\title{Boundary Representations of Locally Compact Hyperbolic Groups}
\date{September 22, 2023}
\subjclass[2020]{22D10,20F67,43A65,37A40}
\keywords{Locally compact hyperbolic groups, Boundary representations, Patterson Sullivan measures, Type I groups, Double ergodicity.}
\author{Michael Glasner}
\email{michael.glasner@weizmann.ac.il}
\address{Faculty of Mathematics and Computer Science, The Weizmann Institute of Science, 234 Herzl Street, Rehovot 7610001, ISRAEL}
\crefname{theorem}{Theorem}{Theorems}
\crefname{theorem}{Theorem}{Theorems}
\crefname{mainthm}{Theorem}{Theorems}
\crefname{lemma}{Lemma}{Lemmas}
\crefname{lem}{Lemma}{Lemmas}
\crefname{remark}{Remark}{Remarks}
\crefname{prop}{Proposition}{Propositions}
\crefname{defn}{Definition}{Definitions}
\crefname{corollary}{Corollary}{Corollaries}
\crefname{cor}{Corollary}{Corollaries}
\crefname{section}{Section}{Sections}
\crefname{figure}{Figure}{Figures}
\crefname{quest}{Question}{Questions}
\newcommand{\N}{\mathbb{N}}
\newcommand{\Z}{\mathbb{Z}}
\newcommand{\R}{\mathbb{R}}
\newtheorem{theorem}{Theorem}[section]
\newtheorem{lemma}[theorem]{Lemma}
\newtheorem{corollary}[theorem]{Corollary}
\theoremstyle{definition}
\newtheorem{definition}[theorem]{Definition}
\newtheorem{example}[theorem]{Example}
\newtheorem*{conjecture}{Conjecture}
\newcommand{\acts}{\operatorname{\curvearrowright}}
\newcommand{\abs}[1]{\left| #1 \right|}
\begin{document}

\maketitle

\begin{abstract}
    We develop the theory of Patterson-Sullivan measures on the boundary of a locally compact hyperbolic group, associating to certain left invariant metrics on the group measures on the boundary. We later prove that for second countable, non-elementary, unimodular locally compact hyperbolic groups the associated Koopman representations are irreducible and their isomorphism type classifies the metric on the group up to homothety and bounded additive changes,  generalizing a theorem of Garncarek on discrete hyperbolic groups. We use this to answer a question of Caprace, Kalantar and Monod on type I hyperbolic groups in the unimodular case.
\end{abstract}

\tableofcontents

\section{Introduction}
\subsection{Koopman and boundary representations}

A locally compact group $G$ is called hyperbolic if for some (hence any) compact generating set the coresponding word metric is Gromov hyperbolic. Equivalently $G$ is hyperbolic if it admits a proper, cocompact, isometric action on a proper, geodesic, Gromov hyperbolic metric space $X$. 
Examples are:

\begin{enumerate}
    \item Rank one simple Lie groups with finite center.
    \item Minimal parabolic subgroups of rank one simple Lie groups with finite center.
    \item Groups acting properly and co-compactly on locally finite trees.
\end{enumerate}

One of the most useful tools to study $G$ is the action of $G$ on its Gromov boundary $\partial G$. Given a left invariant metric $d$ on $G$ which is quasi isometric to a word metric and satisfies some mild assumptions we will construct a measure $\mu$ on $\partial G$ called the Patterson-Sullivan measure associated to $d$. The measure class $[\mu]$ is invariant and $\mu$ is quasi conformal, i.e there exists $C \geq 1$ such that for all $g \in G$:
$$ \frac{1}{C}e^{-h(|g|-2(g,\xi))} \leq \frac{dg_*\mu}{d\mu}(\xi) \leq C e^{-h(|g|-2(g,\xi))}$$
where $h$ is the critical exponent of $G$ with respect to $d$ and $(\cdot,\cdot)$ is the Gromov product.

To such a nonsingular action one can associate a unitary representation of $G$ on $L^2(\mu)$ called the Koopman representation, defined by:
$$[\pi(g)f](\xi) = \sqrt{\frac{dg_*\mu}{d\mu}(\xi)}f(g^{-1}\xi)$$
The Koopman representation depends up to unitary equivalence only on the measure class $[\mu]$.

Irreducibility of the Koopman representation implies ergodicity and can be thought of as a mixing property of the action.

We will call the Koopman representations associated to the Patterson-Sullivan measures boundary representations or Patterson-Sullivan representations.

Our first main theorem is the following:
\begin{theorem} \label{irreducibility main theorem}
Let $G$ be a second countable, unimodular, non-elementary locally compact hyperbolic group.
For any Borel measurable, left invariant metric $d$ on $G$ which is Gromov hyperbolic and quasi isometric to a word metric, the associated Patterson-Sullivan representation $\pi$ is irreducible.
\end{theorem}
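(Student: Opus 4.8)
The plan is to show the commutant $\pi(G)'$ is trivial by exploiting that the multiplication algebra $L^{\infty}(\partial G)$ is a maximal abelian subalgebra of $B(\mathcal H)$, where $\mathcal H:=L^{2}(\partial G,\mu)$ and $\mu$ is normalised to a probability measure. It then suffices to prove (a) every $T\in\pi(G)'$ is a multiplication operator, i.e.\ lies in $L^{\infty}(\partial G)$, and (b) the action $G\acts(\partial G,\mu)$ is ergodic; granting both, $\pi(G)'=\pi(G)'\cap L^{\infty}(\partial G)$ consists of the $G$-invariant bounded functions, hence equals $\C\cdot\mathrm{Id}$. Statement (b) I would deduce from double ergodicity of $G\acts(\partial G\times\partial G,\mu\otimes\mu)$ (if $A\subseteq\partial G$ were a nontrivial invariant set then $A\times\partial G$ would contradict it); double ergodicity, which is where non-elementarity of $G$ is essential, belongs to the Patterson--Sullivan theory developed in the earlier sections. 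Everything is therefore in (a).

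For (a), I would fix $T\in\pi(G)'$ and analyse its integral kernel $K_{T}$ on $\partial G\times\partial G$. Writing the intertwining relation $\pi(g)T=T\pi(g)$ out with the Radon--Nikodym cocycle $c(g,\xi):=\tfrac{dg_{*}\mu}{d\mu}(\xi)$ turns it into the equivariance $K_{T}(g\xi,g\eta)=\bigl(c(g^{-1},\xi)\,c(g^{-1},\eta)\bigr)^{-1/2}K_{T}(\xi,\eta)$. On the diagonal this says that $K_{T}(\xi,\xi)\,d\mu(\xi)$ is a $G$-invariant complex measure absolutely continuous with respect to $\mu$, so by (b) it is a scalar multiple of $\mu$: the diagonal part of $T$ is a scalar, and it remains to show that the restriction of $K_{T}$ to $\partial^{(2)}G:=(\partial G\times\partial G)\setminus\Delta$ vanishes.

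This off-diagonal vanishing is the core of the argument, and it is where quasi-conformality and double ergodicity combine. Since the equivariance factor $\bigl(c(g^{-1},\xi)c(g^{-1},\eta)\bigr)^{-1/2}$ is positive, the phase of the off-diagonal kernel is $G$-invariant, hence constant by double ergodicity of $\mu\otimes\mu$; after rotating $T$ by a unimodular scalar one may assume this kernel, call it $K_{0}$, is nonnegative. Now the function $\beta(\xi,\eta):=e^{h(\xi,\eta)}$ on $\partial^{(2)}G$ --- with $(\xi,\eta)$ the Gromov product and $h$ the critical exponent --- satisfies the same equivariance as $K_{0}$ up to the bounded multiplicative error $C^{\pm1}$ coming from quasi- (rather than exact) conformality, because the estimate $\tfrac1C e^{-h(|g|-2(g,\xi))}\leq c(g,\xi)\leq Ce^{-h(|g|-2(g,\xi))}$ identifies $c(g^{-1},\xi)$, up to that bounded factor, with $e^{-h}$ raised to the Busemann function along which the Gromov product transforms under change of basepoint. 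Hence $K_{0}/\beta$ is a nonnegative measurable function on $(\partial^{(2)}G,\mu\otimes\mu)$ that is $G$-invariant up to a uniform multiplicative constant; a standard super-level-set argument then shows, via double ergodicity, that $K_{0}/\beta$ is bounded above and below by positive constants whenever $K_{0}\not\equiv0$, i.e.\ $K_{0}\asymp\beta$. But $\beta$ is the kernel at the critical exponent: for a visual metric $\varrho$ one has $\beta(\xi,\eta)\asymp\varrho(\xi,\eta)^{-Q}$ with $Q$ the Ahlfors-regularity dimension of $\mu$ for $\varrho$, so $\int_{B_{\varrho}(\xi,r)}\varrho(\xi,\eta)^{-Q}\,d\mu(\eta)=+\infty$ for every $\xi$ (the dyadic annuli around $\xi$ each contribute a bounded amount, infinitely often). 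Testing $T$ against the indicator of a small ball would then give $\langle Tf,f\rangle=+\infty$, contradicting boundedness of $T$; hence $K_{0}=0$, $T\in L^{\infty}(\partial G)$, and the theorem follows.

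The step I expect to be the main obstacle is executing the last two paragraphs rigorously in the locally compact, second countable, unimodular setting. For a general bounded $T$ the ``kernel'' is a priori only a distribution, so the phase-constancy argument, the comparison with $\beta$, and the final singularity estimate have to be run at the level of the sesquilinear form $(\varphi,\psi)\mapsto\langle T\varphi,\psi\rangle$ restricted to functions with disjoint supports, after reducing to positive (or Hilbert--Schmidt) operators; and one needs, with uniform constants, the shadow lemma, the displayed quasi-conformal estimate, the Ahlfors $Q$-regularity of the Patterson--Sullivan measure, and double ergodicity of $\mu\otimes\mu$ --- all provided by the locally compact Patterson--Sullivan machinery built earlier, which replaces Garncarek's combinatorial sums over a finitely generated group by integrals against Haar measure. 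Unimodularity is used throughout: it makes the Patterson--Sullivan theory symmetric (the reflected measure $\check\mu$ is equivalent to $\mu$), and it is precisely what makes $\mu\otimes\mu$ quasi-invariant on $\partial^{(2)}G$ with the cocycle $c(g,\xi)c(g,\eta)$ that appears above; second countability is needed for the disintegration and ergodic-decomposition steps.
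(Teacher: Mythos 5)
Your overall strategy (show $\pi(G)'\subseteq L^\infty(\partial G,\mu)$, then invoke ergodicity) is sound in outline, but the way you propose to establish that inclusion has a genuine gap that I do not see how to close: you assume from the outset that an arbitrary $T\in\pi(G)'$ has an integral kernel $K_T$ on $\partial G\times\partial G$ splitting into a ``diagonal part'' (a complex measure absolutely continuous with respect to $\mu$) and an ``off-diagonal part'' that is a $\mu\otimes\mu$-measurable function on $\partial^2 G$. A bounded operator on $L^2$ of a non-atomic measure space has no such kernel in general, and every subsequent step (constancy of the phase, the comparison $K_0\asymp e^{h(\xi,\eta)}$, the divergence of $\int_{B\times B}K_0\,d\mu^2$) operates on this object. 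The fallback you sketch does not repair this: restricting the sesquilinear form $(\varphi,\psi)\mapsto\langle T\varphi,\psi\rangle$ to disjointly supported continuous functions gives a bounded bilinear form on $C(A)\times C(B)$, and such forms are not in general represented by measures on $A\times B$, let alone by densities in $L^1(\mu\otimes\mu)$; nor is there any reason an element of the commutant should be Hilbert--Schmidt or approximable by Hilbert--Schmidt commutant elements. The ``diagonal part'' cannot be extracted by a conditional expectation either, since there is no normal conditional expectation of $B(L^2(\mu))$ onto the maximal abelian algebra $L^\infty(\mu)$ when $\mu$ is non-atomic.

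The paper argues in the opposite direction precisely to avoid this. Rather than analyzing the commutant, it shows that kernel operators $T_K$ with $K\ge 0$ lie in the weak operator closure of $\pi(L^1(G))$, by constructing explicit approximants $S_R=\int w(k)\widetilde{\pi}(k)\,d\lambda(k)$ supported on annuli; unimodularity enters in the cancellation lemma (Lemma \ref{cancellation lemma}) and in the uniform bound on $\|S_R\|_{op}$ (Lemma \ref{uniformly bounded}). This places the projection onto the cyclic vector $\mathbbm{1}_{\partial G}$ inside $\pi(G)''$, and irreducibility follows from Schur's lemma; in particular the paper's irreducibility proof does not use double ergodicity of $(\partial^2 G,\mu\otimes\mu)$ at all (that is needed only for the classification theorem). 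If you want to keep your commutant-first strategy, the honest route to $\pi(G)'\subseteq L^\infty(\mu)$ is to prove $L^\infty(\mu)\subseteq\pi(G)''$ (as the paper does in Lemma \ref{projections in the positive cone}) and take commutants --- but that is exactly the hard analytic content your kernel argument was meant to replace.
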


Our second main theorem provides a classification of the Patterson-Sullivan representations in terms of the metric $d$.

\begin{theorem} \label{classification main theorem}
    Let $G$ be a second countable, unimodular, non-elementary locally compact hyperbolic group. Given two Borel measurable, left invariant metrics $d_1,d_2$ on $G$ which are Gromov hyperbolic and quasi isometric to a word metric, the corresponding Patterson-Sullivan representations $\pi_1$ and $\pi_2$ are unitarily equivalent if and only if $d_1$ and $d_2$ are roughly similar, that is, there exist constants $L,C>0$ such that for all $g,h \in G$, $Ld_1(g,h) - C \leq d_2(g,h) \leq Ld_1(g,h) + C$. (Note this is much stronger than a quasi-isometry since the multiplicative constant is the same in the upper and lower bounds!)

\end{theorem}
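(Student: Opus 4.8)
The plan is to deduce the theorem from the quasi-conformal estimate recorded in the introduction, together with its two standard consequences: the Sullivan shadow lemma, $\mu_i\bigl(\mathcal O_R(g)\bigr)\asymp_R e^{-h_i d_i(e,g)}$ (taking shadows of a fixed word metric, which makes them common to both $d_1,d_2$), and the resulting asymptotics of the spherical matrix coefficient $\varphi_i(g):=\bigl\langle \pi_i(g)\mathbf 1,\mathbf 1\bigr\rangle_{L^2(\mu_i)}$, where $\mathbf 1$ is the constant function and $h_i$ is the critical exponent of $d_i$. Indeed $\varphi_i(g)=\int_{\partial G}\sqrt{\tfrac{dg_*\mu_i}{d\mu_i}}\,d\mu_i$, and a layer-cake integration against the shadow-lemma estimate gives $\varphi_i(g)\asymp\bigl(1+d_i(e,g)\bigr)e^{-\tfrac{h_i}{2}d_i(e,g)}$ uniformly in $g$ --- the analogue of the Harish-Chandra $\Xi$-function.

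For the implication ``roughly similar $\Rightarrow$ unitarily equivalent'': if $d_2=Ld_1+O(1)$ then the critical exponents satisfy $h_2=h_1/L$ and the Gromov products satisfy $(\cdot,\cdot)_{d_2}=L(\cdot,\cdot)_{d_1}+O(1)$, so the Radon--Nikodym cocycles $\xi\mapsto\tfrac{dg_*\mu_i}{d\mu_i}(\xi)$ of $\mu_1$ and $\mu_2$ are comparable up to a multiplicative constant that is uniform in $g$. By the shadow lemma these two measures then assign comparable mass to every shadow, so a Vitali covering argument on the (doubling) visual metric space $\partial G$ shows they cannot be mutually singular; since both are $G$-ergodic, the standard dichotomy yields $[\mu_1]=[\mu_2]$. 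As the Koopman representation depends only on the measure class, $\pi_1\cong\pi_2$.

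For the converse, let $U\colon L^2(\mu_1)\to L^2(\mu_2)$ be a unitary with $U\pi_1(g)=\pi_2(g)U$ for all $g$. The crucial input, which I would extract from the theory of boundary representations (Bader--Muchnik, Boyer) in the locally compact form underlying \cref{irreducibility main theorem}, is a \emph{spherical domination} estimate: there is $C>0$ with $\bigl|\langle\pi_i(g)v,w\rangle\bigr|\le C\,\|v\|\,\|w\|\,\varphi_i(g)$ for all $v,w\in L^2(\mu_i)$ and $g\in G$ --- morally, $\varphi_i$ is the largest matrix coefficient of $\pi_i$, attained by $\mathbf 1$. This is proved by the same quasi-conformal/North--South-dynamics analysis that powers irreducibility: as $g\to\xi\in\partial G$ the operators $\pi_i(g)$ concentrate, after rescaling, along the line $\mathbb C\mathbf 1_i$. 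Granting this, $\varphi_1(g)=\langle\pi_2(g)U\mathbf 1_1,U\mathbf 1_1\rangle\le C\varphi_2(g)$ and symmetrically $\varphi_2\le C'\varphi_1$, so $\varphi_1\asymp\varphi_2$. Combining with the asymptotics of the first paragraph and the fact that $d_1,d_2$ are both quasi-isometric to a word metric --- hence $d_1(e,g)\asymp d_2(e,g)$ and the prefactors $1+d_i(e,g)$ agree up to a bounded ratio --- the relation $\varphi_1\asymp\varphi_2$ forces $h_1 d_1(e,g)=h_2 d_2(e,g)+O(1)$ uniformly in $g$; by left-invariance, $h_1 d_1(x,y)=h_2 d_2(x,y)+O(1)$ for all $x,y\in G$, i.e.\ $d_1$ and $d_2$ are roughly similar with $L=h_1/h_2$.

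The main obstacle is the spherical domination estimate --- equivalently, showing that the abstract unitary representation remembers the spherical coefficient (the canonical vector $\mathbf 1$). In the measure-theoretic picture $\mathbf 1$ is obviously distinguished, but making this visible representation-theoretically requires the fine structure of boundary representations --- the irreducibility of \cref{irreducibility main theorem}, the proximal dynamics of $G\acts\partial G$, and the quasi-conformal regularity of $\mu_i$ --- rather than a soft functional-analytic argument; carrying this out in the second countable unimodular locally compact setting, where one cannot invoke discrete-group combinatorics or a transitive Mackey analysis directly, is the technical heart. (An alternative organisation first proves $[\mu_1]=[\mu_2]$ via a rigidity statement for the boundary action and then a cocycle-rigidity argument identifying the linear drifts of the Busemann cocycles $-\tfrac1{h_i}\log\tfrac{dg_*\mu_i}{d\mu_i}=d_i(e,\cdot)-2(g,\cdot)_i+O(1)$; this merely relocates the same difficulty.)
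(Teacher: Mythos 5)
The forward implication in your first two paragraphs is essentially the paper's argument (the paper routes it through Ahlfors regularity and Hausdorff measures rather than a Vitali argument, but both reduce to the equivalence of the visual metrics), so that direction is fine. The converse, however, rests on a claim that is false as stated. The ``spherical domination'' estimate $\abs{\langle\pi_i(g)v,w\rangle}\le C\,\|v\|\,\|w\|\,\varphi_i(g)$ for \emph{all} $v,w\in L^2(\mu_i)$ cannot hold for any unitary representation with $\varphi_i(g)\to 0$: since $\pi_i(g)$ is unitary, $\sup_{\|v\|=\|w\|=1}\abs{\langle\pi_i(g)v,w\rangle}=1$ for every $g$ (take $w=\pi_i(g)v$). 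The true statement in this circle of ideas, which the paper proves as Lemma \ref{matrix coefficient esimate}, is that $\langle\pi_i(g)\phi,\psi\rangle\approx\varphi_i(g)\,\phi(\check g)\overline{\psi(\hat g)}$ with an error controlled by the \emph{Lipschitz} norms of $\phi,\psi$, not their $L^2$ norms. Your key step $\varphi_1(g)=\langle\pi_2(g)U\mathbf 1_1,U\mathbf 1_1\rangle\le C\varphi_2(g)$ therefore has no justification: $U\mathbf 1_1$ is merely an $L^2(\mu_2)$ vector, with no regularity, and nothing forces its matrix coefficient to decay like $\varphi_2$. To make this work you would first have to show that $U$ carries $\mathbf 1_1$ to (a multiple of) $\mathbf 1_2$, or at least to a bounded function with controlled modulus of continuity, and that is precisely the content you have deferred.

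The paper closes exactly this gap by a different mechanism. Using Lemma \ref{projections in the positive cone} (the projections onto $L^2(E)$ lie in the positive cone of $\pi_i$), the intertwiner $U$ is shown to restrict to a $G$-equivariant isomorphism $L^\infty(\mu_1)\to L^\infty(\mu_2)$ of von Neumann algebras, hence to come from a measure-class isomorphism $F$ of $\partial G$. Then the invariant Bowen--Margulis--Sullivan measure on $\partial^2G$ (Theorem \ref{invariant measure}) together with double ergodicity (Theorem \ref{weak mixing}) forces $F^2_*m_1$ to be a scalar multiple of $m_2$; a cross-ratio computation upgrades this to an a.e.\ H\"older estimate on $F$, whence $F$ agrees a.e.\ with a continuous equivariant map, which is the identity by Lemma \ref{no automorphisems}. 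This yields $d_{\epsilon_2}\asymp d_{\epsilon_1}^{D_1/D_2}$ and hence $[\mu_1]=[\mu_2]$ with bounded Radon--Nikodym derivative, from which $h_1|g|_1\approx h_2|g|_2$ follows by taking suprema of Gromov products as in Lemma \ref{nonempty shadow}. Your final step (from $h_1|g|_1\approx h_2|g|_2$ to rough similarity of $d_1,d_2$ by left invariance) is correct and is how the paper concludes, but the representation-theoretic input you need upstream of it is not the coefficient comparison you propose.
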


These theorems generalize earlier works by Bader, Muchnik \cite{BAM11}  and Garncarek \cite{G14} which deal with the cases of fundamental groups of negatively curved manifolds and discrete hyperbolic groups respectively. Many of our methods are similar to those in \cite{G14} although some constructions are altered to fit the locally compact setting. It is interesting to notice throughout the paper the key lemmas in which we use the unimodularity assumption from theorems \ref{irreducibility main theorem} and \ref{classification main theorem}. Specifically note lemma \ref{cancellation lemma}, lemma \ref{uniformly bounded} and appendix \ref{appendix b}.

Theorem \ref{irreducibility main theorem} can be envisioned in the wider context of a general conjecture by Bader and Muchnik:

\begin{conjecture}[Bader, Muchnik \cite{BAM11}]
Let $G$ be a locally compact group and $\nu$ a spread out probability measure on $G$. For any $\nu$-boundary of $G$ the associated Koopman representation is irreducible.
\end{conjecture}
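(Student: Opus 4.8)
The conjecture is open in full generality; what can actually be carried out is the template introduced by Bader--Muchnik, completed in special cases (CAT$(-1)$ actions, discrete hyperbolic groups, and, via Theorem~\ref{irreducibility main theorem}, unimodular second countable hyperbolic $G$, where any spread-out $\nu$ with enough moments has Poisson boundary $\partial G$ equipped with a measure in the Patterson--Sullivan class, so that the Koopman representation, depending only on the measure class, is covered). So the plan is to set up that template and indicate precisely where the general case resists it. Write $\pi$ for the Koopman representation of $G$ on $L^2(B,\lambda)$, where $(B,\lambda)$ is the given $\nu$-boundary, a measurable $G$-quotient of the Poisson boundary. First I would record the standing properties of $(B,\lambda)$: it is an amenable $G$-space, $\lambda$ is $\nu$-stationary ($\nu * \lambda = \lambda$), and, as a quotient of the Poisson boundary, the $G$-action is doubly ergodic (ergodic on $B \times \check B$ with $\check B$ the $\check\nu$-boundary; in particular ergodic off the diagonal of $B \times B$). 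Irreducibility of $\pi$ is equivalent to the commutant of $\pi(G)$ consisting of scalars, and since that commutant coincides with the commutant of the von Neumann algebra generated by $\pi(L^1(G))$, any bounded $T$ commuting with $\pi(G)$ also commutes with every convolution operator $\pi(f)$, in particular with $\pi(\nu^{*n})$ and with the self-adjoint operators $\pi(\check\nu^{*n} * \nu^{*n})$.

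The analytic heart is an equidistribution statement. Using $\nu$-stationarity of $\lambda$ together with the martingale/Furstenberg convergence $X_n^{-1}\lambda \to \delta_{\mathrm{bnd}(\mathbf X)}$ (weak-$*$, along $\nu^{\otimes\N}$-a.e.\ random walk trajectory $\mathbf X$), one shows that the sesquilinear forms $(u,v) \mapsto \int_G \langle \pi(g)u, v\rangle\, d\nu^{*n}(g)$ converge, after the right normalization, to a limiting form that is visibly an integral over $B$ of rank-one pieces coming from the boundary kernel $\frac{d(g^{-1}\lambda)}{d\lambda}$; equivalently, $\pi(\nu^{*n})$ converges in an appropriate weak topology to an operator admitting a Schwartz kernel on $B \times B$. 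Chasing this through, one gets that the commutant of $\pi(G)$ embeds into the algebra of bounded measurable kernels $K$ on $B \times B$ that are $G$-equivariant for the diagonal action twisted by the Radon--Nikodym cocycles. In the hyperbolic setting this is exactly what the cancellation lemma~\ref{cancellation lemma} and the uniform boundedness lemma~\ref{uniformly bounded} supply: hyperbolicity gives exponential decay of the shadow matrix coefficients $\langle \pi(g)\mathbbm{1}_A, \mathbbm{1}_B\rangle$ off the diagonal, and unimodularity plus the quasiconformality of the Patterson--Sullivan measure give the uniform bounds that let one pass to the limit.

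The remaining step is soft. A bounded $G$-equivariant kernel $K$ on $B \times B$ defines a function that is constant along almost every $G$-orbit; since the $G$-action on $(B\times B)\setminus \Delta$ is ergodic, while the diagonal $\Delta$ is $G$-invariant, $K$ must be concentrated on $\Delta$, i.e.\ $T$ is a multiplication operator $M_\varphi$ for some $\varphi \in L^\infty(B,\lambda)$. Then $M_\varphi$ commuting with $\pi(G)$ forces $\varphi \circ g^{-1} = \varphi$ $\lambda$-a.e.\ for every $g \in G$, and ergodicity of $G \acts (B,\lambda)$ forces $\varphi$ to be constant. Hence $T$ is a scalar and $\pi$ is irreducible.

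The main obstacle — and the precise reason the conjecture is not a theorem — is the middle paragraph: representing an \emph{arbitrary} commuting operator by a kernel on $B \times B$. For a general spread-out $\nu$ on a general locally compact $G$ there is no geometric model of the Poisson boundary, no shadow combinatorics, and hence no a priori quantitative decay or uniform control of matrix coefficients, so it is unclear how to show $\pi(\nu^{*n})$ converges to a kernel operator (or even that a nonzero commuting operator is ``diffuse'' enough to be detected by the boundary double coset structure). A secondary issue is that $B$ carries no canonical compactification or metric, so the shadow formalism must be replaced by something intrinsic — the tail $\sigma$-algebra and martingale convergence — and one must verify that double ergodicity together with the $L^1$-convolution-algebra action genuinely suffice without extra regularity hypotheses on $\nu$ or $G$. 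In short, the steps using only ergodicity are robust; the step using geometry is what currently confines the argument to cases, such as the hyperbolic one treated here, where that geometry is available.
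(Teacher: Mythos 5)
This statement is the Bader--Muchnik conjecture, which the paper quotes only as an open conjecture for context; the paper contains no proof of it, and its own contribution (Theorem~\ref{irreducibility main theorem}) is the special case of Patterson--Sullivan boundary representations of second countable, unimodular, non-elementary hyperbolic groups. You correctly treat the statement as open rather than pretending to prove it, and your outline of the Bader--Muchnik template (commutant commutes with $\pi(\nu^{*n})$, equidistribution toward boundary kernel operators, then double ergodicity plus ergodicity to kill the kernel off and on the diagonal) is a fair description of the strategy used in \cite{BAM11} and \cite{G14}, with an accurate diagnosis of where it breaks for general spread-out $\nu$.

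One caveat on your claim that the conjecture is ``covered'' for unimodular hyperbolic $G$ via Theorem~\ref{irreducibility main theorem}: that theorem concerns Koopman representations of Patterson--Sullivan measures attached to metrics in $\mathcal{D}(G)$, not of arbitrary $\nu$-stationary measures. To deduce the conjecture for a given spread-out $\nu$ one would need to identify the $\nu$-boundary with $(\partial G,[\mu])$ for some $d\in\mathcal{D}(G)$ --- in the discrete case this is done via the Green metric and requires hypotheses on $\nu$ (and even then yields the Poisson boundary, not every $\nu$-boundary) --- and no such identification is carried out in this paper. Also note that the paper's mechanism for producing kernel operators is not the limit of $\pi(\nu^{*n})$ you describe, but the operators $S_R$ built from shadows, annuli and the cancellation lemma~\ref{cancellation lemma}; the two routes are parallel in spirit but not the same construction. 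None of this affects the main point: there is no proof in the paper to compare against, and your proposal rightly does not supply one.
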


\subsection{Type I hyperbolic groups}

Our main application of theorems \ref{irreducibility main theorem} and \ref{classification main theorem} will be to the study of type I hyperbolic groups, generalizing results of Caprace, Kalantar and Monod from \cite{CKM21}.

\begin{definition}
A locally compact group $G$ is of type I if any two irreducible unitary representations of $G$ which are weakly equivalent (see \cite[Appendix~F.1]{BDV08}) are unitarily equivalent.
\end{definition}

Type I groups have a relatively simple unitary representation theory. In a certain sense they are the groups for which there is hope of obtaining a complete classification of all unitary representations. For an introduction on type I groups see \cite[Section~7.2]{FOL16}.

In section \ref{type I hyperbolic groups} we use theorems \ref{irreducibility main theorem} and \ref{classification main theorem} to deduce the following generalization of \cite[Theorem~B]{CKM21}:

\begin{theorem} \label{type I theorem}
    Let $G$ be a second countable unimodular locally compact hyperbolic group. If $G$ is of type I then $G$ has a co-compact amenable subgroup.
\end{theorem}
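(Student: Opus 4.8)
The plan is to split the argument according to whether $G$ is amenable. If $G$ is amenable we simply take $H=G$. Otherwise $G$ is non-amenable, and since elliptic, lineal and focal locally compact hyperbolic groups are all amenable, the Caprace--Cornulier--Monod--Tessera classification of locally compact hyperbolic groups forces $G$ to be of general type, i.e.\ non-elementary; this is the only place where non-elementarity, and through Theorems \ref{irreducibility main theorem} and \ref{classification main theorem} also unimodularity and second countability, will be used. So from now on $G$ is non-elementary.

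Fix a word metric on $G$, let $\mu$ be an associated Patterson--Sullivan measure on $\partial G$, and let $\pi$ be the boundary representation on $L^2(\partial G,\mu)$, which is irreducible by Theorem \ref{irreducibility main theorem}. I would package $\pi$ together with the multiplication action of $C(\partial G)$ into a representation $\sigma$ of the crossed product $C(\partial G)\rtimes G$ on $L^2(\partial G,\mu)$ and make two observations. First, $\sigma$ is irreducible: its commutant equals $\pi(G)'\cap C(\partial G)'$, which by maximal abelianness of $C(\partial G)$ in $B(L^2(\partial G,\mu))$ is the algebra of $G$-invariant functions in $L^\infty(\partial G,\mu)$, hence $\C$ by ergodicity of $\mu$. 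Second, the boundary action $G\acts\partial G$ of a hyperbolic group is topologically amenable (the locally compact counterpart of Adams's theorem), so the full and reduced crossed products coincide, the measured action $G\acts(\partial G,\mu)$ is amenable so that $\pi\preceq\lambda_G$, and — because $\partial G$ is compact — $C^*_r(G)$ embeds unitally in $C(\partial G)\rtimes G$ in such a way that $\sigma|_{C^*_r(G)}$ is exactly $\pi$, regarded now as an (irreducible) representation of $C^*_r(G)$.

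Now the type I hypothesis enters. Being type I, $C^*(G)$ and hence its quotient $C^*_r(G)$ is a type I (GCR) $C^*$-algebra, so each of its irreducible representations contains the compact operators; in particular $\mathcal K\big(L^2(\partial G,\mu)\big)\subseteq\pi(C^*_r(G))\subseteq\sigma\big(C(\partial G)\rtimes G\big)$. Thus $\sigma$ is an irreducible representation, containing the compacts, of the transformation-group $C^*$-algebra of an amenable action. By the Effros--Hahn/Gootman--Rosenberg description of the primitive ideal space of such crossed products, $\ker\sigma$ is induced from a point stabilizer, and its local closedness (equivalent to containing the compacts) forces the corresponding orbit $G\xi_0\subseteq\partial G$ to be locally closed. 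But the boundary action of a non-elementary hyperbolic group is minimal, so $G\xi_0$ is dense; a dense locally closed subset of a compact space is open, so its complement is a closed $G$-invariant proper subset and hence empty. Therefore $G$ acts transitively on $\partial G$. Setting $P=\mathrm{Stab}_G(\xi_0)$, the orbit map $G/P\to\partial G$ is a homeomorphism (Effros's theorem on transitive actions of second countable locally compact groups), so $P$ is co-compact because $\partial G$ is compact, and $P$ is amenable because topological amenability of the transitive action $G\acts G/P$ is equivalent to amenability of $P$. Hence $H=P$ is the desired co-compact amenable subgroup.

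I expect the crux to be the passage from ``irreducible boundary representation plus $G$ type I'' to ``transitive boundary action'': this is where the type I property genuinely bites, via the dictionary relating type I $C^*$-algebras to the occurrence of compact operators in their irreducible representations, and it leans on amenability of the boundary action both to identify the full and reduced crossed products and to invoke the Effros--Hahn machinery. Everything downstream of transitivity — minimality and compactness of $\partial G$, Effros's theorem, the equivalence between amenability of $G\acts G/P$ and of $P$ — is comparatively routine. In this picture Theorem \ref{irreducibility main theorem} is the essential input that makes $\sigma$ irreducible and lets the hypothesis act, while Theorem \ref{classification main theorem} records that the representation so produced is a faithful invariant of the metric and is what makes precise the sense in which ``$G$ type I'' is a constraint on the geometry of $G$.
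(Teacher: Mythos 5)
Your route is genuinely different from the paper's, and it has a gap at its central step. For comparison: the paper's proof consists of observing that Theorems \ref{irreducibility main theorem} and \ref{classification main theorem} are exactly conditions (G1) and (G2) of \cite[Remark~5.6]{CKM21} and then citing that remark. The classification theorem is an essential input there: type~I together with the weak equivalence of all boundary representations (coming from amenability of the boundary action and non-amenability of $G$) forces all of them to be unitarily equivalent, hence by Theorem \ref{classification main theorem} all metrics (equivalently, all Cayley--Abels graphs) to be pairwise roughly similar, and it is this geometric rigidity that Caprace--Kalantar--Monod convert into a cocompact amenable subgroup. Your argument never uses Theorem \ref{classification main theorem} at all; the fact that you can apparently dispense with it should already raise suspicion.

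The gap is the passage from ``$\sigma$ is irreducible and $\sigma(C(\partial G)\rtimes G)\supseteq\mathcal{K}$'' to ``some orbit $G\xi_0$ is locally closed in $\partial G$''. First, Gootman--Rosenberg is a theorem about amenable \emph{groups}; for amenable actions of non-amenable locally compact groups you would need the groupoid version (Ionescu--Williams), and even granting it, the conclusion is only that $\ker\sigma$ is induced from the stabilizer of some point $\xi_0$ --- it says nothing about the topology of the orbit $G\xi_0$. Second, the only general bridge between local closedness of the point $\{\ker\sigma\}$ in $\mathrm{Prim}(C(\partial G)\rtimes G)$ and the topology of orbits is the quasi-orbit map, and by minimality of the boundary action every quasi-orbit equals $\partial G$: the quasi-orbit space is a single point and carries no information. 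Note that for a minimal non-transitive action \emph{no} orbit is locally closed (a dense locally closed orbit would be open, hence, its complement being closed and invariant, all of $\partial G$). So what your argument actually requires is the assertion that a minimal, non-transitive, topologically amenable action of $G$ on a compact space admits no irreducible covariant representation whose image contains the compacts --- and that is precisely the hard content, which you assume rather than prove. The rest is fine: the reduction to the non-amenable (hence general type, hence non-elementary) case is correct, and everything downstream of transitivity --- Effros' theorem, cocompactness of the stabilizer from compactness of $\partial G$, and amenability of boundary-point stabilizers --- is routine.
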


By a theorem of Thoma \cite{Th64} \cite{Th68}, discrete groups are of type I if and only if they are virtually abelian. Therefore discrete hyperbolic groups of type I are elementary. As a result the above theorem is interesting only for non discrete groups.

In \cite[Theorem~D]{CCMT15} there is a classification of non amenable hyperbolic locally compact groups containing a cocompact amenable subgroup. We can use this to deduce:
\begin{corollary}
    Let $G$ be a second countable unimodular locally compact hyperbolic group. Recall $G$ contains a unique maximal compact normal subgroup $W$. If $G$ is of type I then exactly one of the following holds:

    \begin{enumerate}
        \item $G/W$ is a rank one adjoint Lie group.
        \item $G/W$ is a closed subgroup of the automorphism group of a locally finite, non elementary tree $T$, acting without inversions, with exactly two orbits of vertices and $2$-transitively on the boundary.
        \item $G/W$ is trivial or isomorphic to $\Z,\R,\Z \rtimes \{ \pm 1\}$ or $\R \rtimes \{ \pm 1\}$.
    \end{enumerate}

\end{corollary}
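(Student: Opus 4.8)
The plan is to reduce the statement to two known classification results, splitting according to whether $G$ is amenable.

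Suppose first that $G$ is not amenable. Since $G$ is second countable, unimodular, and of type I, Theorem~\ref{type I theorem} provides a cocompact amenable closed subgroup of $G$. Hence $G$ is a non-amenable hyperbolic locally compact group admitting a cocompact amenable subgroup, so \cite[Theorem~D]{CCMT15} applies directly and tells us that $G/W$ is either a rank one adjoint simple Lie group --- this is case~(1) --- or a closed subgroup of $\operatorname{Aut}(T)$ for some locally finite non-elementary tree $T$, acting without inversions, with exactly two orbits of vertices and $2$-transitively on $\partial T$ --- this is case~(2). At this step I would only need to check that the maximal compact normal subgroup appearing in \cite[Theorem~D]{CCMT15} coincides with the $W$ of our statement (it does, by uniqueness of the maximal compact normal subgroup) and that our standing hypotheses contain whatever mild assumptions --- essentially second countability --- are needed to invoke that theorem.

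Now suppose $G$ is amenable. By the structure theory of amenable hyperbolic locally compact groups from \cite{CCMT15}, $G$ is compact, focal, or lineal. The key point, and the one place the unimodularity hypothesis enters here, is that a focal group cannot be unimodular: a focal hyperbolic group is a semidirect product $N \rtimes \Z$ or $N \rtimes \R$ in which the acting parameter acts on the nontrivial group $N$ by a strictly contracting automorphism, so its modular homomorphism is nontrivial; thus the focal case is excluded. If $G$ is compact, then $G = W$ and $G/W$ is trivial. If $G$ is lineal, then $G$ is quasi-isometric to $\R$, and by the classification of lineal hyperbolic locally compact groups in \cite{CCMT15} the only possibilities for $G/W$ are $\Z$, $\R$, $\Z \rtimes \{\pm 1\}$, and $\R \rtimes \{\pm 1\}$ --- alternatively this is seen directly, since $G/W$ is virtually isomorphic to $\Z$ or $\R$ and has no nontrivial compact normal subgroup, hence is a Lie group which is easily enumerated. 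This is case~(3).

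Finally I would check that the three cases are mutually exclusive: the groups in case~(3) are amenable while those in cases~(1) and~(2) are not, and a rank one adjoint Lie group is connected and noncompact whereas a nontrivial closed subgroup of $\operatorname{Aut}(T)$ acting $2$-transitively on the boundary of a non-elementary locally finite tree is totally disconnected. I do not expect a serious obstacle in this argument: the entire weight rests on Theorem~\ref{type I theorem}, and beyond quoting the cited classifications the only things requiring any care are the observation that unimodularity rules out focal groups and the (routine) matching of the precise hypotheses and formulation of \cite[Theorem~D]{CCMT15} to our setting.
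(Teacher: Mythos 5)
Your proposal is correct and follows the same route the paper intends: the paper derives this corollary by combining Theorem \ref{type I theorem} with \cite[Theorem~D]{CCMT15} in the non-amenable case, and the amenable case is handled exactly as you do, since \cite[Theorem~7.3]{CCMT15} says a unimodular amenable hyperbolic group is elementary (your modular-function argument ruling out focal groups is precisely the content of that result). The only difference is that you spell out details the paper leaves implicit, which is fine.
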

It is already pointed out in \cite[Remark~5.6]{CKM21} that generalizing the works of Garncarek \cite{G14} as in theorems \ref{irreducibility main theorem} and \ref{classification main theorem} would imply the theorems above.

Theorem \ref{type I theorem} fits in to a much wider structural conjecture of Caprace Kalantar and Monod about type I groups:

\begin{conjecture} [Caprace, Kalantar, Monod \cite{CKM21}]
    Every second countable locally compact group of type I admits a cocompact amenable subgroup.
\end{conjecture}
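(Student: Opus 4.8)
The plan is to deduce this purely from Theorem \ref{type I theorem} together with the structure theory of \cite{CCMT15}, after passing to the quotient by the maximal compact normal subgroup. Throughout, the type~I hypothesis enters only through Theorem \ref{type I theorem}; once a cocompact amenable subgroup is available the statement is an assembly of the classification results of \cite{CCMT15}.

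First I would record the reduction to $\bar G := G/W$. By Theorem \ref{type I theorem}, $G$ contains a cocompact amenable closed subgroup $H$. Let $q\colon G \to \bar G$ be the quotient map. I would check that $\bar G$ is again second countable, unimodular, compactly generated and hyperbolic — the last point because $q$ is proper with compact kernel, hence a quasi-isometry, so a word metric on $\bar G$ is Gromov hyperbolic — and that $q(H)$ is a cocompact amenable closed subgroup of $\bar G$. I would also observe that $\bar G$ has trivial maximal compact normal subgroup: the preimage under $q$ of a compact normal subgroup of $\bar G$ is a compact normal subgroup of $G$ containing $W$, hence equal to $W$. Since the three alternatives are phrased entirely in terms of $\bar G$, it then suffices to establish the trichotomy, together with the mutual exclusivity, for $\bar G$.

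Next I would split on the amenability of $\bar G$. If $\bar G$ is amenable, then it is elementary, since a non-elementary hyperbolic locally compact group contains a discrete free subgroup of rank two (by a ping-pong argument on its boundary) and is therefore non-amenable. The classification of amenable hyperbolic locally compact groups in \cite{CCMT15} then presents $\bar G$, up to a compact normal subgroup, as a group of lineal or focal type (or as a compact group); focal type groups are non-unimodular (the contracting element distorts Haar measure), so unimodularity forces $\bar G$ to be compact-by-lineal, and triviality of its maximal compact normal subgroup forces $\bar G$ itself to be trivial or of lineal type, i.e. one of $\Z, \R, \Z \rtimes \{\pm 1\}, \R \rtimes \{\pm 1\}$; this is alternative (3). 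If instead $\bar G$ is non-amenable, then it is a non-amenable hyperbolic locally compact group containing a cocompact amenable closed subgroup, so \cite[Theorem~D]{CCMT15} presents it, again up to a compact normal subgroup, as a rank one adjoint simple Lie group or as a closed subgroup of the automorphism group of a locally finite non-elementary tree acting as in (2); since the maximal compact normal subgroup of $\bar G$ is trivial, $\bar G$ itself is of one of these two forms — alternatives (1) and (2).

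Finally, for the mutual exclusivity: the groups in (3) are amenable whereas rank one simple Lie groups and non-elementary tree automorphism groups are not, so (3) is disjoint from (1) and (2); and a rank one adjoint Lie group is connected and nontrivial while a closed subgroup of the automorphism group of a locally finite tree is totally disconnected, so (1) and (2) cannot overlap, whence exactly one of the three alternatives holds. I expect the only genuinely delicate point to be the amenable case: there one must invoke the finer classification of amenable hyperbolic groups and use unimodularity to discard the non-unimodular focal examples (of $ax+b$ or $\Q_p \rtimes \Z$ type), so that — as elsewhere in this paper — unimodularity is doing essential work. Verifying that the hypotheses of \cite[Theorem~D]{CCMT15} genuinely descend to $\bar G$ (cocompactness and amenability of $q(H)$, hyperbolicity of $\bar G$) is routine but should be spelled out.
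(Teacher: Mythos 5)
There is a fundamental mismatch between the statement you were asked to prove and what your argument addresses. The statement is the general conjecture of Caprace, Kalantar and Monod: \emph{every} second countable locally compact group of type I admits a cocompact amenable subgroup — with no hyperbolicity and no unimodularity hypothesis. The paper does not prove this; it records it as an open conjecture, and its own contribution (Theorem \ref{type I theorem}) is precisely the special case of second countable \emph{unimodular hyperbolic} groups, obtained via the Patterson--Sullivan boundary representations. Your proposal begins by invoking Theorem \ref{type I theorem} and the maximal compact normal subgroup $W$ of \cite{CCMT15}, both of which are only available for hyperbolic locally compact groups (and the former only for unimodular ones). For a general second countable type I group there is no Gromov boundary, no quasi-conformal measure, and no analogue of the structure theory of \cite{CCMT15}, so none of the machinery you lean on applies; this is exactly why the statement remains a conjecture.

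There is a second, logical problem even if one restricts attention to the hyperbolic unimodular case. In that case the conclusion of the conjecture — existence of a cocompact amenable subgroup — \emph{is} Theorem \ref{type I theorem}, so "deducing" it from that theorem is circular, and all the further work in your proposal (splitting on amenability of $\bar G = G/W$, using \cite[Theorem~D]{CCMT15}, discarding focal groups by unimodularity, checking mutual exclusivity of the three alternatives) is an argument for a different statement, namely the trichotomy Corollary that follows Theorem \ref{type I theorem} in the paper. That argument is essentially the paper's own route to the Corollary and looks sound for that purpose, but it neither proves the conjecture in general nor adds anything to it in the hyperbolic case. What is genuinely missing — and missing from the literature, not just from your write-up — is any idea for producing a cocompact amenable subgroup from the type I hypothesis without a hyperbolic (or otherwise boundary-rich) structure to exploit.
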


\subsection{The space $\partial^2 G$}
Let $G$ be a second countable locally compact hyperbolic group, $d$ a Borel measurable left invariant metric on $G$ which is quasi isometric to a word metric and Gromov hyperbolic. Denote by $\mu$ the corresponding Patterson Sullivan measure.

During the proof of theorem \ref{classification main theorem} we will establish several results of independent interest about the space $\partial^2 G$ of distinct pairs in $\partial G$.

First we shall construct an invariant (infinite) measure $m$ in the measure class of $\mu^2$. This measure will be the analogue of the Bowen-Margulis-Sullivan measure in our case. 

It will then be shown that if $G$ is unimodular, then the action of $G$ on $(\partial^2 G , m)$ is weakly mixing in the following sense:

\begin{theorem} \label{weak mixing introduction theorem}
    For any ergodic p.m.p action $G \acts (\Omega,\omega)$ the diagonal action of $G$ on $(\partial^2 G \times \Omega,m\times \omega)$ is ergodic.
\end{theorem}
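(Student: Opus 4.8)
The plan is to reduce the assertion, in the standard way, to a ``metric ergodicity with unitary coefficients'' statement for the action $G \acts (\partial^2 G, m)$, and then to establish that statement from the amenability of the boundary action $G \acts (\partial G,\mu)$ together with a description of $(\partial G, \mu)$, up to measure class, as a Poisson boundary of $G$. If the isometric ergodicity of $(\partial^2 G,m)$ is recorded separately among the promised results about $\partial^2 G$, then only the first step is needed here.

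\textbf{Step 1: reduction to coefficients.} Fix an ergodic p.m.p.\ action $G \acts (\Omega,\omega)$ on a standard space and let $\pi$ be its Koopman representation on the separable Hilbert space $\mathcal{H} = L^2(\Omega,\omega)$; it is unitary because $\omega$ is preserved. Since $m\times\omega$ is $\sigma$-finite, ergodicity of the diagonal action is equivalent to the statement that every $G$-invariant $F \in L^\infty(\partial^2 G \times \Omega,\, m\times\omega)$ is essentially constant. Given such an $F$, the map $\Phi\colon \partial^2 G \to \mathcal{H}$, $\Phi(x) := F(x,\cdot)$, is measurable with $\|\Phi(x)\| \le \|F\|_\infty$ (here $\omega(\Omega)=1$ is used), and the $G$-invariance of $F$ says exactly that for every $g\in G$ one has $\Phi(gx) = \pi(g)\Phi(x)$ for $m$-a.e.\ $x$. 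It therefore suffices to prove the following property $(\star)$: every bounded $G$-equivariant measurable map from $(\partial^2 G,m)$ to a separable Hilbert space carrying a continuous unitary $G$-representation is $m$-essentially constant. Granting $(\star)$, $\Phi$ is $m$-a.e.\ equal to a single $\pi$-fixed vector $v$, and ergodicity of $G\acts(\Omega,\omega)$ forces $v$ to be a constant function, so $F$ is essentially constant.

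\textbf{Step 2: the property $(\star)$.} Two ingredients go into this. First, the action $G\acts(\partial G,\mu)$ is amenable: the action of a locally compact hyperbolic group on its Gromov boundary is topologically amenable, and a topologically amenable action on a compact space is amenable in the sense of Zimmer with respect to any quasi-invariant measure, in particular $\mu$. Second, since $d$ is a genuine left-invariant metric one has $|g|=|g^{-1}|$ for all $g$, so the ``reflected'' Patterson--Sullivan data lie in the same measure class, and (as already established) $m$ lies in the measure class of $\mu\otimes\mu$ on $\partial G\times\partial G$; thus $(\star)$ is equivalent to the corresponding statement for bounded equivariant maps defined on $(\partial G\times\partial G,\mu\otimes\mu)$. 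Now identify $(\partial G,\mu)$, up to measure class, with the Poisson boundary of a symmetric spread-out probability measure on $G$ adapted to $d$; Kaimanovich's double ergodicity of the Poisson boundary, in the form with coefficients, says precisely that the $G$-action on the square of the Poisson boundary is doubly ergodic with coefficients in separable Hilbert spaces, which is $(\star)$. Alternatively, one may deduce $(\star)$ from the strong approximate transitivity of $\mu$ --- which follows from the North--South dynamics on $\partial G$ --- together with the amenability above.

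\textbf{Main obstacle.} The crux is that $(\star)$ is genuinely stronger than any scalar ergodicity statement about $\partial^2 G$. Viewing $\Phi$ as a function of pairs, the quantities $\langle\Phi(\xi_1,\eta),\Phi(\xi_2,\eta)\rangle$ are $G$-invariant on triples of boundary points and $\langle\Phi(\xi_1,\eta_1),\Phi(\xi_2,\eta_2)\rangle$ on quadruples; but $G$ need not act ergodically on quadruples of boundary points (for rank-one Lie groups the cross-ratio is a nonconstant invariant), so constancy of $\Phi$ cannot be derived from ergodicity of $G$ on configurations of boundary points and really requires the harmonic/Poisson-boundary input, i.e.\ the contraction properties underlying Kaimanovich's theorem (or the SAT property). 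The remaining technical work is twofold: to check that topological amenability of the boundary action yields Zimmer-amenability of the measured action $G\acts(\partial G,\mu)$, and --- a point special to the locally compact setting --- to produce a spread-out, $d$-adapted, symmetric probability measure on $G$ whose Poisson boundary is $(\partial G,\mu)$ up to measure class. Unimodularity of $G$ enters, as elsewhere in the paper, through the structural facts about $m$ and the estimates feeding into these steps.
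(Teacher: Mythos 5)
Your Step 1 (reducing ergodicity of the diagonal action on $\partial^2 G\times\Omega$ to property $(\star)$, constancy of bounded $G$-equivariant maps $(\partial^2 G,m)\to\mathcal{H}$ into unitary representations) is a correct, standard reduction, and your observation that scalar ergodicity on boundary configurations cannot suffice because of the cross-ratio is apt. The gap is in Step 2. The assertion that $(\partial G,\mu)$ can be identified, up to measure class, with the Poisson boundary of a symmetric spread-out probability measure on $G$ ``adapted to $d$'' is not available as a black box: for a given $d\in\mathcal{D}(G)$ the Patterson--Sullivan class is in general singular to the harmonic measure class of any natural random walk, and producing a measure on $G$ whose harmonic measure lies in $[\mu]$ --- together with an identification of the \emph{full} Poisson boundary, not merely a $\mu$-boundary --- is a hard theorem even for discrete hyperbolic groups (Connell--Muchnik), with no known analogue for general locally compact $G$ and arbitrary $d\in\mathcal{D}(G)$. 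Without it, Kaimanovich's double ergodicity with coefficients says nothing about $(\partial^2 G,m)$. The fallback via ``SAT of $\mu$ from North--South dynamics plus amenability'' is likewise unsubstantiated: SAT is a measure-theoretic property that does not follow from the topological dynamics, and SAT plus amenability does not by itself yield double ergodicity with unitary coefficients. Note also that your argument never pins down where unimodularity is used, yet it must fail somewhere for non-unimodular groups: for the parabolic $P<SL_2(\R)$ the invariant measure $m$ exists but the action on $(\partial^2 G,m)$ is not ergodic (the order of a pair of points of $\R$ is a nonconstant invariant).

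For comparison, the paper proves the theorem by a Hopf argument following Bader--Furman, with no random-walk input at all: it constructs a measurable geodesic flow on $(\partial^2 G\times\R\times\Omega)//G$ from a bounded Borel cross-section, proves an ergodic theorem for the averaging operators $I_a^b$ and $J_a^b$ along level sets of the cocycles $\tau$ and $\sigma$, and then uses the contraction lemma (Lemma \ref{contraction for Hopf argument}) to show the limit function is independent of each boundary coordinate separately; unimodularity enters in the construction of the flow-invariant measure on the cross-section and in the change-of-variables steps of the ergodic theorem. The auxiliary ergodic action on $(\Omega,\omega)$ is carried along throughout rather than absorbed into a coefficient module. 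If you wish to pursue your route you would need to prove $(\star)$ directly, and the Hopf argument is essentially the known mechanism for doing so in this setting.
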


This implies in particular that the action of $G$ on $(\partial^2 G,m)$ is ergodic and therefore that $m$ is the unique invariant measure in the measure class of $[\mu^2]$. This will be the critical fact about $\partial^2 G$ used in the proof of theorem \ref{classification main theorem}.

In order to prove theorem \ref{weak mixing introduction theorem} we define a cocycle $\tau:G\times\partial^2 G \to \R$:

$$\tau(g,\xi,\eta) = \frac{1}{h}\left( ln\frac{dg^{-1} _*\mu}{d\mu}(\eta)  - ln\frac{dg^{-1} _*\mu}{d\mu}(\xi)\right)$$
where $h$ is the critical exponent of $(G,d)$.

$G$ then acts on $\partial^2 G \times \R \times \Omega$ via:

$$g(\xi,\eta,t,w) = (g\xi,g\eta,t+\tau(g,\xi,\eta),gw)$$
and this action commutes with the $\R$-flow defined by:

$$\Phi^s (\xi,\eta,t,w) = (\xi,\eta,t + s,w)$$

This flow provides an analogue of the geodesic flow of a negatively curved simply connected manifold in our general context. Since it commutes with the $G$ action $\Phi$ descends to an $\R$-flow $\phi$ on the space $X = (\partial^2 G \times \R \times \Omega)//G$ of $G$-ergodic components. We will show that $X$ supports a $\phi$-invariant probability measure $\nu$ in its canonical measure class.

Using $\phi$ we will prove the following ergodic theorem for $\partial^2 G$:

\begin{theorem}
    Suppose $G$ is unimodular and let $G \acts (\Omega,\omega)$ be an ergodic p.m.p action.
    For any $f \in L^1(\partial^2 G \times \Omega,m \times \omega)$, for almost every $(\xi,\eta,w)$ and any $a \in \R$:
    $$\lim_{b \to \infty} \frac{1}{b-a}\int_{\{g \in G| \tau(g,\xi,\eta) \in [a,b]\}} f(g\xi,g\eta,gw) d\lambda(g) = \int f(\xi,\eta,w) dm\times\omega$$
    
\end{theorem}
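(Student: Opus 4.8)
The plan is to reduce the statement, via an unfolding argument, to Birkhoff's pointwise ergodic theorem for the flow $\phi$ on $(X,\nu)$. Write $Y := \partial^2 G \times \R \times \Omega$ and equip it with the (simultaneously $G$- and $\Phi$-invariant, infinite) measure $\bar m := m \times \lambda_\R \times \omega$, where $\lambda_\R$ is Lebesgue measure on $\R$; let $q \colon Y \to X$ be the quotient onto the space of $G$-ergodic components, so that $q \circ \Phi^s = \phi^s \circ q$ and, for a suitable normalisation of the $G$-ergodic conditional measures $\theta_x$, one has $\bar m = \int_X \theta_x \, d\nu(x)$ with $\theta_{\phi^s x} = \Phi^s_*\theta_x$ for a.e.\ $x$.

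First I would check that $(X,\nu,\phi)$ is ergodic, by a Hopf-type argument: a $\phi$-invariant function on $X$ pulls back through $q$ to a function on $Y$ that is both $G$-invariant and $\Phi$-invariant; being $\Phi$-invariant it does not depend on the $\R$-coordinate, hence descends to a $G$-invariant function on $\partial^2 G \times \Omega$, which is a.e.\ constant by \cref{weak mixing introduction theorem} applied to the ergodic p.m.p.\ action $G \acts (\Omega,\omega)$. Birkhoff's theorem for the ergodic measure-preserving $\R$-flow $\phi$ then gives: for every $F \in L^1(X,\nu)$ and $\nu$-a.e.\ $x$, $\tfrac1T\int_{-T}^{0} F(\phi^s x)\,ds \to \int_X F\,d\nu$ (time reversal is allowed since $\phi^{-s}$ is equally ergodic).

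The heart of the argument is the unfolding. The structural input is that $G$ acts properly on $\partial^2 G \times \R$ — this is precisely where the geodesic-flow picture enters — so that $G$ acts smoothly on $Y$, one has $X \cong \bigl((\partial^2 G\times\R)/G\bigr)\times\Omega$, and, after normalising $\lambda$ (equivalently $m$), the conditional measures are the orbital measures $\theta_x = (g \mapsto g\cdot y)_*\lambda$ for any $y$ with $q(y)=x$; here unimodularity of $G$ makes this pushforward depend only on the orbit of $y$ and be $G$-invariant, while the ergodicity of $\phi$ established above forces the normalising constant to be global rather than $x$-dependent. Now let $f \in L^1(\partial^2 G\times\Omega, m\times\omega)$, fix a nonnegative $\rho \in L^1(\R)$ with compact support and $\int_\R \rho = 1$, put $\Psi(\xi,\eta,t,w) := \rho(t)f(\xi,\eta,w) \in L^1(\bar m)$, and define $F(x) := \int_Y \Psi\,d\theta_x$. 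By the disintegration, $F \in L^1(X,\nu)$ and $\int_X F\,d\nu = \int_Y \Psi\,d\bar m = \int f\,d(m\times\omega)$; and unwinding the definitions (using $\Phi^s\circ g = g\circ\Phi^s$ and $\theta_{\phi^s x} = \Phi^s_*\theta_x$), for $x = q(\xi,\eta,0,w)$ one finds
\begin{gather*}
  F(\phi^s x) = \int_G \rho\bigl(\tau(g,\xi,\eta)+s\bigr)\,f(g\xi,g\eta,gw)\,d\lambda(g), \\
  \tfrac1T\int_{-T}^{0} F(\phi^s x)\,ds = \int_G f(g\xi,g\eta,gw)\,\Bigl(\tfrac1T\int_{\tau(g,\xi,\eta)-T}^{\tau(g,\xi,\eta)}\rho(u)\,du\Bigr)\,d\lambda(g).
\end{gather*}
The inner factor equals $\tfrac1T$ once $[\tau(g,\xi,\eta)-T,\tau(g,\xi,\eta)]$ contains $\supp\rho$ and vanishes once the two are disjoint, so the last expression differs from $\tfrac1T\int_{\{g\,:\,\tau(g,\xi,\eta)\in[0,T]\}} f(g\xi,g\eta,gw)\,d\lambda(g)$ only by two boundary terms, one concentrated where $\tau(g,\xi,\eta)$ is near $0$ and one where it is near $T$. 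Comparing with Birkhoff's theorem and letting $T \to \infty$ gives the assertion for $a = 0$; the general $a \in \R$ follows at once, since division by $b-a$ with $b \to \infty$ renders the lower endpoint irrelevant (the tail integral over $\{g : \tau(g,\xi,\eta)\in[0,a]\}$ being a.e.\ finite, by Fubini).

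I expect the main obstacle to be the structural identification just used: that the $G$-ergodic components of $(Y,\bar m)$ are the orbital measures, with the constant normalised to $1$. This rests on the properness of $G \acts \partial^2 G\times\R$ together with the construction of the Bowen-Margulis-Sullivan measure, and is the point at which unimodularity is used decisively. A secondary technical issue is the control of the ``moving'' boundary term $\tfrac1T\int_{\{g\,:\,\tau(g,\xi,\eta)\text{ near }T\}}|f(g\xi,g\eta,gw)|\,d\lambda(g)$: unwound as above it equals, up to constants, $\tfrac1T\int_{T-1}^{T}H(\phi^{-s}x)\,ds$ for the function $H \in L^1(X,\nu)$ obtained by folding $|\Psi|$ down, and this tends to $0$ for $\nu$-a.e.\ $x$ because $\tfrac1T\int_{T-1}^{T}H(\phi^{-s}x)\,ds = \tfrac1T\int_0^{T}H(\phi^{-s}x)\,ds - \tfrac{T-1}{T}\cdot\tfrac{1}{T-1}\int_0^{T-1}H(\phi^{-s}x)\,ds \to 0$ by Birkhoff. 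Finally one must verify that $(\xi,\eta,w) \mapsto q(\xi,\eta,0,w)$ pushes $m\times\omega$ into the measure class of $\nu$; this follows by a Fubini argument in the $\R$-variable of $\bar m$ together with the $\Phi$-equivariance of $q$, and converts the $\nu$-a.e.\ conclusion of Birkhoff's theorem into an $(m\times\omega)$-a.e.\ one.
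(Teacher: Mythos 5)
Your proposal is correct and follows essentially the same route as the paper: both arguments unfold Birkhoff's theorem for the flow $\phi$ on $(X,\nu)$ through the disintegration of $m\times\ell\times\omega$ into $G$-orbital measures (equation \ref{disintegration}), compare the resulting flow-averages of $\bar{f}_{[0,1]}$ with the averages $I_a^b(f)$ up to boundary terms, transfer the almost-everywhere statement back to $\partial^2 G\times\Omega$ via the $\Phi$-invariance of the Birkhoff-good set, and invoke double ergodicity (theorem \ref{weak mixing}) to identify the limit as $\int f\,dm\times\omega$. The differences are cosmetic --- you evaluate the orbital integrals at $(\xi,\eta,0,w)$ instead of at the cross-section point $\bar{p}(x)$, which spares you the constant bounding $|t_x|$ --- apart from two harmless inaccuracies in side remarks: $X$ is not literally $((\partial^2G\times\R)/G)\times\Omega$ (the fibers are quotients of $\Omega$ by compact stabilizers), and the normalisation of the conditional measures is built into the construction of $\nu$ rather than forced by ergodicity of $\phi$.
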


We will then use this to prove theorem \ref{weak mixing introduction theorem}.
Our methods will be closely based on a paper by Bader and Furman \cite{BF17}, generalizing some of their results from discrete to unimodular locally compact hyperbolic groups. While the proofs are similar, the non-discreteness of the group 
makes the measure theoretic aspects much more delicate.

\subsection{Non unimodular groups}
It is interesting to consider what happens in theorems \ref{irreducibility main theorem} and \ref{classification main theorem} when the unimodularity assumption is dropped. The following example shows that in general theorem \ref{invariant measure} dose not hold:

\begin{example}
Consider the minimal parabolic subgroup $P$ of $SL_2(\R)$, i.e the group of $2$ by $2$ upper triangular matrices with determinant $1$. $P$ is non-unimodular and non-elementary. This group is isomorphic to the group of affine transformations of the real line with positive leading coefficient. The Gromov boundary of this group is the boundary of the hyperbolic plane. In the upper half plane model $P$ fixes $\infty$ and acts affinly on $\R$. We shall see in \ref{ahlfors regular} that any Patterson-Sullivan measure corresponding to any metric on $P$ has no atoms and therefore is supported entirely on $\R$. Since the action of $P$ on $\R$ is transitive there is a unique $P$-invariant measure class on $\R$, the Lebesgue class. Thus the corresponding representation is independent of the metric $d$ and equivalent to the Koopman representation of $P$ on $L^2(\R)$. Because the leading coefficient of any affine map in $P$ is positive the action on $L^2(\R)$ preserves the subspaces of functions whose Fourier transforms are supported on $\R_+$ and $\R_-$ so this representation is not irreducible. For a full description of the representation theory of $P$ see \cite[Section~6.7]{FOL16}.
\end{example}

Amenable hyperbolic locally compact groups are unimodular if and only if they are elementary(\cite[Theorem~7.3]{CCMT15}). Since $P$ is amenable it is very natural to ask whether the unimodularity assumption in theorems \ref{irreducibility main theorem} and \ref{classification main theorem} can be replaced by the assumption that the group is non-amenable. We do not know the answer to this question. Next we give an example of a non-amenable, non-unimodular, hyperbolic locally compact group.

\begin{example}
Let $T$ be a regular tree of degree $n<\infty$. Choose an orientation $O$ on the edges of $T$ such that around each vertex, $k$ edges are oriented inward and $n-k$ outward. consider the group of automorphisms of the tree preserving the orientation, $G = Aut(T,O)$. If $k \neq n-k$ then $G$ is non-unimodular, since the stabilizer of an edge has a different index in the stabilizers of the corresponding vertices, even though the stabilizers of the two vertices are conjugate. As long as $k,n-k \neq 1$, $G$ is non-amenable since it is non-elementary and does not fix any end of the tree.
As an example of the non-unimodular non-amenable case it is interesting to ask whether $G$ satisfies theorems \ref{irreducibility main theorem} and \ref{classification main theorem}. As far as we are aware it is not even known whether the Koopman representation of $G$ corresponding to the standard measure on $\partial T$ is irreducible.
\end{example}

\subsection{Structure of the paper}
In section \ref{prelims} we fix the notation and our conventions for hyperbolic metric spaces and we give an overview of the theory of locally compact hyperbolic groups. In section \ref{PSmeasures} we develop the theory of Patterson-Sullivan measures for locally compact hyperbolic groups. The theory is very similar to the discrete case but has never been written down before. The main parts of the proof of theorem \ref{irreducibility main theorem} are given in section \ref{growth setimates} and section \ref{constructing operators}. Section \ref{growth setimates} Contains many of the geometric aspects of the proof while section \ref{constructing operators} includes the representation theoretic sides. In section \ref{irreducibility} we complete the proof of theorem \ref{irreducibility main theorem} and in section \ref{rough equivalence of metrics} we prove theorem \ref{classification main theorem} on rough equivalence of metrics. Section \ref{type I hyperbolic groups} contains applications to hyperbolic groups of type I. In section \ref{the geodesic flow} and section \ref{double ergodicity} we construct a measurable version of a geodesic flow for $G$ and use it to prove the ergodicity of the action on $\partial^2 G$, a result which is needed in section \ref{rough equivalence of metrics}. In the appendices \ref{appendix a} and \ref{appendix b} we deal with measure theoretic technicalities arising from the fact that our groups are non discrete.

\subsection{Acknowledgments}
I would like to thank my advisor, Uri Bader, for his constant support and guidance. Every conversation with him leaves me in a better mood and in awe of some amazing mathematics. Without him this project would never have come to fruition. I would also like to thank my father, Yair Glasner, who has taught me so much both in math and in general through the years and my mother, Shalvia Glasner, for listening to me and helping me when things seemed hard and unapproachable. Last but not least, I would like to thank my teammates at the Weizmann institute: Alon Dogon, Itamar Vigdorovitch, Sheve Leibtag, Aviv Taller, Benny Bachner, Paul Vollrath, Raz Slutsky, Gil Goffer, Tal Cohen, Guy Salomon, Omer Lavi, Guy Kapon, Yuval Salant, Yuval Gorfine, Idan Pazi and Peleg Bar-Sever for their constant friendship and support.

\section{Preliminaries} \label{prelims}
\subsection{Notation and conventions}
Throughout the paper we will use estimates involving additive and multiplicative constants. In order to stop these constants from snowballing we introduce the following notation. Given functions $f$ and $g$ with a common domain, if there exists $0\leq c$ such that $f \leq g + c$ we write $f \lesssim g$. If $f \lesssim g$ and $g \lesssim f$ we write $f \approx g$. Similarly, if there exists $0<C$ such that $f \leq Cg$ we write $f \prec g$ and if $f \prec g$  and $g \prec f$ we write $f \asymp g$. If the constants in an estimate depend on a parameter we write the parameter as subscript in the inequality to denote the fact that the estimate is not uniform in that parameter. For example $f(x,y) \prec_y g(x,y)$ means that there exists $0 < C = C(y)$, possibly depending on $y$ but not depending on $x$ such that $f(x,y) \leq C(y)g(x,y)$. The same conventions are used in \cite{G14}.

\subsection{Maps between metric spaces}
Given two metric spaces $(X,d_X)$ and $(Y,d_Y)$ we call a function $f:X\rightarrow Y$ an (L,c)-quasi-isometric embedding if $$\frac{1}{L}d_X(x_1,x_2) - c \leq d_Y(f(x_1),f(x_2) \leq Ld_X(x_1,x_2) + c$$ A (L,c)-quasi-isometric embedding is called a (L,c)-quasi-isometry if there exists $0\leq D$ such that for any $y\in Y$ there exists $x\in X$ such that $d_Y(f(x),y) \leq D$. A (1,c)-quasi-isometric embedding is called a c-rough embedding and a (1,c)-quasi-isometry is called a c-rough isometry. A map $f:X\rightarrow Y$ is a (L,c)-rough similarity if $Ld_X(x_1,x_2) - c \leq d_Y(f(x_1,x_2) \leq Ld_X(x_1,x_2) + c$. When the constants are irrelevant we will omit them and say "f is a quasi-isometry" or "f is a rough embedding".

A geodesic in a metric space X is an isometric embedding of $\mathbb{R}$. We define quasi-geodesics and rough geodesics as quasi-isometric embeddings and rough embeddings of $\mathbb{R}$. We make the same definitions for rays and segments replacing $\R$ by closed rays and closed intervals in $\R$. A metric space is called geodesic if every two points can be connected by a geodesic segment. Similarly a metric space is called (L,c)-quasi-geodesic if any two points can be connected by a (L,c)-quasi-geodesic segment and c-roughly geodesic if it is (1,c)-quasi-geodesic. Here also we will omit the constants if they are irrelevant. Given a quasi-geodesic or roughly geodesic space we will implicitly assume that all quasi-geodesics or rough geodesics used have uniformly bounded constants compatible with the constants of the space.

\subsection{Hyperbolic metric spaces}
A good introduction to the theory of hyperbolic metric spaces can be found in \cite[Chapter~3.H]{BH99}. A more general theory for non-proper non-geodesic spaces is developed in \cite{DSU17}. We provide here a brief survey.

Let (X,d) be a metric space. Given $x,y,z \in X$ define the \textbf{Gromov product} of $x$ and $y$ with respect to $z$: $$(x,y)_z = \frac{1}{2}(d(z,x) +d(z,y) - d(x,y))$$ Note that $|(x,y)_z - (x,y)_w| \leq d(w,z)$. If $X$ is equipped with a base point $o$ we will denote $(x,y) = (x,y)_o$ 

\begin{definition}
Let $0\leq \delta$. A metric space (X,d) is called $\delta$-hyperbolic if for any $x,y,z,o \in X$ $$(x,z)_o \geq \min\{(x,y)_o, (y,z)_o\} - \delta$$ A metric space is (Gromov)-hyperbolic if it is $\delta$-hyperbolic for some $0\leq \delta$.
\end{definition}

Given a hyperbolic space $X$ fix a base point $o$. A sequence $(x_n)$ is called \textbf{Cauchy-Gromov} if $\lim_{n,m \to \infty} (x_n,x_m) = \infty$. We say such a sequence converges to infinity. Two such sequences $(x_n),(y_n)$ are said to be equivalent if $\lim_{n \to \infty} (x_n,y_n) = \infty$. Using hyperbolicity of the space one sees that this is an equivalence relation. These notions don't depend on $o$.

\begin{definition}
The Gromov boundary of a hyperbolic space $X$ is 
$$\partial X = \{[(x_n)]| (x_n) \text {is Cauchy-Gromov}\}$$
We denote $\bar{X} = X \cup \partial X $.
\end{definition}

We now extend the definition of the Gromov product to points in $\bar{X}$. Given $\xi,\eta \in \partial X$ and $z,o \in X$ define:
$$(\xi,\eta)_o  = \inf \liminf_{n \to \infty} (x_n,y_n)_o $$ $$(\xi,z)_o  = \inf \liminf_{n \to \infty} (x_n,z)_o$$ 
where the infimum is taken over sequences representing the boundary points. After replacing $\delta$ by $2\delta$ the extension of the Gromov product still satisfies the condition given in the definition of hyperbolicity. It follows from hyperbolicity that

$$ |\sup \limsup_{n \to \infty} (x_n,y_n)_o - \inf \liminf_{n \to \infty} (x_n,y_n)_o| < 2\delta $$

$$|\sup \limsup_{n \to \infty} (x_n,z)_o - \inf \liminf_{n \to \infty} (x_n,z)_o| < 2\delta $$

Thus up to $2\delta$ one can compute the Gromov product by any choice of sequences representing the points. We will only care about the Gromov product up to an additive constant so this will be useful.

 $\bar{X}$ can be given a topology as follows. Choose a base point $o$ for $X$. Open sets in $X$ are just the open sets in the metric topology on $X$. A family of basic (not necessarily open) neighborhoods around a point $\xi \in \partial X$ are given by $\{x \in \bar{X} | (x,\xi) > M\}$ where $M \in [0,\infty)$. In this topology $x_n$ converges to $\xi \in \partial X$ if and only if $(x_n,\xi) \rightarrow \infty$. This topology does not depend on the base point $o$ and if $X$ is a proper geodesic metric space then $\partial X$ and $\bar{X}$ are compact.

\begin{lemma}
    The Gromov product is lower semi continuous as a function $(\cdot,\cdot)_\cdot : \bar{X} \times \bar{X} \times X \rightarrow \R \cup \{\infty\}$.
\end{lemma}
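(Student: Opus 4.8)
The plan is to verify lower semicontinuity directly from the definition of the extended Gromov product. Since each $\xi\in\partial X$ has the countable neighbourhood basis $\{x\in\bar X:(x,\xi)_o>M\}$, $M\in\N$, the space $\bar X$ is first countable, so it suffices to show that whenever $(x_n,y_n,z_n)\to(\xi,\eta,w)$ in $\bar X\times\bar X\times X$ one has $\liminf_n(x_n,y_n)_{z_n}\ge(\xi,\eta)_w$.

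\emph{Step 1: remove the basepoint.} The estimate $|(x,y)_z-(x,y)_w|\le d(z,w)$, valid on $X^{3}$, passes to $\bar X\times\bar X\times X$ on taking representing sequences, so $\liminf_n(x_n,y_n)_{z_n}=\liminf_n(x_n,y_n)_w$ as $d(z_n,w)\to0$. Hence I may fix $o:=w$ and must prove: $x_n\to\xi$, $y_n\to\eta$ in $\bar X$ imply $\liminf_n(x_n,y_n)_o\ge(\xi,\eta)_o$. If $(\xi,\eta)_o=\infty$ then $\xi=\eta\in\partial X$, and from $(x_n,\xi)_o,(y_n,\xi)_o\to\infty$ together with $2\delta$-hyperbolicity of the extended product we get $(x_n,y_n)_o\ge\min\{(x_n,\xi)_o,(\xi,y_n)_o\}-2\delta\to\infty$; so assume $(\xi,\eta)_o<\infty$.

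\emph{Step 2: the case $x_n,y_n\in X$.} Here the conclusion holds with the sharp constant straight from the definitions. If $\xi\in\partial X$, then $x_n\to\xi$ with $x_n\in X$ forces $(x_n)$ to be Cauchy--Gromov (by $2\delta$-hyperbolicity) and to represent $\xi$, and similarly $(y_n)$ represents $\eta$ if $\eta\in\partial X$; if $\xi\in X$ then $x_n\to\xi$ metrically and $|(x_n,y_n)_o-(\xi,y_n)_o|\le d(x_n,\xi)\to0$. In each of the four combinations one reads off $\liminf_n(x_n,y_n)_o\ge(\xi,\eta)_o$ from the fact that $(\xi,\eta)_o$ is the infimum of $\liminf$s of Gromov products taken along representing sequences.

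\emph{Step 3: the general case.} This is the only real obstacle: replacing a boundary point $x_n$ by a single nearby point of $X$ costs an additive $2\delta$ in the Gromov product, so instead I exploit the freedom in the choice of representing sequences to absorb that ambiguity. Discarding finitely many terms (legitimate since $X$ is open in $\bar X$) we may assume all $x_n\in X$ when $\xi\in X$, and similarly for $\eta$; passing to a subsequence we may assume $(x_n,y_n)_o\to\ell:=\liminf_n(x_n,y_n)_o$. For each $n$ choose sequences $(a^{(n)}_m)_m,(b^{(n)}_m)_m$ in $X$ converging to $x_n,y_n$ respectively — constant if the point already lies in $X$ — with $\liminf_m(a^{(n)}_m,b^{(n)}_m)_o\le(x_n,y_n)_o+\tfrac{1}{n}$, which is possible precisely because $(x_n,y_n)_o$ is the infimum of such $\liminf$s. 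Then pick $m(n)$ large enough that $(a^{(n)}_{m(n)},b^{(n)}_{m(n)})_o\le(x_n,y_n)_o+\tfrac{2}{n}$, while also $(a^{(n)}_{m(n)},x_n)_o\ge n$ when $x_n\in\partial X$ and $(b^{(n)}_{m(n)},y_n)_o\ge n$ when $y_n\in\partial X$ — the first condition is met for infinitely many $m$, the other two for all large $m$. Setting $x'_n:=a^{(n)}_{m(n)}$ and $y'_n:=b^{(n)}_{m(n)}\in X$ one checks $x'_n\to\xi$ (when $x_n\in\partial X$ via $(x'_n,\xi)_o\ge\min\{(x'_n,x_n)_o,(x_n,\xi)_o\}-2\delta\to\infty$) and $y'_n\to\eta$. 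Step 2 then gives $(\xi,\eta)_o\le\liminf_n(x'_n,y'_n)_o\le\liminf_n\big((x_n,y_n)_o+\tfrac{2}{n}\big)=\ell$, which is the required inequality.
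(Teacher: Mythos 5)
Your proof is correct. Note that the paper does not actually prove this lemma: it simply cites \cite[Lemma~3.4.23]{DSU17}, so there is no in-text argument to compare against; your write-up supplies a self-contained verification of the cited fact. The structure is sound: first countability of $\bar{X}$ justifies the sequential formulation; the $1$-Lipschitz dependence on the basepoint disposes of the third coordinate; the case of sequences lying in $X$ follows directly from the definition of the extended product as an infimum over representing sequences (with the observation, which you correctly justify via $2\delta$-hyperbolicity, that any sequence in $X$ converging to $\xi\in\partial X$ is itself a representing sequence); and the diagonal selection $m(n)$ in Step 3 correctly absorbs the ambiguity in replacing boundary points $x_n,y_n$ by interior approximants without paying the additive $2\delta$ that a naive substitution would cost. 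The only point worth flagging is cosmetic: in Step 3 you should note (as you implicitly do) that the chosen approximating sequences converge to $x_n$ in $\bar X$, which uses the estimate $|\sup\limsup-\inf\liminf|<2\delta$ from the paper to pass from the Cauchy--Gromov property to $(a^{(n)}_m,x_n)_o\to\infty$.
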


The proof can be found in \cite[Lemma~3.4.23]{DSU17}

\begin{definition}
Let $X$ be a hyperbolic metric space. A visual metric $d_\varepsilon$ with parameter $\varepsilon > 0$ on $\partial X$ is a metric satisfying: 

\begin{equation} \label{visual_metric}
   d_\varepsilon(\xi,\eta)\asymp e^{-\varepsilon(\xi,\eta)}
\end{equation}

\end{definition}

For small enough $\varepsilon$ a visual metric always exists (see \cite[Proposition~3.6.8]{DSU17}, \cite[Chapter~3.H.3]{BH99}). Such a metric always generates the standard topology on $\partial X$. The hyperbolicity implies that a visual metric $d_\varepsilon$ is almost an ultra-metric in the following sense, for all $\xi_1$, $\xi_2$, $\xi_3$:
$$d_\varepsilon(\xi_1,\xi_3) \prec max \{d_\varepsilon(\xi_1,\xi_2), d_\varepsilon(\xi_2,\xi_3)\}$$

\begin{definition}
Let X be a hyperbolic metric space, $x,o \in X$ and $\sigma > 0$. The shadow cast by $x$ from $o$ with parameter $\sigma$ on $\partial X$ is 
$$\Sigma_o(x,\sigma) = \{\xi \in \partial X | (x,\xi)_o + \sigma > d(x,o)\}$$
If o is a given base point of X we will omit it from the notation.
\end{definition}

We now mention three lemmas we will need in order to work with hyperbolic locally compact groups. These lemmas are standard for geodesic hyperbolic spaces but we will need them in a more general setting. The proofs of these lemmas are based on the ones given in \cite[Subsection~3.1]{G14} and are basically a restating of results from \cite{BHM11} and \cite{BS00}

\begin{lemma} \label{hyperbolicQI}
Let $X$ be a geodesic hyperbolic metric space and $Y$ a metric space which is quasi-isometric to $X$, then $Y$ is hyperbolic if and only if it is roughly geodesic.
\end{lemma}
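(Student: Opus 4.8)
The plan is to treat the two implications separately, in each case reducing to a known property of quasi-geodesics in hyperbolic spaces available in the non-geodesic generality of \cite{BS00} and \cite{BHM11} (and already used in \cite[Subsection~3.1]{G14}). Fix once and for all an $(L,c)$-quasi-isometry $f\colon X\to Y$ together with an $(L,c)$-quasi-inverse $\bar f\colon Y\to X$; after enlarging $L$ and $c$ we may assume $d_Y(f\bar f(y),y)\le c$ and $d_X(\bar f f(x),x)\le c$ for all $x,y$. I first record the elementary fact that, since $X$ is geodesic, $Y$ is quasi-geodesic: given $y_0,y_1\in Y$ set $x_i=\bar f(y_i)$, take a unit-speed geodesic $\sigma\colon[0,\ell]\to X$ from $x_0$ to $x_1$, and observe that $f\circ\sigma$ is a quasi-geodesic segment from $f(x_0)$ to $f(x_1)$ with constants depending only on $L,c$ (as $f$ is a quasi-isometric embedding and $\sigma$ is isometric); since $d_Y(f(x_i),y_i)\le c$, a routine adjustment at the two endpoints produces a quasi-geodesic from $y_0$ to $y_1$, again with controlled constants.

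For the implication ``$Y$ hyperbolic $\Rightarrow$ $Y$ roughly geodesic'' I would invoke the taming of quasi-geodesics in a Gromov hyperbolic space: in any hyperbolic space a quasi-geodesic joining two points lies within bounded Hausdorff distance of a rough geodesic joining the same two points, the bound depending only on the hyperbolicity constant and the quasi-geodesic constants (see \cite{BS00} and the account in \cite[Subsection~3.1]{G14}). Combined with the previous paragraph this gives at once that a hyperbolic $Y$ which is quasi-geodesic is in fact roughly geodesic.

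For the converse ``$Y$ roughly geodesic $\Rightarrow$ $Y$ hyperbolic'' I would use the Rips-type characterization valid for roughly geodesic spaces: such a space is Gromov hyperbolic (in the four-point sense used above) if and only if all of its rough-geodesic triangles are uniformly slim (again \cite{BS00}; cf.\ \cite{DSU17}). So let $\Delta$ be a rough-geodesic triangle in $Y$ with vertices $y_1,y_2,y_3$ and rough-geodesic sides $\gamma_{ij}$, all with the uniform constants attached to $Y$. Composing with $\bar f$ turns each $\gamma_{ij}$ into a quasi-geodesic in $X$ from $\bar f(y_i)$ to $\bar f(y_j)$, with constants depending only on $L,c$ and the rough-geodesic constants of $Y$. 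Since $X$ is a geodesic hyperbolic space, the classical Morse lemma \cite[Chapter~3.H]{BH99} places each $\bar f\circ\gamma_{ij}$ within Hausdorff distance $H=H(\delta,L,c)$ of a genuine geodesic $[\bar f(y_i),\bar f(y_j)]$, and since geodesic triangles in $X$ are $\delta$-slim (equivalence of the Gromov and Rips conditions in the geodesic case), the triangle $\bar f(\Delta)$ is $(\delta+2H)$-slim. Transporting this back through $\bar f$ (a point lying $(\delta+2H)$-close to another in $X$ has $\bar f$-preimages lying at distance $\le L(\delta+2H)+Lc$ in $Y$) shows that $\Delta$ is uniformly slim, so $Y$ is hyperbolic by the characterization.

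The only real content lies in the two imported statements — the taming of quasi-geodesics and the slim-triangle criterion — in the generality of roughly geodesic rather than geodesic spaces; these are precisely the results of \cite{BS00} and \cite{BHM11}. The point one must be careful about is uniformity of constants: in this generality rough geodesics between two points are far from unique and carry no canonical constants, so one must check that all quasi-geodesic and rough-geodesic constants appearing stay bounded in terms of the fixed data $(L,c)$ and the rough-geodesic constants of $Y$ — which they do — and that no circularity occurs, the Morse lemma being applied only inside $X$, which is honestly geodesic and hyperbolic.
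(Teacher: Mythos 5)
Your proof is correct in outline but takes a genuinely different route from the paper's. The paper disposes of both implications at once through the notion of a \emph{quasi-ruled} space: by \cite[Theorem~A.1]{BHM11} a space quasi-isometric to a geodesic space is hyperbolic if and only if it is quasi-ruled, by \cite[Lemma~A.2]{BHM11} quasi-ruled implies roughly geodesic, and the converse (roughly geodesic implies quasi-ruled) is a two-line Gromov-product computation. You instead prove the two implications separately by transporting quasi-geodesics and rough-geodesic triangles through $f$ and $\bar f$, which makes the mechanism explicit --- in particular it isolates exactly where the geodesic model $X$ and the classical Morse lemma are used --- but forces you to import two nontrivial facts about non-geodesic hyperbolic spaces: the taming of quasi-geodesics, and the equivalence of the Rips slim-triangle condition with the four-point condition in the roughly geodesic setting. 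Both imported facts are true, but be careful with the first: as you phrase it (``a quasi-geodesic lies within bounded Hausdorff distance of a rough geodesic joining the same two points'') it already presupposes the existence of that rough geodesic, which is precisely the conclusion being proved in that direction; the honest source is either \cite[Lemma~A.2]{BHM11} itself or the Bonk--Schramm isometric embedding of an arbitrary hyperbolic space into a complete geodesic one \cite{BS00} followed by the Morse lemma there, not a stability statement internal to $Y$. Likewise the final step of your converse (uniformly slim rough-geodesic triangles imply the four-point inequality) is exactly what \cite{BHM11} packages as ``quasi-ruled implies hyperbolic'' and should be cited in that precise form. In short: the paper's argument is shorter and hides all the non-geodesic subtleties in one citation; yours is longer but more self-contained in spirit, at the cost of needing the two imported statements in carefully stated, uniform-constant versions.
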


\begin{proof}
We follow the proof in \cite[Subsection~3.1]{G14}. By \cite[Theorem~A.1]{BHM11} $Y$ is hyperbolic if and only if $Y$ is quasi-ruled in the sense defined in \cite{BHM11}. By \cite[Lemma~A.2]{BHM11} a quasi-ruled space is roughly geodesic, on the other hand a roughly geodesic space is obviously quasi-ruled.
\end{proof}

\begin{lemma}
Let X be a roughly geodesic hyperbolic space then:
\begin{enumerate}
    \item For any $x \in X$ and $\xi \in \partial X$ there is a roughly geodesic ray $\gamma$ with $\gamma(0) = x$, $\gamma(\infty) = \lim_{t \to \infty}\gamma(t) = \xi$.
    \item For any $\xi , \eta \in \partial X$ there is a rough geodesic $\gamma$ with $\gamma(-\infty) = \xi$, $\gamma(\infty) = \eta$.
\end{enumerate}
\end{lemma}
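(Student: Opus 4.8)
The plan is to construct the rays and bi-infinite rough geodesics by an exhaustion/limiting argument, using the fact that $X$ is roughly geodesic to connect points at finite distance, and then passing to a limit on the boundary. First I would fix a base point $o$ and the rough-geodesic constant $c$ of $X$. For part (1), given $x\in X$ and $\xi\in\partial X$, pick a sequence $x_n\to\xi$; by definition of a roughly geodesic space there is a $c$-rough geodesic segment $\gamma_n\colon[0,T_n]\to X$ from $x$ to $x_n$, parametrized (say) so that $\gamma_n(0)=x$ and $T_n\approx d(x,x_n)\to\infty$. The hope is to extract a ``limiting'' ray: for each fixed $t$, the points $\gamma_n(t)$ lie in a bounded ball around $x$ (since $d(x,\gamma_n(t))\le t+c$), and in the proper case one could use Arzel\`a--Ascoli; but since $X$ need not be proper I would instead work directly with the Gromov products and the fact that we only care about rough geodesics up to bounded error. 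The cleaner route is to invoke the structure theory already imported: by the previous lemma $X$ is quasi-isometric to a geodesic hyperbolic space $X'$; transport $x$, $\xi$ there, use the standard existence of genuine geodesic rays in the proper geodesic case (or the non-proper analogue), and push the resulting quasi-geodesic ray back to $X$ via the quasi-isometry, composing with a $c$-rough geodesic reparametrization to upgrade the quasi-geodesic ray to a rough one by \cite[Lemma~A.2]{BHM11} / the quasi-ruled machinery already cited.

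More concretely, the key steps in order are: (i) by \Cref{hyperbolicQI} and its proof, $X$ is quasi-ruled in the sense of \cite{BHM11}, so between any two points of $\bar X$ (allowing endpoints on the boundary) there is a quasi-ruled, hence $c'$-rough, geodesic segment/ray for a uniform $c'$; (ii) for part (1), approximate $\xi$ by $x_n\to\xi$, take rough geodesic segments $[x,x_n]$, and show using $\delta$-hyperbolicity that these segments fellow-travel each other on bounded initial intervals up to a constant depending only on $\delta$ and $c'$ (this is the standard ``rays converge'' estimate: if $(x_n,x_m)_x$ is large then $[x,x_n]$ and $[x,x_m]$ stay close for a long time); (iii) use a diagonal argument over a countable dense set of parameter values together with the bounded-fellow-traveling to define $\gamma(t)$ as a limit (or, in the non-proper case, define $\gamma$ by choosing, for each $t$ in a countable net, a point on $[x,x_n]$ for $n$ large, and verify the resulting map is a $c''$-rough embedding and tends to $\xi$); (iv) check $\gamma(0)=x$ and $\gamma(\infty)=\xi$ by tracking Gromov products; (v) for part (2), pick sequences $x_n\to\xi$, $y_n\to\eta$, take rough geodesic segments $[x_n,y_n]$, note each passes within bounded distance of $o$ because $(\xi,\eta)_o<\infty$ forces $(x_n,y_n)_o$ to be bounded, recenter, and run the same limiting argument in both directions.

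I expect the main obstacle to be the non-properness of $X$: the naive Arzel\`a--Ascoli extraction of a limiting ray fails, so one must either (a) genuinely work with the quasi-ruled/quasi-isometry reduction to a geodesic space and carefully control how rough-geodesic reparametrization interacts with the quasi-isometry constants, or (b) build the limit map ``coordinate-wise'' on a countable dense set of times and then argue that the bounded fellow-traveling estimates (which are uniform in $n$) force the pointwise limits to assemble into a single $c''$-rough geodesic, with $c''$ depending only on $\delta$ and the rough-geodesic constant of $X$. In either approach the heart of the matter is the uniform fellow-traveling lemma for rough geodesics with a common endpoint (or common endpoints) in a $\delta$-hyperbolic space, which is where hyperbolicity is genuinely used; the rest is bookkeeping with Gromov products and the inequality $|(x,y)_z-(x,y)_w|\le d(w,z)$. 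Since the excerpt explicitly says these proofs follow \cite[Subsection~3.1]{G14} and restate results of \cite{BHM11,BS00}, I would present the argument at the level of citing those fellow-traveling estimates and the quasi-ruled characterization, and only spell out the limiting construction and the verification that the limit lies in the correct boundary point.
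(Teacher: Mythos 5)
Your proposal is correct in substance, but it does considerably more work than the paper, which disposes of the lemma in one line: a roughly geodesic space is trivially \emph{almost geodesic} in the sense of Bonk--Schramm (given $x,y$ and $t\in[0,d(x,y)]$, the point $\gamma(t)$ on a rough geodesic segment witnesses almost-geodesicity), and \cite[Proposition~5.2]{BS00} then states verbatim both the existence of rough geodesic rays from any point to any boundary point and of rough geodesic lines between distinct boundary points. The limiting construction you sketch --- uniform fellow-traveling of rough segments $[x,x_n]$ controlled by $(x_n,x_m)_x$, a coordinate-wise selection of $\gamma(t)$ that avoids Arzel\`a--Ascoli, and the recentering via boundedness of $(x_n,y_n)_o$ in part (2) --- is essentially the content of the proof of that cited proposition, so you are reproving the black box rather than invoking it. Two remarks on your alternatives: route (a) (transporting a genuine geodesic ray from a geodesic model space back through a quasi-isometry) only yields an $(L,c)$-quasi-geodesic ray, and upgrading it to a $(1,c')$-rough geodesic requires both a reparametrization by arclength-in-$X$ and a stability/quasi-ruledness argument to see that the triangle inequality is almost an equality along the image; this is doable but the constants are fiddlier than in route (b), which is the cleaner path and the one closest to \cite{BS00}. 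Also note that your step (i) as stated slightly overreaches: quasi-ruledness per \cite{BHM11} gives segments between points of $X$, not segments with endpoints on $\partial X$; the extension to boundary endpoints is precisely what the limiting argument (or the citation to \cite[Proposition~5.2]{BS00}) supplies, so it cannot be assumed at the outset.
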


\begin{proof}
This follows from \cite[Proposition~5.2]{BS00} and from the obvious fact that roughly geodesic spaces are almost geodesic in the sense defined in \cite{BS00}. Note that as a consequence of \cite[Proposition~5.2]{BS00} the almost geodesic hyperbolic spaces are precisely the roughly geodesic ones.  
\end{proof}

\begin{lemma} \label{boundary invariance}
If $f: X \to Y$ is a quasi-isometry of roughly geodesic hyperbolic spaces, $o \in X$, then  there exists $L,C > 0$ such that $\frac{1}{L}(x,y) - C \leq (f(x),f(y))_{f(o)}\leq L(x,y)_o + C$. As a result $f$ canonically extends to a homeomorphism $\partial f: \partial X \to \partial Y$.
\end{lemma}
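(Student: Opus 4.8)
The plan is to reduce the Gromov product estimate to the standard geometric interpretation of the Gromov product in a roughly geodesic hyperbolic space: if $\gamma$ is a rough geodesic segment joining $x$ and $y$, then $(x,y)_o \approx d(o,\gamma)$, with the implied additive constant depending only on $\delta$ and the rough-geodesic constants. This is the analogue for roughly geodesic spaces of the familiar fact for geodesic spaces and can be extracted from \cite{BS00} and \cite{DSU17}; together with the stability of quasi-geodesics (the Morse lemma, which also holds in roughly geodesic hyperbolic spaces) it is exactly what lets one push the Gromov product through $f$ while distorting it only by the \emph{multiplicative} constant of $f$, all other errors being additive. This is why one needs the rough-geodesic hypothesis and not merely a quasi-isometry of hyperbolic spaces.

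Concretely, let $f$ be an $(L,c)$-quasi-isometry and fix $x,y\in X$. Choose a rough geodesic segment $\gamma$ from $x$ to $y$ in $X$, which exists by the preceding lemma. Then $f\circ\gamma$ is a quasi-geodesic segment from $f(x)$ to $f(y)$ in $Y$ with constants controlled by those of $f$ and $\gamma$, hence by the Morse lemma in $Y$ it lies within bounded Hausdorff distance of a rough geodesic segment $[f(x),f(y)]$. Applying the geometric interpretation of the Gromov product in $Y$ and then in $X$,
$$(f(x),f(y))_{f(o)} \approx d_Y(f(o),[f(x),f(y)]) \approx d_Y(f(o),f\circ\gamma) = \inf_t d_Y(f(o),f(\gamma(t))),$$
and the quasi-isometry inequality, applied for each $t$ and then with $\inf_t$, gives $\tfrac1L d_X(o,\gamma)-c \le d_Y(f(o),f\circ\gamma) \le L\,d_X(o,\gamma)+c$. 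Since $d_X(o,\gamma)\approx (x,y)_o$, we obtain a constant $C>0$ with $\tfrac1L (x,y)_o - C \le (f(x),f(y))_{f(o)} \le L(x,y)_o + C$ for all $x,y$, as claimed.

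I would then pass to the boundary. For a Cauchy-Gromov sequence $(x_n)$, the inequality gives $(f(x_n),f(x_m))_{f(o)} \ge \tfrac1L (x_n,x_m)_o - C \to \infty$, so $(f(x_n))$ is Cauchy-Gromov; and if $(x_n)\sim(y_n)$ then $(f(x_n),f(y_n))_{f(o)}\to\infty$, so $(f(x_n))\sim(f(y_n))$. Hence $\partial f([(x_n)]):=[(f(x_n))]$ is well defined. Taking $\liminf_n$ in the inequality and using that the extended Gromov product is computed by any representing sequences up to $2\delta$, the same two-sided bound $\tfrac1L(\xi,\eta)_o - C' \le (\partial f\xi,\partial f\eta)_{f(o)} \le L(\xi,\eta)_o + C'$ holds for $\xi,\eta\in\partial X$; since the topology on $\partial X$ (resp. $\partial Y$) is the one in which $\xi_n\to\xi$ iff $(\xi_n,\xi)_o\to\infty$, this shows $\partial f$ is continuous. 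Applying the same construction to a quasi-inverse $\bar f\colon Y\to X$ of $f$ produces $\partial\bar f$, and since $\bar f\circ f$ and $f\circ\bar f$ are each within bounded distance of the identity — and any self-map $g$ with $K:=\sup_x d(g(x),x)<\infty$ fixes every boundary point because $(g(x_n),x_n)_o \ge d(o,x_n) - K \to \infty$ — we get $\partial\bar f\circ\partial f=\mathrm{id}_{\partial X}$ and $\partial f\circ\partial\bar f=\mathrm{id}_{\partial Y}$. Thus $\partial f$ is a homeomorphism.

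The main obstacle, and the only place needing genuine care, is the first step: pinning down (or locating in \cite{BS00,DSU17}) the geometric interpretation of the Gromov product and the Morse lemma in the roughly geodesic rather than genuinely geodesic setting, and bookkeeping the constants so that the only surviving multiplicative distortion is the constant $L$ of $f$ itself — which is precisely what makes the upper and lower bounds share the same multiplicative constant. Everything downstream is a formal manipulation of Cauchy-Gromov sequences.
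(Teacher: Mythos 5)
Your proof is correct, and it takes a more self-contained route than the paper, which disposes of the key two-sided estimate in one line by citing \cite[Proposition~5.5]{BS00} and only spells out the boundary-extension step. You instead reprove the estimate: characterize $(x,y)_o$ up to an additive constant as $d_X(o,\gamma)$ for a rough geodesic $\gamma$ from $x$ to $y$, push $\gamma$ through $f$, and use stability of quasi-geodesics in $Y$ to identify $d_Y(f(o),f\circ\gamma)$ with $(f(x),f(y))_{f(o)}$ up to additive constants, after which the quasi-isometry inequality delivers the bound with the single multiplicative constant $L$. This is essentially the argument underlying the cited proposition, so you lose nothing in correctness, and you gain something the citation hides: it becomes visible exactly where rough geodesicity is needed, since the naive purely metric computation of $(f(x),f(y))_{f(o)}$ from the defining formula mixes $L$ and $1/L$ and does not give matching multiplicative constants. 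The two inputs you leave as black boxes --- the distance-to-rough-geodesic interpretation of the Gromov product and the Morse lemma in roughly geodesic (rather than geodesic) hyperbolic spaces --- are indeed standard and extractable from \cite{BS00} and \cite{DSU17} (for instance via Bonk and Schramm's isometric embedding of an arbitrary hyperbolic space into a complete geodesic one), so flagging them is appropriate but they do not constitute a gap. The boundary half of your argument coincides with the paper's, with the welcome extra detail that you justify $\partial\bar f\circ\partial f=\mathrm{id}$ by checking that a self-map at bounded distance from the identity induces the identity on the boundary.
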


\begin{proof}
    The estimate on Gromov products is exactly \cite[Proposition~5.5]{BS00}. It follows that $f$ respects the notion of being Cauchy-Gromov so $f$ extends to the boundary, furthermore the estimate implies this extension is continuous. Applying the same argument to a quasi-inverse of $f$ shows the map between boundaries is a homeomorphism. 
\end{proof}

Finally we will need one last lemma.

\begin{lemma} \label{shadow contains ball}
Let $X$ be a roughly geodesic hyperbolic space with base point $o$, fix a visual metric $d_\epsilon$ on $\partial X$. Let $C\geq0$. For large enough $\sigma$ (depending on $C$), for any $\xi \in \partial X$, $R>0$, any roughly geodesic ray $\gamma$ with $\gamma(0) = o,  \gamma(\infty) = \xi$ and for any $g$ such that $d(\gamma(R),g) \leq C$:
$$B_{e^{-\epsilon R}}(\xi) \subseteq \Sigma_o(g,\sigma)$$

In particular for large enough $\sigma$: $$B_{e^{-\epsilon R}}(\xi) \subseteq \Sigma_o(\gamma(R),\sigma)$$
\end{lemma}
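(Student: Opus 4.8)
The plan is to prove the first inclusion $B_{e^{-\epsilon R}}(\xi) \subseteq \Sigma_o(g, \sigma)$ directly from the defining inequalities, and then observe that the second is the special case $g = \gamma(R)$. Let me think about the geometry. Given $\eta \in B_{e^{-\epsilon R}}(\xi)$, we want $(g,\eta)_o + \sigma > d(g,o)$. We have $d(g,o) \leq d(\gamma(R),o) + C \approx R + C$ since $\gamma$ is a roughly geodesic ray from $o$, so $d(\gamma(R),o) \approx R$. So we need to show $(g,\eta)_o \gtrsim R$ — more precisely $(g,\eta)_o \geq d(g,o) - \sigma$. Since $(g,\eta)_o$ and $(\gamma(R),\eta)_o$ differ by at most $d(g,\gamma(R)) \leq C$, it suffices to bound $(\gamma(R),\eta)_o$ from below, and absorb $C$ into $\sigma$.

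So the key estimate is $(\gamma(R),\eta)_o \gtrsim R$. First, $(\gamma(R),\xi)_o \approx R$: for a roughly geodesic ray based at $o$ converging to $\xi$, $(\gamma(t),\xi)_o \approx t$ (standard — the Gromov product of a point on the ray with the endpoint, from the origin, is roughly the distance along the ray). Next, $d_\epsilon(\xi,\eta) \leq e^{-\epsilon R}$ together with the visual metric property $d_\epsilon(\xi,\eta) \asymp e^{-\epsilon(\xi,\eta)}$ gives $e^{-\epsilon(\xi,\eta)} \prec e^{-\epsilon R}$, i.e. $(\xi,\eta)_o \gtrsim R$. Finally, hyperbolicity of the extended Gromov product gives $(\gamma(R),\eta)_o \geq \min\{(\gamma(R),\xi)_o, (\xi,\eta)_o\} - 2\delta \gtrsim R$. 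Chaining these and tracking the additive constants — which depend only on $\delta$, the rough-geodesic constants, the visual metric constants, and $C$ — shows that for $\sigma$ large enough (depending on all of these, hence "on $C$") the shadow inclusion holds for every $\xi$, every $R$, and every such $\gamma$ and $g$ uniformly.

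Concretely I would carry it out in this order: (1) record that $(\gamma(R),\xi)_o \geq R - c_1$ for a constant $c_1$ depending only on the rough-geodesic constant of $\gamma$ and $\delta$, using that $(\gamma(R),\gamma(R'))_o \approx R$ for $R' \geq R$ and passing to the limit $R' \to \infty$ (with the $2\delta$ slack from the boundary extension); (2) convert the metric hypothesis $d_\epsilon(\xi,\eta) \leq e^{-\epsilon R}$ into $(\xi,\eta)_o \geq R - c_2$ via the visual-metric comparison constant; (3) apply the (extended, so $2\delta$) hyperbolic inequality to get $(\gamma(R),\eta)_o \geq R - c_1 - c_2 - 2\delta$; (4) use $d(g,o) \leq d(\gamma(R),o) + C \leq R + c_3 + C$ and $|(g,\eta)_o - (\gamma(R),\eta)_o| \leq C$ to conclude $(g,\eta)_o + \sigma \geq R - c_1 - c_2 - 2\delta - C \geq R + c_3 + C - d(g,o) + \sigma \geq d(g,o)$ once $\sigma \geq c_1 + c_2 + c_3 + 2\delta + 2C$. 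Then set $g = \gamma(R)$ ($C = 0$) for the last sentence.

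The main obstacle I anticipate is purely bookkeeping rather than conceptual: one has to be careful that all the implicit constants in the $\approx$ and $\asymp$ estimates are genuinely uniform over $\xi \in \partial X$, over $R > 0$, and over the choice of roughly geodesic ray $\gamma$ (this is where "implicitly assume all rough geodesics have uniformly bounded constants compatible with the space" is used), and that the passage to the limit defining $(\gamma(R),\xi)_o$ only costs an extra $2\delta$ as recorded after the extended-Gromov-product discussion. There is also a minor subtlety that $\eta$ might equal $\xi$ or the ball might contain $\xi$ itself, but the estimates are insensitive to this since everything is stated up to additive constants. No genuinely hard analytic input is needed beyond the three cited lemmas and the basic properties of visual metrics already established.
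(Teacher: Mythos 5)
Your proof is correct and follows essentially the same route as the paper's: establish $(\gamma(R),\xi)_o \approx R$ and $(\xi,\eta)_o \gtrsim R$ from the visual metric, chain via hyperbolicity, and transfer from $\gamma(R)$ to $g$ using $d(g,\gamma(R))\leq C$ (the paper does this by inserting $(g,\gamma(R))\approx_C R$ into the min-chain, you by the $1$-Lipschitz dependence of the Gromov product on its arguments — an immaterial difference). The only blemish is the garbled inequality chain in step (4), but the stated threshold $\sigma \geq c_1+c_2+c_3+2\delta+2C$ is the right conclusion.
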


\begin{proof}
On the one hand $(\gamma(R),\xi)\approx |\gamma(R)| \approx R$. On the other hand if $d_\epsilon (\xi,\eta) < e^{-\epsilon R}$ then $(\xi,\eta) \gtrsim R$ and since $d(\gamma(R),g) < C$ , $(\gamma(R),g) \approx_C R$. Now we get $(g,\eta) \gtrsim min\{(g,\gamma(R)),(\gamma(R),\xi),(\xi,\eta)\} \approx_C R$. Thus if $\sigma$ is chosen large enough (independently of $o,\xi,R,\gamma$ but depending on 
$C$) we get $\eta \in \Sigma_o(\gamma(R),\xi)$. Therefore $B_{e^{-\epsilon R}}(\xi) \subseteq \Sigma_o(g,\sigma)$ as needed.
\end{proof}

\subsection{Locally compact hyperbolic groups}

This subsection will present the framework of locally compact hyperbolic groups in which we will work. A good introduction to the concept of a locally compact hyperbolic group can be found in \cite{CCMT15}. We will use the theory developed there.

\begin{definition}
    A locally compact group $G$ is called hyperbolic if it is compactly generated and for some (hence any) compact generating set the induced word metric on $G$ is hyperbolic.
\end{definition} 

In the discrete case this definition recovers the standard notion of hyperbolicity for groups. By \cite[Corollary~2.6]{CCMT15} $G$ is hyperbolic if and only if there exists a proper geodesic metric space $X$ on which $G$ admits a continuous proper co-compact isometric action. The action of G extends continuously to $\bar{X}$. We obtain the following examples:

\begin{enumerate}
    \item Rank one simple Lie groups with finite center are hyperbolic.
    \item Minimal parabolic subgroups of rank one simple Lie groups with finite center are hyperbolic.
    \item Groups acting properly and co-compactly on locally finite trees are hyperbolic.
    \item Generalizing the previous examples, locally compact groups acting isometrically properly and co-compactly on proper CAT(-1) spaces are hyperbolic.
    \item Totally disconnected compactly generated groups are hyperbolic if and only if some (and hence every) associated Cayley-Abels graph is hyperbolic.
\end{enumerate}

Let $G$ be a locally compact hyperbolic group. Denote by $\mathcal{D}(G)$ the set of all left invariant hyperbolic metrics on $G$ which are quasi-isometric to a word metric on G and are Borel measurable as functions $d: G \times G \rightarrow \R$. If $d \in \mathcal{D}(G)$ we will always consider $(G,d)$ to be equipped with base point $1 \in G$. Since word metrics are roughly isometric to the corresponding Cayley graph which is a geodesic space, it follows from \ref{hyperbolicQI} that all the metrics in $\mathcal{D}(G)$ are roughly geodesic. In addition by \ref{boundary invariance} we see that all such metrics give rise to the same boundary which we shall denote $\partial G$. Similarly if $G$ acts on $X$ as above, $\partial G \cong \partial X$. The following are examples of metrics in $\mathcal{D}(G)$:

\begin{enumerate}
    \item Word metrics corresponding to compact generating sets.
    \item If $G$ acts on $(X,d_X)$ as above, for any $x \in X$ one can define $d_0(g,h) = d_X(gx,hx)$. In general this is only a pseudo-metric but any left invariant Borel measurable metric roughly similar to it will be in $\mathcal{D}(G)$. There always exists such a metric, for example one can define $d(g,g) = 0, d(g,h) = d_0(g,h) + 1$ for $g \neq h$. Alternatively one can adjust the theory to work with pseudo-metrics.
\end{enumerate}

Note that we do not require the metrics in $\mathcal{D}(G)$ to be continuous, the reason for this is that we will only be interested in the coarse geometry of the group and won't care about local properties.

\begin{lemma}
    Let $G$ be a locally compact hyperbolic group, then $\partial G$ is compact and the action of $G$ is continuous. 
\end{lemma}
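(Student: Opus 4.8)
The plan is to use the characterization of hyperbolic locally compact groups via their action on a proper geodesic space, and then transport compactness and continuity of the boundary action along the canonical boundary identification. Concretely, by \cite[Corollary~2.6]{CCMT15} there is a proper, geodesic metric space $X$ on which $G$ acts continuously, properly, cocompactly and isometrically, and this action extends continuously to $\bar X = X \cup \partial X$. Since $X$ is proper and geodesic, $\bar X$ and hence $\partial X$ are compact. So the first step is simply to invoke this and record that $\partial X$ is a compact space carrying a continuous $G$-action.

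Next I would identify $\partial G$ with $\partial X$. Fixing a word metric $d$ on $G$ coming from a compact generating set and a base point $o = 1 \in G$, the orbit map $g \mapsto go_X$ (for any $o_X \in X$) is a quasi-isometry from $(G,d)$ to $X$ by the Milnor--\v{S}varc lemma for locally compact groups (a proper cocompact isometric action gives a quasi-isometry on orbits). Both spaces are roughly geodesic hyperbolic spaces: $X$ is geodesic, and $(G,d)$ is roughly geodesic by \cref{hyperbolicQI} as already noted in the text. Hence \cref{boundary invariance} applies and gives a canonical homeomorphism $\partial G \cong \partial X$. This is where one has to be slightly careful that the homeomorphism is independent of the choices (base point, generating set, point in $X$) — but this independence is exactly what was recorded in the discussion preceding the lemma (all metrics in $\mathcal D(G)$ give the same boundary via \cref{boundary invariance}), so I would just cite that.

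The remaining point is that under this identification the $G$-action on $\partial G$ (i.e. the action induced on $\partial(G,d)$ by left translation) corresponds to the $G$-action on $\partial X$ (the boundary extension of the action on $X$). This is because left translation by a fixed $g \in G$ and the orbit map $\Phi: G \to X$ intertwine: $\Phi(g h) = g \cdot \Phi(h)$, i.e. left translation by $g$ is an isometry of $(G,d)$ whose composition with $\Phi$ equals $\Phi$ followed by the isometry $g$ of $X$; passing to boundaries via the functoriality in \cref{boundary invariance}, the induced homeomorphism $\partial G \to \partial G$ coincides with $g$ acting on $\partial X$. Therefore continuity of $G \acts \partial X$ transfers to continuity of $G \acts \partial G$, and compactness is immediate from $\partial G \cong \partial X$.

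I expect the only genuinely delicate point to be the \emph{joint} continuity of the action map $G \times \partial G \to \partial G$, as opposed to continuity of each individual homeomorphism $g$. This is handled on the $X$-side by the statement in \cite{CCMT15} that the action extends continuously to $\bar X$ (which is a joint-continuity statement for a compact metrizable space), so once the identification $\partial G \cong \partial X$ is set up equivariantly there is nothing more to do. In short: everything reduces to \cite[Corollary~2.6]{CCMT15} together with \cref{hyperbolicQI} and \cref{boundary invariance}, and the proof is essentially a bookkeeping of these three inputs.
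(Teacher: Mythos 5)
Your proof is correct and follows essentially the same route as the paper: invoke the existence of a proper geodesic space $X$ with a continuous, proper, cocompact isometric $G$-action (the paper cites \cite[Proposition~2.1]{CCMT15}), note that $\partial X$ is compact with continuous $G$-action, and conclude via the identification $\partial G \cong \partial X$. You spell out the equivariance of that identification more carefully than the paper does, which is a reasonable addition but not a different argument.
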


\begin{proof}
By \cite[Proposition~2.1]{CCMT15} there exists a space $X$ with a $G$ action as above. Since $X$ is proper and geodesic $\partial X$ is compact and the action on $\partial X$ is continuous, but $\partial G \cong \partial X$.
\end{proof}

The following lemma is very important and expresses algebraically the contracting dynamics of $G$ on $\partial G$:

\begin{lemma} \label{basic contraction lemma}
    Let $G$ be a locally compact hyperbolic group and $d \in \mathcal{D}(G)$ then for $g \in G$ and $\xi \in \overline{G}$:
    $$(g,g\xi) \approx |g| - (g^{-1},\xi)$$
    uniformly in $g$ and $\xi$.
\end{lemma}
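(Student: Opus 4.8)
The plan is to reduce the identity $(g, g\xi) \approx |g| - (g^{-1}, \xi)$ to the definition of the Gromov product together with left-invariance of the metric. First I would observe that the left-hand side, when $\xi = x$ is a point of $G$ rather than a boundary point, unwinds directly: by definition $(g, gx)_1 = \tfrac12\bigl(d(1,g) + d(1,gx) - d(g, gx)\bigr)$, and since $d$ is left-invariant we have $d(g, gx) = d(1, x) = |x|$ and $d(1, gx) = d(g^{-1}, x)$ after applying $g^{-1}$ on the left. Hence $(g, gx)_1 = \tfrac12\bigl(|g| + d(g^{-1}, x) - |x|\bigr)$. On the other hand $|g| - (g^{-1}, x)_1 = |g| - \tfrac12\bigl(d(1, g^{-1}) + d(1,x) - d(g^{-1}, x)\bigr) = |g| - \tfrac12\bigl(|g| + |x| - d(g^{-1}, x)\bigr) = \tfrac12\bigl(|g| - |x| + d(g^{-1}, x)\bigr)$, using $|g^{-1}| = |g|$, which holds because $d$ is left-invariant (so $d(1, g^{-1}) = d(g, 1) = d(1,g)$). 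Thus for $x \in G$ the two sides are literally equal, with no error term at all.

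The remaining work is to pass from points of $G$ to points of $\overline{G}$, i.e.\ to boundary points $\xi \in \partial G$. Here I would use the extension of the Gromov product to $\overline{X}$ recalled in the preliminaries: for a boundary point one takes an infimum of $\liminf$ over representing sequences, and by the displayed $2\delta$-estimate any representing sequence computes the product up to an additive $2\delta$. So I would pick a sequence $x_n \in G$ with $x_n \to \xi$; then $g x_n \to g\xi$ since the $G$-action on $\overline{G}$ is by homeomorphisms (and $g$ acting on a Cauchy--Gromov sequence representing $\xi$ produces one representing $g\xi$, which also follows from left-invariance and the exact identity just established in the form $(g x_n, g x_m)_1 \approx (x_n, x_m)_1 + \text{bounded}$, actually $(gx_n, gx_m)_1 = (x_n,x_m)_{g^{-1}}$ which differs from $(x_n,x_m)_1$ by at most $|g|$). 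Applying the exact point-wise identity to each $x_n$ and taking $\liminf$ then infimum over representing sequences, the errors incurred are all bounded by multiples of $\delta$ and independent of $g$ and $\xi$, giving $(g, g\xi)_1 \approx |g| - (g^{-1}, \xi)_1$ with a uniform additive constant.

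I do not expect a serious obstacle here; the statement is essentially bookkeeping once one notices the point-wise version is an exact equality. The one place requiring a little care is the uniformity claim: one must check that the additive constants coming from (a) the $2\delta$-ambiguity in evaluating Gromov products of boundary points along sequences, and (b) the interchange of $g$-translation with passing to the limit, do not depend on $g$ or $\xi$. Both are controlled purely by the hyperbolicity constant $\delta$ of $(G,d)$, so uniformity is automatic. A minor subtlety is that $d$ is only assumed Borel measurable, not continuous, so one cannot argue by continuity of $d$; but this is not needed, since the Gromov-product estimates used are combinatorial consequences of $\delta$-hyperbolicity and the convergence $gx_n \to g\xi$ is purely topological, taking place in the boundary topology of $\overline{G}$ which is intrinsic.
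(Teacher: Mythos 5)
Your proposal is correct and follows essentially the same route as the paper: an exact computation from left-invariance showing $(g,gh) = |g| - (g^{-1},h)$ for $h \in G$, followed by letting $h$ converge to $\xi$ along a representing sequence, with the only error coming from the uniform $2\delta$-ambiguity in evaluating Gromov products at boundary points. Your write-up just spells out the bookkeeping that the paper leaves implicit.
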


\begin{proof}
    The proof is taken from \cite[Lemma~4.1]{G14}. If $h \in G$ then a direct calculation shows that $(g,gh) = |g| - (g^{-1},h)$. Letting $h$ converge to $\xi \in \bar{G}$ we get the desired result.
\end{proof}

We get the following corollary:

\begin{lemma}(\cite[Lemma~4.1]{G14}) \label{nonempty shadow} 
    Let $G$ be a locally compact hyperbolic group and $d \in \mathcal{D}(G)$ then for $g \in G$:
    $$|g| \approx \sup_{\xi \in \partial G} (g,\xi)$$
    uniformly in $g$.
\end{lemma}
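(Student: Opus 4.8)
The plan is to prove the two inequalities implicit in $|g| \approx \sup_{\xi \in \partial G}(g,\xi)$ separately. The bound $\sup_{\xi \in \partial G}(g,\xi) \lesssim |g|$ is immediate: for any $h \in G$ one computes directly $(g,h) = \tfrac{1}{2}(|g| + |h| - d(g,h)) \leq \min\{|g|,|h|\} \leq |g|$, and passing to a sequence $h_n$ representing a boundary point $\xi$ gives $(g,\xi) \leq |g|$; taking the supremum over $\xi \in \partial G$ yields the claim with no additive loss.

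For the reverse inequality $|g| \lesssim \sup_{\xi \in \partial G}(g,\xi)$ the idea is to exhibit, for each $g$, a single boundary point realizing the supremum up to a uniform constant, using Lemma \ref{basic contraction lemma}. That lemma gives $(g,g\xi) \approx |g| - (g^{-1},\xi)$ uniformly in $g$ and $\xi$, so it suffices to find, for every $h \in G$, a point $\xi_0 = \xi_0(h) \in \partial G$ with $(h,\xi_0)$ bounded above by a constant independent of $h$: applying this with $h = g^{-1}$ and setting $\xi = g\xi_0 \in \partial G$ gives $\sup_{\xi}(g,\xi) \geq (g,g\xi_0) \gtrsim |g| - (g^{-1},\xi_0) \gtrsim |g|$.

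To produce $\xi_0$, fix once and for all two distinct points $\xi_1,\xi_2 \in \partial G$ (these exist since $G$ is non-elementary, and more generally whenever $\partial G$ has at least two points; the remaining degenerate cases can be treated separately) and set $M = (\xi_1,\xi_2) < \infty$, finiteness being clear since $\xi_1 \neq \xi_2$. The extension of the hyperbolicity inequality to $\overline{G}$, valid up to the constant $2\delta$, gives $M = (\xi_1,\xi_2) \geq \min\{(h,\xi_1),(h,\xi_2)\} - 2\delta$ for every $h \in G$; hence whichever of $\xi_1,\xi_2$ attains the minimum serves as $\xi_0$, with $(h,\xi_0) \leq M + 2\delta$, a bound independent of $h$. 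Combining this with the previous paragraph and the easy direction finishes the proof.

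The argument is essentially routine; the two points that require care are making sure the additive constants coming from extending the Gromov product to $\overline{G}$ are absorbed correctly into the symbol $\approx$, and the uniformity of the constant $M + 2\delta$, which is guaranteed precisely because $\xi_1,\xi_2$ are chosen before $h$. As in \cite{G14}, this lemma together with Lemma \ref{basic contraction lemma} will serve as the basic geometric input for controlling shadows on $\partial G$.
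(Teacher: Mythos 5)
Your proof is correct and follows essentially the same route as the paper: the easy bound $(g,\xi)\leq |g|$, then fixing two distinct boundary points $\xi_1,\xi_2$, using hyperbolicity to bound $\min_i(g^{-1},\xi_i)$ by $(\xi_1,\xi_2)$ plus a constant, and invoking Lemma \ref{basic contraction lemma} to conclude $(g,g\xi_i)\approx|g|$ for some $i$. Your explicit attention to the uniformity of the constant and to the degenerate cases where $|\partial G|<2$ is a slight elaboration of details the paper leaves implicit, but the argument is the same.
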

 
\begin{proof}
The proof is taken from \cite[Lemma~4.1]{G14}. Obviously $(g,\xi) \leq |g|$ for all $g$. Let $\xi_1,\xi_2 \in \partial G$ be two distinct points. By \ref{basic contraction lemma} for any $g \in G$:
$$\max_i (g,g\xi_i) \approx |g| - \min_i (g^{-1},\xi_i)$$
But $(\xi_1,\xi_2) \gtrsim \min\{(g^{-1},\xi_1),(g^{-1},\xi_2)\}$, so for some $i$, $ (g,g\xi_i) \approx |g|$.

\end{proof}

Let $d \in \mathcal{D}(G)$. Even though the metric topology induced on $G$ by $d$ need not be proper or geodesic the space $\bar{G} = G \cup \partial G$ can be given a compact topology agreeing with the locally compact topology on $G$. Furthermore this topology does not depend on $d$. To do this one sets a base of (not necessarily open) neighborhoods of $\xi \in \partial G$ to be $\{x \in \bar{G}|(x,\xi) > M\}$. So the topology is the regular topology on $G$ and $x_n \to \xi$ if and only if $(x_n,\xi) \to \infty$.

\begin{lemma} \label{compactification of G}
The topology described above is independent of $d$ and compact.
\end{lemma}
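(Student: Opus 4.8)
The plan is to prove the two assertions separately: independence of $d$ by a direct comparison of basic neighbourhoods, and compactness by transferring the question to the proper geodesic hyperbolic space on which $G$ acts geometrically.

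\emph{Independence of $d$.} Given $d_1,d_2\in\mathcal{D}(G)$, the identity map $(G,d_1)\to(G,d_2)$ is a quasi-isometry of roughly geodesic hyperbolic spaces, since both metrics are quasi-isometric to a common word metric. Lemma~\ref{boundary invariance} then supplies constants $L,C$ with $\tfrac1L(x,y)_{1,d_1}-C\le(x,y)_{1,d_2}\le L(x,y)_{1,d_1}+C$ for $x,y\in G$, and one checks that this estimate persists on $\bar G\times\bar G$, with larger constants, by passing to limits along sequences representing boundary points; this is harmless since Gromov products only matter up to an additive error. It follows that every basic neighbourhood $\{x\in\bar G:(x,\xi)_{1,d_2}>M\}$ of a point $\xi\in\partial G$ contains $\{x\in\bar G:(x,\xi)_{1,d_1}>LM+C\}$, and symmetrically; so the two metrics determine the same neighbourhood filter at each boundary point, while at points of $G$ the topology is the ambient locally compact one, which does not involve $d$. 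Hence the topology on $\bar G$ is the same for $d_1$ and $d_2$.

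\emph{Compactness.} Fix a proper geodesic hyperbolic space $X$ carrying a continuous, proper, cocompact, isometric $G$-action (\cite[Corollary~2.6]{CCMT15}), a basepoint $x_0\in X$, and the orbit map $\phi\colon G\to X$, $g\mapsto gx_0$; then $\phi$ is a quasi-isometry, so by Lemma~\ref{boundary invariance} it extends to a homeomorphism $\partial\phi\colon\partial G\to\partial X$ equipped with a two-sided Gromov-product comparison, again valid on $\bar G$ and $\bar X$ by the same limiting argument. Recalling that $\bar X$ is compact, it suffices to show that every ultrafilter $\mathcal{U}$ on $\bar G$ converges. Since $\bar G=G\sqcup\partial G$, exactly one of $G,\partial G$ belongs to $\mathcal{U}$. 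If $\partial G\in\mathcal{U}$, the trace of $\mathcal{U}$ on the compact space $\partial G$ converges to some $\xi$; transporting through $\partial\phi$ shows $\{\eta\in\partial G:(\eta,\xi)_{1,d}>M\}$ is a neighbourhood of $\xi$ in $\partial G$, hence lies in $\mathcal{U}$, and so does the larger set $\{x\in\bar G:(x,\xi)>M\}$, giving $\mathcal{U}\to\xi$. If $G\in\mathcal{U}$ and some compact $K\subseteq G$ lies in $\mathcal{U}$, the trace on $K$ converges inside $K\subseteq\bar G$.

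\emph{The remaining case, which I expect to be the main obstacle.} Suppose $G\in\mathcal{U}$ but no compact subset of $G$ lies in $\mathcal{U}$. Then $\phi_*\mathcal{U}$ is an ultrafilter on the compact space $\bar X$, hence converges to some $\zeta$, and $\zeta$ must lie in $\partial X$: otherwise a compact closed metric ball about $\zeta$ would be a neighbourhood of $\zeta$ in $\bar X$, so it would belong to $\phi_*\mathcal{U}$, forcing its $\phi$-preimage---compact by properness of the action---into $\mathcal{U}$, a contradiction. Put $\xi=(\partial\phi)^{-1}(\zeta)$. For each $M$ the basic neighbourhood $\{x\in\bar X:(x,\zeta)_{x_0}>LM+C\}$ of $\zeta$ lies in $\phi_*\mathcal{U}$; pulling it back and using $(g,\xi)_{1,d}\ge\tfrac1L(\phi(g),\zeta)_{x_0}-\tfrac CL$ shows $\{g\in G:(g,\xi)_{1,d}>M\}\in\mathcal{U}$, hence $\{x\in\bar G:(x,\xi)>M\}\in\mathcal{U}$, so $\mathcal{U}\to\xi$. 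In all cases $\mathcal{U}$ converges, so $\bar G$ is compact. The heart of the matter is precisely this case: seeing that an ultrafilter on $G$ which escapes every compact set concentrates on a single point of the boundary, which is where properness of the $G$-action on $X$ and the two-sided estimate of Lemma~\ref{boundary invariance} are indispensable; by contrast, extending that estimate from $G$ and $X$ to their compactifications is routine.
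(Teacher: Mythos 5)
Your proof is correct, but the compactness half runs along a genuinely different track from the paper's. For independence of $d$ both arguments are the same: compare Gromov products via Lemma~\ref{boundary invariance} (the paper phrases it with convergent sequences, you with neighbourhood filters). For compactness, the paper argues intrinsically with open covers: finitely many members of a cover handle the compact space $\partial G$, each such member contains a basic set $\{x:(x,\xi_i)>M_i\}$, and the leftover part of $G$ then has uniformly bounded Gromov products with all of $\partial G$; Lemma~\ref{nonempty shadow} ($|g|\approx\sup_\xi(g,\xi)$) converts this into $d$-boundedness, hence relative compactness in $G$, and finitely many further members finish the cover. You instead transfer the problem to the compact space $\bar X$ through the orbit map, using ultrafilters, properness of the action (to rule out a limit in $X$ itself) and the two-sided Gromov-product comparison of Lemma~\ref{boundary invariance} to pull convergence back to $\bar G$. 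Your route buys independence from Lemma~\ref{nonempty shadow} and makes the role of properness and of the model space $X$ explicit, at the cost of importing the compactness of $\bar X$ and the (routine but unstated in the paper) extension of the quasi-isometry estimate to the compactifications; the paper's route is self-contained within the coarse geometry of $(G,d)$ and is the one that generalizes to settings where no auxiliary geometric action is available. Both are complete proofs.
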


\begin{proof}
Let $\{U_i\}_{i\in I}$ be an open cover of $\bar{G}$. The topology on $\partial G$ is the same as the standard one because the topology described is easily seen to be the one induced by the visual metric. Since $\partial G$ is compact there exists a finite set $\{U_i\}_{i \in J}$ covering $\partial G$. For each $i \in J$ there exists $\xi_i \in \partial G$ and $M_i > 0$ such that $ \{x \in \bar{G}|(x,\xi_i) > M_i\} \subseteq U_i$. If now $g \notin U_i$ and $\eta \in U_i$ then $M_i \gtrsim (g,\eta)$ by hyperbolicity. Therefore if $g \notin U_i$ for all $i \in J$ then there exists $M>0$ such that $(g,\eta) < M$ for all $\eta \in \partial G$. Now by \ref{nonempty shadow} $|g| \approx \sup_{\xi \in \partial G} (g,\xi)$ so $G \backslash \bigcup_{i\in J}U_i$ is bounded in the metric $d$ and therefore has compact closure in $G$. Thus the closure of $G \backslash \bigcup_{i\in J}U_i$ can be covered by finitely many of the remaining $U_i$ as needed.

To see that the topology is independent of $d$ notice that if $\xi \in \partial G$ then $g_n \to \xi$ if and only if $(g_n,\xi) \to \infty$ and by \ref{boundary invariance} this condition is independent of $d$.
\end{proof}

By \cite[Theorem~6.1.4]{DSU17},\cite[Section~3]{CCMT15} elements in $G$ can be classified into three types:

\begin{enumerate}
    \item Elliptic elements: elements with bounded orbits.
    \item Parabolic elements: non-elliptic elements with one fixed point in $\partial G$.
    \item Hyperbolic elements: non-elliptic elements with two fixed points in $\partial G$. One of the fixed points is attracting and the other is repelling. If $\xi \in \bar{G}$ is not the repelling fixed point of a hyperbolic element $g$ then $g^n \xi$ converges to the attracting fixed point.
\end{enumerate}

In the discrete case there are no parabolic elements but in the locally compact case they can appear.

By \cite[Section~3]{DSU17} \cite[Theorem~6.8]{SA96} locally compact hyperbolic groups can be classified in to three types:

\begin{enumerate}
    \item Elementary: groups with finite boundary. In this case either $\partial G = \phi$ and $G$ is compact or $|\partial G| = 2$ and $G$ contains an infinite cyclic co-compact subgroup.
    \item Non-elementary amenable: non-elementary groups stabilizing a point in $\partial G$. These groups have uncountable boundary and are amenable by \cite[Theorem 6.8]{SA96}.
    \item General type: groups not stabilizing any point in $\partial G$. These groups have uncountable boundary and are never amenable.
\end{enumerate}

Being of general type is equivalent to being non-amenable. By \cite[Theorem~7.3]{CCMT15} if a non-elementary hyperbolic locally compact group is amenable then it is never unimodular. Thus unimodular locally compact hyperbolic groups are either elementary or of general type. 

Non-elementary amenable groups do not exist in the discrete case but in the locally compact case they can appear. Examples of such groups are minimal parabolic subgroups of rank-$1$ simple Lie groups with finite center.

\begin{theorem}
If $G$ is hyperbolic locally compact of general type then the action on $\partial G$ is minimal.
\end{theorem}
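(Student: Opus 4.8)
The plan is to verify minimality by its defining property: show that every nonempty closed $G$-invariant subset $F\subseteq\partial G$ is all of $\partial G$ (note $\partial G$ is uncountable, in particular nonempty, since $G$ is of general type). I would split the argument into a ``soft'' reduction and the one geometric input that does the real work.

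Soft part. Let $F\subseteq\partial G$ be nonempty, closed and $G$-invariant. If $|F|=1$, say $F=\{\xi\}$, then $G$-invariance forces $g\xi=\xi$ for all $g\in G$, so $G$ stabilises a point of $\partial G$, contradicting that $G$ is of general type; hence $|F|\ge 2$. Now let $g\in G$ be any hyperbolic element, with attracting fixed point $\xi^{+}$ and repelling fixed point $\xi^{-}$. Since $|F|\ge 2$ we may pick $\zeta\in F$ with $\zeta\ne\xi^{-}$; by the classification of elements recalled above, $g^{n}\zeta\to\xi^{+}$ in $\overline{G}$, and every $g^{n}\zeta$ lies in the closed $G$-invariant set $F$, so $\xi^{+}\in F$. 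Thus $F$ contains the attracting fixed point of every hyperbolic element of $G$; writing $A$ for the set of all such points, we get $A\subseteq F$ and hence $\overline{A}\subseteq F$.

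Geometric input. It therefore suffices to prove $\overline{A}=\partial G$, i.e. that the attracting fixed points of hyperbolic elements are dense in $\partial G$. Being of general type, $G$ is in particular non-elementary and, by \cite[Proposition~2.1]{CCMT15}, acts continuously, properly, cocompactly and isometrically on a proper geodesic hyperbolic space $X$ with $\partial X\cong\partial G$. Cocompactness implies that every point of $X$ lies within a bounded distance of the orbit $Go$ of a base point, so the limit set $\Lambda\subseteq\partial X$ of $G$ equals $\partial X$; and the structure theory of non-elementary isometric actions on hyperbolic spaces (\cite[Section~3]{CCMT15}, \cite{DSU17}) identifies $\Lambda$ with the closure of the set of attracting fixed points of hyperbolic (loxodromic) elements, that is $\Lambda=\overline{A}$. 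Hence $\overline{A}=\partial X=\partial G$, and combined with the soft part this gives $F=\partial G$.

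The main obstacle is exactly the geometric input of the last paragraph, namely the density of hyperbolic fixed points in the boundary: the reduction in the soft part shows that for a general-type group minimality of the boundary action is essentially equivalent to this density statement, so it cannot be obtained by purely formal manipulation. One must either quote it from the structure theory of hyperbolic actions, as above, or reprove it by a ping-pong type construction producing, near any prescribed boundary point, the attracting fixed point of a suitable hyperbolic element of $G$.
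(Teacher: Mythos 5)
Your proof is correct and follows essentially the same route as the paper: the paper rules out singleton invariant sets using the general-type hypothesis and then cites \cite[Theorem~7.4.1]{DSU17} for the fact that any closed invariant subset with at least two points is all of $\partial G$. You simply unpack that citation into its standard proof — contracting dynamics of hyperbolic elements pulls the attracting fixed points into any such set, and density of these fixed points (limit set $=$ boundary by cocompactness) finishes — so the only difference is how much of the quoted structure theory is reproved by hand.
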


\begin{proof}
By \cite[Theorem~7.4.1]{DSU17} any closed invariant subset of $\partial G$ containing at least two points is all of $\partial G$ but since $G$ is of general type there is no fixed point in $\partial G$.
\end{proof}

Finally we will need the following lemma:

\begin{lemma} \label{no automorphisems}
Let $G$ be hyperbolic locally compact of general type. Any $G$-equivariant continuous map $f:\partial G \to \partial G$ is trivial.
\end{lemma}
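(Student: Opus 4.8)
The plan is to exploit the dynamics of hyperbolic elements together with the minimality of the action on $\partial G$. Let $f\colon\partial G\to\partial G$ be a $G$-equivariant continuous map. First I would fix a hyperbolic element $g\in G$ with attracting fixed point $\xi^+$ and repelling fixed point $\xi^-$; such an element exists because $G$ is of general type (this follows from the classification of elements and the non-amenability assumption — groups of general type contain hyperbolic elements). Since $f$ is $G$-equivariant, $f(\xi^+)$ is a fixed point of $g$, so $f(\xi^+)\in\{\xi^+,\xi^-\}$, and similarly $f(\xi^-)\in\{\xi^+,\xi^-\}$. I would like to rule out the possibility that $f$ swaps $\xi^+$ and $\xi^-$, or that $f$ collapses them; the cleanest way is a continuity/dynamics argument: for any $\eta\in\partial G$ with $\eta\neq\xi^-$, equivariance gives $f(g^n\eta)=g^nf(\eta)$; as $n\to\infty$, $g^n\eta\to\xi^+$, so by continuity $f(\xi^+)=\lim_n g^n f(\eta)$. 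If $f(\eta)\neq\xi^-$ this limit is $\xi^+$, forcing $f(\xi^+)=\xi^+$. Since $\partial G$ is uncountable (general type) while $f^{-1}(\xi^-)$, being a single fiber, cannot be everything, we can pick such an $\eta$, so $f(\xi^+)=\xi^+$.

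Next, the same argument run on every hyperbolic element shows $f$ fixes the attracting fixed point of every hyperbolic element of $G$. The key geometric input is that the set of attracting fixed points of hyperbolic elements is dense in $\partial G$ (this is standard for non-elementary hyperbolic groups and follows from minimality together with a North–South dynamics/ping-pong argument, or can be extracted from \cite[Theorem~7.4.1]{DSU17} / the classification in \cite{CCMT15}). Since $f$ is continuous and agrees with the identity on a dense subset of $\partial G$, we conclude $f=\mathrm{id}_{\partial G}$, which is what "trivial" means here.

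An alternative, perhaps more self-contained route avoiding the density statement: show directly that $f$ has at least two fixed points and then invoke minimality. Indeed, by the above, $f$ fixes $\xi^+$ for the chosen hyperbolic $g$; running the argument with $g^{-1}$ in place of $g$ (whose attracting fixed point is $\xi^-$) shows $f$ also fixes $\xi^-$. Now consider $\mathrm{Fix}(f)=\{\eta\in\partial G\mid f(\eta)=\eta\}$; this is a closed subset of $\partial G$ since $f$ is continuous, it is $G$-invariant since $f$ is $G$-equivariant (if $f(\eta)=\eta$ then $f(h\eta)=hf(\eta)=h\eta$), and it contains the two distinct points $\xi^+,\xi^-$. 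By \cite[Theorem~7.4.1]{DSU17}, a closed $G$-invariant subset of $\partial G$ with at least two points must be all of $\partial G$ — so $f=\mathrm{id}$.

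The main obstacle is the step ruling out the degenerate behaviors of $f$ on $\{\xi^+,\xi^-\}$ (swapping or collapsing), i.e.\ making the North–South dynamics argument rigorous in the present generality where the metric need not be proper or geodesic but only roughly geodesic; one must be careful that "$g^n\eta\to\xi^+$ for $\eta\neq\xi^-$" and the continuity of the $G$-action on $\bar G$ from \cref{compactification of G} are exactly what is needed, and that the existence of hyperbolic elements in groups of general type is available — this last fact is standard but should be cited (it is implicit in the classification of groups via \cite{SA96}, \cite{DSU17}, since a group with no hyperbolic element and uncountable boundary would have to fix a point). Once those two ingredients are in place, the second route above via minimality is short and clean, so I would present that one.
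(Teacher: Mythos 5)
Your proposal is correct and follows essentially the same route as the paper: existence of hyperbolic elements in a general-type group, the observation (via North--South dynamics, continuity and equivariance) that $f$ must fix the attracting and repelling points of each hyperbolic element, and minimality to conclude. Your second variant (applying minimality to the closed invariant set $\mathrm{Fix}(f)$, which contains two points) is only a cosmetic repackaging of the paper's use of minimality via the density of hyperbolic fixed points.
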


\begin{proof}
Since $G$ is of general type, $G$ contains hyperbolic elements (\cite[Section~3]{CCMT15}). The set of all fixed points in $\partial G$ of hyperbolic elements is invariant and non-empty so since the action is minimal it is dense. If $f:\partial G \to \partial G$ is $G$-equivariant and continuous then $f$ must stabilize the attracting fixed point and the repelling fixed point of any hyperbolic element, since continuity and equivariance imply that if $\xi$ is attracting or repelling for $g$ so is $f(\xi)$. Thus  $f$ fixes a dense set.
\end{proof}

We add one last remark on hyperbolic locally compact groups. It is not clear to us that given $d \in \mathcal{D}(G)$ the corresponding Gromov product $(\cdot,\cdot) : \bar{G} \times \bar{G} \to \R$ is measurable, however one can always slightly modify it to be measurable. We will only be interested in the values of the Gromov product up to a uniform constant since we are only interested in the coarse geometric structure of $d$. Therefore we will change the definition of the Gromov product in a way that changes the values only by a bounded amount and ensures that it is measurable. Choose a countable subset $S \subseteq G$ which is co-bounded, i.e any element is a uniformly bounded distance away from an element of $S$. The space $(S,d)$ is hyperbolic and roughly isometric to $(G,d)$ so they give rise to the same boundary. Since the Gromov product is lower semi-continuous on $\bar{S}$ it is Borel measurable (this is also true on $G$ but the Borel $\sigma$-algebra will be the one corresponding to the metric $d$ and not to the standard topology on $G$, this is the reason for this entire remark). Denote the Gromov product on $\bar{S}$ by $\langle \cdot,\cdot \rangle$. Choose a measurable closest point projection $P: G \to S$ and extend it to the boundary via the identity. Now if $x,y,z \in \bar{G}$ the function $\langle P(x), P(y) \rangle_{P(z)}$ is Borel measurable on $G$ and at bounded distance from the original definition of the Gromov product on $G$, so we can take it to be the new Gromov product. We will make one more change for convenience: we assume that the formula $(g,h)_o = \frac{1}{2}( |g| + |h| -d(g,h))$  still holds when $g,h,o \in G$.

\subsection{Non-singular actions and Koopman representations}

Consider a locally compact group $G$ with a measurable non-singular (i.e. measure class preserving) action on a measure space $(X,\mu)$. One can define an associated unitary representation of $G$ called the Koopman representation on $L^2(X,\mu)$ by:
$$[\pi(g)f](\xi) = \sqrt{\frac{dg_*\mu}{d\mu}(\xi)}f(g^{-1}\xi)$$

The Koopman representation depends only on the measure class $[\mu]$ and not on the actual measure. If $\nu$ is in the same measure class as $\mu$ then the map $L^2(X,\nu) \to L^2(X,\mu)$ defined by $f \mapsto \sqrt{\frac{d\mu}{d\nu}}f$ is an isomorphism of the Koopman representations corresponding to $\mu,\nu$.

By \cite[Proposition~A.6.1]{BDV08}, if $G$ is $\sigma$-compact and $(X,\mu)$ is a standard measure space this representation is continuous with respect to the strong operator topology. If $G$ is locally compact hyperbolic then $G$ is compactly generated hence $\sigma$-compact so the above theorem holds.

\section{Patterson-Sulivan Measures} \label{PSmeasures}

In this section we will construct from a metric $d \in \mathcal{D}(G)$ an invariant measure class $[\mu]$ on $\partial G$ called the Patterson-Sullivan measure class associated to $d$. The original construction of such measures in due to Patterson in the case of Fuchsian groups \cite{P76}. Sullivan later extended this to the case of discrete subgroups of $SO(n,1)$ \cite{DS79}. Further generalizations were made by Coornaert for discrete hyperbolic groups \cite{CO93}, and by Burger and Mozes for non-discrete groups acting on CAT(-1) spaces \cite{BM96}. We give a construction for general hyperbolic locally compact groups.

Fix a locally compact hyperbolic group $G$, $d \in \mathcal{D}(G)$, a left Haar measure $\lambda$ on $G$ and a visual metric $d_\varepsilon$.

\begin{definition}
    The critical exponent of (G,d) is 
    $$h = \inf \{s | \int\!e^{-s|g|}d\lambda(g) < \infty\} = \limsup_{r \to \infty}\frac{1}{r}\lambda(B_r(1)) $$
\end{definition}

$h$ is the exponential growth rate of $G$ with respect to $d$. If $G$ is non-elementary then by \cite[Section~3]{CCMT15} $G$ contains a discrete free 
 quasi isometrically embedded (Schottky) subgroup or subsemigroup and thus $h>0$. On the other hand, if $G$ is elementary then obviously $h = 0$. We denote $D = \frac{h}{\epsilon}$.

\begin{definition}
    A finite measure $\mu$ on $\partial G$ is called quasi-conformal of dimension $\frac{\alpha}{\epsilon}$ if for all $g \in G$, for $\mu$ almost every $\xi$ 
    $$\frac{dg_*\mu}{d\mu}(\xi) \asymp e^{-\alpha(|g|-2(g,\xi))}$$
\end{definition}

The quantity $|g|-2(g,\xi)$ in the above definition is the analogue of the Busemann function (see \cite[Definition II.8.17]{BH99}) corresponding to $\xi$, i.e. it roughly measures the "distance from $\xi$" normalized to be $0$ at $1$. We will not use this formally but it might help understand the geometric picture.

\begin{theorem}
If G is non-compact, there exists a quasi-conformal measure of dimension $D = \frac{h}{\epsilon}$ on $\partial G$.
\end{theorem}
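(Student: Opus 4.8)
The plan is to follow the classical Patterson–Sullivan construction, adapted to the locally compact setting. First I would form, for $s > h$, the "Poincaré-type" measures on $\bar{G}$ given by
\[
\mu_s = \frac{1}{\mathcal{P}(s)} \int_G e^{-s|g|}\, \delta_g \, d\lambda(g), \qquad \mathcal{P}(s) = \int_G e^{-s|g|}\, d\lambda(g),
\]
so that each $\mu_s$ is a probability measure on $\bar{G}$. Since $\bar{G}$ is compact (Lemma \ref{compactification of G}), the net $(\mu_s)_{s > h}$ has a weak-$*$ limit point $\mu$ as $s \downarrow h$. The first point to address is the standard dichotomy: if the Poincaré integral $\mathcal{P}(s)$ diverges as $s \downarrow h$, then any such weak-$*$ limit is supported on $\partial G$ (mass escapes every bounded set, using $\sigma$-compactness of $G$ and that bounded sets have compact closure); if instead $\mathcal{P}(h) < \infty$, one performs Patterson's trick of multiplying the integrand by a slowly growing function $\rho(|g|)$ — with $\rho$ chosen so that $\int \rho(|g|) e^{-s|g|} d\lambda(g)$ diverges at $s = h$ while $\rho(t+a)/\rho(t) \to 1$ uniformly for bounded $a$ — which restores divergence without affecting the conformal estimates up to bounded multiplicative error. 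Here the hypothesis that $G$ is non-compact (hence non-elementary or at least with unbounded word metric) is what guarantees the construction is non-trivial; for non-elementary $G$ one has $h>0$ as noted, and the elementary non-compact case must be handled directly since there $h=0$.

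The heart of the argument is the conformality estimate. For fixed $g \in G$ one computes, using left-invariance of $d$ and of $\lambda$ (this is where a change of variables $g' \mapsto g g'$ is used), that
\[
(g_*\mu_s)(\varphi) = \frac{1}{\mathcal{P}(s)} \int_G e^{-s|g^{-1}g'|} \varphi(g') \, d\lambda(g')
\]
for continuous $\varphi$ on $\bar{G}$, and the key geometric input is the identity $|g^{-1}g'| = |g'| - 2(g, g')_1 + |g|$ combined with Lemma \ref{basic contraction lemma}, giving $|g^{-1}g'| \approx |g'| - (|g| - 2(g,g'))$ uniformly. Therefore $\frac{dg_*\mu_s}{d\mu_s}(g') \asymp e^{-s(|g| - 2(g,g'))}$ with constants independent of $s$ and $g'$. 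Passing to the weak-$*$ limit and using lower semicontinuity of the Gromov product (so that $(g,\xi)$ behaves well under the limit, up to the additive $\delta$-ambiguity already built into our conventions), one obtains for the limit measure $\mu$ that $\frac{dg_*\mu}{d\mu}(\xi) \asymp e^{-h(|g|-2(g,\xi))}$ for $\mu$-a.e.\ $\xi$, i.e.\ $\mu$ is quasi-conformal of dimension $h/\varepsilon$. Finally one checks $\mu$ is non-zero (its total mass is the limit of the total masses $1$, or is controlled from below by the divergence), and finite (bounded above similarly), so after normalization $\mu$ is the desired measure.

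The main obstacle I expect is not the algebra of the conformality estimate but the measure-theoretic care needed in the non-discrete setting: the maps $g \mapsto e^{-s|g|}$ and $g \mapsto (g,\xi)$ must be genuinely Borel measurable (which is exactly why the excerpt modified the Gromov product to be lower semicontinuous, hence Borel, on a countable co-bounded set $S$), and the weak-$*$ convergence argument on $\bar{G}$ requires knowing that the conformal Radon–Nikodym bounds survive the limit — this is delicate because the limit measure may charge $\partial G$ where the "function" $(g,\xi)$ is only defined up to $2\delta$. I would handle this by working throughout with the modified, genuinely measurable Gromov product on the countable co-bounded set, proving all estimates as two-sided inequalities with uniform constants (so that the $\asymp$ relation is manifestly preserved under weak-$*$ limits when tested against continuous functions), and invoking standard Portmanteau-type arguments together with the compactness of $\bar{G}$ to extract and control the limit. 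The divergence-type Patterson modification, while routine, also needs to be stated carefully so that $\rho(|g^{-1}g'|)/\rho(|g'|) \to 1$ uniformly in $g'$ for each fixed $g$, which follows from $\big||g^{-1}g'| - |g'|\big| \le |g|$.
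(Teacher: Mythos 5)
Your proposal follows essentially the same route as the paper: weak-$*$ limits of the normalized measures $\mathcal{P}(s)^{-1}e^{-s|g|}d\lambda$ (with Patterson's slowly-varying correction factor, which the paper imports as the function $H$ from Burger--Mozes), the exact identity $|g^{-1}g'| = |g|+|g'|-2(g,g')$ for the conformal density, escape of mass to $\partial G$ from divergence of $\mathcal{P}(s)$, and two-sided $\asymp$ estimates tested against continuous functions to survive the limit. The paper makes your last step precise by localizing near each $\xi\in\partial G$ (choosing a neighborhood $V$ on which $(g,x)\approx(g,\xi)$ and the $H$-ratio is $\asymp 1$), which is exactly the Portmanteau-type argument you sketch; the approaches are the same.
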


We will later see that If G is non-elementary, the measure class of such a measure is uniquely determined. This measure class $[\mu]$ is also known as the Patterson-Sullivan measure class. After showing the measure class is unique we will fix such a measure $\mu$ and call it the Patterson-Sullivan measure of $(G,d)$.

\begin{proof}
There are many similar proofs in the literature. We imitate the proof in \cite[Theorem~5.4]{CO93}, another example is \cite[Section~1]{BM96}.
Recall that the space $\bar{G}$ has a natural compact topology on it described in \ref{compactification of G}. By \cite[Lemma~1.2]{BM96}
There exists an increasing continuous function $H:\R_+ \longrightarrow \R_+$ satisfying 

\begin{enumerate}
    \item $\int_G H(e^{|x|})e^{-s|x|}d\lambda(x)$ converges for $s > h$ and diverges at $s = h$.
    \item $\forall \alpha > 0, \exists t_0 > 0 $ such that $\forall t>t_0$ and $k>1$, $H(kt) \leq k^\alpha H(t)$.
\end{enumerate}

If one assumes that $\int e^{-h|x|}d\lambda(x)$ diverges one can take $H = 1$ and the proof becomes simpler. Denote
$$\mathcal{P}(s) = \int_G H(e^{|x|})e^{-s|x|}d\lambda(x)$$
and define probability measures on $\bar{G}$ by:
$$d\mu_s = \frac{1}{\mathcal{P}(s)}H(e^{|x|})e^{-s|x|}d\lambda(x)$$
Let $\mu$ be a weak-$*$ limit point of $\mu_s$ as $s \searrow h$. We claim $\mu$ is supported on $\partial G$ and is quasi-conformal of dimension $D$.

To see that $\mu(G) = 0$ notice that since $\mathcal{P}(s) \to \infty$ when $s \searrow h$, $\mu_s(K) \to 0$ for every compact $K \subset G$.

In order to show that $\mu$ is quasi-conformal of dimension $D$ it is enough to show that for every $g \in G$ and $\xi \in \partial G$ there exists a neighborhood $V$ of $\xi$ in $\bar{G}$ such that for any continuous function $f$ supported in $V$:
$$\int f(x) dg_*\mu(x) \asymp e^{-h(|g|-2(g,\xi))} \int f(x) d\mu(x)$$
uniformly in $g,\xi$. Indeed let $g \in G$, we have: 
$$\frac{dg_*\mu_s}{d\mu_s}(x) \asymp \frac{H(e^{|g^{-1}x|})}{H(e^{|x|})}e^{-s(|g^{-1}x| - |x|)} = \frac{H(e^{|g^{-1}x|})}{H(e^{|x|})}e^{-s(|g|-2(g,x))}$$ 
Let $\xi \in \partial G$. If $x \in G$ such that $(x,\xi) > |g|$ then by hyperbolicity $(g,x) \approx (g,\xi)$ uniformly in $g,\xi$. In addition since $| |x|-|g^{-1}x|| <|g|$ it follows from the second property of $H$ that there exists $C = C(|g|) > 0$ such that if $|x| > C$ then $\frac{H(e^{|g^{-1}x|})}{H(e^{|x|})} \asymp 1$. Let now $M = max\{C,|g|\}$ and consider $V = \{x \in \bar{G}|(x,\xi) > M\}$. By definition $V$ contains an open neighborhood of $\xi$. If $x\in V$ then in particular $|x| > M$. Therefore by the formula for $\frac{dg_*\mu_s}{d\mu_s}$ and since $(g,x) \approx (g,\xi)$ we get that for $x \in V$, $\frac{dg_*\mu_s}{d\mu_s}(x)  \asymp e^{-h(|g|-2(g,\xi))}$. Integrating any continuous function $f$ supported on $V$ we get:

$$\int f(x) dg_*\mu_s = \int f(x) \frac{dg_*\mu_s}{d\mu_s}(x)d\mu_s \asymp \int f(x) e^{-h(|g|-2(g,\xi))} d\mu_s$$

Taking a partial   limit $s \searrow h$ we get that the same holds for $\mu$ and thus $\mu$ is quasi-conformal of dimension $D$ as needed. 

\end{proof}

\begin{lemma}[Sullivan's shadow lemma] \label{shadow lemma}
If $\mu$ is quasi-conformal of dimension $D > 0$, is not supported on a single atom and $\sigma$ is large enough then 
$$\mu(\Sigma(g,\sigma)) \asymp_\sigma e^{-h|g|}$$
\end{lemma}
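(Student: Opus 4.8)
The plan is to base both inequalities on the single quantity $g_*\mu(\Sigma(g,\sigma))$, which equals $\mu(g^{-1}\Sigma(g,\sigma))$ by definition of the pushforward and can also be estimated from the quasi-conformal property of $\mu$. Keep the base point $o=1$, so $\Sigma(g,\sigma)=\{\xi\in\partial G:(g,\xi)+\sigma>|g|\}$. For every $\xi\in\partial G$ one has $(g,\xi)\le|g|$, while on $\Sigma(g,\sigma)$ also $(g,\xi)>|g|-\sigma$; hence there $|g|-2(g,\xi)\in(-|g|,-|g|+2\sigma]$, so by quasi-conformality of dimension $D=h/\varepsilon$ we get $\frac{dg_*\mu}{d\mu}(\xi)\asymp e^{-h(|g|-2(g,\xi))}\asymp_\sigma e^{h|g|}$ for $\mu$-almost every $\xi\in\Sigma(g,\sigma)$, with all constants uniform in $g$. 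Integrating over $\Sigma(g,\sigma)$ yields the identity
$$\mu\bigl(g^{-1}\Sigma(g,\sigma)\bigr)=g_*\mu\bigl(\Sigma(g,\sigma)\bigr)\asymp_\sigma e^{h|g|}\,\mu\bigl(\Sigma(g,\sigma)\bigr).$$
For the upper bound nothing more is needed: $\mu(g^{-1}\Sigma(g,\sigma))\le\|\mu\|<\infty$, so $\mu(\Sigma(g,\sigma))\prec_\sigma e^{-h|g|}$, and this is valid for every $\sigma>0$.

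For the lower bound the displayed identity reduces the claim to showing that $\mu(g^{-1}\Sigma(g,\sigma))$ is bounded below by a positive constant uniformly in $g$, once $\sigma$ is large. I would first use Lemma \ref{basic contraction lemma}: since $(g,g\xi)\approx|g|-(g^{-1},\xi)$ uniformly, there is a uniform constant $C_0$ with $\{\xi:(g^{-1},\xi)\le\sigma-C_0\}\subseteq g^{-1}\Sigma(g,\sigma)$. Next, the complementary set $\{\xi:(g^{-1},\xi)>\sigma-C_0\}$ has small visual diameter, uniformly in $g$: if $(g^{-1},\xi),(g^{-1},\eta)>M$ then hyperbolicity gives $(\xi,\eta)\gtrsim M$, hence $d_\varepsilon(\xi,\eta)\prec e^{-\varepsilon M}$, so $\operatorname{diam}_\varepsilon\{\xi:(g^{-1},\xi)>\sigma-C_0\}\le C_1e^{-\varepsilon(\sigma-C_0)}$ with $C_1$ uniform. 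It then suffices to know that sets of sufficiently small visual diameter have $\mu$-measure bounded away from $\|\mu\|$; this is where the hypothesis that $\mu$ is not a single atom enters, together with compactness of $\partial G$. Writing $m_0:=\sup_\xi\mu(\{\xi\})<\|\mu\|$, a compactness argument gives $\sup_x\mu(\bar{B}_\varepsilon(x,r))\to m_0$ as $r\to0$, so for $r$ small enough every set of visual diameter $\le r$ has $\mu$-measure $\le\tfrac12(m_0+\|\mu\|)$. Choosing $\sigma$ large enough that $C_1e^{-\varepsilon(\sigma-C_0)}$ is below this threshold, we obtain $\mu(g^{-1}\Sigma(g,\sigma))\ge\|\mu\|-\tfrac12(m_0+\|\mu\|)=\tfrac12(\|\mu\|-m_0)>0$ uniformly in $g$, and the displayed identity then gives $\mu(\Sigma(g,\sigma))\succ_\sigma e^{-h|g|}$.

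The Gromov-product manipulations are routine; the genuinely delicate step is the last one — extracting from ``$\mu$ is not supported on a single atom'' a bound on $\mu(B)$ for small-diameter $B$ that is uniform over all such $B$, hence uniform in $g$. This is precisely where compactness of $\partial G$ is needed: one checks that $x\mapsto\mu(\bar{B}_\varepsilon(x,r))$ is upper semicontinuous, maximizes it over the compact space $\partial G$, and lets $r\to0$ to see that the maximum tends to $m_0<\|\mu\|$. Without compactness the supremum of $\mu$ over small balls need not converge to $\max_\xi\mu(\{\xi\})$, and the uniformity in $g$ — which is exactly the content of the subscript $\sigma$ (rather than $g,\sigma$) in the conclusion $\mu(\Sigma(g,\sigma))\asymp_\sigma e^{-h|g|}$ — could fail.
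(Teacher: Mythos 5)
Your proof is correct and follows essentially the same route as the paper: both establish the identity $\mu(g^{-1}\Sigma(g,\sigma))\asymp_\sigma e^{h|g|}\mu(\Sigma(g,\sigma))$ via quasi-conformality and then bound $\mu(g^{-1}\Sigma(g,\sigma))$ below uniformly by showing its complement has visual diameter $\prec e^{-\varepsilon\sigma}$ and that small-diameter sets have measure uniformly bounded away from $\|\mu\|$. The only cosmetic difference is that the paper derives the last fact by a sequential compactness contradiction rather than your upper-semicontinuity argument; the content is identical.
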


Note that if $\mu$ is supported on a single atom then it must be invariant and this can only happen if $h=0$ i.e. if $G$ is elementary.

\begin{proof}
Let $g \in G$, we have 

\begin{align*}
\mu(g^{-1}\Sigma(g,\sigma)) 
=&
\int_{\Sigma(g,\sigma)}\frac{dg_*\mu}{d\mu}(\xi) d\mu(\xi) 
\\
\asymp&
\int_{\Sigma(g,\sigma)} e^{-h(|g|-2(g,\xi))}d\mu(\xi) 
\\
\asymp&_\sigma
\int_{\Sigma(g,\sigma)} e^{h|g|} d\mu(\xi) 
\\
=&
\mu(\Sigma(g,\sigma))e^{h|g|}    
\end{align*}

Now in order to finish the proof we must bound $\mu(g^{-1}\Sigma(g,\sigma))$ independently of $g$.
Let $t<1$ be larger than the size of the largest atom of $\mu$. There exists $\delta > 0$ such that the measure of any ball of radius less then $\delta$ is less than $t$. Indeed, if $B_{r_n}(\xi_n)$ is a sequence of balls with $r_n \to 0$ and $\mu(B_{r_n}(\xi_n)) \geq t$ after passing to a convergent sub-sequence we can assume $\xi_n \to \xi$. For any $r>0$, $B_r(\xi)$ contains a ball of the form $B_{r_n}(\xi_n)$ so $\mu(B_r(\xi)) \geq t$ and thus $\mu(\{\xi\}) \geq t$ which is a contradiction.

Now the diameter of $\partial G \backslash g^{-1}\Sigma(g,\sigma)$ converges to $0$ as $\sigma \to \infty$ uniformly in g. To see this, notice that if $\xi \notin g^{-1}\Sigma(g,\sigma)$ then $g\xi \notin \Sigma(g,\sigma)$ so $(g,g\xi) \leq |g| - \sigma$. By lemma \ref{basic contraction lemma} we see that $(\xi,g^{-1}) \gtrsim \sigma$. So if $\xi, \eta \notin g^{-1}\Sigma(g,\sigma)$ then:
$$(\xi,\eta) \gtrsim min\{(\xi,g^{-1})(\eta,g^{-1})\} \gtrsim \sigma$$
Therefore $d_\varepsilon(\xi,\eta) \prec e^{-\sigma}$ as needed.
So for $\sigma$ large enough we have that $1 \geq \mu(g^{-1}\Sigma(g,\sigma)) > 1-t$ and thus $\mu(\Sigma(g,\sigma)) \asymp_\sigma e^{-h|g|}\mu(g^{-1}\Sigma(g,\sigma)) \asymp e^{-h|g|}$ finishing the proof.
\end{proof}

We assume from now on that $G$ is non-elementary so $h > 0$ and the shadow lemma holds.
Denote $A_R(\alpha) = \{g \in G |  R-\alpha < |g| < R+\alpha\}$, the annulus of radius $R$ and thickness $\alpha$ around the identity in $G$. For our purposes the parameter $\alpha$ should be thought of as an arbitrary parameter which we will take to be as large as we need and then keep it fixed.

\begin{lemma} \label{coverlemma}
For $\alpha$, $\sigma$ large enough and any $\xi \in \partial G$, $\lambda(\{g \in A_R(\alpha) | \xi \in \Sigma(g,\sigma) \}) \asymp_{\sigma,\alpha} 1$. i.e , The shadows of elements of the annuli $A_R(\alpha)$ cover $\partial G$ with "bounded multiplicity".
\end{lemma}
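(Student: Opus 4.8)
The plan is to bound $\lambda(\{g \in A_R(\alpha) : \xi \in \Sigma(g,\sigma)\})$ above and below by positive constants depending only on $\sigma$ and $\alpha$ (and the fixed data $G,d,\lambda,d_\epsilon$), uniformly in $R$ and in $\xi \in \partial G$. Two elementary measure-theoretic facts will be used throughout. First, since $d \in \mathcal{D}(G)$ is quasi-isometric to a word metric on the compactly generated group $G$, every $d$-ball $B_\rho(1)$ is contained in a closed word-metric ball, hence has compact closure and so finite Haar measure. Second, since the balls $B_n(1)$, $n \in \N$, exhaust $G$, there is some $\rho_0 > 0$ with $\lambda(B_\rho(1)) > 0$ for every $\rho \geq \rho_0$. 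By left-invariance of $\lambda$ these bounds are the same for balls centered anywhere in $G$.

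For the upper bound I will show that all elements contributing to the set fellow-travel. Suppose $g_1,g_2 \in A_R(\alpha)$ and $\xi \in \Sigma(g_1,\sigma) \cap \Sigma(g_2,\sigma)$. Then $(g_i,\xi) > |g_i| - \sigma > R - \alpha - \sigma$ for $i=1,2$, so by hyperbolicity of the (extended, modified) Gromov product $(g_1,g_2) \gtrsim \min\{(g_1,\xi),(g_2,\xi)\} \gtrsim R - \alpha - \sigma$. Combining this with the identity $d(g_1,g_2) = |g_1| + |g_2| - 2(g_1,g_2)$ and with $|g_i| < R + \alpha$ yields $d(g_1,g_2) \lesssim 4\alpha + 2\sigma$; that is, $d(g_1,g_2) \leq D_0$ for a constant $D_0 = D_0(\sigma,\alpha)$. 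Hence $\{g \in A_R(\alpha) : \xi \in \Sigma(g,\sigma)\}$ has $d$-diameter at most $D_0$, so it is either empty or contained in a left translate of $B_{D_0}(1)$, and in either case its $\lambda$-measure is at most $\lambda(B_{D_0}(1)) < \infty$. This bound holds for all $\sigma$ and $\alpha$.

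For the lower bound, fix $\xi \in \partial G$ and pick a rough geodesic ray $\gamma$ in $(G,d)$ with $\gamma(0) = 1$ and $\gamma(\infty) = \xi$; by the standing convention that all rough geodesics used have uniform constants, there is $c_\gamma$, independent of $R$ and $\xi$, with $R - c_\gamma \leq |\gamma(R)| \leq R + c_\gamma$. Apply Lemma~\ref{shadow contains ball} with $C = \rho_0$: there is $\sigma_0 = \sigma_0(\rho_0)$ such that for every $\sigma \geq \sigma_0$ and every $g$ with $d(\gamma(R),g) \leq \rho_0$ one has $\xi \in B_{e^{-\epsilon R}}(\xi) \subseteq \Sigma(g,\sigma)$; moreover any such $g$ satisfies $R - \rho_0 - c_\gamma \leq |g| \leq R + \rho_0 + c_\gamma$, hence $g \in A_R(\alpha)$ whenever $\alpha > \rho_0 + c_\gamma$. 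Therefore, for $\sigma \geq \sigma_0$ and $\alpha > \rho_0 + c_\gamma$, the ball $B_{\rho_0}(\gamma(R))$ is contained in $\{g \in A_R(\alpha) : \xi \in \Sigma(g,\sigma)\}$, and by left-invariance $\lambda(B_{\rho_0}(\gamma(R))) = \lambda(B_{\rho_0}(1)) > 0$. Combined with the upper bound this gives $\lambda(\{g \in A_R(\alpha) : \xi \in \Sigma(g,\sigma)\}) \asymp_{\sigma,\alpha} 1$, uniformly in $R$ and $\xi$.

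The geometric content is routine given Lemma~\ref{shadow contains ball} and the basic contraction estimates of the previous subsection; the only point requiring genuine care is the measure-theoretic bookkeeping — establishing the two facts about Haar measures of $d$-balls, and checking that every additive constant entering the argument (the hyperbolicity constant of the modified Gromov product, the rough-geodesic constant $c_\gamma$, and the threshold $\sigma_0$ of Lemma~\ref{shadow contains ball}) depends only on $(G,d)$ and not on $R$ or $\xi$. I do not anticipate a serious obstacle.
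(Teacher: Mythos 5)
Your proof is correct and follows essentially the same route as the paper's: the upper bound via the observation that any two elements of the annulus whose shadows share $\xi$ must fellow-travel (so the set sits in a ball of radius depending only on $\alpha,\sigma$), and the lower bound by exhibiting a fixed-radius ball around $\gamma(R)$ inside the set. The only cosmetic difference is that you invoke Lemma~\ref{shadow contains ball} for the lower bound where the paper redoes the Gromov-product estimate $(g,\xi)\approx_r|g|$ directly; these are the same computation.
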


\begin{proof}
Let $\xi \in \partial G$ and $\gamma$ a roughly geodesic ray with $\gamma(0) = 1, \gamma(\infty) = \xi$. Fix $r > 0$ large enough so that the measure of an $r$-ball in G is positive, and choose $\alpha$ large enough so that $B_r(\gamma(R)) \subseteq A_R(\alpha)$. For any $g \in B_r(\gamma(R))$, $(g,\xi) \approx_r |g|$ and thus for $\sigma$ large enough, and independent of $g,\xi, R$  we have $\xi \in \Sigma(g,\sigma)$. Thus $\lambda(\{g \in A_R(\alpha) | \xi \in \Sigma(g,\sigma) \} \geq \lambda(B_r(\gamma(R)))$.

On the other hand if $g,h \in A_R(\alpha)$ and $\xi \in \Sigma(g,\sigma) \cap \Sigma(h,\sigma)$ then since $|g| \approx_\alpha |h| \approx_\alpha R$:
$$(g,h) \gtrsim_{\alpha, \sigma} min\{(g,\xi),(\xi,h)\} \approx_{\alpha, \sigma} R$$
Using again that $|g| \approx_\alpha |h| \approx_\alpha R$ and plugging this in to the definition of the Gromov product we deduce that $d(g,h) \approx_{\alpha, \sigma} 0$. Therefore $\{g \in A_R(\alpha) | \xi \in \Sigma(g,\sigma) \}$ is contained in a ball with radius in independent of $g,R,\xi$ giving the other inequality.
\end{proof}

We will also need the following discrete version of lemma \ref{coverlemma}:

\begin{lemma} \label{discrete aproximation}
Fix $C > 0$. For large enough $\alpha$, large enough $\sigma$ (perhaps depending on $C$) and any $R > 0$, there exists a finite subset $F \subseteq A_R(\alpha)$ such that the shadows $\{\Sigma(g,\sigma)| g\in F\}$ cover $\partial G$ and for any $g,h \in F$, $d(g,h) > C$.
\end{lemma}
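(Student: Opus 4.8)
The plan is to construct $F$ as a maximal subset of the annulus $A_R(\alpha)$ subject to $d(g,h)>C$ for all distinct $g,h\in F$; this is the discrete counterpart of the covering statement in Lemma~\ref{coverlemma}. It then remains to check that such an $F$ is automatically finite and that, once $\sigma$ is large enough, the shadows $\{\Sigma(g,\sigma):g\in F\}$ still cover $\partial G$.

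For finiteness I would argue through the Haar measure. Since $d$ is quasi-isometric to a proper word metric (see the discussion after Lemma~\ref{hyperbolicQI}), the annulus $A_R(\alpha)$ is bounded in $d$, hence relatively compact in $G$, hence of finite Haar measure. If $g_1,\dots,g_n\in A_R(\alpha)$ satisfy $d(g_i,g_j)>C$ for $i\neq j$, then the balls $B_{C/2}(g_i)=g_iB_{C/2}(1)$ are pairwise disjoint by the triangle inequality, are all contained in the bounded set $B_{R+\alpha+C/2}(1)$, and each has Haar measure $\lambda(B_{C/2}(1))$ by left invariance of $d$ and of $\lambda$. Provided this common measure is positive — which holds once $C$ exceeds a constant determined only by the quasi-isometry constants relating $d$ to a word metric, and which we may ensure by enlarging $C$ at the outset, since doing so only strengthens the separation conclusion and is absorbed into the permitted dependence of $\sigma$ on $C$ — the number $n$ is bounded. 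Hence a maximal $F$ exists, is finite, and by maximality every point of $A_R(\alpha)$ lies within $d$-distance $C$ of $F$.

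For the covering property, fix $\xi\in\partial G$ and a rough geodesic ray $\gamma$ with $\gamma(0)=1$, $\gamma(\infty)=\xi$, whose constant is bounded by some $c_0$ depending only on $(G,d)$. Choosing $\alpha>c_0$ forces $\bigl|\,|\gamma(R)|-R\,\bigr|\le c_0<\alpha$, so $\gamma(R)\in A_R(\alpha)$; by maximality there is $g\in F$ with $d(\gamma(R),g)\le C$. Then Lemma~\ref{shadow contains ball}, applied with this value of $C$, gives $B_{e^{-\epsilon R}}(\xi)\subseteq\Sigma(g,\sigma)$ for all $\sigma$ past a threshold depending on $C$, and in particular $\xi\in\Sigma(g,\sigma)$. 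As $\xi$ was arbitrary, $\{\Sigma(g,\sigma):g\in F\}$ covers $\partial G$, while $F$ has the required separation by construction.

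The one genuinely delicate point is the finiteness step, where the obstacle is measure-theoretic rather than combinatorial: because metrics in $\mathcal{D}(G)$ need not be continuous, $d$-balls need not be open and $A_R(\alpha)$ need not be totally bounded for $d$, so one cannot simply cover it by finitely many small $d$-balls. Routing the count through the Haar measure — boundedness yields relative compactness and finite measure, and $\lambda(B_{C/2}(1))>0$ for $C$ large — circumvents this. The rest is a direct combination of the maximal-net construction with Lemma~\ref{shadow contains ball}.
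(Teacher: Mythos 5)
Your proof is correct, and it reaches the conclusion by a genuinely different route for the one nontrivial point, namely finiteness of $F$. The paper sidesteps the issue entirely: it first uses compactness of $\partial G$ to extract finitely many points $\xi_i$ whose $e^{-\epsilon R}$-balls cover the boundary, pushes them to the annulus via rough geodesic rays as $\gamma_i(R)$, and then takes a maximal $C$-separated subset of this already finite collection; Lemma~\ref{shadow contains ball} then transfers the covering by balls $B_{e^{-\epsilon R}}(\xi_i)$ to a covering by shadows exactly as in your last step. You instead take a maximal $C$-separated net in the whole annulus $A_R(\alpha)$ and prove finiteness by a Haar-measure packing argument (disjoint left translates of $B_{C/2}(1)$ of equal positive measure inside a set of finite measure). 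Your packing argument is sound and uses only facts the paper relies on elsewhere: boundedness in $d$ implies relative compactness (as in Lemma~\ref{compactification of G}), and $d$-balls of sufficiently large radius have positive Haar measure (as in Lemma~\ref{coverlemma}); your replacement of $C$ by a larger constant when necessary is harmless since it only strengthens the separation and the permitted dependence of $\sigma$ on $C$ absorbs it. What each approach buys: the paper's version is shorter because finiteness is free, while yours yields a slightly stronger output --- a net that is $C$-dense in the entire annulus, not just near the chosen rays --- though the lemma as used later (in Lemma~\ref{uniformly bounded}) only needs the three properties both constructions provide.
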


\begin{proof}
Choose $\alpha$ large enough that for any roughly geodesic ray $\gamma$ with $\gamma(0) = 1$, $\gamma(R) \in A_R(\alpha)$. Since $\partial G$ is compact there exists a finite set of points $\xi_i \in \partial G$ such that the balls of radius $e^{-\epsilon R}$ cover $\partial G$. Choose roughly geodesic rays $\gamma_i$ such that $\gamma_i(0) = 1$ and $\gamma_i(\infty) = \xi_i$. By lemma \ref{shadow contains ball}, for some $\sigma > 0$ (depending on $C$), if $d(\gamma_i(R),\gamma_j(R)) \leq C$ then $B_{e^{-\epsilon R}}(\xi_j) \subseteq \Sigma(\gamma_i(R),\sigma)$. Let $F$ be a maximal subset of the $\gamma_i(R)$ with the property that for all $g,h \in F$, $d(g,h) > C$ then $\{\Sigma(g,\sigma)| g \in F\}$ covers $\partial G$ and has the desired properties. 

\end{proof}

The only use we will make of the parameter $\sigma$ is in lemma \ref{shadow contains ball}, lemma \ref{shadow lemma}, lemma \ref{coverlemma} and in lemma \ref{discrete aproximation}, furthermore we will only use lemma \ref{shadow contains ball} and lemma \ref{discrete aproximation} for a specific constant $C$ in the proof of lemma \ref{uniformly bounded} depending only on $(G,d)$. All three lemmas only hold for $\sigma$ which is large enough. We fix such a $\sigma$ and denote $\Sigma(g) = \Sigma(g,\sigma)$ for every $g \in G$. After this we will have no more need to mention the parameter $\sigma$ explicitly again. We state what we will use from lemma \ref{shadow contains ball}, lemma \ref{shadow lemma}, lemma \ref{coverlemma} and lemma \ref{discrete aproximation}:

\begin{itemize}
    \item For every $g \in G$, $R > 0$ and roughly geodesic ray $\gamma$ with $\gamma(0) = 1, \gamma(\infty) = \xi$: $B_{e^{-\epsilon R}}(\xi) \subseteq \Sigma(g)$.
    \item For every $g \in G$: $\mu(\Sigma(g)) \asymp e^{-h|g|}$.
    \item For large enough $\alpha$ and any $\xi \in \partial G$, $\lambda(\{g \in A_R(\alpha) | \xi \in \Sigma(g) \}) \asymp_\alpha 1$.
    \item For a fixed constant $C$ from the proof of \ref{uniformly bounded} which depends only on $(G,d)$, for large enough $\alpha$ and for any $R > 0$ there exists a finite $F \subseteq A_R(\alpha)$ such that the shadows $\{\Sigma(g)| g\in F\}$ cover $\partial G$ and for any $g,h \in F$, $d(g,h) > C$.
\end{itemize}

An immediate consequence of the shadow lemma (or alternatively lemma \ref{basic contraction lemma}) is that for all $g$, $\Sigma(g) \neq \phi$. We fix $\hat{g} \in \Sigma(g)$ for every $g$ and denote $\check{g} = \widehat{g^{-1}}$. We have $(g,\hat{g}) \approx |g|$ uniformly in $g$. Perhaps the function $g \to \hat{g}$ can be chosen to be measurable but this will not affect us. Since $(g,\hat{g}) \approx |g|$ and $(g,x) \leq |g|$ for any $x$ we get that $(x,\hat{g}) \gtrsim min\{(x,g),(g,\hat{g})\} \approx (x,g)$. On the other hand, we get that $|g| \geq (g,x) \gtrsim min\{(g,\hat{g}),(\hat{g},x)\} \approx min\{|g|,(\hat{g},x)\}$ so:

\begin{equation} \label{passing_to_hat}
    (g,x) \approx min\{|g|,(\hat{g},x)\}
\end{equation}

\begin{lemma} \label{ahlfors regular}
Suppose $\mu$ is quasi-conformal of dimension $D$. The measure $\mu$ is Ahlfors regular of dimension $D = \frac{h}{\varepsilon}$ with respect to the metric $d_\varepsilon$, i.e for any $\xi \in \partial G$ and $\rho \leq diam(\partial G)$  

$$\mu(B_\rho(\xi)) \asymp \rho^D$$
\end{lemma}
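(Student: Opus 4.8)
The plan is to deduce Ahlfors regularity from the shadow lemma \ref{shadow lemma} by sandwiching a ball $B_\rho(\xi)$ between two shadows of elements of an appropriate annulus. Set $R$ so that $e^{-\varepsilon R} \asymp \rho$, i.e. $R = -\frac{1}{\varepsilon}\ln\rho$ up to an additive constant. Since $\mu(\Sigma(g)) \asymp e^{-h|g|}$ and $|g| \approx R$ for $g$ in the annulus $A_R(\alpha)$, we have $\mu(\Sigma(g)) \asymp e^{-hR} = e^{-\varepsilon D R} \asymp \rho^D$; so every shadow cast by an element of $A_R(\alpha)$ already has measure comparable to $\rho^D$. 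It remains to trap $B_\rho(\xi)$ between two such shadows.

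For the lower bound $\mu(B_\rho(\xi)) \succ \rho^D$: take a roughly geodesic ray $\gamma$ with $\gamma(0)=1$, $\gamma(\infty)=\xi$, and let $g = \gamma(R)$. By (the bulleted consequence of) lemma \ref{shadow contains ball} we have $B_{e^{-\varepsilon R}}(\xi) \subseteq \Sigma(g)$, but we want the reverse inclusion $\Sigma(g) \subseteq B_{C'\rho}(\xi)$ for some uniform $C'$; this is immediate from hyperbolicity, since $\eta \in \Sigma(g)$ forces $(g,\eta) \gtrsim |g| \approx R$, hence $(\xi,\eta) \gtrsim \min\{(\xi,g),(g,\eta)\} \approx R$, so $d_\varepsilon(\xi,\eta) \prec e^{-\varepsilon R} \asymp \rho$. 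Choosing the visual metric constants appropriately (or passing to a comparable radius) gives $\Sigma(g) \subseteq B_\rho(\xi)$ after rescaling $\rho$ by a uniform factor, and then $\mu(B_\rho(\xi)) \geq \mu(\Sigma(g)) \asymp e^{-hR} \asymp \rho^D$. For the upper bound $\mu(B_\rho(\xi)) \prec \rho^D$: by lemma \ref{coverlemma} the shadows of elements of $A_R(\alpha)$ cover $\partial G$ with bounded multiplicity, so $B_\rho(\xi)$ is covered by those shadows $\Sigma(g)$, $g \in A_R(\alpha)$, that it meets; by the computation above each such shadow has $d_\varepsilon$-diameter $\prec \rho$, so if $\Sigma(g) \cap B_\rho(\xi) \neq \emptyset$ then $\Sigma(g) \subseteq B_{C''\rho}(\xi)$. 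Now use the bounded-multiplicity/separation: the relevant $g$'s lie in a bounded-radius set (as in the proof of \ref{coverlemma}, any two of them are a bounded distance apart once their shadows both meet a set of diameter $\prec\rho$ inside a single annulus), so by local finiteness of $\lambda$ there are only boundedly many of them up to bounded-distance clusters, and summing $\mu(\Sigma(g)) \asymp \rho^D$ over this bounded collection yields $\mu(B_\rho(\xi)) \prec \rho^D$.

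One should also dispatch the trivial range: for $\rho \asymp \mathrm{diam}(\partial G)$ the statement is just $\mu(\partial G) \asymp 1$, which holds since $\mu$ is a finite nonzero measure (nonzero because it is not a single atom and, by the shadow lemma applied with $g=1$, $\mu(\partial G) = \mu(\Sigma(1)) \asymp 1$). For small $\rho$ one must make sure that $R = -\frac{1}{\varepsilon}\ln\rho$ is large enough for lemmas \ref{shadow contains ball} and \ref{coverlemma} to apply; this only excludes $\rho$ in a bounded range near $\mathrm{diam}(\partial G)$, where the estimate holds trivially by monotonicity and finiteness of $\mu$.

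The main obstacle I expect is the upper bound: making precise that the elements $g \in A_R(\alpha)$ whose shadows meet a fixed ball of radius $\rho$ form a $\lambda$-bounded set (hence contribute a bounded total measure after accounting for the bounded multiplicity of the shadow cover). This is essentially contained in the second half of the proof of lemma \ref{coverlemma} — two elements of the same annulus whose shadows intersect must be a uniformly bounded distance apart — applied now with the shadows forced to lie inside $B_{C''\rho}(\xi)$; once that is set up, the conclusion follows by combining it with $\mu(\Sigma(g)) \asymp e^{-hR} \asymp \rho^D$ and the bounded-multiplicity covering. The lower bound, by contrast, is a one-line consequence of \ref{shadow contains ball} and \ref{shadow lemma}.
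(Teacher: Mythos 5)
Your lower bound is exactly the paper's argument: show $\Sigma(\gamma(R+C))\subseteq B_\rho(\xi)$ (equivalently, $\Sigma(\gamma(R))\subseteq B_{C'\rho}(\xi)$ as you phrase it) via hyperbolicity and apply the shadow lemma. For the upper bound, however, you have made the easy direction hard. The containment $B_{e^{-\varepsilon R}}(\xi)\subseteq\Sigma(\gamma(R))$ from Lemma \ref{shadow contains ball} --- which you yourself quote at the start of your lower-bound paragraph --- already gives $\mu(B_\rho(\xi))\leq\mu(\Sigma(\gamma(R)))\asymp e^{-hR}\asymp\rho^D$ in one line; this is precisely what the paper does. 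The covering argument you build instead is not needed, and as written it has a genuine loose step in the locally compact setting: Lemma \ref{coverlemma} controls the \emph{Haar measure} of $\{g\in A_R(\alpha)\mid \eta\in\Sigma(g)\}$, not a cardinality, and the set of $g\in A_R(\alpha)$ whose shadow meets $B_\rho(\xi)$ is typically an uncountable set of positive Haar measure. Grouping it into ``boundedly many bounded-distance clusters'' and summing $\mu(\Sigma(g))$ over representatives does not bound the measure of the union, because the shadows of the non-representative elements of a cluster are not contained in the shadow of the representative. This can be repaired either by enlarging the shadow parameter (for $d(g,g')\leq C_0$ one has $\Sigma(g',\sigma)\subseteq\Sigma(g,\sigma+2C_0)$, whose measure is still $\asymp_{C_0}e^{-h|g|}$) or by a Fubini argument in the spirit of Theorem \ref{growth estimate}:
$$\mu(B)\prec\int_{A_R(\alpha)}\mu(\Sigma(g)\cap B)\,d\lambda(g)\leq \Bigl(\sup_{g\in A_R(\alpha)}\mu(\Sigma(g))\Bigr)\cdot\lambda\bigl(\{g\in A_R(\alpha)\mid\Sigma(g)\cap B\neq\emptyset\}\bigr)\prec e^{-hR},$$
using that the latter set sits in a ball of uniformly bounded radius. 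Either repair works, but the direct containment makes the whole apparatus unnecessary. Your remarks on the trivial range of $\rho$ are fine.
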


\begin{proof}
Let $\xi, \rho$ be as above and let $\gamma : [0,\infty) \to G$ be a roughly geodesic ray with $\gamma(0) = 1, \gamma(\infty) = \xi$. We know that $B_\rho(\xi) \subseteq \Sigma(\gamma(\frac{-1}{\epsilon}ln(\rho)))$. Taking the measures of both sides we get by the shadow lemma that $\mu(B_\rho(\xi)) \prec  \rho^D$.

For the other inequality suppose $\eta \in \Sigma(\gamma(t))$, then $(\eta,\xi) \gtrsim min\{(\eta,\gamma(t)), (\gamma(t),\xi)\} \approx t$. Therefore $d_\varepsilon(\xi,\eta) \asymp e^{-\varepsilon(\xi,\eta)} \prec e^{-\varepsilon t}$, so there exists $C>0$ independent of $\xi, \rho$ such that if $t \geq \frac{-1}{\varepsilon}ln(\rho) + C$ then $d_\varepsilon(\xi, \eta) < \rho$. We conclude that $\Sigma(\gamma(\frac{-1}{\varepsilon}ln(\rho) + C)) \subseteq B_\rho(\xi)$. Taking the measures of these sets and using lemma \ref{shadow lemma} we get that $\rho^D \prec \mu(B_\rho(\xi))$.
\end{proof}

As a result, since $h \neq 0$, $\mu$ has no atoms. Because $\mu$ is Radon we also see that the measure class $[\mu]$ is determined uniquely by the assumption that $\mu$ is quasi-conformal of dimension $D$.

\begin{lemma} \label{PS are Hausdorff}
If $\mu$ is quasi-conformal of dimension $D$ then $\mu$ and the Hausdorff measure $H^D$ of dimension $D$ on $\partial G$ are in the same measure class and the corresponding Radon-Nikodym derivative is bounded and bounded away from $0$.
\end{lemma}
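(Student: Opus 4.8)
The plan is to recognize this as the standard fact that an Ahlfors $D$-regular measure on a metric space is comparable to the $D$-dimensional Hausdorff measure, and to derive it from Lemma~\ref{ahlfors regular} by a two-sided covering argument. Concretely I would establish that there is a constant $C\geq 1$ with $\frac1C H^D(A)\leq \mu(A)\leq C\,H^D(A)$ for every Borel set $A\subseteq\partial G$; granting this, $\mu$ and $H^D$ are mutually absolutely continuous, and applying the Radon--Nikodym theorem in both directions shows that $\frac{d\mu}{dH^D}$ is bounded above and below by positive constants almost everywhere. Throughout one uses that $(\partial G,d_\varepsilon)$ is compact, hence separable, that $\mu$ is a finite Radon (Borel regular) measure, and that $H^D$ is a Borel regular metric outer measure; its finiteness will itself fall out of the first estimate.

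For the direction $\mu(A)\prec H^D(A)$: given $\delta>0$ and any countable cover $\{U_i\}$ of $A$ by sets of diameter at most $\delta$, choose a point $\xi_i\in U_i$ for each nonempty $U_i$ so that $U_i\subseteq B_{diam(U_i)}(\xi_i)$, whence Lemma~\ref{ahlfors regular} gives $\mu(U_i)\prec diam(U_i)^D$ with a constant independent of $i$. Summing over $i$ and taking the infimum over all such covers yields $\mu(A)\prec H^D_\delta(A)$, and letting $\delta\to 0$ gives $\mu(A)\prec H^D(A)$.

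The substantive direction is $H^D(A)\prec\mu(A)$, where I would use the $5r$-covering lemma, which holds in any metric space for a family of balls of uniformly bounded radius. Fix $\delta>0$ and $\epsilon>0$; using outer regularity of $\mu$ pick an open set $V\supseteq A$ with $\mu(V)\leq\mu(A)+\epsilon$, and for each $\xi\in A$ pick $0<r_\xi<\delta/10$ with $B_{r_\xi}(\xi)\subseteq V$. The covering lemma extracts a pairwise disjoint subfamily $\{B_{r_j}(\xi_j)\}_j$ with $A\subseteq\bigcup_j B_{5r_j}(\xi_j)$; this subfamily is countable because the balls $B_{r_j}(\xi_j)$ are pairwise disjoint, each has $\mu$-measure $\succ r_j^D>0$ by Lemma~\ref{ahlfors regular}, and $\mu(\partial G)<\infty$. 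Since $diam\big(B_{5r_j}(\xi_j)\big)\leq 10r_j\leq\delta$ we get
$$H^D_\delta(A)\ \leq\ \sum_j \big(diam\,B_{5r_j}(\xi_j)\big)^D\ \leq\ 10^D\sum_j r_j^D\ \prec\ \sum_j \mu\big(B_{r_j}(\xi_j)\big)\ =\ \mu\Big(\bigsqcup_j B_{r_j}(\xi_j)\Big)\ \leq\ \mu(V)\ \leq\ \mu(A)+\epsilon .$$
Letting first $\epsilon\to 0$ and then $\delta\to 0$ gives $H^D(A)\prec\mu(A)$, which together with the previous paragraph proves the comparison, and hence the lemma.

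I expect the only real obstacle to be making this second step fully rigorous: one must check that the $5r$-covering lemma applies (all the selected balls have radius $<\delta/10$, hence uniformly bounded radius), that the extracted disjoint family is genuinely countable so that the sum above is meaningful, and that outer regularity of the Radon measure $\mu$ legitimately lets one trade the disjoint union of balls lying inside $V$ for $A$ itself at the price of $\epsilon$. All of the hyperbolic geometry — contraction dynamics, shadows, rough geodesics — has already been spent in proving Lemma~\ref{ahlfors regular}, so nothing further of that nature enters here.
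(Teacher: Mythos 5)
Your proof is correct, and the interesting direction is handled by a genuinely different device than the one in the paper. The easy inequality $\mu(A)\prec H^D(A)$ is identical in both arguments. For $H^D(A)\prec\mu(A)$, the paper works with an arbitrary countable cover of a compact subset $K\subseteq A$ (obtained from inner regularity of both $\mu$ and $H^D$), orders it by non-increasing diameter, recenters each covering set at a point of $K$ not yet covered, and argues that the half-radius balls are disjoint so that $\sum\rho_i^D\prec\mu(U)$; you instead apply outer regularity of $\mu$ alone and run the $5r$-covering lemma on a family of small balls centered in $A$, getting a disjoint countable subfamily whose $5r$-enlargements cover $A$. The two approaches buy slightly different things. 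Yours uses only outer regularity of the finite Borel measure $\mu$ (automatic on a compact metric space) and never needs to invoke regularity of $H^D$ before its finiteness is known, which is the cleaner logical order; the cost is importing the Vitali $5r$-lemma, including the Zorn's-lemma selection and the countability check you correctly supply. The paper's greedy disjointification avoids the covering lemma but leans on inner regularity of $H^D$ and on a disjointness claim for the half-radius balls that is stated rather tersely; your route sidesteps that delicacy entirely. Both arguments correctly reduce everything to the two-sided Ahlfors bound of Lemma~\ref{ahlfors regular}, and your final passage from the two-sided measure comparison to two-sided bounds on the Radon--Nikodym derivative is standard and sound.
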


\begin{proof}
The proof is taken from \cite[Corollary~2.5.10]{DC13}. Let $A$ be a measurable set, and $U_i$ a countable cover of $A$ by sets of diameters $\rho_i < \delta$. Each $U_i$ is contained in a ball $B_i$ of radius $\rho_i$ so $\mu(A) \prec \sum_i \rho_i^D$. Taking  $\delta \to 0$ we get $\mu(A) \prec H^D(A)$ uniformly in A.

For the other direction, let $A$ be measurable. The measures $\mu$ and $H^D$ are Radon so for every 
$\epsilon > 0$ there exist compact $K$ and open $U$ with $K \subset A \subset U$ and $H^D(U\backslash K), \mu(U\backslash K) < \epsilon$. Since $K$ is compact there exists $\delta$ such that any ball centered at a point in $K$ of radius less than $\delta$ is contained in $U$. Let $U_i$ be a countable cover of $K$ with sets of diameters $\rho_i < \delta$, ordered such that $\rho_i$ is non-increasing. We can assume without loss of generality that $U_i \cap (K \backslash \bigcup_{j<i}U_j) \neq \phi$. Under this assumption each $U_i$ is contained in a ball $B_i$ of radius $\rho_i$ centered at a point in $K \backslash \bigcup_{j<i}U_j$. Therefore $B_i \subset U$ and if $C_i$ denote the balls radius $\frac{\rho_i}{2}$ centered at the same point as $B_i$ then $C_i$ are disjoint. Thus $\sum \rho_i^D \prec \mu(U)$ and taking $\delta$ to $0$ we get that $H^D(K) \prec \mu(U)$ uniformly in $A,\epsilon$. Now taking $\epsilon$ to $0$ we get $H^D(A) \prec \mu(A)$ uniformly in $A$.
\end{proof}

Since the Hausdorff measure class is independent of $\mu$ there is a unique measure class containing a quasi-conformal measure of dimension $D$. In addition since $\frac{d\mu}{dH^D} \asymp 1$, $H^D$ is quasi-conformal of dimension $D$. We now fix a quasi-conformal probability measure  $\mu$ of dimension $D$ and call it the Patterson Sullivan measure of $(G,d)$.
As a corollary we see that $\mu$ is ergodic. Indeed if, $E$ is a $G$ invariant subset and $\mu(E) \neq 0$ then the restriction of $\mu$ to $E$ is quasi-conformal of dimension $D$ and is thus equivalent to $\mu$ so E is co-null.

Finally we will need the following strengthening of the shadow lemma:

\begin{lemma} \label{generalized shadow lemma}
There exists a constant $0 < C$ such that for any $g \in G$ and any $s < |g| - C$:
$$\mu(\{ \xi \in \partial G | (g,\xi) > s \}) \asymp e^{-hs} $$
\end{lemma}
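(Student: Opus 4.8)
The plan is to deduce this from Sullivan's shadow lemma (\cref{shadow lemma}) together with the covering estimate \cref{coverlemma}, by expressing the set $\{\xi \mid (g,\xi) > s\}$ as a union of shadows of elements lying on rough geodesics from $1$ to points of this set. First I would fix $g \in G$ and $s < |g| - C$, where $C$ is a large constant depending only on $(G,d)$ to be determined. Write $\Sigma_s(g) = \{\xi \in \partial G \mid (g,\xi) > s\}$. The upper bound $\mu(\Sigma_s(g)) \prec e^{-hs}$ I would get by the same diameter-of-the-complement argument as in the shadow lemma: if $\xi, \eta \notin \Sigma_s(g)$ then $(g,\xi), (g,\eta) \leq s$, and — using \cref{basic contraction lemma} applied to a rough geodesic through $g$, or more directly a Gromov-product manipulation — one shows $d_\varepsilon(\xi,\eta) \prec e^{-\varepsilon s} \asymp (e^{-hs})^{1/D}$; but actually to get the cleaner exponent I would instead cover $\Sigma_s(g)$ itself. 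The cleanest route: pick a rough geodesic ray $\gamma$ from $1$ to any $\xi$ with $(g,\xi)$ large; then $\Sigma_s(g)$ is contained in the shadow $\Sigma(\gamma_\xi(s'))$ for a suitable $s' \approx s$, but this depends on $\xi$, so instead I would use the uniform covering lemma.

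The main argument: by \cref{coverlemma} (in fact by the bulleted consequences fixed after \cref{discrete aproximation}, which hold for the fixed $\sigma$), for $\alpha$ large enough the shadows $\{\Sigma(k) \mid k \in A_s(\alpha)\}$ cover $\partial G$ with bounded multiplicity, and $\mu(\Sigma(k)) \asymp e^{-h|k|} \asymp_\alpha e^{-hs}$ for $k \in A_s(\alpha)$. So I would set $F_s(g) = \{k \in A_s(\alpha) \mid \Sigma(k) \cap \Sigma_s(g) \neq \emptyset\}$ and argue that (i) $\Sigma_s(g) \subseteq \bigcup_{k \in F_s(g)} \Sigma(k)$, which follows since the annulus shadows cover everything; (ii) for each $k \in F_s(g)$ one has $(g,k) \gtrsim \min\{(g,\xi),(\xi,k)\} \gtrsim \min\{s, |k|-\sigma'\} \approx s$ for $\xi$ in the intersection, whence by the formula $(g,k) = \frac{1}{2}(|g|+|k|-d(g,k))$ and $|k| \approx_\alpha s$ we get $d(g,k) \lesssim_\alpha |g| - s$; thus $k$ ranges over a set of bounded diameter translated out by roughly $|g|-s$, so $\lambda(F_s(g))$ — hence the number of $k$ needed after passing to a net — is $\prec_\alpha 1$, uniformly in $g$ and $s$. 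Combining, $\mu(\Sigma_s(g)) \leq \sum_{k} \mu(\Sigma(k)) \prec_\alpha e^{-hs}$, giving the upper bound.

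For the lower bound I would produce a single element $k_0 \in A_s(\alpha)$ whose shadow is entirely contained in $\Sigma_s(g)$: take a rough geodesic ray $\gamma$ from $1$ through a point near $g$ and extending to some $\xi$ with $(g,\xi) \approx |g|$ (such $\xi$ exists by \cref{nonempty shadow}), and set $k_0 = \gamma(s)$; since $s < |g| - C$ with $C$ large, the point $k_0$ lies "before" $g$ along the geodesic, so for any $\eta \in \Sigma(k_0)$ we have $(g,\eta) \gtrsim \min\{(g,k_0),(k_0,\eta)\} \approx \min\{s, s\} = s$ up to a uniform constant, and choosing $C$ large enough absorbs this constant so that $(g,\eta) > s$, i.e. $\Sigma(k_0) \subseteq \Sigma_s(g)$. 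Then $\mu(\Sigma_s(g)) \geq \mu(\Sigma(k_0)) \asymp e^{-h|k_0|} \asymp_\alpha e^{-hs}$. Once $\alpha$ is fixed this completes the proof.

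The main obstacle I expect is bookkeeping the additive constants so that the hypothesis $s < |g| - C$ is genuinely used exactly where needed — specifically, ensuring in the lower bound that the rough-geodesic point $k_0$ at parameter $s$ really satisfies $(g, k_0) \approx s$ (not something smaller), which requires $g$ to be roughly on the ray and $s$ bounded away from $|g|$ so that the rough-geodesic and $\delta$-hyperbolicity slack does not eat the estimate; and in the upper bound, checking that the multiplicity/diameter bound on $F_s(g)$ is uniform in both $g$ and $s$ simultaneously rather than just in one of them. These are routine $\delta$-hyperbolic manipulations but need to be done with some care, exactly as in the proof of \cref{shadow lemma} and \cref{coverlemma}.
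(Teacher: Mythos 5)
Your argument is essentially correct, but it takes a different route from the paper. The paper's proof is much shorter: using the fixed point $\hat g\in\Sigma(g)$ and the estimate $(g,x)\approx\min\{|g|,(\hat g,x)\}$ (estimate \ref{passing_to_hat}), it sandwiches $\{\xi:(g,\xi)>s\}$ between two balls $B_{\rho/L}(\hat g)\subseteq\{\xi:(g,\xi)>s\}\subseteq B_{L\rho}(\hat g)$ with $\rho=e^{-\epsilon s}$ (the hypothesis $s<|g|-C$ is used exactly for the inner inclusion), and then applies Ahlfors regularity (\cref{ahlfors regular}) to get $\mu\asymp\rho^D=e^{-hs}$. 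Your proof instead covers the set by shadows of elements of $A_s(\alpha)$ and invokes \cref{shadow lemma}, \cref{coverlemma} and \cref{discrete aproximation}; since Ahlfors regularity is itself proved by sandwiching balls between shadows, you are essentially inlining that lemma, at the cost of the extra net/separation argument needed to turn the bounded-diameter bound on $F_s(g)$ into a bound on the number of covering shadows. Both approaches are sound; the paper's buys brevity by reusing $\hat g$ and \cref{ahlfors regular}, yours stays closer to the raw shadow machinery. One small correction in your lower bound: with $k_0=\gamma(s)$ you only obtain $(g,\eta)>s-c$ for $\eta\in\Sigma(k_0)$, and enlarging $C$ in the hypothesis $s<|g|-C$ does not remove that loss; the right fix is to take $k_0=\gamma(s+c)$ for the uniform constant $c$ (which still has $\mu(\Sigma(k_0))\asymp e^{-hs}$), and then $C$ is chosen large enough that $s+c$ is still below $|g|$ by a definite margin so that $(g,\gamma(s+c))\approx s+c$. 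This is the additive-constant bookkeeping you flagged, and it resolves as indicated.
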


\begin{proof}
Let $s>0$ and set $\rho =e^{-\epsilon s}$. If $(\xi,g) > s$ then by estimate \ref{passing_to_hat}, $(\hat{g},\xi) \gtrsim s$. Thus by estimate \ref{visual_metric} there exists $L > 0$, independent of $g,s$, such that $\{ \xi \in \partial G | (g,\xi) > s \} \subset B_{L\rho}(\hat{g})$. Using Ahlfors regularity we get one inequality. In the other direction, by estimate \ref{passing_to_hat} there exists $C > 0$, independent of $g, \xi$, such that if $(\hat{g},\xi) > s + C$ then $(g,\xi) > min\{s , |g| - C\}$, so if $s < |g| - C$ then $(g,\xi) > s$. Thus by estimate \ref{visual_metric} there exists $L > 0$, independent of $g,s$,  satisfying $B_{\frac{\rho}{L}}(\hat{g}) \subset \{ \xi \in \partial G | (g,\xi) > s \}$. The second inequality follows again by Ahlfors regularity.
\end{proof}

\section{Some Growth Estimates} \label{growth setimates}

In this section we will use the shadow lemma to obtain growth estimates on the group $G$. We generalize many of the results of Garncarek (\cite[Section~4]{G14}) to the locally compact case, many of the proofs are similar. The main difference occurs in lemma \ref{cancellation lemma} which requires the assumption of unimodularity which holds trivially for discrete groups. We keep the notation of the previous section. 

First we deduce the following corollary from lemma \ref{coverlemma}. 

\begin{theorem} \label{growth estimate}
If $G$ is non-elementary and $\alpha$ is large enough, $\lambda(A_R(\alpha)) \asymp_\alpha \lambda(B_R(1)) \asymp e^{hR}$.
\end{theorem}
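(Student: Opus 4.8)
The plan is to first prove the annulus estimate $\lambda(A_R(\alpha))\asymp_\alpha e^{hR}$ by playing the shadow lemma \ref{shadow lemma} against the covering lemma \ref{coverlemma} through Fubini's theorem, and then to pass from annuli to balls by summing a geometric series over concentric annuli. Fix $\alpha$ large enough for \ref{coverlemma} (and hence for \ref{shadow lemma}, which only needs $\sigma$ large) to apply; all constants below may depend on this fixed $\alpha$. Note that non-elementarity of $G$ enters exactly to guarantee $h>0$: this is what makes the hypotheses of \ref{shadow lemma} hold (the measure $\mu$ is supported on a single atom only when $h=0$) and, later, what makes the geometric series below comparable to its last term. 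For the annulus estimate, consider $E=\{(g,\xi)\in A_R(\alpha)\times\partial G:\xi\in\Sigma(g)\}$; thanks to the measurable modification of the Gromov product introduced in Section~\ref{prelims}, $E$ is product measurable and
\[
\int_{A_R(\alpha)}\mu(\Sigma(g))\,d\lambda(g)=\int_{\partial G}\lambda\bigl(\{g\in A_R(\alpha):\xi\in\Sigma(g)\}\bigr)\,d\mu(\xi),
\]
both sides computing $(\lambda\times\mu)(E)$. On the left, $\mu(\Sigma(g))\asymp e^{-h|g|}$ by \ref{shadow lemma} and $|g|\approx_\alpha R$ for $g\in A_R(\alpha)$, so the left side is $\asymp_\alpha e^{-hR}\lambda(A_R(\alpha))$. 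On the right, the inner quantity is $\asymp_\alpha 1$ by \ref{coverlemma} and $\mu$ is a probability measure, so the right side is $\asymp_\alpha 1$. Comparing gives $\lambda(A_R(\alpha))\asymp_\alpha e^{hR}$.

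To pass to balls, observe for the lower bound that $A_{R-\alpha}(\alpha)\subseteq B_R(1)$ whenever $R\geq\alpha$, so $\lambda(B_R(1))\succ_\alpha e^{h(R-\alpha)}\asymp_\alpha e^{hR}$ (for bounded $R$, all quantities in the statement are comparable positive constants, using that $\alpha$ is large). For the upper bound, $B_R(1)$ is covered by the finitely many annuli $A_{k\alpha}(\alpha)$ with $0\leq k\leq\lceil R/\alpha\rceil$ — consecutive ones overlap, so their union contains $\{g:|g|<R\}$ — and since $h>0$ the geometric series is dominated by its last term:
\[
\lambda(B_R(1))\leq\sum_{k=0}^{\lceil R/\alpha\rceil}\lambda(A_{k\alpha}(\alpha))\prec_\alpha\sum_{k=0}^{\lceil R/\alpha\rceil}e^{hk\alpha}\prec_\alpha e^{hR}.
\]
Combining the two steps yields $\lambda(A_R(\alpha))\asymp_\alpha\lambda(B_R(1))\asymp_\alpha e^{hR}$, which is the claim.

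The proof is short, and the only point requiring care is the legitimacy of Fubini, i.e.\ product measurability of $E$ and measurability of the two marginal functions $g\mapsto\mu(\Sigma(g))$ and $\xi\mapsto\lambda(\{g\in A_R(\alpha):\xi\in\Sigma(g)\})$; this is precisely the kind of non-discreteness technicality that the measurability conventions of Section~\ref{prelims} and the measure-theoretic appendices are designed to handle. Should one prefer to avoid integrating over $G$ at all, the same estimate can be obtained from the discretization \ref{discrete aproximation}: take a maximal $C$-separated $F\subseteq A_R(\alpha)$, so that $\{\Sigma(g):g\in F\}$ covers $\partial G$ while (by the argument in the proof of \ref{coverlemma}, for $C$ large) these shadows are pairwise disjoint; then $\sum_{g\in F}\mu(\Sigma(g))$ is both $\geq\mu(\partial G)=1$ and $\leq 1$, and $\mu(\Sigma(g))\asymp e^{-hR}$, forcing $|F|\asymp_\alpha e^{hR}$, after which $\lambda(A_R(\alpha))\asymp_\alpha|F|$ follows by comparing with the boundedly many left-translated balls of fixed radius centered at the points of $F$.
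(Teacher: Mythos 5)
Your proof is correct and follows essentially the same route as the paper: a double-counting/Fubini argument on the incidence set $\{(g,\xi):\xi\in\Sigma(g)\}$, comparing the shadow lemma on one marginal with the covering lemma on the other, and then passing from annuli to balls via a geometric series using $h>0$. The added remarks on measurability and the alternative discretized count are fine but not needed beyond what the paper already assumes.
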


\begin{proof}
The proof is essentially a double counting argument. Suppose $\alpha$ is large enough for lemma \ref{coverlemma} to hold. Consider the set $A = \{(g,\xi)| \xi \in \Sigma(g)\}$. Using Fubini's theorem we see that on the one hand by lemma \ref{coverlemma}:
$$\lambda \times \mu (A) = \int \lambda(\{g \in A_R(\alpha)| \xi \in \Sigma(g)\}) d\mu(\xi) \asymp_\alpha \int 1 d\mu(\xi) = 1$$

On the other hand by lemma \ref{shadow lemma}:
$$\lambda \times \mu (A) = \int_{A_R(\alpha)} \mu(\Sigma(g))d\lambda(g)  \asymp_\alpha \int_{A_R(\alpha)} e^{-hR}d\lambda(g) = \lambda(A_R(\alpha))e^{-hR}$$
 
 Comparing the two we get the result for annuli. The result for balls follows by covering the ball with annuli and using the formula for the sum of a geometric series.
\end{proof}

We will need one more lemma estimating the size of certain subsets of annuli.

\begin{lemma} \label{cone lemma}
For any $ s,R,\alpha  > 0$ and $\xi \in \partial G$:
$$\lambda(\{g \in A_R(\alpha)| (g,\xi) > s\}) \prec_\alpha e^{h(R-s)}$$
\end{lemma}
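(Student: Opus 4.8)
The plan is to mimic the double-counting argument in the proof of Theorem \ref{growth estimate}, but restricted to the subset of the annulus whose shadows are pinned near $\xi$. Fix $\xi \in \partial G$ and write $E = \{g \in A_R(\alpha) \mid (g,\xi) > s\}$ for the set whose Haar measure we must estimate. A harmless first reduction: if $s \geq R+\alpha$ then $(g,\xi)\le |g| < R+\alpha$ for every $g\in A_R(\alpha)$, so $E=\emptyset$ and there is nothing to prove; hence assume $s < R+\alpha$.

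The geometric heart of the argument is the claim that there is a constant $c=c(\alpha)\ge 0$ such that $\Sigma(g)\subseteq B:=\{\eta\in\partial G\mid (\xi,\eta)>s-c\}$ for every $g\in E$. Indeed, for $g\in E$ and $\eta\in\Sigma(g)$ one has $(g,\eta)\gtrsim |g|$ and $(g,\xi)>s$; since $|g|>R-\alpha>s-2\alpha$, hyperbolicity of the (extended) Gromov product gives $(\xi,\eta)\gtrsim\min\{(g,\eta),(g,\xi)\}>s-c$ once $c$ is taken large enough to absorb the fixed constants $\alpha$, $\sigma$ and the hyperbolicity constant. By the defining property (\ref{visual_metric}) of the visual metric, $B$ is contained in a ball about $\xi$ of radius $\asymp_\alpha e^{-\varepsilon s}$, so Ahlfors regularity (Lemma \ref{ahlfors regular}), together with the trivial bound $\mu(B)\le 1$ when $s$ is bounded, yields $\mu(B)\prec_\alpha e^{-hs}$.

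With this in hand, apply Fubini to $A=\{(g,\eta)\mid g\in E,\ \eta\in\Sigma(g)\}\subseteq E\times B$. Integrating in $\eta$ first and using the shadow lemma (Lemma \ref{shadow lemma}) together with $|g|\approx_\alpha R$ on $A_R(\alpha)$, so that $\mu(\Sigma(g))\asymp_\alpha e^{-hR}$, we get
$$\lambda\times\mu(A)=\int_E\mu(\Sigma(g))\,d\lambda(g)\asymp_\alpha e^{-hR}\,\lambda(E).$$
Integrating in $g$ first, observe that $\{g\in E\mid \eta\in\Sigma(g)\}$ is contained in $\{g\in A_R(\alpha)\mid \eta\in\Sigma(g)\}$, which has measure $\asymp_\alpha 1$ by Lemma \ref{coverlemma}, and is empty whenever $\eta\notin B$ by the previous paragraph; hence
$$\lambda\times\mu(A)=\int_B\lambda(\{g\in E\mid \eta\in\Sigma(g)\})\,d\mu(\eta)\prec_\alpha\mu(B)\prec_\alpha e^{-hs}.$$
Comparing the two estimates gives $e^{-hR}\lambda(E)\prec_\alpha e^{-hs}$, i.e. $\lambda(E)\prec_\alpha e^{h(R-s)}$, which is the assertion.

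The only delicate point is the geometric claim of the second paragraph: one must check that the various additive errors (in $(g,\eta)\gtrsim|g|$, in the hyperbolic inequality for the Gromov product, and in passing between $|g|$ and $R$) are all absorbed into a single constant depending only on $\alpha$ and $(G,d)$, uniformly in $g,\xi,R,s$. This is exactly where the reduction $s<R+\alpha$ is used: it guarantees that $\min\{(g,\eta),(g,\xi)\}$ is within an $\alpha$-dependent additive constant of $s$, rather than merely of $\min\{s,R\}$. Once that is settled, the rest is a routine application of the shadow and covering lemmas already established.
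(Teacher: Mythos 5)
Your argument is correct, but it takes a genuinely different route from the paper. The paper's proof localizes in the group: it fixes a rough geodesic ray $\gamma$ from $1$ to $\xi$ and shows directly from the definition of the Gromov product that every $g$ in the set satisfies $d(g,\gamma(s)) \approx R-s$, so the whole set sits in a ball of radius $\approx R-s$, and the volume estimate of Theorem \ref{growth estimate} finishes the job in two lines. You instead localize on the boundary: you re-run the shadow double-counting underlying Theorem \ref{growth estimate}, but restricted to the incidence set over $E$, observing that all the relevant shadows are trapped in the set $\{\eta \mid (\xi,\eta)>s-c\}$, whose measure you control by Ahlfors regularity; the shadow lemma gives the lower bound $e^{-hR}\lambda(E)$ for the incidence mass and Lemma \ref{coverlemma} gives the upper bound $\prec_\alpha \mu(B)$. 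Both arguments are sound and rest ultimately on the same machinery (the shadow lemma), and your geometric claim is carefully quantified — the reduction to $s<R+\alpha$ is exactly what is needed to turn $\min\{s,R-\alpha-\sigma\}$ into $s-c(\alpha)$. The paper's version is shorter because it reuses the already-proved ball volume estimate rather than redoing the double count; yours has the mild virtue of making explicit the duality between the cone $\{g\in A_R(\alpha)\mid (g,\xi)>s\}$ in the group and the ball of radius $\asymp e^{-\varepsilon s}$ around $\xi$ on the boundary. Note only that, as with the paper's own proof, your appeals to Lemmas \ref{shadow lemma} and \ref{coverlemma} implicitly require $\alpha$ (and the fixed $\sigma$) large enough; for small $\alpha$ one simply enlarges the annulus, which only weakens the claim by an $\alpha$-dependent constant already allowed by the $\prec_\alpha$ notation.
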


\begin{proof}
Let $\gamma$ be a roughly geodesic ray with $\gamma(0) = 1, \gamma(\infty) = \xi$ and $g$ as above. Note that $(\gamma(s),\xi) \approx s$, so $(g,\gamma(s)) \gtrsim min\{(g,\xi),(\xi,\gamma(s))\} \approx s$, but $|\gamma(s)| \approx s$ so $s \gtrsim (g,\gamma(s))$ and therefore $(g,\gamma(s)) \approx s$. Opening up the definition of the Gromov product and plugging in $|g| \approx_\alpha R, |\gamma(s)| \approx s$ we get $d(g,\gamma) \approx R-s$. Thus $\{g \in A_R(\alpha)| (g,\xi) > s\}$ is contained in a ball of radius $r \approx R-s$ centred at $\gamma(s)$ . Applying lemma \ref{growth estimate} we obtain the result.
\end{proof}

The following lemma is one of the key geometric ideas in the proof of irreducibility of boundary representations. It generalizes the fact that given any two words $w_1,w_2$ in a free group one can find a generator $x$ such that the product $w_1 x w_2$ has no cancellation. Interestingly the proof requires the added assumption of unimodularity, which holds trivially in the discrete case.

\begin{lemma} \label{cancellation lemma}
Suppose G is unimodular and non-elementary. There exists $\tau$ such that for all $g,h \in G$, $\mu (\{g' \in B_\tau(g) | |g'| + |h| - 2\tau \leq |g'h|  \}) \asymp 1$.
\end{lemma}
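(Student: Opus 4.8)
My plan is to recast the defining inequality, clear away two easy size regimes, and reduce the rest to a single statement expressing that $G$-orbits on $\partial G$ cannot concentrate; that statement is where unimodularity does the real work (twice). Throughout $\lambda$ is the Haar measure, and the constant $\tau$ will be chosen — large — only at the end. Since $d(g'^{-1},h)=|g'h|$ and $|g'^{-1}|=|g'|$, the inequality $|g'|+|h|-2\tau\le|g'h|$ is exactly $(g'^{-1},h)\le\tau$, and since the set in question lies in $B_\tau(g)$, on which $\lambda$ has mass $\lambda(B_\tau(1))$, the bound ``$\prec 1$'' is automatic; only ``$\succ 1$'' needs proof, uniformly in $g,h$. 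If $|h|\le\tau$ then $(g'^{-1},h)\le|h|\le\tau$ for \emph{every} $g'\in B_\tau(g)$, so the set is all of $B_\tau(g)$. If $|g|\le\tau+C$ for a suitable constant $C$, I take a rough geodesic from $1$ to $g$, let $m$ be a roughly halfway point, and observe that a ball of radius $r\approx\tau-\tfrac12|g|$ about $m$ lies in $B_\tau(1)\cap B_\tau(g)$; any $g'\in B_\tau(1)$ has $(g'^{-1},h)\le|g'|\le\tau$, so this ball sits inside our set and has positive $\lambda$-measure bounded below. The remaining main case is $|h|>\tau$ and $|g|>\tau+C$; with $C$ chosen appropriately the two easy regimes together with this one exhaust all of $G\times G$.

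In the main case I apply \eqref{passing_to_hat} twice, first to $g'^{-1}$ and then to $h$, obtaining $(g'^{-1},h)\approx\min\{|g'|,|h|,(\check{g'},\hat h)\}$ with $\check{g'}:=\widehat{(g')^{-1}}$; since $|g'|$ and $|h|$ are large, $g'$ lies in our set whenever $\check{g'}$ is at $d_\varepsilon$-distance at least $c$ from $\hat h$, where $c=c(\tau)\asymp e^{-\varepsilon\tau}\to 0$. Writing $g'=gk$ with $k=g^{-1}g'\in B_\tau(1)$, equivariance of the Gromov product and a change of basepoint give that $\widehat{(gk)^{-1}}$ lies at $d_\varepsilon$-distance $\prec e^{-\varepsilon|g|}$ from $k^{-1}\check g$, which is much smaller than $c$ once $|g|\gg\tau$. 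Hence, for any fixed $\tau'\le\tau$, the $\lambda$-measure of our set is at least $\lambda(\{k\in B_{\tau'}(1)\mid d_\varepsilon(k^{-1}\check g,\hat h)\ge 2c\})$, and this equals $\lambda(\{k\in B_{\tau'}(1)\mid d_\varepsilon(k\check g,\hat h)\ge 2c\})$ because $\lambda$ is left- and right-invariant and inversion-invariant while $B_{\tau'}(1)$ is symmetric — the first use of unimodularity. It therefore suffices to produce fixed constants $\tau'>0$ and $c'>0$ with $\lambda(\{k\in B_{\tau'}(1)\mid d_\varepsilon(k\eta_1,\eta_2)>c'\})\succ 1$ uniformly in $\eta_1,\eta_2\in\partial G$, and then to take $\tau$ large enough that $2c(\tau)<c'$.

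This non-concentration statement I would prove in two steps. First, since $G$ is unimodular and non-elementary it is of general type, so it acts minimally on the infinite space $\partial G$ and consequently has no finite orbit there; a compactness argument then yields $\tau_0,c_0>0$ such that for \emph{every} $\eta\in\partial G$ the set $B_{\tau_0}(1)\eta$ contains three points pairwise at $d_\varepsilon$-distance $\ge c_0$. Indeed, if not, one gets points $\eta_n$ with $B_n(1)\eta_n$ covered by two balls of radius $1/n$, and passing to limits — using joint continuity of the action on the compact $\partial G$ and relative compactness of metric balls in $G$ — produces a point of $\partial G$ whose $G$-orbit has at most two elements, a contradiction. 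Second, given $\eta_1,\eta_2$, pick $k_1,k_2,k_3\in B_{\tau_0}(1)$ with the $k_i\eta_1$ pairwise $c_0$-separated and fix a further constant $\tau_1$. For $v\in B_{\tau_1}(1)$ the bounded element $v$ distorts $d_\varepsilon$ by at most a fixed multiplicative factor, so the three points $vk_i\eta_1$ are still pairwise $\ge c_1$ apart for some fixed $c_1>0$; hence at most one of them is within $c_1/2$ of $\eta_2$. Integrating the inequality $\#\{i:d_\varepsilon(vk_i\eta_1,\eta_2)\ge c_1/2\}\ge 2$ over $v\in B_{\tau_1}(1)$ and pigeonholing on $i$, some index $i^\ast$ satisfies $\lambda(\{v\in B_{\tau_1}(1)\mid d_\varepsilon(vk_{i^\ast}\eta_1,\eta_2)\ge c_1/2\})\ge\tfrac23\lambda(B_{\tau_1}(1))$; substituting $k=vk_{i^\ast}$, a $\lambda$-preserving change of variable by unimodularity with image in $B_{\tau_0+\tau_1}(1)$, gives the claim with $\tau'=\tau_0+\tau_1$ and $c'=c_1/2$.

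The conceptual crux, and the step I expect to be the main obstacle, is this orbit non-concentration: it is exactly where ``$G$ unimodular and non-elementary, hence of general type and non-amenable'' is indispensable, and it fails precisely for non-elementary amenable groups, where a parabolic fixed point furnishes a one-point orbit and the lemma itself is false — cf.\ the minimal-parabolic example in the introduction. The other use of unimodularity is the more clerical one of replacing $\lambda$ by its pushforward under an inversion or a right translation. Beyond that the work is bookkeeping: organizing the additive hyperbolicity constants and the ($\tau$-dependent) distortion factor for bounded elements so that the final estimate closes for one value of $\tau$.
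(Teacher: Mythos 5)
Your proof is correct in its essentials, but it follows a genuinely different route from the paper's. The paper argues by volume growth: a ``bad'' element $g'\in B_\tau(g)$ (one with $(g'h,1)_{g'}>\tau$) is shown to have $g'\gamma_h(\tau)$ within uniformly bounded distance of the segment $\gamma_g([|g|-2\tau,|g|])$; since $g'\mapsto g'\gamma_h(\tau)$ preserves $\lambda$ (the paper's use of unimodularity) and this tube has measure growing only linearly in $\tau$ while $\lambda(B_\tau(g))\asymp e^{h\tau}$ by Theorem \ref{growth estimate}, most of the ball is good once $\tau$ is large. You instead rewrite the condition as $(\check{g'},\hat{h})\lesssim\tau$ via estimate \ref{passing_to_hat} and reduce everything to a non-concentration statement for $G$-orbits on $\partial G$, which you prove by a three-point-separation compactness argument from minimality; unimodularity enters as inversion- and right-invariance of $\lambda$. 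The paper's argument is shorter because the growth estimates are already available; yours is independent of the Patterson--Sullivan and growth machinery (you only need Lemma \ref{nonempty shadow} and estimate \ref{passing_to_hat}) and it isolates conceptually why the statement is tied to unimodularity, since a fixed boundary point of a non-elementary amenable group destroys exactly the non-concentration you need.

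Two points deserve care in a full write-up. First, the order of quantifiers in your main case is essential: the radius $\tau'$ over which $k$ ranges must be the \emph{fixed} constant supplied by the non-concentration lemma, chosen before $\tau$. Only then is the implied constant in $d_\varepsilon\bigl(\widehat{(gk)^{-1}},k^{-1}\check{g}\bigr)\prec e^{-\varepsilon|g|}$ independent of $\tau$, so that the main case covers all $|g|\ge\tau+C$ with $C$ independent of $\tau$ and overlaps your midpoint regime (which in fact reaches up to $|g|\approx 2\tau$). If $k$ were allowed to range over all of $B_\tau(1)$, the two regimes would leave an uncovered window near $|g|\approx 2\tau$. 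You do set this up correctly, but it is the one place where the bookkeeping could silently fail. Second, the assignment $g'\mapsto\check{g'}$ need not be measurable (the paper itself flags this); your argument survives because the set whose measure you actually bound from below is $\{gk:k\in B_{\tau'}(1),\ d_\varepsilon(k^{-1}\check{g},\hat{h})\ge 2c\}$ with $\check{g},\hat{h}$ fixed, and $\check{g'}$ is used only pointwise to verify membership. Saying this explicitly would make the argument airtight.
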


\begin{proof}
Let $g,h \in G$, and for every $s \in G$ fix a roughly geodesic segment $\gamma_s:[0,|s|] \rightarrow G$ in G with $\gamma_s(0) = 1, \gamma_s(|s|) = s$. Let $g' \in B_\tau(g)$ such that $|g'| + |h| - 2\tau > |g'h|$, this is equivalent to $(g'h,1)_{g'} > \tau$. We get:
$$(\gamma_{g'}(|g'| -\tau),g'\gamma_h(\tau))_{g'} \gtrsim min\{(\gamma_g'(|g'| -\tau),1)_{g'}, (1,g'h)_{g'}, (g'h,g'\gamma_h(\tau))_{g'}\} \approx \tau$$
Since $d(g', \gamma_{g'}(|g'| -\tau)),d(g', g'\gamma_h(\tau)) \approx \tau$, we conclude that $d(\gamma_{g'}(|g'| -\tau), g'\gamma_h(\tau)) \approx 0$.
Similarly we have that:
$$(\gamma_g(|g'|-\tau),\gamma_{g'}(|g'| -\tau)) \gtrsim min \{(\gamma_g(|g'|-\tau),g),(g,g'),(g',\gamma_{g'}(|g'| -\tau)) \} \gtrsim |g'| - \tau$$
So since $|\gamma_{g'}(|g'| -\tau)|, |\gamma_g(|g'|-\tau)| \approx |g'| - \tau$ we conclude that $d(\gamma_{g'}(|g'| -\tau),\gamma_g(|g'|-\tau)) \approx 0$.

Putting everything together we get that $d(g'\gamma_h(\tau),\gamma_g(|g'|-\tau)) \approx 0$. Notice that $\gamma_g(|g'|-\tau) \in \gamma_g([|g|-2\tau,|g|])$, so if  $|g'| + |h| - 2\tau > |g'h|$, then $g'\gamma_h(\tau)$ is in a tubular neighborhood of constant distance (independent of $\tau$) from the roughly geodesic segment $\gamma_g([|g|-2\tau,|g|])$. Denote this neighborhood by $N(\gamma_g([|g|-2\tau,|g|]))$.

Consider the map $g' \mapsto g'\gamma_h(\tau)$. This map is measure preserving since $G$ is unimodular. Because the measure of $B_\tau(g)$ grows exponentially with $\tau$ and the measure of $N(\gamma_g([|g|-2\tau,|g|]))$ grows linearly with $\tau$, for $\tau$ large enough this map must send more than half of the measure of $B_\tau(g)$ outside of $N(\gamma_g([|g|-2\tau,|g|]))$. The elements not being sent to $N(\gamma_g([|g|-2\tau,|g|]))$ satisfy $|g'| + |h| - 2\tau \leq |g'h|$.
\end{proof}

\section{Operators on $L^2(\mu)$} \label{constructing operators}

 In this section we use the tools we have developed to construct operators in the von Neuman algebra of the representation $\pi$. These operators will later be used to  show irreducibility of the representation. The results generalize \cite[Section~5]{G14} and many of the core ideas come from there, although some of the constructions become more subtle and need to be changed.

\subsection{Estimates on $\pi(g)$}

To simplify calculations we assume without loss of generality that $D > 1$. Denote $P_g(\xi) = \sqrt{\frac{dg_*\mu}{d\mu}}(\xi)$, $\widetilde{P}_g = \frac{P_g}{\|P_g\|_1}$ and $\widetilde{\pi}(g) = \frac{\pi(g)}{\|P_g\|_1}$. Note that $\|P_g\|_1 = \langle \pi(g)1,1 \rangle = \langle 1,\pi(g^{-1})1 \rangle = \|P_{g^{-1}}\|_1$ so that $\widetilde{\pi}(g)^* = \widetilde{\pi}(g^{-1})$. We will call the weak operator closure of the image of the positive $L^1$ functions under $\pi$ the \textbf{positive cone} of $\pi$.

We will use the following lemma several times during calculations.

\begin{lemma}
Let $(X,\mu)$ be a $\sigma$-finite measure space and $f$ a positive measurable function on $X$ then:

$$\int_X f(\xi) d\mu = \int \limits_0^\infty \mu(\{\xi|f(\xi) > t\})dt$$
\end{lemma}

\begin{proof}
The proof follows from Fubini's theorem:

$$\int_X f(\xi) d\mu = \int_X \int \limits_0^\infty \mathbbm{1}_{\{(t,\xi)|t < f(\xi)\}} dt d\mu = \int \limits_0^\infty \int_X \mathbbm{1}_{\{(t,\xi)|t < f(\xi)\}}d\mu dt = \int \limits_0^\infty \mu(\{\xi|f(\xi) > t\})dt $$
\end{proof}

\begin{lemma}
The following estimates are satisfied uniformly in $g, \xi$:
$$\|P_g\|_1 \asymp e^{-\frac{h|g|}{2}}(1+|g|)$$
$$\widetilde{P}_g \asymp \frac{e^{h(g,\xi)}}{1+|g|}$$

\end{lemma}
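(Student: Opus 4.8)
The plan is to estimate the two quantities separately, using the generalized shadow lemma (Lemma~\ref{generalized shadow lemma}) together with the layer-cake formula just proved, and the growth estimate $\lambda(B_R(1)) \asymp e^{hR}$ is not needed here since everything happens on $\partial G$.

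First I would compute $\|P_g\|_1 = \int_{\partial G} P_g(\xi)\, d\mu(\xi)$. By the quasi-conformality of $\mu$ we have $P_g(\xi)^2 = \frac{dg_*\mu}{d\mu}(\xi) \asymp e^{-h(|g|-2(g,\xi))}$, so $P_g(\xi) \asymp e^{-\frac{h}{2}|g|} e^{h(g,\xi)}$. Hence $\|P_g\|_1 \asymp e^{-\frac{h}{2}|g|} \int_{\partial G} e^{h(g,\xi)}\, d\mu(\xi)$, and it remains to show $\int_{\partial G} e^{h(g,\xi)}\, d\mu(\xi) \asymp 1+|g|$. I would apply the layer-cake lemma to $f(\xi) = e^{h(g,\xi)}$: writing $t = e^{hs}$, one gets $\int_{\partial G} e^{h(g,\xi)}\, d\mu = \int_0^\infty \mu(\{\xi \mid (g,\xi) > s\}) \, h e^{hs}\, ds$ (after substituting and noting $\mu(\{f > t\}) = \mu(\{(g,\xi) > s\})$ when $t = e^{hs}$, plus the constant-measure contribution from $t \le 1$, i.e. $s \le 0$, which is $\asymp 1$). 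Now for $s < |g| - C$, Lemma~\ref{generalized shadow lemma} gives $\mu(\{\xi \mid (g,\xi) > s\}) \asymp e^{-hs}$, so the integrand is $\asymp h$ on that range, contributing $\asymp |g|$; for $s \ge |g| - C$ the set $\{(g,\xi) > s\}$ is contained in $\{(g,\xi) > |g|-C\}$, whose measure is $\asymp e^{-h|g|}$ (still by Lemma~\ref{generalized shadow lemma}, or since $(g,\xi) \le |g|$ the range of relevant $s$ is bounded), giving a contribution $\prec e^{-h|g|} \cdot e^{h|g|} \asymp 1$; and for $s \le 0$ the contribution is $\asymp 1$ as noted. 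Summing, $\int_{\partial G} e^{h(g,\xi)}\, d\mu \asymp 1 + |g|$, which yields the first estimate.

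The second estimate is then essentially immediate: $\widetilde{P}_g(\xi) = \frac{P_g(\xi)}{\|P_g\|_1} \asymp \frac{e^{-\frac{h}{2}|g|}e^{h(g,\xi)}}{e^{-\frac{h}{2}|g|}(1+|g|)} = \frac{e^{h(g,\xi)}}{1+|g|}$, using the already-established $\|P_g\|_1 \asymp e^{-\frac{h}{2}|g|}(1+|g|)$ and the pointwise estimate $P_g(\xi) \asymp e^{-\frac{h}{2}|g|}e^{h(g,\xi)}$ coming from quasi-conformality. One should remark that the $\asymp$ in the quasi-conformal estimate holds for $\mu$-almost every $\xi$, so all the statements are understood almost everywhere, which is all that is needed since we only use these as $L^2$ or $L^1$ functions.

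The main obstacle is the bookkeeping in the layer-cake computation: one has to be careful that Lemma~\ref{generalized shadow lemma} only applies in the range $s < |g| - C$, and to check that the tail $s \ge |g| - C$ and the head $s \le 0$ each contribute only a bounded amount, so that the dominant term is the linear-in-$|g|$ middle range. This is where the factor $(1+|g|)$ (rather than a pure exponential) comes from, and it is the one genuinely quantitative point; everything else is a substitution of the quasi-conformality estimate. There is also a minor subtlety in the substitution $t = e^{hs}$ splitting the $t$-integral at $t = 1$, but this only produces the harmless additive constant reflected in the ``$1+$'' of $1+|g|$.
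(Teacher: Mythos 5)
Your proof is correct and follows essentially the same route as the paper: quasi-conformality gives the pointwise estimate $P_g(\xi)\asymp e^{h((g,\xi)-|g|/2)}$, and then the layer-cake formula combined with Lemma \ref{generalized shadow lemma} yields $\int e^{h(g,\xi)}d\mu \asymp 1+|g|$, with the head, middle, and tail ranges handled exactly as in the paper (your substitution $t=e^{hs}$ is only a cosmetic reparametrization of the paper's integral in $t$, which splits at $t=1$ and $t=e^{h(|g|-C)}$). The only point the paper makes explicitly that you elide is the trivial case $|g|<C$, where both sides are $\asymp 1$.
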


\begin{proof}

By definition $P_g(\xi) \asymp e^{h((g,\xi) - \frac{|g|}{2})}$. Let $C$ be the constant from lemma \ref{generalized shadow lemma}. If $|g| < C$ then $\|P_g\|_1$ is bounded by a constant. Assume $|g| \geq C$. By lemma \ref{generalized shadow lemma} $\mu(\{\xi \in \partial G | (g,\xi) > \frac{ln(t)}{h}\}) \asymp \frac{1}{t}$ for $t < |g|-C$. we get:

\begin{align*}
    e^{\frac{|g|}{2}}\int_{\partial G}  P_g(\xi) d\mu &\asymp \int_{\partial G} e^{h(g,\xi)} d\mu 
    \\& = \int_0^\infty \mu(\{\xi \in \partial G | (g,\xi) > \frac{ln(t)}{h}\})dt
    \\& \asymp 1 + \int_1^{e^{h(|g| - C)}} \mu(\{\xi \in \partial G | (g,\xi) > \frac{ln(t)}{h}\})dt
    \\& \asymp 1 + \int_1^{e^{h(|g| - C)}} \frac{dt}{t} 
    \\& = 1 + h(|g| - C) 
    \\& \asymp 1 + |g|
\end{align*}

Which gives the first estimate. The second estimate now follows from the first and from $P_g(\xi) \asymp e^{h((g,\xi) - \frac{|g|}{2})}$.

\end{proof}

\begin{lemma} \label{bounded at 1}
For every $R$,$\alpha$ and for almost every $\xi \in \partial G$
$$\int_{A_R(\alpha)} \widetilde{P}_g(\xi)d\lambda(g) \prec_\alpha e^{hR}$$
\end{lemma}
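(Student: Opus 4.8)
The plan is to estimate the integral using the previous lemma's formula $\widetilde{P}_g(\xi) \asymp \frac{e^{h(g,\xi)}}{1+|g|}$ and the geometric decomposition of the annulus $A_R(\alpha)$ according to the level sets of $g \mapsto (g,\xi)$. First I would fix $\xi$ for which the estimate $\widetilde P_g(\xi)\asymp e^{h(g,\xi)}/(1+|g|)$ holds for all $g$ (this holds for a.e.\ $\xi$, by the previous lemma together with the fact that there are only countably many relevant $g$ once we pass to a countable co-bounded subset $S$, or simply because both the set of bad $(g,\xi)$ pairs and the annulus can be handled by Fubini). On $A_R(\alpha)$ we have $|g| \approx_\alpha R$, so $1 + |g| \asymp_\alpha 1+R$, and hence
$$\int_{A_R(\alpha)} \widetilde{P}_g(\xi)\, d\lambda(g) \asymp_\alpha \frac{1}{1+R}\int_{A_R(\alpha)} e^{h(g,\xi)}\, d\lambda(g).$$

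Next I would slice the annulus by the value of $(g,\xi)$. Write $A_R(\alpha) = \bigcup_{s} \{g \in A_R(\alpha) \mid (g,\xi) \in [s, s+1)\}$ over integers $s$ ranging roughly from $0$ to $R + O_\alpha(1)$ (recall $(g,\xi) \leq |g| \approx_\alpha R$, so the sum is finite with $O(R)$ terms). On each slice $e^{h(g,\xi)} \asymp e^{hs}$, and by \cref{cone lemma} the $\lambda$-measure of $\{g \in A_R(\alpha) \mid (g,\xi) > s\}$ is $\prec_\alpha e^{h(R-s)}$, so a fortiori the measure of each slice is $\prec_\alpha e^{h(R-s)}$. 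Therefore
$$\int_{A_R(\alpha)} e^{h(g,\xi)}\, d\lambda(g) \prec_\alpha \sum_{s=0}^{\lfloor R\rfloor + O_\alpha(1)} e^{hs} \cdot e^{h(R-s)} = \sum_{s=0}^{\lfloor R\rfloor + O_\alpha(1)} e^{hR} \prec_\alpha (1+R) e^{hR}.$$
Combining with the previous display gives $\int_{A_R(\alpha)} \widetilde{P}_g(\xi)\, d\lambda(g) \prec_\alpha \frac{1}{1+R}(1+R)e^{hR} = e^{hR}$, as claimed.

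The main thing to be careful about — rather than a deep obstacle — is the "for almost every $\xi$" clause: the pointwise estimate $\widetilde P_g(\xi) \asymp e^{h(g,\xi)}/(1+|g|)$ from the previous lemma is stated with an implicit null set depending on $g$, so to use it simultaneously for all $g$ in the annulus one should first reduce to a countable co-bounded set $S \subseteq G$ (as in the measurability remark in \cref{prelims}), note that $(g,\xi) \approx (g',\xi)$ and $|g|\approx|g'|$ when $d(g,g')$ is bounded, and absorb the comparison between $\int_{A_R(\alpha)}$ and the corresponding sum over $S \cap A_R(\alpha)$ into the constants; alternatively one invokes Fubini to see the bad set has measure zero for a.e.\ $\xi$. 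The geometric input (\cref{cone lemma}) and the level-set slicing are completely routine once this is set up.
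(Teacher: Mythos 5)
Your proof is correct and follows essentially the same route as the paper: the paper likewise reduces to bounding $\frac{1}{1+R}\int_{A_R(\alpha)} e^{h(g,\xi)}\,d\lambda$ and applies Lemma \ref{cone lemma} to the level sets of $g\mapsto (g,\xi)$, the only difference being that it uses the continuous layer-cake formula $\int f\,d\lambda = \int_0^\infty \lambda(\{f>t\})\,dt$ where you use a discrete unit-interval slicing, and these two computations are interchangeable. Your extra care about the ``for almost every $\xi$'' quantifier (handled via Fubini) addresses a point the paper's proof silently glosses over, and is handled correctly.
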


\begin{proof}
Using theorem \ref{growth estimate}, lemma \ref{cone lemma} and the previous lemma we see that:
\begin{align*}
   & (1+R)\int_{A_R(\alpha)} \widetilde{P}_g(\xi) d\lambda
    \\& \asymp_\alpha \int_{A_R(\alpha)} e^{h(g,\xi)} d\lambda
    \\& = \int_0^{\infty} \lambda\{g \in A_R(\alpha)| (g,\xi) >\frac{ln(t)}{h} \} dt
    \\& \asymp_\alpha e^{hR} + \int_1^{e^{h(R+\alpha)}} \lambda\{g \in A_R(\alpha)| (g,\xi) >\frac{ln(t)}{h} \} dt
    \\& \prec_\alpha e^{hR} + \int_1^{e^{h(R+\alpha)}} e^{h(R-\frac{ln(t)}{h})} dt 
    \\& = e^{hR}(1+R+\alpha)
\end{align*}
 Giving the required estimate.
\end{proof}

We now obtain estimates on matrix coefficients of the representation $\pi$. Denote the space of $d_\varepsilon$-Lipschitz functions on $\partial G$ by $Lip(\partial G)$, and for $\phi \in Lip(\partial G)$ denote the Lipschitz constant by $L(\phi)$. $Lip(\partial G)$ is dense in $L^2(\mu)$. To see this notice that  by the Lebesgue differentiation theorem, which holds for Ahlfors regular measures by \cite[Theorem~1.8]{HJ01}, every non-zero $L^2$ function has non-zero inner product with the characteristic function of some ball. Thus characteristic functions of balls span a dense subspace of $L^2(\mu)$. Since these can be approximated by Lipschitz functions, $Lip(\partial G)$ is dense in $L^2(\mu)$.

\begin{lemma} \label{matrix coefficient esimate}
Let $g \in G$. For any $\phi, \psi \in Lip(\partial G)$ we have:

$$|\langle\widetilde{\pi}(g)\phi,\psi\rangle - \phi(\check{g})\overline{\psi(\hat{g})}| \prec \frac{L(\phi)\|\psi\|_\infty + L(\psi)\|\phi\|_\infty}{(1+|g|)^{\frac{1}{D}}}$$
\end{lemma}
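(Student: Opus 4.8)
The plan is to reduce the matrix coefficient $\langle \widetilde\pi(g)\phi,\psi\rangle = \int_{\partial G} \widetilde P_g(\xi)\,\phi(g^{-1}\xi)\overline{\psi(\xi)}\,d\mu(\xi)$ to a concentration statement: almost all the mass of the probability density $\widetilde P_g$ sits on a small ball around $\hat g$, and under $\xi\mapsto g^{-1}\xi$ this ball is carried close to $\check g$. Concretely, I would first record that $\widetilde P_g\,d\mu$ is a probability measure on $\partial G$, and then use Lemma~\ref{generalized shadow lemma} together with the estimate $\widetilde P_g(\xi)\asymp \frac{e^{h(g,\xi)}}{1+|g|}$ to show that for any threshold $s$, $\int_{\{(g,\xi)\le s\}}\widetilde P_g\,d\mu$ is small — more precisely, using the layer-cake lemma and $\mu(\{(g,\xi)>t\})\asymp e^{-ht}$ one gets $\int_{\{(g,\xi)\le s\}}\widetilde P_g\,d\mu \prec \frac{1+s}{1+|g|}$ roughly (the contribution of the region where $(g,\xi)$ is bounded by $s$). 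Choosing $s$ to be a small multiple of $|g|$, say $s = \frac{|g|}{2D}$ or similar, makes this tail $\prec (1+|g|)^{-1/D}$ up to logarithmic factors; one has to be slightly careful and instead choose the cutoff radius of a ball rather than a cutoff on the Gromov product, so that the power $\frac1D$ comes out cleanly from Ahlfors regularity.

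The cleaner route, which I would actually follow, is geometric: set $\rho = (1+|g|)^{-1/(hD)}\cdot(\text{const}) = $ a radius chosen so that $\rho^D \asymp \frac{1}{1+|g|}$, i.e. $\rho \asymp (1+|g|)^{-1/D}$ in the visual metric scale (recall $D=h/\varepsilon$, so $e^{-\varepsilon s}=\rho$ corresponds to $s \asymp \frac{1}{\varepsilon}\ln(1+|g|)$, well below $|g|$ for large $|g|$). By estimate~\eqref{passing_to_hat} and \eqref{visual_metric}, the region $\{\xi : (g,\xi) > s\}$ is sandwiched between two balls around $\hat g$ of radius $\asymp\rho$, so I can replace it by the ball $B=B_\rho(\hat g)$ up to bounded error. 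On $B$, since $\phi$ is Lipschitz, $|\phi(g^{-1}\xi)-\phi(\check g)| \le L(\phi)\cdot\mathrm{diam}(g^{-1}B)$; here I need that $g^{-1}$ maps $B_\rho(\hat g)$ into a ball of comparable radius around $\check g$ — this follows from the quasi-conformality/shadow picture: $g^{-1}\Sigma(g)$ is a neighborhood of $\check g$ of controlled (in fact small, shrinking) diameter, and $B$ sits inside a shadow $\Sigma(g)$ by Lemma~\ref{shadow contains ball}, so $g^{-1}B \subseteq g^{-1}\Sigma(g)$ which has diameter $\prec \rho$ as well (using Lemma~\ref{basic contraction lemma} as in the proof of the shadow lemma). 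Similarly $|\psi(\xi)-\psi(\hat g)| \le L(\psi)\rho$ on $B$. Then
\begin{align*}
\Bigl| \langle \widetilde\pi(g)\phi,\psi\rangle - \phi(\check g)\overline{\psi(\hat g)} \Bigr|
&\le \int_B \widetilde P_g\,\bigl|\phi(g^{-1}\xi)\overline{\psi(\xi)} - \phi(\check g)\overline{\psi(\hat g)}\bigr|\,d\mu \\
&\quad + \phi(\check g)\overline{\psi(\hat g)}\Bigl(1 - \int_B \widetilde P_g\,d\mu\Bigr) + \int_{\partial G\setminus B}\widetilde P_g\,|\phi||\psi|\,d\mu,
\end{align*}
and each term is bounded by $\rho\,(L(\phi)\|\psi\|_\infty + L(\psi)\|\phi\|_\infty)$ plus $\|\phi\|_\infty\|\psi\|_\infty$ times the tail mass $1-\int_B\widetilde P_g\,d\mu$, which by the shadow-type estimate above is also $\prec \rho \asymp (1+|g|)^{-1/D}$; absorbing $\|\phi\|_\infty$ into $L(\phi)\cdot\mathrm{diam}(\partial G) + |\phi(\check g)|$ and similarly for $\psi$ (or simply noting the stated bound already has $\|\phi\|_\infty,\|\psi\|_\infty$ on the right) gives the claim.

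The step I expect to be the main obstacle is the bookkeeping of the tail mass $1 - \int_{B_\rho(\hat g)}\widetilde P_g\,d\mu$: I need this to be $\prec (1+|g|)^{-1/D}$, not merely $o(1)$, and getting the exponent exactly right requires pairing Lemma~\ref{generalized shadow lemma} (valid only for $s < |g|-C$, which is why $s \asymp \frac1\varepsilon\ln(1+|g|)$ is safe) with the layer-cake integration of $\widetilde P_g$ and checking that the logarithmic factors from $\|P_g\|_1 \asymp e^{-h|g|/2}(1+|g|)$ cancel against the $1+|g|$ in the denominator of $\widetilde P_g$. A secondary technical point is confirming that $g^{-1}$ does not expand the ball $B_\rho(\hat g)$ by more than a constant factor — this is where one uses that $\hat g$ lies deep inside the shadow $\Sigma(g)$ (so that $(g^{-1},\xi)$ is bounded for $\xi\in B$, forcing $(g,g\cdot g^{-1}\xi) = (g,\xi)$ large, hence $g^{-1}\xi$ close to $\check g$ via Lemma~\ref{basic contraction lemma}), rather than any uniform bi-Lipschitz property of $g^{-1}$ on $\partial G$, which is false.
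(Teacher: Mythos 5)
Your overall framework (choose $\rho \asymp (1+|g|)^{-1/D}$, split the integral over $B_\rho(\hat g)$ and its complement, use $\widetilde P_g \asymp e^{h(g,\xi)}/(1+|g|)$) matches the paper's, but the step you flag as a ``secondary technical point'' is in fact a fatal gap, and your proposed fix has the dynamics backwards. By Lemma \ref{basic contraction lemma}, $(g^{-1},g^{-1}\xi) \approx |g| - (g,\xi)$; hence for $\xi$ close to $\hat g$, where $(g,\xi)$ is close to $|g|$, one gets $(g^{-1},g^{-1}\xi)\approx 0$, so $g^{-1}\xi$ is \emph{far} from $\check g$. For instance, for $g=a^n$ in a free group, $g^{-1}\hat g = a^{-n}\cdot a^\infty = a^\infty$ is antipodal to $\check g = a^{-\infty}$. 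The map $g^{-1}$ contracts the \emph{complement} of a neighborhood of $\hat g$ toward $\check g$ and spreads a neighborhood of $\hat g$ over the rest of the boundary; correspondingly $g^{-1}\Sigma(g)$ is not a small neighborhood of $\check g$ but the complement of one (this is exactly how the shadow lemma is proved: $\mu(g^{-1}\Sigma(g,\sigma))>1-t$). So the bound $|\phi(g^{-1}\xi)-\phi(\check g)| \le L(\phi)\cdot\mathrm{diam}(g^{-1}B) \prec L(\phi)\rho$ on $B=B_\rho(\hat g)$ is false. A second gap: your tail terms carry a factor $\|\phi\|_\infty\|\psi\|_\infty$ times the tail mass, and this cannot be absorbed into $L(\phi)\|\psi\|_\infty + L(\psi)\|\phi\|_\infty$ (take $\phi,\psi$ constant: the stated right-hand side vanishes while your error term does not).

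The paper sidesteps both problems with one move: since $\langle\widetilde{\pi}(g)1,1\rangle = 1$, it writes $|\langle\widetilde{\pi}(g)\phi,\psi\rangle - \phi(\check g)\overline{\psi(\hat g)}| \le |\langle\widetilde{\pi}(g)\phi,\psi-\psi(\hat g)\rangle| + |\langle\phi-\phi(\check g),\widetilde{\pi}(g^{-1})\psi(\hat g)\rangle|$. In the first term only $\psi - \psi(\hat g)$ must be small near $\hat g$, while $\phi\circ g^{-1}$ is merely bounded by $\|\phi\|_\infty$; in the second term the adjoint identity $\widetilde{\pi}(g)^* = \widetilde{\pi}(g^{-1})$ moves the operator onto the constant, so one integrates $\phi - \phi(\check g)$ against $\widetilde P_{g^{-1}}$, which concentrates at $\check g$. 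No control of $g^{-1}B_\rho(\hat g)$ is ever needed, and no $\|\phi\|_\infty\|\psi\|_\infty$ term appears. The far-field contribution is then bounded not by $\|\psi\|_\infty$ times the tail mass but by integrating $L(\psi)\,d_\varepsilon(\hat g,\xi)\cdot\widetilde P_g(\xi) \prec L(\psi)\,d_\varepsilon(\hat g,\xi)^{1-D}/(1+|g|)$ over the complement of the ball (using $D>1$), giving $\rho^{1-D}/(1+|g|)$, after which $\rho = (1+|g|)^{-1/D}$ optimizes. You should restructure your argument around this symmetric decomposition.
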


\begin{proof}

By the choice of normalization $\langle\widetilde{\pi}(g)1,1\rangle = 1$. Therefore:

\begin{align*}
&|\langle\widetilde{\pi}(g)\phi,\psi\rangle - \phi(\check{g})\overline{\psi(\hat{g})}| 
\\
= &
|\langle\widetilde{\pi}(g)\phi,\psi\rangle - \langle\widetilde{\pi}(g)\phi(\check{g}),{\psi(\hat{g})}\rangle| 
\\
\leq &
|\langle\widetilde{\pi}(g)\phi,\psi - \psi(\hat{g})\rangle| + |\langle\widetilde{\pi}(g)(\phi - \phi(\check{g})), \psi(\hat{g})\rangle|
\\
= &
|\langle\widetilde{\pi}(g)\phi,\psi - \psi(\hat{g})\rangle| + |\langle\phi - \phi(\check{g}), \widetilde{\pi}(g^{-1})\psi(\hat{g})\rangle|
\end{align*}

We estimate the first term, the second term can be estimated similarly after replacing $g$ by $g^{-1}$. Let $\rho > 0$ and consider the ball $B_\rho(\hat{g})$. We will decompose the integral over $ B_\rho(\hat{g})$ and the complement.

\begin{align*}
&|\langle\widetilde{\pi}(g)\phi,\psi - \psi(\hat{g})\rangle| 
\\
\leq &
\int_{\partial G} \widetilde{P}_g(\xi) |\phi(g^{-1}\xi)| |\psi(\xi) - \psi(\hat{g})|d\mu(\xi) 
\\
=&
\int_{B_\rho(\hat{g})} \widetilde{P}_g(\xi) |\phi(g^{-1}\xi)| |\psi(\xi) - \psi(\hat{g})|d\mu(\xi) +
\\&
\int_{\partial G \backslash B_\rho(\hat{g})} \widetilde{P}_g(\xi) |\phi(g^{-1}\xi)| |\psi(\xi) - \psi(\hat{g})|d\mu(\xi)
\end{align*}

For $ \xi \in B_\rho(\hat{g})$, $|\psi(\xi) - \psi(\hat{g})| \leq L(\psi)\rho$ so the first integral is bounded by $\|\phi\|_\infty L(\psi) \rho$. In order to estimate the second integral recall that for any $\xi$,  $(\hat{g},\xi) \geq (g,\xi)$, so that
$\widetilde{P}_g \asymp \frac{e^{h(g,\xi)}}{1+|g|} \prec \frac{e^{h(\hat{g},\xi)}}{1+|g|} \asymp \frac{d_\varepsilon(\hat{g},\xi)^{-D
}}{1+|g|}$. Thus (using the assumption $D>1$):

\begin{align*}
&\int_{\partial G \backslash B_\rho(\hat{g})} \widetilde{P}_g(\xi) |\phi(g^{-1}\xi)| |\psi(\xi) - \psi(\hat{g})|d\mu(\xi) 
\\
\prec& 
\int_{\partial G \backslash B_\rho(\hat{g})} \frac{\|\phi\|_\infty L(\psi) d_\varepsilon(\hat{g},\xi)^{1-D}}{1+|g|}d\mu(\xi) 
\\
\leq&
\int_{\partial G \backslash B_\rho(\hat{g})} \frac{\|\phi\|_\infty L(\psi) \rho^{1-D}}{1+|g|}d\mu(\xi) \leq 
\frac{\|\phi\|_\infty L(\psi) \rho^{1-D}}{1+|g|}
\end{align*}

All in all we get a bound on the first term proportional to $\|\phi\|_\infty L(\psi)(\rho + \frac{\rho^{1-D}}{1+|g|})$ and similarly a bound on the second term proportional to $\|\psi\|_\infty L(\phi)(\rho + \frac{\rho^{1-D}}{1+|g|})$. Finally: 
$$|\langle\widetilde{\pi}(g)\phi,\psi\rangle - \phi(\check{g})\overline{\psi(\hat{g})}| \prec(\|\phi\|_\infty L(\psi) + \|\psi\|_\infty L(\phi))(\rho + \frac{\rho^{1-D}}{1+|g|})$$

One can now differentiate with respect to $\rho$ in order to find the best bound. To obtain the claim it is enough to take $\rho = (1+|g|)^{-\frac{ 1}{D}}$.

\end{proof}

\subsection{Kernel operators}

Given a function $K \in L^\infty (\partial^2 G)$ we can define a bounded operator $T_K$ on $L^2(\mu)$ by $T_Kf(\xi) = \int K(\xi,\eta) f(\eta) d\mu(\eta)$. These operators are called \textbf{kernel operators}. The key theorem is that kernel operators with non-negative kernel $K$ are in the positive cone of $\pi$. This will give us a rich enough family of operators to show the representation is irreducible.

Assume now that $G$ is unimodular so lemma \ref{cancellation lemma} holds and fix  and a positive kernal $K \geq 0$ in $L^\infty(\partial^2 G)$. Given $R$, consider a family $F \subset A_R(\alpha)$ as in lemma \ref{discrete aproximation}, for some fixed large enough $\alpha$. Fix $\tau$ as in lemma \ref{cancellation lemma} and denote $B = B_\tau(1)$. We partition $\partial^2 G$ as follows, choose a linear order on $F^2$, the sets $\Sigma(g) \times \Sigma(h)$ cover $\partial^2 G$ for $g,h \in F$. Using the order on $F^2$ we define $V_{g,h} = \Sigma(g) \times \Sigma(h) \backslash \bigcup_{(s,t) < (g,h)} \Sigma(s) \times \Sigma(t)$. After discarding empty sets $\{V_{g,h}| (g,h) \in F^2\}$ is a partition of $\partial^2 G$. Denote $U_{g,h} = \{k \in gBh^{-1}| |k| > |g| + |h| -3\tau\}$, by lemma \ref{cancellation lemma} and since $G$ is unimodular $\lambda(U_{g,h}) \asymp 1$. Note that $U_{g,h} \subseteq A_{2R}(2\alpha+3\tau)$.
Define:

$$w_{g,h} =\frac{ \int_{V_{g,h}} K d\mu^2}{\lambda(U_{g,h})}$$
Denote also $w(k) = \sum_{F^2} w_{g,h}\mathbbm{1}_{U_{g,h}}(k)$ and $S_{R,g,h} = \int_{U_{g,h}} \widetilde{\pi}(k) d\lambda(k)$.
Finaly define:

$$S_R = \int_G w(k)\widetilde{\pi}(k) d\lambda(k) = \sum_{F^2}w_{g,h}\int_{U_{g,h}} \widetilde{\pi}(k) d\lambda(k) = \sum_{F^2} w_{g,h} S_{R,g,h}$$

We now prove that if $G$ is a non-elementary unimodular locally compact hyperbolic group then $S_R \underset{R \to \infty}{\longrightarrow} T_K$ in the weak operator topology. This follows easily from the next two lemmas.

\begin{lemma}
If $G$ is a non-elementary unimodular locally compact hyperbolic group and $\phi, \psi \in Lip(\partial G)$ then $\langle S_R \phi,\psi \rangle \underset{R \to \infty}{\longrightarrow} \langle T_K \phi, \psi \rangle $.
\end{lemma}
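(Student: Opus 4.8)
The plan is to show that $\langle S_R \phi, \psi \rangle$ converges to $\int K(\xi,\eta)\phi(\eta)\overline{\psi(\xi)}\,d\mu^2(\xi,\eta)$ by comparing $S_R$ term-by-term with a Riemann-type sum for the double integral defining $T_K$. The starting observation is that $S_R = \sum_{F^2} w_{g,h} S_{R,g,h}$ where $S_{R,g,h} = \int_{U_{g,h}} \widetilde{\pi}(k)\,d\lambda(k)$, and $w_{g,h} = \tfrac{1}{\lambda(U_{g,h})}\int_{V_{g,h}} K\,d\mu^2$. So $\langle S_R\phi,\psi\rangle = \sum_{F^2} \tfrac{1}{\lambda(U_{g,h})}\left(\int_{V_{g,h}} K\,d\mu^2\right)\int_{U_{g,h}} \langle\widetilde\pi(k)\phi,\psi\rangle\,d\lambda(k)$. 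The heart of the argument is to pin down $\langle\widetilde\pi(k)\phi,\psi\rangle$ for $k \in U_{g,h}$ using the matrix coefficient estimate of Lemma \ref{matrix coefficient esimate}: it equals $\phi(\check k)\overline{\psi(\hat k)}$ up to an error of size $\prec (1+|k|)^{-1/D} \asymp (1+2R)^{-1/D}$ (uniformly, since $|k|\approx 2R$ on $U_{g,h}$), which tends to $0$.

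Next I would control the points $\hat k, \check k$ for $k\in U_{g,h}$. Since $k \in gBh^{-1}$ with $B=B_\tau(1)$ and $|k| > |g|+|h|-3\tau$, i.e. there is almost no cancellation in the product $g\cdot(\text{small})\cdot h^{-1}$, a Gromov-product computation (in the spirit of Lemma \ref{cancellation lemma}) shows $\hat k$ lies within bounded visual distance of $\hat g$ and $\check k$ within bounded visual distance of $\check h$; more precisely, as $R\to\infty$ the relevant shadows shrink and $\Sigma(g)\times\Sigma(h)$ — hence $V_{g,h}$ — has $\mu^2$-diameter going to $0$ uniformly (by Lemma \ref{shadow lemma} combined with the almost-ultrametric property, $\mu(\Sigma(g))\asymp e^{-hR}$ forces small visual diameter). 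Therefore for $k\in U_{g,h}$ and $(\xi,\eta)\in V_{g,h}$ one has $|\phi(\check k)\overline{\psi(\hat k)} - \phi(\eta)\overline{\psi(\xi)}| \leq L(\phi)\|\psi\|_\infty\,\mathrm{diam}_\varepsilon(V_{g,h}) + L(\psi)\|\phi\|_\infty\,\mathrm{diam}_\varepsilon(V_{g,h}) \to 0$ uniformly in $g,h$. Substituting both approximations, $\tfrac{1}{\lambda(U_{g,h})}\int_{U_{g,h}}\langle\widetilde\pi(k)\phi,\psi\rangle\,d\lambda(k) = \phi(\eta_{g,h})\overline{\psi(\xi_{g,h})} + o(1)$ for any sample point $(\xi_{g,h},\eta_{g,h})\in V_{g,h}$, with $o(1)$ uniform over $F^2$.

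Feeding this back, $\langle S_R\phi,\psi\rangle = \sum_{F^2}\left(\int_{V_{g,h}} K\,d\mu^2\right)\big(\phi(\eta_{g,h})\overline{\psi(\xi_{g,h})}+o(1)\big)$. Since $\{V_{g,h}\}$ partitions $\partial^2 G$, the main sum is exactly a Riemann sum for $\int K\phi\otimes\overline\psi\,d\mu^2 = \langle T_K\phi,\psi\rangle$, and it converges because $K\cdot(\phi\otimes\overline\psi)$ is integrable and $\mathrm{diam}(V_{g,h})\to 0$ (so oscillation of the continuous factor $\phi\otimes\overline\psi$ over each cell vanishes). The error sum is bounded by $o(1)\cdot\sum_{F^2}\int_{V_{g,h}}|K|\,d\mu^2 = o(1)\cdot\|K\|_{L^1(\mu^2)} \leq o(1)\cdot\|K\|_\infty\,\mu^2(\partial^2 G) \to 0$.

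The main obstacle I anticipate is the uniformity: ensuring that both the matrix-coefficient error $(1+|k|)^{-1/D}$ and the geometric error $\mathrm{diam}_\varepsilon(V_{g,h})$ are controlled simultaneously and uniformly over all cells of the partition as $R\to\infty$, and in particular verifying that $|k|$ is genuinely comparable to $2R$ for all $k \in U_{g,h}$ (which follows from $U_{g,h}\subseteq A_{2R}(2\alpha+3\tau)$) and that the near-no-cancellation condition defining $U_{g,h}$ indeed forces $\hat k$ near $\hat g$ and $\check k$ near $\check h$. This last point is where unimodularity re-enters implicitly — it guarantees $\lambda(U_{g,h})\asymp 1$ via Lemma \ref{cancellation lemma}, so the weights $w_{g,h}$ don't degenerate and the averaging $\tfrac{1}{\lambda(U_{g,h})}\int_{U_{g,h}}$ is a genuine average.
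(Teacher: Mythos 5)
Your proposal follows essentially the same route as the paper's proof: decompose $\langle S_R\phi,\psi\rangle$ over the cells $U_{g,h}\times V_{g,h}$, apply Lemma \ref{matrix coefficient esimate} with $|k|\approx 2R$ uniformly on $U_{g,h}$, check by a Gromov-product computation that $\hat{k}$ is $e^{-\varepsilon R}$-close to $\hat{g}$ and $\check{k}$ to $\hat{h}$, note that $V_{g,h}\subseteq\Sigma(g)\times\Sigma(h)$ has visual diameter $\prec e^{-\varepsilon R}$, and sum the resulting uniform error against $\|K\|_1$. The only slip is that you place $\check{k}$ near $\check{h}$ rather than near $\hat{h}$ (since $k^{-1}\in hB^{-1}g^{-1}$, the point $\check{k}=\widehat{k^{-1}}$ tracks $\hat{h}$, which is what your subsequent comparison with $\eta\in\Sigma(h)$ actually requires), and your detour through $\mu(\Sigma(g))\asymp e^{-hR}$ plus Ahlfors regularity to bound the diameter of the shadows is more roundabout than the direct estimate $(\xi,\eta)\gtrsim R$ for $\xi,\eta\in\Sigma(g)$, but neither affects the correctness of the argument.
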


\begin{proof}
 If $k \in U_{g,h}$ then $|k| \approx |g| + |h|$, since $d(g,k) \lesssim |h|$ we get that $(g,k) \approx |g|$. Thus we have $(\hat{g},\hat{k}) \gtrsim min\{(\hat{g},g)(g,k)(k,\hat{k})\} \approx |g| \approx R$ so $d_\varepsilon(\hat{g}, \hat{k}) \prec e^{-\varepsilon R}$. Similarly $d_\varepsilon(\hat{h}, \check{k}) \prec e^{-\varepsilon R}$. Therefore $|\phi(\hat{h})\overline{\psi(\hat{g})} - \phi(\check{k})\overline{\psi(\hat{k})}| \prec (L(\phi)\|\psi\|_\infty + L(\psi)\|\phi\|_\infty)e^{-\varepsilon R}$, so by lemma \ref{matrix coefficient esimate}:

\begin{align*}
    &|\langle\widetilde{\pi}(k)\phi,\psi\rangle - \phi(\hat{h})\overline{\psi(\hat{g})}| 
    \\
    <& |\langle\widetilde{\pi}(k)\phi,\psi\rangle - \phi(\check{k})\overline{\psi(\hat{k})} | + |\phi(\check{k})\overline{\psi(\hat{k})} - \phi(\hat{h})\overline{\psi(\hat{g})}| 
    \\
    \prec&
    (L(\phi)\|\psi\|_\infty + L(\psi)\|\phi\|_\infty) \left(\frac{1}{(1+R)^{\frac{1}{D}}} + e^{-\varepsilon R} \right )
\end{align*}

As a result we get:

$$\abs{ \frac {1}{\lambda(U_{g,h})}\left \langle S_{R,g,h} \phi, \psi \right \rangle  - \phi(\hat{h})\overline{\psi(\hat{g})}} \prec (L(\phi)\|\psi\|_\infty + L(\psi)\|\phi\|_\infty) \left (\frac{1}{(1+R)^{\frac{1}{D}}} + e^{-\varepsilon R} \right ) $$

In addition, if $\xi,\eta \in V_{g,h}$ then $d_\varepsilon(\xi,\hat{g}), d_\varepsilon(\eta,\hat{h}) \prec e^{-\varepsilon R}$ so  $|\phi(\hat{h})\overline{\psi(\hat{g})} - \phi(\eta)\overline{\psi(\xi)}| \prec (L(\phi)\|\psi\|_\infty + L(\psi)\|\phi\|_\infty)e^{-\varepsilon R}$ and thus:

\begin{align*}
&\int_{V_{g,h}} K(\xi,\eta)|\phi(\eta)\overline{\psi(\xi)} - \phi(\hat{h})\overline{\psi(\hat{g})}|d\mu^2(\eta,\xi)
\\
\prec&
\int_{V_{g,h}} K(\xi,\eta)d\mu^2 (L(\phi)\|\psi\|_\infty + L(\psi)\|\phi\|_\infty)e^{-\varepsilon R}
\end{align*}

Finally, using the previous two estimates:
\begin{align*}
&|\langle S_R \phi, \psi \rangle - \langle T_K \phi, \psi \rangle| 
\\
\leq& 
 \sum_{F^2}  \abs{ w_{g,h} \langle S_{R,g,h} \phi ,\psi \rangle - \int_{V_{g,h}} K(\xi,\eta)\phi(\eta)\overline{\psi(\xi)}d\mu^2(\xi,\eta)} 
 \\
 \leq&
 \sum_{F^2}\left(\int_{V_{g,h}} K d\mu^2\right)\abs{\frac{1}{\lambda(U_{g,h})} \langle S_{R,g,h} \phi ,\psi \rangle -  \phi(\hat{h})\overline{\psi(\hat{g})}} 
 \\
 +&
 \sum_{F^2}\int_{V_{g,h}} K(\xi,\eta)| \phi(\hat{h})\overline{\psi(\hat{g})} - \phi(\eta)\overline{\psi(\xi)}|d\mu^2(\eta,\xi)
 \\
 \prec&
\sum_{F^2}\left[ \left(\int_{V_{g,h}} K d\mu^2 \right) (L(\phi)\|\psi\|_\infty + L(\psi)\|\phi\|_\infty)\left(\frac{1}{(1+R)^{\frac{1}{D}}} + 2e^{-\varepsilon R} \right )\right]
\\
= &
 \|K\|_1(L(\phi)\|\psi\|_\infty + L(\psi)\|\phi\|_\infty)\left(\frac{1}{(1+R)^{\frac{1}{D}}} + 2e^{-\varepsilon R} \right )
 \end{align*}

The right hand side tends to $0$ as $R \to \infty$  as needed.

\end{proof}

\begin{lemma} \label{uniformly bounded}
If $G$ is a non-elementary unimodular locally compact hyperbolic group then the norms $\|S_R\|_{op}$ are bounded uniformly in R.
\end{lemma}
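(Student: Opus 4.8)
The plan is to bound $\|S_R\|_{op}$ by interpolating between $L^1$ and $L^\infty$ estimates; note that for each fixed $R$ the operator $S_R$ is a priori bounded on $L^2(\mu)$, so the only point is uniformity. Since $w\geq 0$ and every $\widetilde P_k\geq 0$, the formula $S_Rf(\xi)=\int_G w(k)\widetilde P_k(\xi)f(k^{-1}\xi)\,d\lambda(k)$ gives $|S_Rf(\xi)|\leq\|f\|_\infty\,(S_R1)(\xi)$, hence $\|S_R\|_{L^\infty\to L^\infty}\leq\|S_R1\|_\infty$. For the $L^1\to L^1$ bound I would, inside $\int_{\partial G}|S_Rf(\xi)|\,d\mu(\xi)$, interchange the integrals and change variables $\eta=k^{-1}\xi$; using the identity $\widetilde P_k(k\eta)\,\tfrac{dk^{-1}_*\mu}{d\mu}(\eta)=\widetilde P_{k^{-1}}(\eta)$ and then the substitution $k\mapsto k^{-1}$, which preserves the Haar measure precisely because $G$ is unimodular, one obtains $\|S_Rf\|_1\leq\int_{\partial G}|f(\eta)|\,(\check S_R1)(\eta)\,d\mu(\eta)$, where $\check S_R$ is the operator built exactly as $S_R$ but from the transposed kernel $K^\ast(\xi,\eta)=K(\eta,\xi)$ (this identification uses that $B=B_\tau(1)$ is symmetric and that $U_{g,h}^{-1}=U_{h,g}$, again via unimodularity so that $\lambda(U_{h,g})=\lambda(U_{g,h})$). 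Thus $\|S_R\|_{L^1\to L^1}\leq\|\check S_R1\|_\infty$, and by Riesz--Thorin the whole lemma reduces to the single claim: for every non-negative $K\in L^\infty(\partial^2G)$ one has $\|S_R1\|_\infty\prec\|K\|_\infty$ with a constant independent of $R$ and of $K$.

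To estimate $(S_R1)(\xi)=\sum_{(g,h)\in F^2}w_{g,h}\int_{U_{g,h}}\widetilde P_k(\xi)\,d\lambda(k)$, recall $\widetilde P_k(\xi)\asymp\frac{e^{h(k,\xi)}}{1+|k|}$ and that $|k|\approx 2R$ for $k\in U_{g,h}\subseteq A_{2R}(2\alpha+3\tau)$. Fix $\xi$ and split the sum according to whether $d_\varepsilon(\hat g,\xi)>L_0e^{-\varepsilon R}$ (the \emph{transversal} indices) or $d_\varepsilon(\hat g,\xi)\leq L_0e^{-\varepsilon R}$ (the \emph{diagonal} indices), with $L_0$ a large absolute constant. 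For a transversal index, since $(\hat g,\hat k)\gtrsim R$ for every $k\in U_{g,h}$ (as in the proof of the previous lemma: $|k|\approx|g|+|h|$ and $d(g,k)\lesssim|h|$ force $(g,k)\approx|g|$) while $(\hat g,\xi)\lesssim R$ here, a short computation with the quasi-ultrametric inequality and estimate \ref{passing_to_hat} gives $(k,\xi)\approx(g,\xi)$ uniformly over $k\in U_{g,h}$; hence $\int_{U_{g,h}}\widetilde P_k(\xi)\,d\lambda(k)\asymp\frac{e^{h(g,\xi)}}{1+R}\lambda(U_{g,h})\asymp\frac{e^{h(g,\xi)}}{1+R}$ (using $\lambda(U_{g,h})\asymp 1$ from lemma \ref{cancellation lemma}). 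Since the $V_{g,h}$ are disjoint and $\bigcup_h V_{g,h}\subseteq\Sigma(g)\times\partial G$ we have $\sum_h w_{g,h}\asymp\sum_h\int_{V_{g,h}}K\,d\mu^2\leq\|K\|_\infty\mu(\Sigma(g))\asymp\|K\|_\infty e^{-hR}$; summing over $g$, the transversal part is $\prec\frac{\|K\|_\infty}{(1+R)e^{hR}}\sum_{g\in F}e^{h(g,\xi)}$, and since $F$ is $C$-separated inside $A_R(\alpha)$, lemma \ref{bounded at 1} yields $\sum_{g\in F}e^{h(g,\xi)}\prec(1+R)e^{hR}$, so the transversal part is $\prec\|K\|_\infty$.

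For the diagonal part, observe first that by \ref{passing_to_hat} a diagonal index has $(g,\xi)\approx|g|\approx R$, so by the cone lemma \ref{cone lemma} together with the $C$-separation of $F$ there are at most $N_0$ diagonal indices $g$, with $N_0$ an absolute constant. Fix one such $g$. Packaging the weights into $w^{(g)}(k)=\sum_h w_{g,h}\mathbbm{1}_{U_{g,h}}(k)$ one has $\sum_h w_{g,h}\int_{U_{g,h}}\widetilde P_k(\xi)\,d\lambda(k)=\int_G w^{(g)}(k)\widetilde P_k(\xi)\,d\lambda(k)$. On $U_{g,h}$, $w_{g,h}\asymp\int_{V_{g,h}}K\,d\mu^2\leq\|K\|_\infty\mu(\Sigma(g))\mu(\Sigma(h))\asymp\|K\|_\infty e^{-h(|g|+|h|)}\asymp\|K\|_\infty e^{-h|k|}$, and since $C$-separation of $F$ forces the sets $U_{g,h}$ $(h\in F)$ to overlap with bounded multiplicity, $w^{(g)}(k)\prec\|K\|_\infty e^{-h|k|}$ throughout $\bigcup_h U_{g,h}\subseteq A_{2R}(2\alpha+3\tau)$. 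Hence $\int_G w^{(g)}(k)\widetilde P_k(\xi)\,d\lambda(k)\prec\|K\|_\infty e^{-2hR}\int_{A_{2R}(2\alpha+3\tau)}\widetilde P_k(\xi)\,d\lambda(k)\prec\|K\|_\infty e^{-2hR}\cdot e^{2hR}=\|K\|_\infty$, by lemma \ref{bounded at 1} applied with radius $2R$. Summing over the at most $N_0$ diagonal indices, the diagonal part is also $\prec\|K\|_\infty$, so $\|S_R1\|_\infty\prec\|K\|_\infty$ uniformly in $R$, and combining with the first step finishes the proof.

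The step I expect to be the main obstacle is the diagonal part: the naive estimate there — bounding $\widetilde P_k(\xi)$ by its $U_{g,h}$-supremum $\asymp e^{2hR}/(1+R)$ and summing — overshoots by a factor of order $e^{hR}$. The two points that rescue it are that only boundedly many indices are diagonal for a given $\xi$, and that after absorbing the combinatorial weight $w_{g,h}$ into the geometric weight $e^{-h|k|}$ one may invoke the \emph{sharp} estimate $\int_{A_{2R}}\widetilde P_k(\xi)\,d\lambda(k)\prec e^{2hR}$ of lemma \ref{bounded at 1} rather than the trivial $\prec e^{3hR}/(1+R)$. One should also be careful, when invoking lemma \ref{discrete aproximation}, to take the constant $C$ larger than the hyperbolicity constant and than $2\tau$, so that all the bounded-overlap and $C$-separation statements above are available. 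As flagged, unimodularity re-enters twice beyond the $L^1$-duality reduction: in $\lambda(U_{g,h})\asymp 1$ (lemma \ref{cancellation lemma}), used in the transversal estimate, and implicitly in the inversion-invariance of $\lambda$.
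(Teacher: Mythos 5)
Your proof is correct and shares the paper's overall strategy --- exploit positivity of $S_R$ to reduce the operator norm to the two sup--norms $\|S_R\mathbbm{1}_{\partial G}\|_\infty$ and $\|S_R^*\mathbbm{1}_{\partial G}\|_\infty$ (your Schur-test/Riesz--Thorin reduction is interchangeable with the paper's Cauchy--Schwarz argument on $L^4$ functions, and both invoke unimodularity in the same place, namely $\int w(k^{-1})\widetilde\pi(k)\,d\lambda(k)$) --- but you obtain the central estimate $\|S_R\mathbbm{1}_{\partial G}\|_\infty\prec 1$ by a genuinely different and more laborious route. The paper's key observation is that the sets $U_{g,h}$ are pairwise \emph{disjoint over all of} $F^2$ (if $U_{g,h}\cap U_{g',h'}\neq\emptyset$ then $d(g,g'),d(h,h')<C$, and the $C$-separation of $F$ forces $(g,h)=(g',h')$); hence the single weight function $w(k)=\sum w_{g,h}\mathbbm{1}_{U_{g,h}}(k)$ satisfies $w\prec e^{-2hR}$ pointwise, and one application of Lemma \ref{bounded at 1} on $A_{2R}(2\alpha+3\tau)$ gives $S_R\mathbbm{1}_{\partial G}\prec e^{-2hR}\cdot e^{2hR}=1$ in one shot --- no transversal/diagonal dichotomy is needed. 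Your split arises because you sum the contributions of the indices $g\in F$ separately, which forces you to treat the $\asymp e^{hR}$ transversal indices by the geometric identity $(k,\xi)\approx(g,\xi)$ and a discretized form of Lemma \ref{bounded at 1} over the separated set $F$, reserving the sharp integral bound for the boundedly many diagonal indices; in exchange you only ever need disjointness of the $U_{g,h}$ fiberwise in $h$. Both arguments are sound; yours buys a slightly weaker combinatorial input at the cost of two extra estimates that you should make explicit if you write this up: (i) the passage from $\sum_{g\in F}e^{h(g,\xi)}$ to $\int_{A_R(\alpha+C)}e^{h(g',\xi)}\,d\lambda(g')$ requires that balls of radius $C/2$ about points of $F$ be disjoint \emph{and} have Haar measure bounded below, so $C$ must be taken large enough for the latter (as in Lemma \ref{coverlemma}); and (ii) the claim $(k,\xi)\approx(g,\xi)$ on transversal indices genuinely needs $L_0$ chosen large relative to the implied constants in $(\hat g,\hat k)\gtrsim R$, since it fails outright in the diagonal regime.
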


\begin{proof}
We first show $w(x) \prec e^{-2hR}$. For every $(g,h) \in F^2$, $V_{g,h} \subseteq \Sigma(g) \times \Sigma(h)$, so by lemma \ref{shadow lemma} $\int_{V_{g,h}}Kd\mu^2 \prec \|K\|_\infty e^{-2hR} \prec e^{-2hR}$. Since $\lambda(U_{g,h}) \asymp 1$ we get that $w_{g,h} \prec e^{-2hR}$. Now all we need to show is that the $U_{g,h}$ are disjoint. Indeed if $U_{g,h} \cap U_{g',h'} \neq \phi$ there exist $s \in B_\tau(g), s' \in B_\tau(g')$ such that $|sh^{-1}| \geq |s| + |h| -4\tau, |s'h'^{-1}| \geq |s'| + |h'| -4\tau$ and $sh^{-1} = s'h'^{-1}$. $\tau$ is a fixed constant so $(sh^{-1},s) \gtrsim |s| \approx R, (s'h'^{-1},s')\gtrsim|s'|\approx R$. Since $|g|,|g'|,|s|,|s'| \approx R$ and $d(g,s),d(g',s') \approx 0$, we get $(g,s),(g',s') \approx R$. Thus using $sh^{-1} = s'h'^{-1}$, $(g,g') \gtrsim min\{(g,s),(s,sh^{-1})(s'h'^{-1},s'),(s',g')\} \approx R$. Since $|g|,|g'| \approx R$ we get that $d(g,g') \approx 0$. Taking inverses we similarly see that $d(h,h') \approx 0$. As a result there exists a constant $C$ depending only on $(G,d)$  such that if $U_{g,h} \cap U_{g',h'} \neq \phi$ then $d(g,g'), d(h,h') < C$. By the definition of $F$ (and our assumptions on the choice of $\sigma$ defining $\Sigma(g) = \Sigma(g,\sigma)$) we know that for this specific $C$, lemma \ref{discrete aproximation} holds and thus since $d(g,g'), d(h,h') < C$ we conclude that $g = g'$ and $h = h'$ as needed. Note that there is no circular reasoning in the proof since the constant $C$ which we obtained depends only on $(G,d)$ so our choice of the parameter $\sigma$ is independent of $R$.

Using $w(x) \prec e^{-2hR}$ and the fact that $w(k)$ is supported in $A_{2R}(2\alpha +3\tau)$ it follows from lemma \ref{bounded at 1} that $\|S_R \mathbbm{1}_{\partial G}\|_\infty \prec 1$ because: 
$$S_R \mathbbm{1}_{\partial G} = \int w(k)\widetilde{\pi}(k)  \mathbbm{1}_{\partial G}d\lambda(k) = \int w(k)\widetilde{P}_gd\lambda(k) \prec \int_{A_{2R}(2\alpha +3\tau)} e^{-2hR}\widetilde{P}_gd\lambda(k) \prec 1$$
Since $G$ is unimodular, 
$$S_R^* = \int w(g)\widetilde{\pi}(g^{-1}) d\lambda(g) = \int w(g^{-1})\pi(g) d\lambda(g) $$
(otherwise we would need right Haar measure). Using this formula one can now similarly show that $\|S_R^* \mathbbm{1}_{\partial G}\|_\infty \prec 1$.

Let $\phi, \psi \in L^4(\mu)$ and consider the measure $d\alpha = w(g)d\lambda(g)$. Using Cauchy-Shwartz and our estimates on $\|S_R \mathbbm{1}_{\partial G}\|_\infty, \|S_R^* \mathbbm{1}_{\partial G}\|_\infty$ we have:

\begin{align*}
&|\langle S_R \phi, \psi \rangle|^2 = \abs{\int_{G\times\partial G}\widetilde{P}_g(\xi)\phi(g^{-1}\xi)\overline{\psi(\xi)}d\alpha\times\mu(g,\xi)}^2
\\
\leq&
\abs{\int_{G\times\partial G}\widetilde{P}_g(\xi) |\phi(g^{-1}\xi)|^2 d\alpha\times\mu(g,\xi)}\abs{\int_{G\times\partial G}\widetilde{P}_g(\xi) |\psi(\xi)|^2 d\alpha\times\mu(g,\xi)} 
\\
=&
|\langle S_R |\phi|^2,\mathbbm{1}_{\partial G}\rangle||\langle S_R \mathbbm{1}_{\partial G}, |\psi|^2 \rangle|
=  |\langle |\phi|^2, S_R^* \mathbbm{1}_{\partial G}\rangle||\langle S_R \mathbbm{1}_{\partial G}, |\psi|^2 \rangle|
\\
\leq&
\|S_R^* \mathbbm{1}_{\partial G}\|_\infty \|S_R \mathbbm{1}_{\partial G}\|_\infty \|\phi^2\|_1 \|\psi^2\|_1 \prec \|\phi\|_2\|\psi\|_2
\end{align*}

The claim now follows by density of $L^4(\mu)$ in $L^2(\mu)$. 
\end{proof}

\begin{theorem}
Let $G$ be a unimodular, non-elementary, locally compact hyperbolic group. Let $d \in \mathcal{D}(G)$ and let $\mu$, $\pi$ be the corresponding measure and boundary representation. The kernel operators with non-negative kernels are in the positive cone of $\pi$.
\end{theorem}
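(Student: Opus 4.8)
The plan is to exhibit each kernel operator $T_K$ (for $K\ge 0$) as a weak operator limit of the operators $S_R$ constructed above, each of which is visibly $\pi$ applied to a non-negative $L^1$ function, and then invoke that the positive cone is by definition weak operator closed.

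First I would record that $T_K$ is a bounded (indeed Hilbert--Schmidt) operator: since $\mu$ is finite, $K\in L^\infty(\partial^2 G)\subseteq L^2(\mu\times\mu)$, and $\|T_Kf\|_2\le \|K\|_\infty\,\mu(\partial G)\,\|f\|_2$. Next I would check that each $S_R$ lies in the positive cone. Unwinding the definition,
\[
S_R=\int_G w(k)\,\widetilde{\pi}(k)\,d\lambda(k)=\int_G \frac{w(k)}{\|P_k\|_1}\,\pi(k)\,d\lambda(k)=\pi(f_R),\qquad f_R(k):=\frac{w(k)}{\|P_k\|_1}.
\]
Here $w\ge 0$, being a non-negative combination of the numbers $\int_{V_{g,h}}K\,d\mu^2\ge 0$; moreover $w(k)\prec e^{-2hR}$ (shown in the proof of Lemma \ref{uniformly bounded}), and $w$ is supported on the annulus $A_{2R}(2\alpha+3\tau)$, which has finite Haar measure by Theorem \ref{growth estimate}. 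On that annulus $\|P_k\|_1\asymp e^{-h|k|/2}(1+|k|)$ is bounded below by a positive constant (depending on $R$), so $f_R$ is a non-negative, bounded, compactly supported function, hence $f_R\in L^1(\lambda)$. Therefore $S_R=\pi(f_R)$ lies in the image of the non-negative $L^1$ functions under $\pi$, a fortiori in the positive cone.

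It remains to upgrade the two preceding lemmas to genuine weak operator convergence $S_R\to T_K$. The lemma establishing $\langle S_R\phi,\psi\rangle\to\langle T_K\phi,\psi\rangle$ for all $\phi,\psi\in Lip(\partial G)$, combined with the uniform bound $M:=\sup_R\|S_R\|_{op}<\infty$ from Lemma \ref{uniformly bounded} and the density of $Lip(\partial G)$ in $L^2(\mu)$, yields $\langle S_R\phi,\psi\rangle\to\langle T_K\phi,\psi\rangle$ for all $\phi,\psi\in L^2(\mu)$ via a routine three-epsilon estimate: approximate $\phi,\psi$ in $L^2$ by Lipschitz $\phi',\psi'$, bound the errors $|\langle S_R(\phi-\phi'),\psi\rangle|$, $|\langle S_R\phi',\psi-\psi'\rangle|$ (and the analogues for $T_K$) by $\max(M,\|T_K\|_{op})$ times a small quantity uniformly in $R$, and apply the convergence on $\phi',\psi'$. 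Hence $S_R\to T_K$ in the weak operator topology. Since the positive cone is by definition the weak operator closure of $\{\pi(f):f\in L^1(\lambda),\ f\ge 0\}$ and every $S_R$ belongs to this set, the limit $T_K$ belongs to the positive cone.

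The essential content is already packaged into the two lemmas, so the remaining steps are bookkeeping; the points that need care are (i) verifying that $f_R$ is honestly in $L^1(\lambda)$ — which is where Theorem \ref{growth estimate} and the two-sided control of $\|P_k\|_1$ enter — so that $S_R$ is literally $\pi(f_R)$ rather than merely a weak limit of such operators, and (ii) the interplay of the uniform operator bound with the density of $Lip(\partial G)$ in the three-epsilon argument.
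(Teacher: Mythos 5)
Your proof is correct and follows essentially the same route as the paper: combine the lemma giving $\langle S_R\phi,\psi\rangle\to\langle T_K\phi,\psi\rangle$ for Lipschitz $\phi,\psi$ with the uniform bound on $\|S_R\|_{op}$ and the density of $Lip(\partial G)$ in $L^2(\mu)$. The extra verifications you include --- that each $S_R$ equals $\pi(f_R)$ for an honest non-negative $L^1$ function $f_R$, and the explicit three-epsilon upgrade to weak operator convergence --- are details the paper leaves implicit, and you carry them out correctly.
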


\begin{proof}
Since the operators $S_R$ have uniformly bounded norms, in order to check convergence in the weak operator topology it is enough to check that $\langle S_R \phi,\psi \rangle \underset{R \to \infty}{\longrightarrow} \langle T_K \phi, \psi \rangle $ for $\phi, \psi$ in a dense subset of $L^2(\mu)$. When $K \geq 0$ we saw that this convergence holds for $\phi,\psi \in Lip(\partial G)$.
\end{proof}

We now prove one more lemma we will need later.

\begin{lemma} \label{projections in the positive cone}
    For any Borel set $E \subseteq \partial G$ the projection $P_E$ on to $L^2(E)$ is in the positive cone of $\pi$.
\end{lemma}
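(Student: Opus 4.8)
The plan is to exhibit $P_E$ as a weak-operator limit of kernel operators with non-negative kernels and then invoke the previous theorem together with the fact that the positive cone, being by definition a weak-operator closure, is weak-operator closed. The non-negative kernel operators I will use are (cut-downs to $E$ of) conditional expectations with respect to finite partitions of $\partial G$, so that the approximation is just the martingale convergence theorem. If $\mu(E)=0$ then $P_E=0$ is trivially in the positive cone, so assume $\mu(E)>0$.

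First I would choose, for each $n$, a finite Borel partition $\mathcal{P}_n=\{A^n_1,\dots,A^n_{k_n}\}$ of (a $\mu$-conull subset of) $\partial G$ such that: each $A^n_i$ has positive $\mu$-measure and is contained in $E$ or in $\partial G\setminus E$; $\mathcal{P}_{n+1}$ refines $\mathcal{P}_n$; and the $d_\varepsilon$-diameters of the parts tend to $0$. Such partitions exist because $(\partial G,d_\varepsilon)$ is compact: start from $\{E,\partial G\setminus E\}$, repeatedly take common refinements with partitions generated by finite $1/n$-nets of balls, and discard the $\mu$-null pieces. Then I would set
$$K_n(\xi,\eta)=\sum_{i:\,A^n_i\subseteq E}\frac{1}{\mu(A^n_i)}\,\mathbbm{1}_{A^n_i}(\xi)\,\mathbbm{1}_{A^n_i}(\eta),$$
which is a non-negative element of $L^\infty(\partial^2 G)$ (its sup norm is $\max_i 1/\mu(A^n_i)<\infty$), and a one-line computation shows $T_{K_n}f=\mathbbm{1}_E\cdot\mathbb{E}[f\mid\mathcal{P}_n]$. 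Since $(\partial G,\mu)$ is a standard probability space and the partitions refine with mesh going to $0$, the $\sigma$-algebras $\sigma(\mathcal{P}_n)$ generate the Borel $\sigma$-algebra mod $\mu$-null sets, so the martingale convergence theorem gives $\mathbb{E}[f\mid\mathcal{P}_n]\to f$ in $L^2(\mu)$ for every $f$, hence $T_{K_n}f\to\mathbbm{1}_E f=P_E f$ in $L^2(\mu)$. Thus $T_{K_n}\to P_E$ strongly, a fortiori in the weak operator topology.

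By the previous theorem each $T_{K_n}$ lies in the positive cone of $\pi$, and the positive cone is weak-operator closed, so $P_E$ lies in the positive cone. The only step that needs a little care — and the closest thing to an obstacle — is arranging the approximating partitions to simultaneously respect $E$, have all parts of positive measure, be nested, and have vanishing mesh (so that the martingale argument applies); this is routine given that $\partial G$ is compact metrizable and $\mu$ is an (atomless, Ahlfors regular) Radon probability measure, but it is the one place where the construction must be spelled out carefully rather than quoted.
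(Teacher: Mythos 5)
Your proof is correct, but it takes a genuinely different route from the paper's. The paper approximates $P_E$ by the kernel operators $T_\rho$ with kernels $K_\rho(\xi,\eta)=\mu(B_\rho(\xi))^{-1}\mathbbm{1}_{\{\xi\in E,\ d_\varepsilon(\xi,\eta)<\rho\}}$, i.e.\ by averaging over small metric balls; since these are not conditional expectations, the paper must argue in two steps, first verifying weak convergence against the dense set of Lipschitz functions (where $|T_\rho\phi-P_E\phi|\leq L(\phi)\rho$ pointwise) and then separately establishing a uniform bound on $\|T_\rho\|_{op}$ via a Schur-test estimate that ultimately rests on Ahlfors regularity of $\mu$. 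Your martingale scheme sidesteps both steps: because $T_{K_n}=\mathbbm{1}_E\cdot\mathbb{E}[\,\cdot\mid\mathcal{P}_n]$ is a composition of contractions and the filtration generates the Borel $\sigma$-algebra mod null sets, you get strong operator convergence on all of $L^2(\mu)$ at once, with no density argument and no separate norm estimate. Your approach is also more robust, as it uses only that $(\partial G,\mu)$ is a compact metric space with an atomless Borel probability measure, not the Ahlfors regularity of $\mu$; the one piece you rightly flag, the construction of nested, $E$-adapted, positive-measure partitions with vanishing mesh, is indeed routine (refine $\{E,E^c\}$ successively by disjointified finite covers by $1/n$-balls and discard null pieces, noting that a non-null part of the $(n{+}1)$-st partition sits inside a non-null part of the $n$-th). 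One could even drop the nesting and replace martingale convergence by uniform continuity on continuous functions plus contractivity, but as written your argument is complete. Both proofs correctly conclude by citing that the positive cone is weak-operator closed and that non-negative kernels give operators in it.
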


\begin{proof}
Consider the kernels $K_\rho(\xi,\eta) = \frac{1}{\mu(B_\rho(\xi))}\mathbbm{1}_{\{(\xi,\eta)| \xi \in E , d(\xi,\eta) < \rho\}}$ with corresponding kernel operators $T_\rho$ which are in the positive cone. We claim that $T_\rho \to P_E$ in the weak operator topology. As before we check convergence on Lipschitz functions first. Let $\phi, \psi \in Lip(\partial G)$ and $\xi \in \partial G$

\begin{align*}
    &\left |\int K_\rho(\xi,\eta)\phi(\eta) d\mu(\eta) - P_E\phi(\xi) \right|
    \\ = &
    \frac{\mathbbm{1}_E(\xi)}{\mu(B_\rho(\xi))} \left| \int_{B_\rho(\xi)} (\phi(\eta) -\phi(\xi))  d\mu(\eta)  \right| 
    \\ \leq &
    \frac{\mathbbm{1}_E(\xi)}{\mu(B_\rho(\xi))} \int_{B_\rho(\xi)} L(\phi)\rho d\mu
    \leq L(\phi)\rho
\end{align*}

Using this estimate we see that:
$$ |\langle (T_\rho - P_E)\phi,\psi \rangle| \leq \|\psi\|_\infty L(\phi) \rho \underset{\rho \to 0}{\longrightarrow} 0 $$

Now since $Lip(\partial G)$ is dense in $L^2 (\partial G)$ all that remains is to check that $\|T_\rho\|_{op}$ are bounded independently of $\rho$.
For any non negative kernel $K$ it follows from the Cauchy-Shwartz inequality on $(\partial G^2,\mu^2)$ that for any $\phi,\psi \in L^2(\mu)$

\begin{small}
$$\langle T_K\phi,\psi \rangle \leq \|\sqrt{K(\xi,\eta)}\phi(\eta)\|_2\|\sqrt{K(\xi,\eta)}\overline{\psi(\xi)}\|_2 \leq \left \|\int K(\xi,\eta) d\mu(\xi) \right \|_\infty^{\frac{1}{2}} \left \|\int K(\xi,\eta) d\mu(\eta) \right \|_\infty^{\frac{1}{2}} \|\phi\|_2\|\psi\|_2$$
\end{small}

But for $K = K_\rho$ it is easily seen that:

$$\int K_\rho(\xi,\eta) d\mu(\eta) , \int K_\rho(\xi,\eta) d\mu(\xi) \leq 1$$
So $\|T_\rho\| \leq 1$ as needed.

\end{proof}

\section{Irreducibility of Boundary Representations} \label{irreducibility}

We now prove irreducibility of the representation $\pi$.

\begin{lemma}
    Let $\pi$ be a representation of $G$, if the weak operator closure of $\pi(L^1(G))$ contains a projection on a cyclic vector then $\pi$ is irreducible.
\end{lemma}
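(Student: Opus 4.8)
The plan is to deduce irreducibility from the triviality of the commutant $\pi(G)'$. Write $\mathcal{N}$ for the weak operator closure of the integrated algebra $\pi(L^1(G))$, where $\pi(f) = \int_G f(g)\pi(g)\,d\lambda(g)$, let $v$ be the cyclic vector in the hypothesis, and let $P \in \mathcal{N}$ denote the orthogonal projection onto $\mathbb{C}v$. Recall that $\pi$ is irreducible exactly when $\pi(G)' = \mathbb{C}I$, so it suffices to prove this.

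The first step is to record that $\pi(G)' \subseteq \mathcal{N}'$. For this I would observe that if $T\pi(g) = \pi(g)T$ for all $g \in G$, then pulling the bounded operator $T$ through the weak integral defining $\pi(f)$ gives $\langle T\pi(f)x,y\rangle = \int_G f(g)\langle \pi(g)Tx,y\rangle\,d\lambda(g) = \langle \pi(f)Tx,y\rangle$ for all $x,y$, so $T \in \pi(L^1(G))'$. Since left and right multiplication by a fixed bounded operator are weak-operator continuous, the commutant of any set of operators equals the commutant of its weak operator closure; hence $\pi(L^1(G))' = \mathcal{N}'$, and therefore $\pi(G)' \subseteq \mathcal{N}'$.

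The second step is the actual argument. Take any $T \in \pi(G)'$. By the first step $T$ commutes with $P \in \mathcal{N}$, so $T$ maps the range of $P$, namely $\mathbb{C}v$, into itself; thus $Tv = \lambda v$ for some scalar $\lambda$. Then for every $g \in G$ we get $T\pi(g)v = \pi(g)Tv = \lambda\,\pi(g)v$, so $T$ coincides with $\lambda I$ on the linear span of the orbit $\{\pi(g)v : g \in G\}$, which is dense in the underlying Hilbert space because $v$ is cyclic. Since $T$ is bounded this forces $T = \lambda I$. Hence $\pi(G)' = \mathbb{C}I$ and $\pi$ is irreducible.

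I do not anticipate a genuine obstacle here: the content is essentially formal, the only points needing a line of justification being that a bounded operator can be pulled through the weak integral defining $\pi(f)$ and that passing to the weak operator closure does not change the commutant. If one prefers a more conceptual packaging, one could instead invoke the bicommutant theorem to identify $\mathcal{N}$ with the von Neumann algebra $\pi(G)''$ generated by the representation and then run the same commutant computation inside $\pi(G)''$; the argument is unchanged.
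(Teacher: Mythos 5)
Your proposal is correct and follows essentially the same route as the paper: both reduce irreducibility to triviality of the commutant, use that an operator commuting with $\pi(G)$ also commutes with the weak operator closure of $\pi(L^1(G))$ (hence with $P$), and then propagate the eigenvalue of $v$ along the dense orbit $\{\pi(g)v\}$. Your write-up is in fact slightly more careful than the paper's, since you make explicit both why the commutant passes to the weak closure and why all the eigenvectors $\pi(g)v$ share the same eigenvalue.
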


\begin{proof}
We show that the commutant of $\pi$ is trivial, the result then follows by Schur's lemma. Let $P$ be a projection on to a cyclic vector $v$. If $P$ is in the weak operator closure of $\pi(L^1(G))$ then so is $\pi(g)P\pi(g)^{-1}$, which is the projection on $\pi(g)v$. So if $T$ is an operator commuting with $\pi(L^1(G))$ then $\pi(g)v$ is an eigenvector of $T$ for every $g \in G$. Since the vectors $\pi(g)v$ span a dense subspace we conclude that $T$ is a scalar operator.
\end{proof}

If G is a unimodular non-elementary locally compact hyperbolic group and $\pi$ is the boundary representation corresponding to some $d \in \mathcal{D}(G)$ then all the kernel operators are in the weak operator closure of $\pi(L^1(G))$. Let $\phi \in L^\infty(\mu)$ and consider the kernel operator $T_K$ corresponding to the kernel $K(\xi,\eta) = \phi(\xi)$. Note that $T_K(\mathbbm{1}_{\partial G}) = \phi$. Therefore since $L^\infty(\mu)$ is dense in $L^2(\mu)$, $\mathbbm{1}_{\partial G}$ is cyclic. Taking $\phi = \mathbbm{1}_{\partial G}$ we see that the projection on $\mathbbm{1}_{\partial G}$ is in the weak operator closure of $\pi(L^1(G))$. Thus we get by the lemma:

\begin{theorem}
    Let $G$ be a unimodular, non-elementary, hyperbolic locally compact group. Let $d \in \mathcal{D}(G)$ and let $\mu, \pi$ be the the corresponding Patterson-Sullivan measure and boundary representation then $\pi$ is irreducible.
\end{theorem}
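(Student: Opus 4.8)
The plan is to deduce the statement directly from the two results proved immediately above: the reduction lemma, which says $\pi$ is irreducible as soon as the weak operator closure of $\pi(L^1(G))$ contains a projection onto a cyclic vector, and the theorem that every kernel operator with a non-negative kernel lies in the positive cone of $\pi$, hence in that weak operator closure. The whole task is therefore to exhibit $\mathbbm{1}_{\partial G}$ as a cyclic vector whose associated rank-one projection is itself a kernel operator. Throughout I use that the weak operator closure $\mathcal{M}$ of $\pi(L^1(G))$ is a linear subspace of $B(L^2(\mu))$ containing the positive cone, and that $\mu$ is a probability measure.

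First I would verify cyclicity of $\mathbbm{1}_{\partial G}$. For a bounded Borel function $\phi$ on $\partial G$ set $K_\phi(\xi,\eta)=\phi(\xi)$; then $T_{K_\phi}f=\big(\int f\,d\mu\big)\,\phi$, so in particular $T_{K_\phi}\mathbbm{1}_{\partial G}=\phi$. Writing a general bounded $\phi$ as a complex linear combination of four non-negative bounded functions and using linearity of $K\mapsto T_K$, each summand is a kernel operator in the positive cone by the kernel-operator theorem, whence $T_{K_\phi}\in\mathcal{M}$. Consequently the closure of $\mathcal{M}\,\mathbbm{1}_{\partial G}$ contains every bounded function on $\partial G$; since bounded functions are dense in $L^2(\mu)$, the vector $\mathbbm{1}_{\partial G}$ is cyclic for $\pi$.

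Next, taking $\phi\equiv 1$ (a non-negative bounded kernel), the operator $T_{K_1}f=\big(\int f\,d\mu\big)\mathbbm{1}_{\partial G}=\langle f,\mathbbm{1}_{\partial G}\rangle\,\mathbbm{1}_{\partial G}$ is precisely the orthogonal projection onto $\mathbb{C}\,\mathbbm{1}_{\partial G}$, using $\|\mathbbm{1}_{\partial G}\|_2=1$; alternatively this is the case $E=\partial G$ of the preceding lemma on projections in the positive cone. Thus $\mathcal{M}$ contains the projection onto the cyclic vector $\mathbbm{1}_{\partial G}$, and the reduction lemma gives irreducibility of $\pi$. I do not expect any genuine obstacle here: all the analytic difficulty, in particular the cancellation lemma and the uniform norm bound on the operators $S_R$ where unimodularity is used, has already been carried out, and the only points needing care are that $\mu$ is a probability measure (so $T_{K_1}$ is a true projection, not merely a positive multiple of one) and that a non-selfadjoint kernel such as $K_\phi$ is handled by splitting $\phi$ into non-negative pieces rather than applying the kernel-operator theorem to it directly.
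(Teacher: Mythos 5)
Your proposal is correct and follows the paper's own argument essentially verbatim: cyclicity of $\mathbbm{1}_{\partial G}$ via the kernels $K(\xi,\eta)=\phi(\xi)$, then $\phi=\mathbbm{1}_{\partial G}$ to realize the rank-one projection, then the reduction lemma. Your explicit splitting of a bounded $\phi$ into non-negative pieces is a point the paper glosses over (it asserts all kernel operators lie in the closure, while the kernel-operator theorem is stated only for non-negative kernels), but it is the obvious linearity step and does not constitute a different route.
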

Finishing the proof of theorem \ref{irreducibility main theorem}.

We remark that unimodularity of $G$ only entered the proof of irreducibility in two places: lemma \ref{cancellation lemma} and lemma \ref{uniformly bounded}, although the entire construction of the previous section would not work without these two lemmas.

\section{Rough Equivalence of Metrics} \label{rough equivalence of metrics}
This section is dedicated to the proof that two metrics $d_1,d_2 \in \mathcal{D}(G)$ are roughly similar if and only if the corresponding boundary representations are unitarily equivalent. The proof will use the fact that the action of $G$ on $\partial^2 G$ is erogodic which we will show later (theorem \ref{weak mixing}), independently of this section.

We fix throughout the section metrics $d_1,d_2 \in \mathcal{D}(G)$. We denote objects corresponding to $d_i$ by a subscript $i$. For example the boundary representations will be $\pi_i$ and the visual metrics will have parameters $\epsilon_i$ and be denoted $d_{\epsilon_i}$. From this point forward we add the assumption that $G$ is second countable to the standing assumption that $G$ is non elementary and unimodular.

\begin{lemma}
If the representations $\pi_1$ and $\pi_2$ are unitarily equivalent then there exists an almost everywhere defined measure class preserving isomorphism $F:\partial G \to \partial G$ conjugating $\pi_1$ to $\pi_2$.
\end{lemma}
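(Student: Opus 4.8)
## Proof Strategy

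The plan is to exploit the fact that both representations are built on the same underlying measure space $\partial G$, with measure classes given by the two Patterson–Sullivan measures $\mu_1$ and $\mu_2$, which are absolutely continuous (indeed, each is Ahlfors regular for its own visual metric, and all of these live on the same Polish space $\partial G$ with the same Borel structure). Given a unitary intertwiner $U : L^2(\mu_1) \to L^2(\mu_2)$ with $U\pi_1(g) = \pi_2(g)U$ for all $g$, the goal is to produce an a.e.\ defined nonsingular Borel isomorphism $F : (\partial G, \mu_1) \to (\partial G, \mu_2)$ such that conjugation by the associated Koopman-type operator realizes $U$ up to a scalar.

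The key input is that the commutative von Neumann algebra $L^\infty(\partial G)$ (acting by multiplication) generates a maximal abelian subalgebra (MASA) inside $B(L^2(\mu_i))$, and that $L^\infty(\partial G)$ is, as a subalgebra of the von Neumann algebra $\pi_i(G)''$, canonically attached to the representation. Concretely, I would first identify $L^\infty(\partial G, \mu_1)$ inside $\pi_1(G)''$: by the previous section every kernel operator $T_K$ lies in the weak closure of $\pi_1(L^1(G))$, and taking kernels of the form $K(\xi,\eta)=\phi(\xi)$ shows that the operator sending $f \mapsto \phi \cdot \left(\int f\, d\mu_1\right)$ lies in $\pi_1(G)''$; more usefully, Lemma~\ref{projections in the positive cone} shows every projection $P_E$ onto $L^2(E)$ lies in the positive cone, hence $L^\infty(\partial G) \subseteq \pi_1(G)''$. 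Since this holds for $\pi_2$ as well, the unitary $U$ carries the MASA $L^\infty(\partial G,\mu_1)$ into a MASA of $B(L^2(\mu_2))$ contained in $\pi_2(G)''$. The hard part will be to show that this transported MASA is exactly $L^\infty(\partial G, \mu_2)$ rather than some other MASA conjugate to it inside $\pi_2(G)''$; this is where second countability and the concrete geometry must be used, e.g.\ by arguing that $L^\infty(\partial G)$ is the unique (up to the relevant equivalence) masa that is "diffuse and $G$-equivariantly embedded" — equivalently, that the system $G \acts \partial G$ is the Mackey point realization attached canonically to the pair $(\pi_i, L^\infty(\partial G))$. Once $U L^\infty(\partial G, \mu_1) U^* = L^\infty(\partial G, \mu_2)$, a standard theorem (the spatial theorem for isomorphisms of commutative von Neumann algebras on standard measure spaces — see e.g.\ Takesaki) gives a measure-class isomorphism $F : (\partial G, \mu_2) \to (\partial G, \mu_1)$ implementing the algebra isomorphism, i.e.\ $U \phi U^* = \phi \circ F^{-1}$ as multiplication operators.

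It then remains to check $F$ conjugates the $G$-actions, hence $\pi_1$ to $\pi_2$ in the claimed sense. This is forced by the intertwining relation: for $g \in G$ and $\phi \in L^\infty(\partial G)$, one has $\pi_2(g)(\phi \circ F^{-1})\pi_2(g)^{-1} = \phi(g^{-1} F^{-1}(\cdot))$ on one hand (the action of $\pi_2(g)$ on a multiplication operator is by the point transformation of $\mu_2$), and on the other hand
\[
\pi_2(g)\,U\phi U^{*}\,\pi_2(g)^{-1} = U\,\pi_1(g)\phi\pi_1(g)^{-1}\,U^{*} = U\big(\phi(g^{-1}\cdot)\big)U^{*} = \phi\big(g^{-1}F^{-1}(\cdot)\big).
\]
Comparing, $F^{-1}(g\xi) = g F^{-1}(\xi)$ for $\mu_2$-a.e.\ $\xi$ and every $g$; by second countability one upgrades this to a conull set working for all $g$ simultaneously (using a countable dense subgroup and continuity of the action on $\partial G$), so $F$ is $G$-equivariant a.e. Finally, since $U$ is unitary and conjugates $L^\infty$-actions via $F$, the operator $U \circ (\text{Koopman operator of } F^{-1})^{-1}$ commutes with all of $L^\infty(\partial G,\mu_2)$, hence is itself multiplication by a unimodular function; absorbing this corrects $U$ to the Koopman isomorphism associated to $F$. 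Thus $F$ is the desired measure-class-preserving isomorphism conjugating $\pi_1$ to $\pi_2$.

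The main obstacle, as indicated, is the uniqueness/canonicity of the MASA $L^\infty(\partial G)$ inside $\pi_i(G)''$ — a priori $U$ could send it to an inequivalent MASA. I expect this to be handled by characterizing $L^\infty(\partial G)$ intrinsically: it is the von Neumann subalgebra generated by the positive cone's "rank-one-like" projections $P_E$ of Lemma~\ref{projections in the positive cone}, and these are detected spectrally by the matrix-coefficient asymptotics of Lemma~\ref{matrix coefficient esimate} (the boundary points $\hat g, \check g$ that appear are genuinely boundary data). So the argument will proceed by showing $U$ must carry this distinguished family of projections to the corresponding family for $\pi_2$, forcing $U L^\infty(\partial G,\mu_1)U^* = L^\infty(\partial G,\mu_2)$.
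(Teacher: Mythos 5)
Your proposal has the right skeleton (transport the multiplication algebra through the intertwiner, invoke the spatial/point-realization theorem for commutative von Neumann algebras, then upgrade a.e.\ equivariance over a countable dense subgroup), and you correctly isolate the one genuinely hard step: showing that $U L^\infty(\mu_1) U^* = L^\infty(\mu_2)$ rather than some other MASA inside $\pi_2(G)''$. But you do not actually close that step. You offer two gestures --- uniqueness of a ``diffuse, $G$-equivariantly embedded'' MASA / Mackey point realization, and ``spectral detection'' of the projections $P_E$ via the matrix-coefficient asymptotics of Lemma~\ref{matrix coefficient esimate} --- and neither is carried out. The second in particular is not obviously workable: the boundary points $\hat g,\check g$ enter Lemma~\ref{matrix coefficient esimate} only through pairings with Lipschitz vectors in the concrete model $L^2(\mu_i)$, and it is unclear how to turn this into an intrinsic, unitarily invariant characterization of the family $\{P_E\}$ inside $\pi_i(G)''$. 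As written, the proof has a gap exactly at the point you flag as the main obstacle.

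The paper closes this gap with a short order-theoretic argument that you could substitute directly. First, a projection $P\in B(L^2(\mu_i))$ is a multiplication projection if and only if both $P$ and $I-P$ preserve the cone of nonnegative functions: if so, $P\mathbbm{1}$ and $(I-P)\mathbbm{1}$ are nonnegative, orthogonal, and sum to $\mathbbm{1}$, hence $P\mathbbm{1}=\mathbbm{1}_E$ for some $E$, and positivity plus $0\le P\phi\le \|\phi\|_\infty \mathbbm{1}_E$ forces $\operatorname{Im}(P)=L^2(E)$. Second, the positive cone of $\pi_i$ (the WOT closure of $\pi_i(L^1_+(G))$) consists of positivity-preserving operators, and since $U\pi_1(f)U^*=\pi_2(f)$ for $f\in L^1(G)$, conjugation by $U$ carries the positive cone of $\pi_1$ onto that of $\pi_2$. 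By Lemma~\ref{projections in the positive cone} each $P_E$ and its complement $P_{E^c}$ lie in the positive cone of $\pi_1$, so $UP_EU^*$ and $I-UP_EU^*=UP_{E^c}U^*$ both preserve positivity in $L^2(\mu_2)$ and are therefore multiplication projections; since such projections generate $L^\infty(\mu_1)$, this yields $UL^\infty(\mu_1)U^*\subseteq L^\infty(\mu_2)$, and the reverse inclusion follows by symmetry. With that inserted, the rest of your argument (spatial realization of the algebra isomorphism, equivariance of $F$) goes through essentially as in the paper.
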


\begin{proof}
Denote the ($G$ equivariant) unitary isomorphism $T:L^2(\mu_1) \to L^2(\mu_1)$. $T$ induces an isomorphism of the von Neumann algebras of bounded operators on these Hilbert spaces $\hat{T}: B(L^2(\mu_1)) \to B(L^2(\mu_2))$ by $\hat{T}(A) = TAT^{-1}$. We claim that $\hat{T}$ restricts to an isomorphism between the von Neumann algebras $L^\infty(\mu_i)$ considered as multiplication operators inside $B(L^2(\mu_i))$.

The projections in $L^\infty(\mu_i)$ can be characterized amongst the projections of $B(L^2(\mu_i))$ as those $P$ such that both $P$ and $I-P$ preserve the partial order on $L^2(\mu_i)$. Obviously all projections in $L^\infty(\mu_i)$ satisfy this. In the other direction if $P$ and $I-P$ both preserve the partial order on  $L^2(\mu_i)$ then $P \mathbbm{1}_{\partial G}$,$(I-P)\mathbbm{1}_{\partial G}$ are positive orthogonal functions, so they must have disjoint supports. Since $P\mathbbm{1}_{\partial G} +(I-P)\mathbbm{1}_{\partial G} = \mathbbm{1}_{\partial G}$ we conclude that $P\mathbbm{1}_{\partial G}$ is the indicator function of some set $E$. If $\phi \in L^\infty(\mu_i)$ is positive then $P\phi$ is supported on $E$ since $0 \leq P\phi < P \|\phi\|_\infty \mathbbm{1}_{\partial G} = \|\phi\|_\infty \mathbbm{1}_E$. Using linearity and density of $L^\infty(\mu_i)$ in $L^2(\mu_i)$ we deduce that $Im(P) \subseteq L^2(E)$. Similarly $Im(I-P) \subseteq L^2(E^c)$, so since $P, I-P$ are projections which sum up to $I$ we deduce that $Im(P) = L^2(E)$ as needed.

By lemma \ref{projections in the positive cone} we know that all the projections in $L^\infty(\mu_1)$ are in the positive cone of $\pi_1$. Using the $G$-equivariance we see that $\hat{T}$ takes the positive cone of $\pi_1$ in to the positive cone of $\pi_2$. Since elements of the positive cone of $\pi_2$ preserve the partial order on $L^2(\mu_2)$ we conclude that $\hat{T}$ takes projections in $L^\infty(\mu_1)$ to projections in $L^\infty(\mu_2)$. Finally because  the projections generate $L^\infty(\mu_1)$ we conclude that $\hat{T}(L^\infty(\mu_1)) \subseteq L^\infty(\mu_2)$. Applying the same line of reasoning to $\hat{T}^{-1}$ one gets the opposite inclusion showing that $\hat{T}$ restricts to a $G$-equivariant isomorphism of von Neumann algebras $\hat{T}:L^\infty(\mu_1) \to L^\infty(\mu_2)$

Since maps between commutative von Neumann algebras correspond bijectively to almost everywhere defined maps between the spectra we conclude that there exists a $G$-equivariant isomorphism $F: (\partial G, \mu_1) \to (\partial G, \mu_2)$.

\end{proof}

Our strategy now will be first to show that $F$ agrees almost everywhere with a continuous map and then to show that $F$ is the identity so that $\mu_1$ and $\mu_2$ are actually in the same measure class. In order to do this we will take a slight detour.

Denote by $\partial^2 G$ the space of distinct pairs of elements in $\partial G$. It will be slightly more convenient for us to consider only distinct pairs but since the measure $\mu$ is purely non-atomic this has no measure theoretic significance.

\begin{theorem} \label{invariant measure}
For any Patterson-Sullivan measure $\mu$, the measure space $(\partial^2 G,\mu^2)$ contains an invariant (infinite) measure $m$ satisfying:
$$ dm(\xi,\eta) \asymp e^{2h(\xi,\eta)}d\mu^2(\xi,\eta) \asymp 
 d_\epsilon(\xi,\eta)^{-2D}d\mu^2(\xi,\eta)$$
\end{theorem}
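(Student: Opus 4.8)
The plan is to obtain $m$ from a closed formula built out of the genuine (exact) Radon--Nikodym cocycle of $\mu$, so that $G$-invariance becomes a formal consequence of the chain rule, and then to identify the resulting density with $e^{2h(\xi,\eta)}d\mu^2$ by a coarse estimate. For $x\in G$ put $\mu_x := x_*\mu$ and
\[
b_\xi(x) := -\tfrac1h\ln\tfrac{d\mu_x}{d\mu}(\xi),\qquad \xi\in\partial G .
\]
Since $\mu$ is quasi-conformal of dimension $D$, $b_\xi(x)\approx |x|-2(x,\xi)$ uniformly (for each $x$, and $\mu$-a.e.\ $\xi$); but unlike the Gromov product, $x\mapsto \tfrac{d\mu_x}{d\mu}=\tfrac{dx_*\mu}{d\mu}$ is an \emph{exact} multiplicative cocycle and the family $\{\mu_x\}_{x\in G}$ is $G$-equivariant, $g_*\mu_x=\mu_{gx}$. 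I would then set
\[
G(\xi,\eta) := \exp\!\Big(-h\operatorname*{ess\,inf}_{x\in G}\big(b_\xi(x)+b_\eta(x)\big)\Big),\qquad m := G\cdot\mu^2 ,
\]
the essential infimum taken against a left Haar measure on $G$ (used in place of an honest infimum only to keep everything measurable; the associated Fubini bookkeeping is of the kind relegated to the appendices).

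Invariance of $m$: writing $A(g,\zeta):=-\tfrac1h\ln\tfrac{dg_*\mu}{d\mu}(g\zeta)$, the chain rule together with $g_*\mu_y=\mu_{gy}$ gives $b_{g\zeta}(gy)=b_\zeta(y)+A(g,\zeta)$ for a.e.\ $(y,\zeta)$; substituting $x=gy$ (a Haar-preserving bijection of $G$) therefore yields, for a.e.\ $(\xi,\eta)$,
\[
G(g\xi,g\eta)=G(\xi,\eta)\cdot\tfrac{dg_*\mu}{d\mu}(g\xi)\cdot\tfrac{dg_*\mu}{d\mu}(g\eta).
\]
Feeding this (with $g$ replaced by $g^{-1}$) into $\tfrac{dg_*m}{dm}(\xi,\eta)=\tfrac{G(g^{-1}\xi,g^{-1}\eta)}{G(\xi,\eta)}\cdot\tfrac{dg_*\mu}{d\mu}(\xi)\tfrac{dg_*\mu}{d\mu}(\eta)$ and cancelling via the elementary cocycle identity $\tfrac{dg_*\mu}{d\mu}(\xi)\cdot\tfrac{dg^{-1}_*\mu}{d\mu}(g^{-1}\xi)=1$, every factor disappears and $\tfrac{dg_*m}{dm}\equiv 1$.

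Identifying the density reduces to the coarse estimate $\operatorname*{ess\,inf}_{x\in G}\big(|x|-(x,\xi)-(x,\eta)\big)\approx -(\xi,\eta)$, uniformly in $\xi,\eta$: the bound ``$\lesssim$'' comes from testing at the point of a rough geodesic from $\xi$ to $\eta$ closest to $1$ (where $|x|\approx(x,\xi)\approx(x,\eta)\approx(\xi,\eta)$), and ``$\gtrsim$'' is the hyperbolic inequality $(x,\xi)+(x,\eta)\lesssim|x|+(\xi,\eta)$, a consequence of $(\xi,\eta)\gtrsim\min\{(x,\xi),(x,\eta)\}$. Combined with $b_\xi(x)\approx|x|-2(x,\xi)$ this gives $G\asymp e^{2h(\xi,\eta)}$, and $e^{2h(\xi,\eta)}\asymp d_\varepsilon(\xi,\eta)^{-2D}$ by \ref{visual_metric} with $D=h/\varepsilon$; that $m$ is infinite follows since $\mu^2(\{(\xi,\eta):(\xi,\eta)>t\})\asymp e^{-ht}$ by Ahlfors regularity (\ref{ahlfors regular}), whence $\int e^{2h(\xi,\eta)}d\mu^2\asymp\int e^{ht}\,dt=\infty$.

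I expect the main (though still mild) obstacle to be the coarse estimate for $\operatorname*{ess\,inf}_x\big(|x|-(x,\xi)-(x,\eta)\big)$ together with the measure-theoretic care needed for ``ess inf'' to interact correctly with the almost-everywhere transformation law for $b$; the invariance itself is pure cocycle algebra, which explains why — in contrast to the lemmas behind Theorem \ref{irreducibility main theorem} — no unimodularity assumption enters here. Uniqueness of $m$ up to scaling among invariant measures in $[\mu^2]$ is a separate matter, to be deduced later from ergodicity of $G\acts(\partial^2 G,m)$ (Section \ref{double ergodicity}).
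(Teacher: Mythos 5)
Your construction is correct, and it reaches the paper's measure by a noticeably different (though secretly related) route. Observe first that your density is exactly an essential supremum of the Radon--Nikodym cocycle: since $e^{-h(b_\xi(x)+b_\eta(x))}=\frac{dx_*\mu^2}{d\mu^2}(\xi,\eta)$, your $G(\xi,\eta)$ is $\operatorname{ess\,sup}_{x\in G}\frac{dx_*\mu^2}{d\mu^2}(\xi,\eta)$. The paper uses the same algebraic trick (``the supremum of a cocycle of derivatives is a coboundary trivializing it''), but packages it differently: it first passes to the pre-normalized measure $d\nu=e^{2h(\xi,\eta)}d\mu^2$, for which quasi-conformality gives $\frac{dg_*\nu}{d\nu}\asymp 1$ uniformly in $g$, and then invokes a general lemma producing an invariant measure with density $\asymp 1$ by taking $\sup_{h}\frac{dh_*\nu}{d\nu}$ over a \emph{countable dense subgroup} and upgrading invariance from the dense subgroup to all of $G$ via continuity of the action and a dominated-convergence argument for Radon measures. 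You instead work with $\mu^2$ directly, take the essential supremum against Haar measure over all of $G$, and get exact $G$-equivariance of the supremum from left-invariance of Haar measure; the price is that you must separately prove the sup is finite and compute it, which is your coarse estimate $\operatorname{ess\,inf}_x\bigl(|x|-(x,\xi)-(x,\eta)\bigr)\approx-(\xi,\eta)$ (correct as stated: the lower bound from $(x,\zeta)\lesssim|x|$ together with $(\xi,\eta)\gtrsim\min\{(x,\xi),(x,\eta)\}$, the upper bound from a positive-measure ball around the projection of $1$ to a rough geodesic $(\xi,\eta)$). The paper's normalization makes that geometric step unnecessary, while your version makes the invariance purely formal and avoids the countable-dense-subgroup/continuity device; both correctly avoid unimodularity, and both defer uniqueness to the ergodicity of $G\acts(\partial^2G,m)$. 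The measure-theoretic points you flag (a jointly measurable version of $(x,\xi)\mapsto\frac{dx_*\mu}{d\mu}(\xi)$, measurability of the essential infimum in $(\xi,\eta)$, and Fubini to convert ``for each $x$, a.e.\ $\xi$'' into ``for a.e.\ $(\xi,\eta)$, a.e.\ $x$'') are all standard and of the same flavor as what the paper handles in its appendices, so I see no gap.
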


To prove this theorem we will need two lemmas.

\begin{lemma}
Let $G$ be a second countable locally compact group acting continuously on a locally compact space $X$. If $G$ has a dense subgroup $H$ preserving a (possibly infinite) Radon measure $m$ then $G$ preserves $m$.
\end{lemma}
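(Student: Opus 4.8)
The plan is to compare $m$ and $g_*m$ by integrating against compactly supported continuous functions, reducing the problem to the continuity on $G$ of a single real-valued function.

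Fix $f \in C_c(X)$ and define $\Phi_f\colon G \to \R$ by $\Phi_f(g) = \int_X f\,d(g_*m) = \int_X f(g^{-1}x)\,dm(x)$. The first and only substantive step is to prove that $\Phi_f$ is continuous. I would fix $g_0 \in G$, choose a compact neighbourhood $V$ of $g_0$ (available since $G$ is locally compact), and set $K = \supp f$. The set $VK = \{gk : g\in V,\ k\in K\}$ is the image of the compact set $V\times K$ under the (jointly continuous) action map, hence compact, so $m(VK) < \infty$ because $m$ is Radon. For every $g \in V$ the function $x\mapsto f(g^{-1}x)$ is supported in $gK \subseteq VK$ and bounded by $\|f\|_\infty$, so it is dominated by $\|f\|_\infty \mathbbm{1}_{VK} \in L^1(m)$. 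Since $G$ is second countable and Hausdorff it is metrizable, so continuity at $g_0$ follows from sequential continuity: if $g_n \to g_0$ with $g_n \in V$ for all large $n$, then $f(g_n^{-1}x) \to f(g_0^{-1}x)$ for every $x$ by continuity of the action and of $f$, and the dominated convergence theorem yields $\Phi_f(g_n) \to \Phi_f(g_0)$.

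With continuity in hand the rest is formal. Since $H$ preserves $m$, one has $\Phi_f(h) = \int_X f\,dm$ for all $h \in H$; as $\Phi_f$ is continuous and $H$ is dense, $\Phi_f \equiv \int_X f\,dm$ on all of $G$. Thus $\int_X f\,d(g_*m) = \int_X f\,dm$ for every $g \in G$ and every $f \in C_c(X)$. Because $g_*m$ is again a Radon measure (a homeomorphic pushforward of one) and a Radon measure on a locally compact Hausdorff space is determined by its integrals against $C_c(X)$ via the Riesz representation theorem, we conclude $g_*m = m$ for all $g \in G$, i.e. $G$ preserves $m$. The only point requiring care is the continuity of $\Phi_f$: the measure $m$ may be infinite, so one cannot dominate globally, but the compact support of $f$ together with local compactness of $G$ confines the relevant mass to the compact set $VK$ of finite measure, which is exactly what makes dominated convergence go through; everything else is bookkeeping.
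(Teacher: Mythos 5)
Your proof is correct and follows essentially the same route as the paper: both arguments approximate $g$ by elements of $H$, use the compactness of $VK$ (equivalently, the common compact support of the translates of $f$) to justify dominated convergence, and then invoke the fact that a Radon measure is determined by its integrals against $C_c(X)$. Your packaging of the argument as continuity of $g \mapsto \int f\,d(g_*m)$ is just a slightly more explicit bookkeeping of the same steps.
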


\begin{proof}
Let $f: X \to \R$ be a continuous compactly supported function. Let $g \in G$ and $g_n \in H$ such that $g_n \to g$. If $U$ is some compact neighborhood of $g$ then the functions $f(g^{-1}x)$ are all supported on some compact subset of $X$. Since $g_n$ preserve $m$ we see by dominated convergence that:

$$\int f(x) dg_*m = \int f(g^{-1}x) dm = \lim_{n \to \infty}\int f(g_n^{-1}x) dm = \lim_{n \to \infty} \int f(x) d(g_{n})_{*}m = \int f(x) dm$$

Radon measures are defined by their values on continuous compactly supported functions so since $f$ is arbitrary we conclude that $g_*m = m$ as needed.
\end{proof}

\begin{lemma} \label{bounded RN}
    Let $G$ a locally compact second countable group acting continuously on a locally compact space $X$ and preserving the measure class of a Radon measure $\nu$ such that $\frac{dg_*\nu}{d\nu} \asymp 1$ uniformly in $g$. There exists an invariant measure $m \in [\nu]$ satisfying $\frac{dm}{d\nu} \asymp 1$.
\end{lemma}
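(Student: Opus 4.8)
The plan is to first construct an invariant measure for a countable dense subgroup $H \leq G$ and then to invoke the preceding lemma to promote invariance from $H$ to all of $G$. Since $G$ is second countable it contains a countable dense subset, and the subgroup $H$ generated by this subset is still countable and dense. Write $c(g,x) = \frac{dg_*\nu}{d\nu}(x)$; by hypothesis there is $C \geq 1$ with $C^{-1} \leq c(g,x) \leq C$ for $\nu$-a.e.\ $x$ and every $g$, and $c$ satisfies the cocycle identity $c(gh,x) = c(g,x)\,c(h,g^{-1}x)$ for $\nu$-a.e.\ $x$. I would begin by fixing, once and for all, Borel representatives of the functions $c(h,\cdot)$ for the countably many $h \in H$, truncated to take values in $[C^{-1},C]$ everywhere, and normalised so that $c(e,\cdot) \equiv 1$.

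Next I would set $\rho(x) = \sup_{h \in H} c(h,x)$. As a countable supremum of Borel functions this is Borel, and $1 \leq \rho \leq C$. The key point is that $\rho(x) = \rho(h^{-1}x)\,c(h,x)$ for every $h \in H$ and $\nu$-a.e.\ $x$: from the cocycle identity $c(hg,x) = c(h,x)\,c(g,h^{-1}x)$ one gets $\rho(h^{-1}x)\,c(h,x) = \sup_{g \in H} c(hg,x) = \sup_{g' \in H} c(g',x) = \rho(x)$, where we used that $g \mapsto hg$ is a bijection of the group $H$ onto itself. Since only countably many elements $g,h \in H$ are involved, the relevant a.e.\ identities can be intersected to hold simultaneously off a single $\nu$-null set.

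Then $m := \rho\,\nu$ is a measure in the class $[\nu]$ with $\frac{dm}{d\nu} = \rho \asymp 1$; being sandwiched between $\nu$ and $C\nu$, it is finite on compacta and inherits the regularity of $\nu$, hence is again a Radon measure. The displayed identity is precisely the statement that $h_*m = m$ for each $h \in H$: unwinding the change of variables, $(h_*m)(A) = \int_A \rho(h^{-1}x)\,c(h,x)\,d\nu(x) = \int_A \rho\,d\nu = m(A)$. Thus $m$ is an $H$-invariant Radon measure, and applying the previous lemma to the dense subgroup $H \leq G$ and the measure $m$ yields that $m$ is $G$-invariant, completing the proof.

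I expect the only genuinely delicate part to be the measure-theoretic bookkeeping: one must work with honest Borel representatives of the Radon–Nikodym cocycle so that $\rho$ is a genuine Borel function and so that the a.e.\ cocycle identities survive being intersected over $H$ — this is exactly where second countability enters, since an uncountable supremum of measurable functions need not be measurable. One should also confirm that $m = \rho\nu$ really satisfies the Radon hypothesis required by the preceding lemma, but this is immediate from the two-sided bound on $\rho$.
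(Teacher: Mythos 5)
Your argument is correct and follows essentially the same route as the paper: define $\rho$ as the supremum over a countable dense subgroup of the Radon–Nikodym cocycle, use the chain rule to show $\rho(x)=\rho(h^{-1}x)c(h,x)$, take $m=\rho\,\nu$, and promote $H$-invariance to $G$-invariance via the preceding density lemma. Your extra care with Borel representatives and intersecting the countably many null sets is a sound (and welcome) elaboration of details the paper leaves implicit.
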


\begin{proof}
    We start with the case of a countable group. Consider the function:
    $$ \alpha(x) = \sup_{h \in G} \frac{dh_*\nu}{d\nu}$$
    Since $G$ is countable  $\alpha $ is measurable and well defined almost everywhere.
    By the chain rule for Radon Nikodym derivatives:
    $$ \frac{d(gh)_*\nu}{d\nu}(x) = \frac{dh_*\nu}{d\nu}(g^{-1}x) \frac{dg_*\nu}{d\nu}(x) $$

    Taking the supremum over $h \in G$ we get that for every $g \in G$:
    $$\frac{dg_*\nu}{d\nu}(x) = \frac{\alpha(x)}{\alpha(g^{-1}x)}$$

    If we define a measure $m$ by $dm = \alpha(x)d\nu$ then for any $g \in G$:

    $$\frac{dg_*m}{dm}(x) = \frac{dg_*m}{dg_*\nu}(x)\frac{dg_*\nu}{d\nu}(x)\frac{d\nu}{dm}(x) = \frac{\alpha(g^{-1}x)}{\alpha(x)}\frac{dg_*\nu}{d\nu}(x) = 1$$
    So $m$ is invariant under $G$. By assumption $\alpha(x) \asymp 1$.

    If now we assume that $G$ is locally compact and second countable then then $G$ contains a dense countable subgroup $H$. By the countable case there is an $H$ invariant measure on $X$ satisfying the conditions of the theorem and by the previous lemma this measure is actually $G$ invariant as needed. 
\end{proof}

We are now ready to prove theorem \ref{invariant measure}.

\begin{proof}[proof of theorem \ref{invariant measure}]
    Consider the measure $d\nu = e^{2h(\xi,\eta)}d\mu^2$ on $\partial^2 G$. Since we are only considering distinct pairs in $\partial G$ this measure is a Radon measure (it would not be a Radon measure on $\partial G \times \partial G$). The theorem will now follow from lemma\ref{bounded RN} if we show $\frac{dg_*\nu}{d\nu} \asymp 1$ independently of $g$. Indeed:

    \begin{align*}
    ln \left(  \frac{dg_*\nu}{d\nu}(\xi,\eta) \right) 
    =&  ln \left(  \frac{dg_*\nu}{dg_*\mu^2}(\xi,\eta)\frac{dg_*\mu^2}{d\mu^2}(\xi,\eta)\frac{d\mu^2}{d\nu}(\xi,\eta) \right)
    \\
    \approx & 2h((g^{-1}\xi,g^{-1}\eta) + (g,\xi) + (g,\eta) - |g| - (\xi,\eta)) \approx 0
    \end{align*}
\end{proof}

Recall that given a metric space $(X,d)$ the cross ratio of four distinct points $x,y,z,w$ is:

$$[x,y;z,w] = \frac{d(x,z)d(y,w)}{d(x,w)d(y,z)}$$

We will show  in the following chapters (theorem \ref{weak mixing}) that the action of $G$ on $(\partial^2 G, \mu^2)$ is ergodic. The proof is independent rest of this chapter so we can use theorem \ref{weak mixing} and not run in to any circular logic. An ergodic measure class can not contain more then one invariant measure up to scalars. Therefore, denoting $F^2(\xi,\eta) = (F(\xi),F(\eta))$, we conclude that $F^2_*m_1 =\beta m_2$ for some constant $\beta > 0$. As a result:

$$\frac{dF^2_*\mu_1^2}{d\mu_2^2} (\xi,\eta) \asymp \frac{dF^2_*\mu_1^2}{dF^2_*m_1}(\xi,\eta)\frac{dm_2}{d\mu_2^2}(\xi,\eta) \asymp d_{\epsilon_1}(F^{-1}(\xi),F^{-1}(\eta))^{2D_1} d_{\epsilon_2}(\xi,\eta)^{-2D_2}$$

One can now calculate the cross ratio in $(\partial G, d_{\epsilon_1})$ directly and see that all the Radon-Nikodym derivatives cancel out giving:

$$ [F(\xi_1),F(\xi_2);F(\eta_1),F(\eta_2)]_2 \asymp [\xi_1,\xi_2;\eta_1,\eta_2]_1^{\frac{D_1}{D_2}}$$
which holds on a subset $E$ of full measure in $\partial G^4$.

\begin{lemma} \label{ae Holder equivalence}
    There exists a full measure subset of $ E'' \subseteq \partial G$ such that for $\xi,\eta \in E''$:
    $$d_{\epsilon_2}(F(\xi),F(\eta)) \prec d_{\epsilon_1}(\xi,\eta)^{\frac{D_1}{D_2}} $$
\end{lemma}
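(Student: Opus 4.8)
The plan is to bootstrap the almost-everywhere cross-ratio identity
$$[F(\xi_1),F(\xi_2);F(\eta_1),F(\eta_2)]_2 \asymp [\xi_1,\xi_2;\eta_1,\eta_2]_1^{\frac{D_1}{D_2}}$$
from a statement about quadruples to a H\"older-type estimate on pairs. The basic idea is that fixing two of the four points as auxiliary ``anchors'' turns the cross-ratio comparison into a comparison of the distances $d_{\epsilon_2}(F(\xi),F(\eta))$ with $d_{\epsilon_1}(\xi,\eta)$, provided the anchor points are ``generic'' enough (bounded away from $\xi,\eta$). The only subtlety is measure-theoretic: the cross-ratio identity holds only on a full-measure subset $E \subseteq \partial G^4$, so one must use a Fubini argument to choose, for almost every pair $(\xi,\eta)$, a positive-measure set of valid anchor pairs.

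Here is how I would carry it out. First, by Fubini applied to the full-measure set $E \subseteq \partial G^4$ (with respect to $\mu_1^4$, equivalently $\mu_2^4$ since the measure classes agree and $F$ preserves them), there is a full-measure set $E' \subseteq \partial G^2$ of pairs $(\xi,\eta)$ such that for $\mu_1^2$-a.e.\ pair $(\zeta_1,\zeta_2)$ the quadruple $(\xi,\zeta_1;\eta,\zeta_2) \in E$. Shrinking slightly and using second countability / regularity of $\mu_1$, I would further arrange that the pair $(\zeta_1,\zeta_2)$ ranges over a positive-measure set contained in a region where $d_{\epsilon_i}(\xi,\zeta_1)$, $d_{\epsilon_i}(\eta,\zeta_2)$, $d_{\epsilon_i}(\xi,\zeta_2)$, $d_{\epsilon_i}(\eta,\zeta_1)$, $d_{\epsilon_i}(\zeta_1,\zeta_2)$ are all bounded below and above by positive constants for $i=1,2$ — this is possible because $\mu_i$ has no atoms and is Ahlfors regular, so almost every pair is genuinely distinct and one can localize the $\zeta$'s in small balls away from $\xi,\eta$. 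Call the resulting full-measure set of good $(\xi,\eta)$ by $E''$; by symmetry of the construction in $\xi$ and $\eta$ we may also take $E''$ to be symmetric.

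Second, for $\xi,\eta \in E''$ and a valid anchor pair $(\zeta_1,\zeta_2)$, I write out the cross-ratio $[\xi,\zeta_1;\eta,\zeta_2]_1 = \dfrac{d_{\epsilon_1}(\xi,\eta)\, d_{\epsilon_1}(\zeta_1,\zeta_2)}{d_{\epsilon_1}(\xi,\zeta_2)\, d_{\epsilon_1}(\zeta_1,\eta)}$ and similarly for $F$-images in $d_{\epsilon_2}$. Since all the anchor-involving distances are pinched between positive constants (and $F$ preserves boundedness of these, as $F$ is measure-class preserving and the densities are controlled), the cross-ratio identity reduces to
$$d_{\epsilon_2}(F(\xi),F(\eta)) \asymp d_{\epsilon_1}(\xi,\eta)^{\frac{D_1}{D_2}} \cdot (\text{bounded factor}),$$
where the bounded factor involves only the anchor distances and their $F$-images. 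Here one must be careful: the multiplicative constants in $\asymp$ depend on the pinching bounds for the anchor pair, but those bounds can be taken uniform over a fixed positive-measure anchor region, so the implied constant is genuinely uniform. This yields the one-sided bound $d_{\epsilon_2}(F(\xi),F(\eta)) \prec d_{\epsilon_1}(\xi,\eta)^{\frac{D_1}{D_2}}$ for all $\xi,\eta \in E''$, which is the claim.

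The main obstacle I anticipate is the measure-theoretic bookkeeping in the first step: producing a genuinely full-measure symmetric set $E''$ of pairs together with, for each such pair, a positive-measure family of anchor pairs $(\zeta_1,\zeta_2)$ that simultaneously lie in $E$ (so the cross-ratio identity applies) \emph{and} keep all anchor distances uniformly pinched. Getting the pinching uniform — so that the implied constant in the final $\prec$ does not degenerate as $(\xi,\eta)$ varies — requires covering $\partial G$ by finitely many small balls (using compactness) and, for each pair of balls $(B_i, B_j)$ well-separated from each other, verifying that for a.e.\ $(\xi,\eta)$ there is still a positive-measure set of anchors in $B_i \times B_j$ lying over $E$; the Ahlfors regularity of $\mu$ and the fact that the complement of $E$ is null are exactly what make this work. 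Once the anchors are secured, the algebraic manipulation of the cross-ratio is routine.
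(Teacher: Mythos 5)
Your first stage — using Fubini to extract, for a.e.\ pair $(\xi,\eta)$, anchor pairs $(\zeta_1,\zeta_2)$ over which the cross-ratio identity applies, localizing the anchors with a finite cover to make the constant uniform — is essentially the paper's argument (the paper fixes finitely many anchor pairs $(\xi_2,\eta_2)$ from a dense full-measure set and restricts $(\xi_1,\eta_1)$ to $B_\rho(\eta_2)^c\times B_\rho(\xi_2)^c$). Two remarks on your execution. First, you demand more pinching than is needed or available: for the one-sided bound the only quantity involving $F$ that must be bounded \emph{below} is $d_{\epsilon_2}(F\zeta_1,F\zeta_2)$ (it sits in the denominator after solving the cross-ratio identity for $d_{\epsilon_2}(F\xi,F\eta)$); the other $F$-image distances only need the trivial upper bound $\operatorname{diam}(\partial G)$. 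Your justification that ``$F$ preserves boundedness of these'' because it is measure-class preserving is not correct as stated — a measurable measure-class-preserving map can send well-separated points to nearby ones. The correct fix is that $F$ is a.e.\ injective, so $d_{\epsilon_2}(F\zeta_1,F\zeta_2)>0$ a.e., and one restricts to a positive-measure subset of each anchor region where it is $\geq c$; for a fixed finite list of anchor pairs (the paper's route) this is automatic.

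The genuine gap is the last step. What your argument produces is a full-measure subset of $\partial G\times\partial G$ of \emph{pairs} on which the estimate holds; you then call this set $E''$ and declare victory. But the lemma asserts the existence of a full-measure subset $E''\subseteq\partial G$ of \emph{points} such that the estimate holds for \emph{every} pair drawn from $E''$ — and a conull subset of the square need not contain $A\times A$ for any conull $A$. This stronger form is exactly what the next step of the paper needs (uniform continuity of $F|_{E''}$, hence agreement a.e.\ with a continuous, necessarily trivial, equivariant map). The missing upgrade is the paper's final paragraph: let $E'\subseteq\partial G^2$ be your conull set of pairs, let $E''$ be the conull set of points whose $E'$-fibers are conull, and for $\xi,\eta\in E''$ pick $\zeta$ in both fibers; then
$$d_{\epsilon_2}(F(\xi),F(\eta)) \leq d_{\epsilon_2}(F(\xi),F(\zeta)) + d_{\epsilon_2}(F(\zeta),F(\eta)) \prec d_{\epsilon_1}(\xi,\zeta)^{\frac{D_1}{D_2}} + d_{\epsilon_1}(\zeta,\eta)^{\frac{D_1}{D_2}},$$
and letting $\zeta\to\eta$ through the (conull, hence dense) common fiber gives the estimate for the pair $(\xi,\eta)$ itself. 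Without this step your proof does not yield the statement as used downstream.
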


\begin{proof}
    Let $(\xi_2,\eta_2)$ be such that the fiber $\{(\xi_1,\eta_1) | (\xi_1,\xi_2,\eta_1,\eta_2) \in E\}$ has full measure in $\partial G \times \partial G$ and let $\rho > 0$. Thus the estimate on the cross ratios implies that for a full measure subset of $(\xi_1,\eta_1) \in B_\rho(\eta_2)^c \times B_\rho(\xi_2)^c$:

    $$d_{\epsilon_2}(F(\xi_1),F(\eta_1)) \prec_{\xi_2,\eta_2,\rho} d_{\epsilon_1}(\xi_1,\eta_1)^{\frac{D_1}{D_2}}$$

    By Fubini's theorem $(\xi_2,\eta_2)$ can be chosen from a full measure subset which in particular is dense in $\partial G \times \partial G$. By compactness, $\partial G \times \partial G$ can be covered by finitely many sets of the form $B_\rho(\eta_2)^c \times B_\rho(\xi_2)^c$ so that the above estimate actually holds uniformly over a full measure subset of $\partial G \times \partial G$. Since $F$ is an almost everywhere defined isomorphism there is a full measure subset $E'$ of $\partial G \times \partial G$ such that $d_{\epsilon_2}(F(\xi),F(\eta)) \prec d_{\epsilon_1}(\xi,\eta)^{\frac{D_1}{D_2}}$.

    Let $E''$ be the full measure subset of $\partial G$  such that the fibers over points in $E'$ have full measure. If $\xi,\eta \in E''$ and $\zeta$ is in the fiber over both $\xi$ and $\eta$ then:
    $$d_{\epsilon_2}(F(\xi),F(\eta)) \leq d_{\epsilon_2}(F(\xi),F(\zeta)) + d_{\epsilon_2}(F(\zeta),F(\eta)) \prec d_{\epsilon_1}(\xi,\zeta)^{\frac{D_1}{D_2}} +d_{\epsilon_1}(\zeta,\eta)^{\frac{D_1}{D_2}}$$
    Since $\zeta$ can be taken from a full measure subset we can let it converge to $\eta$ and we get $d_{\epsilon_2}(F(\xi),F(\eta)) \prec d_{\epsilon_1}(\xi,\eta)^{\frac{D_1}{D_2}}$ for all $\xi,\eta \in E''$ as needed.
 \end{proof}

As a result we see that $F$ is uniformly continuous on $E''$ and thus agrees almost everywhere with a uniformly continuous function $H: \partial G \to \partial G$. Since $F$ is almost everywhere $G$-equivariant we conclude that $H$ is $G$-equivariant everywhere so by lemma \ref{no automorphisems} $H$ is trivial. Finally using the symmetry between $d_1$ and $d_2$, for all $\xi, \eta \in \partial G \times \partial G$:

$$d_{\epsilon_2}(\xi,\eta) \asymp d_{\epsilon_1}(\xi,\eta)^{\frac{D_1}{D_2}}$$

Now we can prove the main theorem of this section:

\begin{theorem}
Let $G$ be a second countable non elementary unimodular locally compact hyperbolic group and let $d_1,d_2 \in \mathcal{D}(G)$. The following are equivalent:
\begin{enumerate}
    \item The metrics $d_1$ and $d_2$ are roughly similar.
    \item The Patterson-Sullivan measures $\mu_1$ and $\mu_2$ are in the same measure class.
    \item The boundary representations $\pi_1$ and $\pi_2$ are unitarily equivalent.
\end{enumerate}
\end {theorem}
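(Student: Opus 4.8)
The plan is to prove the three equivalences cyclically, as $(1)\Rightarrow(2)\Rightarrow(3)\Rightarrow(1)$. The implications $(1)\Rightarrow(2)$ and $(2)\Rightarrow(3)$ are the easy directions, and I would dispatch them quickly; the substantial content — the reverse implication $(3)\Rightarrow(1)$ — has already been assembled in the body of the section, so the main work here is to explain how the pieces fit together.

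\medskip

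\emph{$(1)\Rightarrow(2)$.} Suppose $L\,d_1 - C \leq d_2 \leq L\,d_1 + C$. Then $(\xi,\eta)_2 \approx L(\xi,\eta)_1$ uniformly, where the Gromov products are computed with respect to $d_1,d_2$ respectively (the additive error comes from the bounded additive discrepancy in the metrics). Consequently a visual metric $d_{\epsilon_1}$ for $d_1$ with parameter $\epsilon_1$ is comparable to a power of a visual metric for $d_2$: choosing $\epsilon_2 = \epsilon_1 / L$ one gets $d_{\epsilon_2} \asymp d_{\epsilon_1}$, so the two boundaries carry bi-Lipschitz-equivalent visual metrics of the \emph{same} dimension $D_1 = D_2$. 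By \cref{ahlfors regular} both $\mu_1$ and $\mu_2$ are Ahlfors regular of this common dimension with respect to comparable metrics, hence both are comparable to the same Hausdorff measure $H^{D}$ by \cref{PS are Hausdorff}, and therefore $\mu_1$ and $\mu_2$ lie in the same measure class.

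\medskip

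\emph{$(2)\Rightarrow(3)$.} This is immediate from the discussion of Koopman representations in the preliminaries: the Koopman representation attached to a non-singular action depends, up to unitary equivalence, only on the measure class, via the intertwiner $f \mapsto \sqrt{d\mu_2/d\mu_1}\,f$ from $L^2(\mu_1)$ to $L^2(\mu_2)$. So if $[\mu_1] = [\mu_2]$ then $\pi_1 \cong \pi_2$.

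\medskip

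\emph{$(3)\Rightarrow(1)$.} This is the heart of the matter and is exactly what the preceding lemmas of the section establish, assuming \cref{weak mixing} (whose proof is deferred to \cref{double ergodicity} and does not depend on this section). Starting from a unitary equivalence $T\colon L^2(\mu_1)\to L^2(\mu_2)$, the first lemma of the section produces an almost-everywhere-defined $G$-equivariant measure-class isomorphism $F\colon(\partial G,\mu_1)\to(\partial G,\mu_2)$, using \cref{projections in the positive cone} to see that $\hat{T}(A)=TAT^{-1}$ carries $L^\infty(\mu_1)$ onto $L^\infty(\mu_2)$. Then one transports the Bowen–Margulis–Sullivan measures of \cref{invariant measure}: since the $G$-action on $(\partial^2 G,\mu^2)$ is ergodic by \cref{weak mixing}, the invariant measure in a given measure class is unique up to scaling, so $F^2_* m_1 = \beta m_2$; computing Radon–Nikodym derivatives gives the cross-ratio identity $[F(\xi_1),F(\xi_2);F(\eta_1),F(\eta_2)]_2 \asymp [\xi_1,\xi_2;\eta_1,\eta_2]_1^{D_1/D_2}$ on a full-measure subset of $\partial G^4$. \cref{ae Holder equivalence} upgrades this to an almost-everywhere Hölder bound $d_{\epsilon_2}(F(\xi),F(\eta)) \prec d_{\epsilon_1}(\xi,\eta)^{D_1/D_2}$, so $F$ is uniformly continuous on a full-measure set and agrees a.e.\ with a continuous $G$-equivariant map $H\colon\partial G\to\partial G$; by \cref{no automorphisems}, $H$ is the identity, hence $\mu_1$ and $\mu_2$ are in the same measure class, $F=\mathrm{id}$, and by symmetry $d_{\epsilon_2}(\xi,\eta) \asymp d_{\epsilon_1}(\xi,\eta)^{D_1/D_2}$ on all of $\partial G\times\partial G$. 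It remains only to translate this comparability of visual metrics back into a rough similarity of $d_1$ and $d_2$: taking logarithms, $(\xi,\eta)_2 \approx \tfrac{\epsilon_1}{\epsilon_2}\cdot\tfrac{D_1}{D_2}(\xi,\eta)_1 = \tfrac{h_1}{h_2}(\xi,\eta)_1$ uniformly over $\partial^2 G$; since every pair of points of $G$ lies (up to bounded error) on a rough geodesic whose endpoints are boundary points and since Gromov products at boundary points control distances in $G$ up to additive constants via \cref{basic contraction lemma} and the relation $|g|\approx\sup_{\xi}(g,\xi)$ of \cref{nonempty shadow}, this comparability of boundary Gromov products forces $d_2 \approx \tfrac{h_1}{h_2} d_1$, i.e.\ a rough similarity with $L = h_1/h_2$.

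\medskip

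The step I expect to be the main obstacle — beyond simply quoting \cref{weak mixing} — is the last one: recovering a rough similarity \emph{of the metrics on $G$} from a Hölder comparison of visual metrics \emph{on the boundary}. One must be careful that the additive (not just multiplicative) control is genuinely uniform; the mechanism is that the difference $d_2(g,h) - \tfrac{h_1}{h_2}d_1(g,h)$ is, up to a uniform constant, a difference of Gromov products $(\,\cdot\,,\,\cdot\,)_2 - \tfrac{h_1}{h_2}(\,\cdot\,,\,\cdot\,)_1$ evaluated at suitable boundary points obtained by extending rough geodesics through $g$ and $h$, and the displayed boundary estimate bounds exactly this difference. Making this argument clean requires invoking that both $(G,d_1)$ and $(G,d_2)$ are roughly geodesic (which holds since all metrics in $\mathcal{D}(G)$ are, by \cref{hyperbolicQI}) so that such geodesic extensions exist with uniformly bounded constants.
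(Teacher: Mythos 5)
Your proposal is correct and, for $(1)\Rightarrow(2)$, $(2)\Rightarrow(3)$, and all of $(3)\Rightarrow(1)$ up to the identity $d_{\epsilon_2}(\xi,\eta)\asymp d_{\epsilon_1}(\xi,\eta)^{D_1/D_2}$, it coincides with the paper's argument. The one place you genuinely diverge is the final step of $(3)\Rightarrow(1)$: you propose to recover the rough similarity of $d_1,d_2$ geometrically from the comparison of boundary Gromov products, via rough geodesics. The paper instead exploits that at this stage $\mu_1$ and $\mu_2$ are already known to lie in the same measure class with Radon--Nikodym derivative bounded away from $0$ and $\infty$ (via \cref{PS are Hausdorff}), so quasi-conformality forces $e^{h_1(2(g,\xi)_1-|g|_1)}\asymp e^{h_2(2(g,\xi)_2-|g|_2)}$ for every $g$, and taking $\sup_\xi$ and invoking \cref{nonempty shadow} gives $h_1|g|_1\approx h_2|g|_2$ immediately. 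That route is shorter and sidesteps the issue you yourself flag. Your geometric alternative can be made to work, but not quite by the mechanism you describe: extending a rough geodesic \emph{through} $g$ and $h$ to endpoints $\xi,\eta$ gives $(\xi,\eta)_o\approx d(o,\gamma)$, which does not see $d(g,h)$. What one should do instead is reduce to $|k|_i$ for $k=g^{-1}h$ by left invariance, pick boundary points $\xi,\eta$ in general position relative to $k^{-1}$ (i.e.\ $(\xi,\eta)_1,(k^{-1},\xi)_1,(k^{-1},\eta)_1\approx 0$, which exist by the argument of \cref{nonempty shadow}), observe that then $(k\xi,k\eta)_1=(\xi,\eta)_{k^{-1}}\approx|k|_1$, apply the boundary comparison to the pair $(k\xi,k\eta)$, and use the basepoint-change inequality $(k\xi,k\eta)_2\le|k|_2+(\xi,\eta)_2$ together with the symmetric argument in the other metric to get both inequalities. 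Since the mixed products $(k^{-1},\xi)_2$ are not controlled by the boundary comparison, the one-sided estimates plus symmetry are genuinely needed; as written, your sketch leaves this gap, and the paper's measure-theoretic shortcut is the cleaner way to close the argument.
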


\begin{proof}
If $d_2 \approx Ld_1$ are roughly similar then for a given small enough visual parameters $\epsilon$ (chosen for simplicity to be the same for both metrics), the visual metrics $d^1_{L\epsilon}$ and $d^2_\epsilon$ are bi-Lipschitz and thus produce Hausdorff measures in the same measure class so by lemma \ref{PS are Hausdorff} $\mu_1$ and $\mu_2$ are in the same measure class. As explained in the preliminaries this implies that $\pi_1$ and $\pi_2$ are equivalent.

For the implication $(3) \implies (1)$, since $d_{\epsilon_2}(\xi,\eta) \asymp d_{\epsilon_1}(\xi,\eta)^{\frac{D_1}{D_2}}$ the corresponding $D_i$ -dimensional Hausdorff measures are in the same measure class with Radon-Nikodym derivatives bounded away from $0$ and $\infty$. By lemma \ref{PS are Hausdorff} the same is true for $\mu_1$ and $\mu_2$. Therefore for any $g \in G$:

$$e^{h_1(2(g,\xi)_1 -|g|_1)} \asymp e^{h_2(2(g,\xi)_2 -|g|_2)}$$

By lemma \ref{nonempty shadow}, after taking suprema over $\xi$ we get that $e^{h_1|g|_1} \asymp e^{h_2|g|_2}$ so:

$$|g|_1 \approx \frac{h_2}{h_1}|g|_2$$

as needed.
\end{proof}

We have now finished the proof of theorem \ref{classification main theorem}, assuming theorem \ref{weak mixing} holds.

\section{Type I Hyperbolic Groups} \label{type I hyperbolic groups}
One of the original motivations for generalizing Garncarek's work \cite{G14} to the locally compact setting was that such a generalization would strengthen the results of Caprace, Klantar and  Monod in \cite{CKM21}. This is pointed out explicitly in \cite[Remark~5.6]{CKM21}. We describe the strengthened results here.

\begin{definition}
    A locally compact group $G$ is of type I if any two weakly equivalent (see \cite[Appendix~F.1]{BDV08}) irreducible unitary representations are unitarily equivalent.
\end{definition}

If $G$ is a second countable totally disconnected unimodular hyperbolic locally compact group then it follows from theorem \ref{irreducibility main theorem} that for any compact open subgroup $U$ and any Cayley-Abels graph on $G/U$ the Patterson-Sullivan representation corresponding to the induced (pseudo)metric on $G$ is irreducible. Similarly by theorem \ref{classification main theorem} the identity on $G/U$ is a rough similarity between two Cayley-Abels graphs if and only if the corresponding Patterson-Sullivan representations are unitarily equivalent. These are exactly conditions (G1) and (G2) mentioned in \cite[Remark~5.6]{CKM21}, thus it follows from \cite[Remark~5.6]{CKM21} that \cite[Theorem~B]{CKM21},\cite[Theorem~K]{CKM21} hold in greater generality. Explicitly:

\begin{theorem}
     Let $G$ be a second countable unimodular hyperbolic locally compact group. If $G$ is of type I then $G$ admits a cocompact amenable subgroup.
\end{theorem}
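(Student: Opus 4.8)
The plan is to deduce the theorem directly from the two main theorems \ref{irreducibility main theorem} and \ref{classification main theorem} by verifying the hypotheses of the transfer principle established in \cite[Remark~5.6]{CKM21}. First I would reduce to the totally disconnected case: by the structure theory of locally compact hyperbolic groups (see \cite{CCMT15}), a unimodular non-elementary hyperbolic locally compact group $G$ has a maximal compact normal subgroup $W$, and $G/W$ is either a rank one adjoint Lie group or a totally disconnected group acting on a locally finite graph; in the former case the existence of a cocompact amenable (minimal parabolic) subgroup is classical, and the elementary case is immediate, so the content is when $G$ is (virtually) totally disconnected. In that situation one fixes a compact open subgroup $U \leq G$ and considers Cayley--Abels graphs for $G/U$; these induce left invariant, Borel measurable, Gromov hyperbolic (pseudo)metrics on $G$ that are quasi-isometric to a word metric, hence lie in $\mathcal{D}(G)$ (after the standard adjustment of a pseudo-metric to a genuine metric as in \cref{PSmeasures}).

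Next I would spell out that conditions (G1) and (G2) of \cite[Remark~5.6]{CKM21} are exactly what Theorems \ref{irreducibility main theorem} and \ref{classification main theorem} supply. Condition (G1) asks that the Koopman (Patterson--Sullivan) representation attached to every such Cayley--Abels (pseudo)metric be irreducible: this is precisely \cref{irreducibility main theorem} applied to the corresponding metric in $\mathcal{D}(G)$, noting that the Koopman representation is insensitive to passing from a pseudo-metric to a roughly similar genuine metric. Condition (G2) asks that the identity map $G/U \to G/U$ be a rough similarity between two Cayley--Abels graphs if and only if the associated Patterson--Sullivan representations are unitarily equivalent: this is exactly the equivalence ``(1) $\Leftrightarrow$ (3)'' in \cref{classification main theorem}, again transported along the pseudo-metric/metric correspondence. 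With (G1) and (G2) in hand, \cite[Remark~5.6]{CKM21} yields that \cite[Theorem~B]{CKM21} and \cite[Theorem~K]{CKM21} hold for $G$, and \cite[Theorem~B]{CKM21} is the statement that a type I $G$ admits a cocompact amenable subgroup.

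The main obstacle, and the only point requiring genuine care rather than citation, is the bookkeeping that translates between the Cayley--Abels graph metric on the coset space $G/U$ and an honest metric $d \in \mathcal{D}(G)$ on the group itself: one must check that the induced pseudo-metric $d_0(g,h) = d_{\mathrm{graph}}(gU,hU)$ is left invariant, Borel measurable, Gromov hyperbolic, and quasi-isometric to a word metric on $G$, that replacing it by a roughly similar genuine metric (e.g.\ $d(g,g)=0$, $d(g,h) = d_0(g,h)+1$ for $g \ne h$) does not change the Patterson--Sullivan measure class nor the Koopman representation, and that the notion of ``rough similarity of Cayley--Abels graphs via the identity of $G/U$'' matches rough similarity of the corresponding metrics in $\mathcal{D}(G)$ in the sense of \cref{classification main theorem}. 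All of this is routine given the framework of \cref{prelims} and \cref{PSmeasures}, and once it is recorded the theorem follows formally; I would therefore present the proof as a short verification that the hypotheses of \cite[Remark~5.6]{CKM21} are met, citing Theorems \ref{irreducibility main theorem} and \ref{classification main theorem} for (G1) and (G2) respectively.
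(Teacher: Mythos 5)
Your proposal is correct and follows essentially the same route as the paper: both verify that \cref{irreducibility main theorem} and \cref{classification main theorem} supply conditions (G1) and (G2) of \cite[Remark~5.6]{CKM21} for the Cayley--Abels (pseudo)metrics, and then invoke that remark to import \cite[Theorem~B]{CKM21}. Your additional remarks on the reduction to the totally disconnected case and on the pseudo-metric/metric bookkeeping are sensible elaborations of points the paper leaves implicit, not a different argument.
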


\begin{theorem}
Let $G$ be a non-amenable second countable unimodular hyperbolic locally compact group. Let $\mu$ be a quasi invariant measure on $\partial G$, $\pi$ the corresponding Koopman representation and $C^*_\pi(G)$ the image of the group $C^*$-algebra $C^*(G)$ under $\pi$. The following are equivalent:

\begin{itemize}
    \item $C^*_\pi(G)$ contains a non-zero CCR closed two sided ideal.
    \item $C^*_\pi(G)$ is GCR.
    \item $C^*_\pi(G)$ is CCR.
    \item $C^*_\pi(G)$ consists entirely of compact operators.
    \item G has a cocompact amenable subgroup.
\end{itemize}
\end{theorem}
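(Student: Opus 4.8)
The plan is to read this statement off from \cite[Theorem~K]{CKM21} via the mechanism of \cite[Remark~5.6]{CKM21}. That remark shows that \cite[Theorem~K]{CKM21} holds for any (second countable, unimodular) hyperbolic locally compact group for which two properties hold: (G1) for every compact open subgroup $U$ the Koopman representation attached to any Cayley--Abels graph on $G/U$ is irreducible, and (G2) two such representations are unitarily equivalent precisely when the associated pseudometrics on $G$ are roughly similar. So the first step is to observe that (G1) and (G2) are exactly Theorems \ref{irreducibility main theorem} and \ref{classification main theorem}: a Cayley--Abels graph on $G/U$ induces a left invariant pseudometric on $G$ which is roughly similar to a genuine metric in $\mathcal D(G)$ (this is one of the examples of metrics in $\mathcal D(G)$ discussed in Section \ref{prelims}), so the two main theorems of this paper apply and deliver (G1), (G2). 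The purely $C^*$-algebraic implications $(4)\Rightarrow(3)\Rightarrow(2)\Rightarrow(1)$ are in any case formal: the compacts form a CCR algebra, every CCR algebra is GCR, and a non-zero GCR algebra contains a non-zero CCR closed two sided ideal.

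Two points need attention in order to be in the exact situation of \cite[Remark~5.6]{CKM21}. First, the connected case: a unimodular non-elementary connected hyperbolic locally compact group is non-amenable by \cite[Theorem~7.3]{CCMT15}, so it is of general type, and by the structure theory of \cite{CCMT15} it is, modulo a compact normal subgroup, a rank one simple adjoint Lie group; such a group is CCR and its boundary representation is irreducible (indeed it has a cocompact amenable subgroup, namely a minimal parabolic), so the theorem there is classical, and a general $G$ follows by combining this with the totally disconnected case. Second, the measure $\mu$ is an arbitrary quasi-invariant one, not necessarily a Patterson--Sullivan measure: since $G$ is of general type the action $G\acts\partial G$ is minimal, so $\mu$ has full support; if $G$ has a cocompact amenable subgroup then by \cite[Theorem~D]{CCMT15} the boundary is a transitive $G$-space with amenable stabilizer, hence carries a single quasi-invariant measure class, so $\pi$ is unitarily the Patterson--Sullivan representation; if $G$ has no cocompact amenable subgroup, the amenability of the action $G\acts(\partial G,\mu)$ gives $\pi\prec\lambda_G$, so $C^*_\pi(G)$ is a quotient of $C^*_r(G)$ and the obstruction argument of \cite{CKM21} applies to it unchanged.

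The only implication carrying real content is $(1)\Rightarrow(5)$, and this is where (G1) and (G2) enter in \cite{CKM21}: a non-zero CCR ideal of $C^*_\pi(G)$ would produce a separated point in the spectrum of the dynamical system $G\acts\partial G$, and the Glimm--Effros dichotomy --- together with the irreducibility (G1) and the fact, provided by the classification (G2), that the family of boundary representations is too large to embed in a type I spectrum --- forces the relevant point stabilizer to be amenable and cocompact, contrary to assumption. I do not expect a new obstacle in transcribing this argument; the genuine difficulty, and the whole point of the paper, lies upstream, in proving (G1) and (G2) in the non-discrete setting, which is where unimodularity is used (through Lemmas \ref{cancellation lemma} and \ref{uniformly bounded}) and where the measure-theoretic subtleties addressed in the appendices arise.
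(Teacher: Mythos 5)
Your proposal matches the paper's own treatment: the paper likewise deduces this statement from \cite[Theorem~K]{CKM21} via \cite[Remark~5.6]{CKM21}, after observing that Theorems \ref{irreducibility main theorem} and \ref{classification main theorem} applied to the pseudometrics induced by Cayley--Abels graphs on $G/U$ yield exactly conditions (G1) and (G2). Your additional remarks on the connected case and on arbitrary quasi-invariant measures are elaborations the paper leaves implicit, but the route is the same.
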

Note that the last item of the proposition is independent of $\mu$.

It is an interesting question whether the unimodularity assumption in these theorems can be replaced by the weaker assumption of non-amenability. This would follow as above if one would show the conclusions of theorems \ref{irreducibility main theorem} and \ref{classification main theorem} hold for non amenable hyperbolic groups.

\section{The Geodesic Flow} \label{the geodesic flow}
We retain with the assumptions from the last sections that $G$ is second countable locally compact hyperbolic and unimodular.
In the special case when $G$ is a discrete group acting freely properly and co-compactly on a negatively curved simply connected manifold $M$ one can associate to this action the geodesic flow on the unit tangent bundle of the quotient $T^1N = G\backslash T^1M$. In this restricted setting $T^1M$ is canonically identified with the space of pointed geodesics in $M$. By looking at the endpoints of each geodesic we see that this space is a line bundle over $\partial^2 G$. Measure theoretically we can identify this bundle with $\partial^2 G \times \R$ and the action of $G$ on 
on this space is given by a certain cocycle $\tau: G \times \partial^2 G \to \R$. This point of view can be generalized to our setting and will allow us to define a geodesic flow for general $(G,d)$. In the case where $G$ is a discrete group this is done in \cite[Appendix~A.1]{G14} and \cite[Section~3]{BF17}. We will encounter some added measure theoretic difficulties. The two major complications are the following:

\begin{itemize}
    \item If $G$ is uncountable not every null set is contained in a $G$-invariant null set so one must take care to ignore only invariant null sets.
    
    \item If $G$ is uncountable a fundamental domain for a non-singular action of $G$ will typically be a null set (which is of course not $G$-invariant).
\end{itemize}

Define $\sigma, \rho: G \times \partial G \to \R$ and $\tau: G \times \partial^2G \to \R$ by:

$$\sigma(g,\xi) = 2(g^{-1},\xi) - |g|$$
$$\rho(g,\xi) = \frac{1}{h}ln\frac{dg^{-1} _*\mu}{d\mu}(\xi)$$
$$\tau(g,\xi,\eta) = \frac{\rho(g,\eta)-\rho(g,\xi)}{2} \approx (g^{-1},\eta) - (g^{-1},\xi) $$ 

By the chain rule for Radon-Nikodym derivatives $\rho$ and $\tau$ are cocycles, i.e. they satisfy the cocycle equation:
$$c(gh,x) = c(g,hx) + c(h,x)$$
By \cite[Appendix~B.9]{ZIM84} we can assume that $\rho$ and $\tau$ are strict cocycles, i.e. that the cocycle equation is satisfied everywhere (as opposed to almost everywhere).
$\sigma$ is a strict almost cocycle in the sense that for all every $g,h$ and $\xi$:

$$\sigma(gh,\xi) \approx \sigma(g,h\xi) + \sigma(h,\xi)$$
uniformly in $g,h,\xi$.

Finally, by quasi-conformality of $\mu$ it follows that for every $g \in G$, for almost every $\xi \in \partial G$:

\begin{equation} \label{close to cocycle}
    \rho(g,\xi) \approx \sigma(g,\xi)
\end{equation}

By appendix \ref{almost cocycle lemma}, after replacing $\rho$ by a strictly cohomologous cocycle we can assume that estimate \ref{close to cocycle} holds on a $G$-invariant full measure subset $A \subseteq \partial G$. Abusing notation we call will denote this new cocycle again by $\rho$. We now get also that for all $g \in G$ and $(\xi,\eta)$ in a full measure $G$-invariant set $S$ (for example $ S = A^2-\Delta A$) we have:
$$\tau(g,\xi,\eta) \approx \frac{\sigma(g,\eta) - \sigma(g,\xi)}{2}$$

We will need the following lemma in the next section:

\begin{lemma} \label{bound on tau minus sigma}
    For every $M>0$ there exists a constant $C_M$ such that if $(\xi,\eta) \in S$ such that $(\xi,\eta), (g\xi,g\eta) < M$ then $|\tau(g,\xi,\eta) - \sigma(g,\eta)|<C_M $.
\end{lemma}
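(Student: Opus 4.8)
The plan is to reduce the statement to an elementary estimate on Gromov products. The starting point is the identity, valid uniformly for $(\xi,\eta)\in S$ by the remark preceding the lemma, that $\tau(g,\xi,\eta)\approx\tfrac12\big(\sigma(g,\eta)-\sigma(g,\xi)\big)$; since $\sigma(g,\zeta)=2(g^{-1},\zeta)-|g|$ this gives $\tfrac12\big(\sigma(g,\eta)-\sigma(g,\xi)\big)=(g^{-1},\eta)-(g^{-1},\xi)$, whence
$$\tau(g,\xi,\eta)-\sigma(g,\eta)\ \approx\ \big((g^{-1},\eta)-(g^{-1},\xi)\big)-\big(2(g^{-1},\eta)-|g|\big)\ =\ |g|-(g^{-1},\xi)-(g^{-1},\eta),$$
uniformly in $g$ and in $(\xi,\eta)\in S$. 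So it suffices to bound $\big|\,|g|-(g^{-1},\xi)-(g^{-1},\eta)\,\big|$ by a constant depending only on $M$.

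For that I would write $a=(g^{-1},\xi)$, $b=(g^{-1},\eta)$, $L=|g|$, and record that $a,b\le L$ (a Gromov product based at $1$ is at most $|g^{-1}|=|g|$ by left invariance of $d$) and that $a,b$ are bounded below by a universal constant. Then two applications of the $\delta$-inequality do the work. From $(\xi,\eta)<M$, hyperbolicity with base point $1$ inserting $g^{-1}$ gives $\min\{a,b\}\lesssim(\xi,\eta)<M$. From $(g\xi,g\eta)<M$, using Lemma \ref{basic contraction lemma} in the form $(g,g\xi)\approx L-a$, $(g,g\eta)\approx L-b$ together with hyperbolicity with base point $1$ inserting $g$, one gets $L-\max\{a,b\}\lesssim(g\xi,g\eta)<M$, i.e. $\max\{a,b\}\gtrsim L-M$. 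Writing $a+b=\min\{a,b\}+\max\{a,b\}$ and combining $\max\{a,b\}\le L$ with the lower bound on $\min\{a,b\}$ then traps $a+b$ within an $O(M)$ window around $L$, so $\big|L-a-b\big|\lesssim M$ and $C_M$ can be taken affine in $M$.

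I do not anticipate a real obstacle: the whole content is the two one-line hyperbolicity estimates above. The only points needing care are bookkeeping. First, the approximate cocycle identity for $\tau$ is only available on the $G$-invariant full measure set $S$, which is precisely the hypothesis $(\xi,\eta)\in S$. Second, every $\approx$, $\lesssim$ and $\gtrsim$ appearing above must have constants depending only on the hyperbolicity constant and the fixed uniform data of $(G,d)$ — not on $g$, $\xi$ or $\eta$ — so that the resulting $C_M$ depends on $M$ alone.
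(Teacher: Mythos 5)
Your proof is correct. Both you and the paper begin with the same reduction: on $S$ one has $\tau(g,\xi,\eta)\approx\tfrac12(\sigma(g,\eta)-\sigma(g,\xi))$, so $\tau(g,\xi,\eta)-\sigma(g,\eta)$ is, up to uniform constants, the symmetrized quantity $-\tfrac12(\sigma(g,\xi)+\sigma(g,\eta))=|g|-(g^{-1},\xi)-(g^{-1},\eta)$. Where you diverge is in how this quantity is bounded. The paper works with $\rho$ instead of $\sigma$ and invokes the $G$-invariance of $m$ together with $\tfrac{dm}{d\mu^2}\asymp e^{2h(\xi,\eta)}$: taking logarithms in the chain rule for Radon--Nikodym derivatives gives $\big|\tfrac{\rho(g,\xi)+\rho(g,\eta)}{2}\big|\approx|(\xi,\eta)-(g\xi,g\eta)|\le 2M$ in one line. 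You instead bound $\big||g|-(g^{-1},\xi)-(g^{-1},\eta)\big|$ purely geometrically, via two applications of the $\delta$-inequality (one for $(\xi,\eta)<M$ controlling $\min\{a,b\}$, one for $(g\xi,g\eta)<M$ together with Lemma \ref{basic contraction lemma} controlling $\max\{a,b\}$), plus the trivial bounds $0\lesssim a,b\le|g|$. Your route is more elementary and self-contained --- it never touches $m$ or Radon--Nikodym derivatives --- at the cost of being slightly longer; the paper's route is a one-liner given the machinery of $m$ already in place. It is worth noting that your quantity and the paper's are literally the same thing in disguise: for group elements one has the exact identity $|g|-(g^{-1},x)-(g^{-1},y)=(gx,gy)-(x,y)$, so your two hyperbolicity estimates are in effect re-deriving the bound $|(\xi,\eta)-(g\xi,g\eta)|\lesssim M$ (with constant $O(M)$ rather than the sharper $2M$, which is immaterial here). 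Your bookkeeping caveats --- that the approximate identity for $\tau$ is only available on $S$, and that all implicit constants are uniform in $g,\xi,\eta$ --- are exactly the right ones, and both are satisfied.
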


\begin{proof}
    Since $\sigma(g,\eta) \approx \rho(g,\eta)$ we have that $\tau(g,\xi,\eta) - \sigma(g,\eta) \approx -\frac{\rho(g,\eta)+\rho(g,\xi)}{2}$.  Using the chain rule for Radon Nikodym derivatives:

    $$1 = \frac{dg^{-1}_*m}{dm}(\xi,\eta) = \frac{dm}{d\mu^2}(g\xi,g\eta)\frac{dg^{-1}_*\mu^2}{d\mu^2}(\xi,\eta)\frac{d\mu^2}{dm}(\xi,\eta) $$
    Taking the logarithm of both sides and recalling that $\frac{dm}{d\mu^2}(\xi,\eta) \asymp e^{2h(\xi,\eta)}$ we see that $|\frac{\rho(g,\eta)+\rho(g,\xi)}{2}| \approx |(\xi,\eta) - (g\xi,g\eta)| \leq 2M $  as needed.
\end{proof}

Fix an ergodic p.m.p action of $G$ on a standard probability space $(\Omega,\omega)$, we introduce this space to increase the generality of the next section but it will have very little effect on the discussion and for our purposes one can take $\Omega$ to be trivial. Recall the invariant measure $m$ constructed on $\partial^2 G$. Denote the Lebesgue measure on $\R$ by $\ell$. Using the cocycle $\tau$ we can define an (infinite) measure preserving action of $G$ on the space $(S \times \R \times \Omega, m \times  \ell \times \omega)$ by:

$$g(\xi,\eta,t,w) = (g\xi,g\eta,t+\tau(g,\xi,\eta),gw)$$
This action commutes with the $\R$-flow defined by:

$$\Phi^s (\xi,\eta,t) = (\xi,\eta,t + s)$$
Denote:

$$D_{\theta,k} = \{(\xi,\eta, t, w) \in S \times (-k,k) \times \Omega | d_\epsilon (\xi,\eta) > \theta\}$$
We call sets contained in some $D_{\theta,k}$ bounded. Being bounded is equivalent to having precompact projection in $\partial^2 G \times \R$. Bonded sets have finite $m \times \ell \times \omega$ measure.

Similarly to the proofs in of \cite[Appendix~A.2,A.3]{G14} one proves the following two lemmas:

\begin{lemma} \label{proper action}
For any $\theta,k > 0$, $\{g\in G | gD_{\theta,k} \cap D_{\theta,k} \neq \phi\}$ is bounded in the metric $d$ and precompact in $G$.
\end{lemma}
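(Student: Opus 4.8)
The plan is to unpack the definition of $D_{\theta,k}$ and use the defining estimate $\tau(g,\xi,\eta) \approx \frac{1}{2}(\sigma(g,\eta) - \sigma(g,\xi))$ on the invariant set $S$ together with the basic contraction lemma \ref{basic contraction lemma} to translate the condition $gD_{\theta,k}\cap D_{\theta,k}\neq\emptyset$ into a uniform bound on $|g|$. Suppose $(\xi,\eta,t,w)\in D_{\theta,k}$ and $g(\xi,\eta,t,w)\in D_{\theta,k}$. Then both $(\xi,\eta)$ and $(g\xi,g\eta)$ have $d_\epsilon$-distance exceeding $\theta$, which by estimate \ref{visual_metric} means $(\xi,\eta)\prec_\theta 1$ and $(g\xi,g\eta)\prec_\theta 1$; i.e. there is $M = M(\theta)$ with $(\xi,\eta),(g\xi,g\eta) < M$. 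Also $|t|<k$ and $|t+\tau(g,\xi,\eta)|<k$, so $|\tau(g,\xi,\eta)| < 2k$.

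Next I would feed these into lemma \ref{bound on tau minus sigma}: since $(\xi,\eta)\in S$ and $(\xi,\eta),(g\xi,g\eta)<M$, we get $|\tau(g,\xi,\eta) - \sigma(g,\eta)| < C_M$, hence $|\sigma(g,\eta)| < 2k + C_M$. Recall $\sigma(g,\eta) = 2(g^{-1},\eta) - |g|$, so $|g| \leq 2(g^{-1},\eta) + 2k + C_M$. It remains to bound $(g^{-1},\eta)$ from above by a constant depending only on $\theta$ (equivalently on $M$). For this apply lemma \ref{basic contraction lemma} to $g^{-1}$: $(g^{-1}, g^{-1}\cdot(g\xi)) \approx |g^{-1}| - (g,g\xi)$, i.e. $(g^{-1},\xi)\approx |g| - (g,g\xi)$, and similarly $(g^{-1},\eta)\approx |g| - (g,g\eta)$. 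Then $|g| - (g^{-1},\eta) \approx (g,g\eta) \leq |g|$, which alone is not enough; instead I combine the two: $\min\{(g^{-1},\xi),(g^{-1},\eta)\} \gtrsim \min\{(g,g\xi),(g,g\eta)\}$ rearranged, and use $(g\xi,g\eta) \gtrsim \min\{(g^{-1}\cdot\text{--})\}$... more directly, $(g\xi, g\eta) \gtrsim \min\{(g,g\xi),(g,g\eta)\} - \text{const}$ is false in general, so the clean route is: $(g,g\xi)\approx |g| - (g^{-1},\xi)$ and $(g,g\eta)\approx|g|-(g^{-1},\eta)$, and by hyperbolicity $\min\{(g,g\xi),(g,g\eta)\} \lesssim (g\xi,g\eta) < M$. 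Hence $|g| - \max\{(g^{-1},\xi),(g^{-1},\eta)\} \lesssim M$, giving $\max\{(g^{-1},\xi),(g^{-1},\eta)\} \gtrsim |g| - M$; combined with $|g| \leq 2(g^{-1},\eta) + 2k + C_M$ this is circular unless I also bound $(g^{-1},\eta)$ above.

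To close the loop I instead use the symmetric estimate coming from the \emph{other} point: the same computation with $\xi$ in place of $\eta$ gives $|\sigma(g,\xi)| < 2k + C_M'$ once one notes $|\tau(g,\xi,\eta) - \sigma(g,\xi)| = |\tau + (\sigma(g,\eta)-\sigma(g,\xi))/2 \cdot(\pm\cdots)|$ is also controlled (apply lemma \ref{bound on tau minus sigma} after swapping the roles of $\xi,\eta$, which replaces $\tau$ by $-\tau$). So both $|2(g^{-1},\xi) - |g||$ and $|2(g^{-1},\eta) - |g||$ are $\prec_{\theta,k} 1$, whence $(g^{-1},\xi) \approx (g^{-1},\eta) \approx |g|/2$; in particular $(g^{-1},\xi),(g^{-1},\eta) \gtrsim |g|/2 - \text{const}$. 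But by hyperbolicity $\min\{(g^{-1},\xi),(g^{-1},\eta)\} \lesssim (\xi,\eta) < M$ — wait, that is the wrong inequality; the correct one is $(\xi,\eta) \gtrsim \min\{(g^{-1},\xi),(g^{-1},\eta)\}$, which would force $\min \lesssim M$, contradicting $\min \gtrsim |g|/2$ unless $|g| \prec_{\theta,k} 1$. That contradiction is exactly the bound we want: $\{g : gD_{\theta,k}\cap D_{\theta,k}\neq\emptyset\}$ is $d$-bounded. Finally, a $d$-bounded subset of $G$ is precompact in $G$: by lemma \ref{nonempty shadow} and the proof of lemma \ref{compactification of G}, $d$-bounded sets have compact closure in the locally compact topology on $G$. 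The main obstacle is precisely the bookkeeping in this last paragraph — making sure one extracts from $gD_{\theta,k}\cap D_{\theta,k}\neq\emptyset$ control of \emph{both} $\sigma(g,\xi)$ and $\sigma(g,\eta)$ (not just one), so that $(g^{-1},\xi)$ and $(g^{-1},\eta)$ are both forced to be $\approx |g|/2$, at which point the near-ultrametric inequality for the Gromov product on $\partial G$ delivers the contradiction with $(\xi,\eta)<M$; everything else is routine application of lemmas \ref{basic contraction lemma}, \ref{bound on tau minus sigma} and the compactness statement from lemma \ref{compactification of G}.
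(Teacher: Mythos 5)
Your proof is correct, but it closes the argument by a different route than the paper. The first half is the same: from $gD_{\theta,k}\cap D_{\theta,k}\neq\emptyset$ you extract $(\xi,\eta),(g\xi,g\eta)\prec_\theta 1$ and $|\tau(g,\xi,\eta)|<2k$. From there the paper stays purely geometric: since $\tau(g,\xi,\eta)\approx(g^{-1},\eta)-(g^{-1},\xi)$ on $S$, the two Gromov products are approximately equal, so the near-ultrametric inequality $(\xi,\eta)\gtrsim\min\{(g^{-1},\xi),(g^{-1},\eta)\}$ together with $(\xi,\eta)\approx 0$ forces both to be $\approx 0$; Lemma \ref{basic contraction lemma} then gives $(g\xi,g\eta)\gtrsim\min\{(g\xi,g),(g,g\eta)\}\approx|g|$, contradicting $(g\xi,g\eta)\approx 0$. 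You instead invoke Lemma \ref{bound on tau minus sigma} twice (once per coordinate, using the antisymmetry $\tau(g,\eta,\xi)=-\tau(g,\xi,\eta)$) to bound both $\sigma(g,\xi)$ and $\sigma(g,\eta)$, deduce $(g^{-1},\xi)\approx(g^{-1},\eta)\approx|g|/2$, and contradict $(\xi,\eta)<M$ directly. Both arguments are valid and exploit the same dichotomy, but in a dual order: the paper spends $(\xi,\eta)\approx 0$ early and $(g\xi,g\eta)\approx 0$ at the end, while you spend $(g\xi,g\eta)\approx 0$ early (inside Lemma \ref{bound on tau minus sigma}) and $(\xi,\eta)<M$ at the end. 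The paper's version is the more self-contained, since Lemma \ref{bound on tau minus sigma} is proved using the invariant measure $m$ and its density, whereas the paper's proof of the present lemma needs only hyperbolicity and Lemma \ref{basic contraction lemma}. Two small points to tidy in your write-up: applying Lemma \ref{bound on tau minus sigma} to the swapped pair requires $(\eta,\xi)\in S$, which is harmless since $S$ may be taken symmetric (the paper's example $S=A^2\setminus\Delta A$ is), but should be said; and the exploratory false starts in your middle paragraph should be excised, as only the final assembled chain of inequalities is the actual proof.
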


\begin{proof}
    Suppose that  $gD_{\theta,k} \cap D_{\theta,k} \neq \phi$. There exists $(\xi,\eta,t,w) \in D_{\theta,k}$ such that $(g\xi,g\eta,t + \tau(g,\xi,\eta),gw) \in D_{\theta,k}$. Therefore, $(\xi,\eta),(g\xi,g\eta) \approx 0$ and $|(g^{-1},\eta) -(g^{-1},\xi)| \approx |\tau(g,\xi,\eta)| \approx 0$. Now $(\xi,\eta) \gtrsim \min\{(\xi,g^{-1}),(g^{-1},\eta)\}$ but $(\xi,\eta) \approx 0$ and $(g^{-1},\eta) \approx (g^{-1},\xi)$ so $(\xi,g^{-1})\approx(g^{-1},\eta) \approx 0$.
    Using lemma \ref{basic contraction lemma} we see that:
    $$(g\xi,g\eta) \gtrsim \min\{(g\xi,g),(g,\eta)\} \approx |g| - \max\{(g^{-1},\xi),((g^{-1},\eta)\} \approx |g|$$
    But $(g\xi,g\eta) \approx 0$ so $|g| \approx 0$ as needed.
\end{proof}

\begin{lemma} \label{cocompact action}
For large enough $\theta$ and  $k$, $D_{\theta,k}$ intersects every $G$ orbit in $S \times \R \times \Omega$.
\end{lemma}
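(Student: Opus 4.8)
The plan is to exploit the geodesic-flow picture behind the construction. By the estimate $\tau(g,\xi,\eta)\approx(g^{-1},\eta)-(g^{-1},\xi)$, which is valid for $(\xi,\eta)$ in the $G$-invariant set $S$ (it is the identity $\tau(g,\cdot)\approx\tfrac12(\sigma(g,\eta)-\sigma(g,\xi))$ rewritten), the change of the $\R$-coordinate under the $G$-action tracks, up to a bounded error, the signed displacement of the base point $1$ along a rough geodesic from $\xi$ to $\eta$; and a bounded set $D_{\theta,k}$ is, under this dictionary, essentially the set of parametrised geodesics passing within bounded distance of $1$ with their parameter-$0$ point near $1$. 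So, given $(\xi,\eta,t,w)$, I would first translate so that the rough geodesic from $\xi$ to $\eta$ runs through $1$ --- which automatically places $(\xi,\eta)$ in a bounded region of $\partial^2 G$ --- and then slide along that geodesic to pull the $\R$-coordinate into $(-k,k)$.

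Concretely: pick a rough geodesic $\gamma:\R\to G$ with $\gamma(-\infty)=\xi$, $\gamma(+\infty)=\eta$ (these exist since $(G,d)$ is roughly geodesic hyperbolic), with an arbitrary parametrisation, and set $g_*=\gamma(0)^{-1}$, $\beta:=g_*\gamma$, a rough geodesic from $\xi':=g_*\xi$ to $\eta':=g_*\eta$ with $\beta(0)=1$. From $d(\beta(s),\beta(r))\approx|s-r|$ one obtains, uniformly in $s$, the estimates $(\beta(s),\eta')_1\approx\max(s,0)$ and $(\beta(s),\xi')_1\approx\max(-s,0)$; in particular $(\xi',\eta')_1\lesssim 0$, so $1\prec d_\epsilon(\xi',\eta')$, and $(\beta(s),\eta')_1-(\beta(s),\xi')_1\approx s$. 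Now put $t'=t+\tau(g_*,\xi,\eta)$, so that $g_*(\xi,\eta,t,w)=(\xi',\eta',t',g_*w)$, and act once more by $\beta(-t')^{-1}$. Since $(\xi',\eta')\in S$ (as $S$ is $G$-invariant), $\tau(\beta(-t')^{-1},\xi',\eta')\approx(\beta(-t'),\eta')_1-(\beta(-t'),\xi')_1\approx -t'$, so the element $g:=\beta(-t')^{-1}g_*$ sends $(\xi,\eta,t,w)$ to a point whose $\R$-coordinate is $t'+\tau(\beta(-t')^{-1},\xi',\eta')\approx 0$ and whose pair-coordinate $(\beta(-t')^{-1}\xi',\beta(-t')^{-1}\eta')$ again has Gromov product at $1$ which is $\lesssim 0$, because $\beta(-t')^{-1}\beta$ is a rough geodesic through $1$; hence the $d_\epsilon$-distance of that pair is bounded below by a fixed positive constant. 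All implied constants depend only on $(G,d)$ and $\epsilon$, so there are $\theta_0>0$, $k_0>0$, depending only on $(G,d)$, with $g(\xi,\eta,t,w)\in D_{\theta,k}$ whenever $\theta\le\theta_0$ and $k\ge k_0$; thus $D_{\theta,k}$ meets every $G$-orbit as soon as $D_{\theta,k}$ is large enough, i.e. $\theta$ small enough and $k$ large enough. (The $\Omega$-coordinate plays no role, since $D_{\theta,k}$ imposes no condition on it.)

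The one point demanding attention is the uniformity of $(\beta(s),\eta')_1-(\beta(s),\xi')_1\approx s$: this is exactly where it matters that $\beta$ was arranged to pass through $1$, so that no term measuring the (a priori unbounded) distance from $1$ to the geodesic $\gamma$ enters. One must also keep track that the estimate $\tau(g,\xi,\eta)\approx(g^{-1},\eta)-(g^{-1},\xi)$ is only ever invoked at pairs lying in $S$, which both $(\xi',\eta')$ and $(\beta(-t')^{-1}\xi',\beta(-t')^{-1}\eta')$ satisfy by $G$-invariance of $S$. I do not expect a genuine obstacle: the argument is the direct analogue of the discrete case in \cite[Appendix~A.3]{G14}, with rough geodesics in $(G,d)$ replacing geodesics in a Cayley graph, and it runs parallel to the proof of Lemma \ref{proper action} above.
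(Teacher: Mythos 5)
Your proof is correct and follows essentially the same route as the paper's: both choose a rough geodesic $\gamma$ from $\xi$ to $\eta$ and act by $\gamma(r)^{-1}$ with $r=-t-\tau(\gamma(0)^{-1},\xi,\eta)$, using that $(\xi,\eta)_{\gamma(r)}\approx 0$ bounds the pair-coordinate away from the diagonal and that $\tau(\gamma(r)^{-1},\xi,\eta)\approx r+\tau(\gamma(0)^{-1},\xi,\eta)$ controls the $\R$-coordinate; your two-step factorisation $\beta(-t')^{-1}g_*$ is literally the same element. Your reading of the quantifiers (small $\theta$, large $k$, i.e.\ $D_{\theta,k}$ large enough) is the correct interpretation of the lemma given the definition of $D_{\theta,k}$.
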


\begin{proof}
    Let $(\xi,\eta,t,w) \in S \times \R\times \Omega$ and let $\gamma :(-\infty,\infty) \to G$ be a rough geodesic with $\gamma(\infty) = \eta, \gamma(-\infty) =\xi$. For any $r \in \R$ we have that $(\xi,\eta)_{\gamma(r)} \approx 0$ and thus $(\gamma(r)^{-1}\xi,\gamma(t)^{-1}\eta) \approx 0$. 
    In addition:
    \begin{eqnarray*}
    \sigma(\gamma(r)^{-1},\eta) & = & 2(\gamma(r),\eta) - |\gamma(r)| \approx \liminf_{s \to \infty} |\gamma(r)|-(s-r) \\
    & = & r + \liminf_{s\to\infty}(|\gamma(s)| - s) 
    = r + \sigma(\gamma(0)^{-1},\eta) 
    \end{eqnarray*}
    and similarly $\sigma(\gamma(r)^{-1},\xi) \approx - r + \sigma(\gamma(0)^{-1},\xi)$ so:
    $$\tau(\gamma(r)^{-1},\xi,\eta) \approx r + \tau(\gamma(0)^{-1},\xi,\eta)$$

    Putting everything together we see that for $r = - \tau(\gamma(0)^{-1},\xi,\eta) -t$: $(\gamma(r)^{-1}\xi,\gamma(r)^{-1}\eta) \approx 0$ and $\tau(\gamma(r)^{-1},\xi,\eta) \approx 0$. Therefore $\gamma(r)^{-1}(\xi,\eta,t,w)$ is in some $D_{\theta,k}$ with $\theta$ and $k$ independent of $(\xi,\eta,t,w)$. 
\end{proof}

\begin{lemma} \label{bounded times bounded is bounded}
If $U \subset G$ is bounded then for any $\theta$ and $k$, $UD_{\theta,k}$ is bounded.
\end{lemma}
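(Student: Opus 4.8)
The statement to prove is Lemma \ref{bounded times bounded is bounded}: if $U \subset G$ is bounded then $UD_{\theta,k}$ is bounded, i.e.\ contained in some $D_{\theta',k'}$.

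\medskip

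The plan is as follows. Write $C = \sup_{g \in U} |g| < \infty$, which is finite because $U$ is bounded in the metric $d$. I want to produce $\theta' > 0$ and $k' > 0$, depending only on $C$, $\theta$, $k$, such that $g \cdot (\xi,\eta,t,w) \in D_{\theta',k'}$ for every $g \in U$ and every $(\xi,\eta,t,w) \in D_{\theta,k}$. Recall $g \cdot (\xi,\eta,t,w) = (g\xi, g\eta, t + \tau(g,\xi,\eta), g w)$, so there are two things to control: the visual distance $d_\epsilon(g\xi, g\eta)$ must stay bounded below, and $|t + \tau(g,\xi,\eta)|$ must stay bounded above.

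\medskip

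First I would control the $\R$-coordinate. Since $|t| < k$, it suffices to bound $|\tau(g,\xi,\eta)|$ uniformly over $g \in U$. From the defining estimate $\tau(g,\xi,\eta) \approx (g^{-1},\eta) - (g^{-1},\xi)$ (valid on the invariant full-measure set $S$), and since $0 \le (g^{-1},\zeta) \le |g^{-1}| = |g| \le C$ for every $\zeta \in \partial G$, we get $|\tau(g,\xi,\eta)| \le C + O(1)$, uniformly in $g \in U$. Hence $|t + \tau(g,\xi,\eta)| \le k + C + O(1) =: k'$.

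\medskip

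Next I would control the visual distance of the image pair. Using the relation $d_\epsilon(\zeta_1,\zeta_2) \asymp e^{-\epsilon(\zeta_1,\zeta_2)}$, it is enough to bound $(g\xi, g\eta)$ from above uniformly over $g \in U$. The basic inequality $|(x,y)_o - (x,y)_w| \le d(w,o)$ extended to the boundary, applied with $w = g$, $o = 1$, gives $(g\xi, g\eta)_1 \le (g\xi, g\eta)_g + |g|$. But $(g\xi, g\eta)_g = (\xi,\eta)_1$ (left-invariance of $d$ translated by $g$), and $(\xi,\eta)_1 < \frac{1}{\epsilon}\log(\text{const}/\theta)$ is bounded above because $d_\epsilon(\xi,\eta) > \theta$. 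Therefore $(g\xi,g\eta) \le (\xi,\eta) + C + O(1)$, so $d_\epsilon(g\xi,g\eta) \succ \theta =: \theta'$ (up to the uniform multiplicative constant in the visual-metric estimate, which can be absorbed into $\theta'$). Combining the two bounds shows $UD_{\theta,k} \subseteq D_{\theta',k'}$, which is bounded.

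\medskip

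The only mildly delicate point—the ``main obstacle''—is that $\tau$ is only defined (and only satisfies the estimate $\tau(g,\xi,\eta) \approx (g^{-1},\eta)-(g^{-1},\xi)$) on the invariant conull set $S$, and the various $\approx$'s carry implicit additive constants. One must be a little careful that all these constants are uniform in $g$ (they are, since they come from hyperbolicity of $(G,d)$ and from quasi-conformality, neither of which depends on $g$) and that one only ever discards $G$-invariant null sets, as flagged in the discussion preceding the lemma. Beyond this bookkeeping the argument is routine, being essentially the observation that the two displayed near-invariance properties of $\tau$ and of the Gromov product change coordinates by at most $O(\sup_{g\in U}|g|)$.
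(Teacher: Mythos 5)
Your proof is correct and follows essentially the same route as the paper's: bound $|\tau(g,\xi,\eta)|$ by the Gromov product difference $|(g^{-1},\eta)-(g^{-1},\xi)|\lesssim|g|$, and bound the change in $(\xi,\eta)$ under $g$ by $|g|$ via left invariance and the base-point inequality for Gromov products. The paper's proof is a two-line version of exactly this argument.
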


\begin{proof}
Given $g \in G$ and $(\xi,\eta) \in \partial^2 G$ we have that $|\tau(g,\xi,\eta)| \approx |(g^{-1},\xi)-(g^{-1},\eta)| \lesssim 2|g|$ and $|(g\xi,g\eta) - (\xi,\eta)| \approx |(\xi,\eta)_{g^{-1}} - (\xi,\eta)| \lesssim |g|$. Together with the boundedness of $U$ this shows that for any $\theta$ and $k$ there exist $\theta'$ and $k'$ such that $U D_{\theta,k} \subseteq D_{\theta',k'}$.
\end{proof}

Intuitively one should think of lemma \ref{proper action} and lemma \ref{cocompact action} as saying that the $G$ action on $S \times \R \times \Omega$ is proper and co-compact, although formally this is a measurable action and not a continuous topological action. In the same spirit we have the following theorem:

\begin{theorem}
The $G$ action on $S \times \R \times \Omega$ admits a bounded Borel cross section, i.e. a bounded Borel subset $\hat{X}$ intersecting each $G$ orbit exactly once.
\end{theorem}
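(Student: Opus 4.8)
The plan is to produce the cross section inside a single bounded set. By Lemma~\ref{cocompact action} fix $\theta_0,k_0>0$ so that $D:=D_{\theta_0,k_0}$ meets every $G$-orbit of $S\times\R\times\Omega$. Since any subset of $D$ is bounded and $D$ is a complete section, it suffices to produce a Borel subset $\hat X\subseteq D$ meeting every $G$-orbit exactly once; equivalently, a Borel transversal for the orbit equivalence relation $E=E_G$ restricted to the standard Borel space $D$ (whose classes are precisely the non-empty sets $Gx\cap D$, by Lemma~\ref{cocompact action}).

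The essential input is Lemma~\ref{proper action}: $K:=\overline{\{g\in G\mid gD\cap D\neq\phi\}}$ is compact. Hence for $x,y\in D$ one has $x\mathrel{E}y$ iff $y=gx$ for some $g\in K$, so $E|_D$ is "generated by'' the precompact set $K$. I would first reduce, via the Becker--Kechris theorem on Borel $G$-spaces, to the case where the action of $G$ on $S\times\R\times\Omega$ is a \emph{continuous} action on a Polish space; then for $x,y\in D$ the set $\{g\in K\mid gx=y\}$ is closed in $K$, hence compact, and the Arsenin--Kunugui projection theorem shows that $E|_D$ is a \emph{Borel} equivalence relation each of whose classes $[x]_{E|_D}$ is contained in the compact set $Kx$. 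Thus $E|_D$ realizes a properly-discontinuous-like situation — all returns to the complete section $D$ lie in the fixed compact set $K$ — and such an equivalence relation is smooth, i.e.\ its orbit space is standard Borel, this being the measurable counterpart of the fact that a proper action has a Hausdorff quotient. For this I would cite the descriptive set theory of Borel actions of locally compact second countable groups (the circle of ideas used in Appendix~\ref{appendix a}; in the discrete case this is \cite[Appendix~A]{G14}).

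Once smoothness of $E|_D$ is established, it yields — because $E|_D$ comes from a (partial) action of a locally compact group and has relatively compact classes — a Borel transversal $\hat X\subseteq D$. By construction $\hat X$ is bounded and meets every $G$-orbit of $S\times\R\times\Omega$ exactly once, proving the theorem.

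I expect the smoothness step to be the real obstacle. In Garncarek's discrete setting $E|_D$ is a \emph{countable} Borel equivalence relation and Lemma~\ref{proper action} makes it smooth immediately (via Lusin--Novikov), so one simply selects a transversal; when $G$ is non-discrete the classes of $E|_D$ are compact but typically uncountable, and genuine descriptive-set-theoretic input is needed: the reduction to a continuous action, the Borel-ness of $E|_D$ through a compact-section projection theorem, and the passage from "compact returns'' to a standard orbit space and then to an honest (everywhere-defined) Borel transversal. This is exactly the kind of non-discrete Borel subtlety that the appendices are designed to handle.
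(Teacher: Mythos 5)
Your route is genuinely different from the paper's. The paper does not attempt a general descriptive-set-theoretic argument: it invokes the structure theory of hyperbolic locally compact groups (\cite[Proposition~5.10]{CCMT15}) to split into two cases. When $G$ modulo its maximal compact normal subgroup is a connected rank one adjoint Lie group, the action on $\partial^2 G\times\R$ is transitive (it is the unit tangent bundle of the symmetric space) and the cross section is immediate. Otherwise $G$ has a compact open subgroup $U$ by van Dantzig's theorem; one first takes a Borel cross section $\Delta$ for the $U$-action (available by \cite{ZIM84} since $U$ is compact), then uses Lemmas \ref{proper action} and \ref{cocompact action} to see that each $G$-orbit meets $D_{\theta,k}\cap\Delta$ in a finite nonempty set (the return set is covered by finitely many cosets $Ug_i$, and each $Ug_ix$ meets $\Delta$ exactly once), and finishes with the finite Borel selection theorem. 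Your approach, if completed, would be more uniform --- it uses only Lemmas \ref{proper action} and \ref{cocompact action} and would apply to any second countable locally compact group --- whereas the paper's dichotomy buys a reduction to situations where only elementary selection results (transitivity; finite uniformization) are needed.

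The gap is exactly where you flag it, and it is not filled by the sources you point to: Appendix \ref{appendix a} is about cocycles, and \cite{G14} treats the discrete case, where $E|_D$ is a countable Borel equivalence relation and Lusin--Novikov applies directly; neither contains the statement that ``compact returns to a complete section'' forces smoothness, let alone an everywhere-defined Borel transversal. To close the gap you would have to run the argument rather than cite it: after a Becker--Kechris change of topology making the action continuous and $D$ clopen, the set $R=\{(x,gx)\mid g\in K,\ x\in X\}$ is closed (image of a proper map), so $E|_D=R\cap D^2$ is a relatively closed equivalence relation whose classes $Kx\cap D$ are compact; then $x\mapsto[x]_{E|_D}$ is a Borel $E$-invariant map into the hyperspace of compact subsets of $D$ (this is where a compact-section uniformization theorem genuinely enters), which gives smoothness, and a Borel selector for this compact-valued map yields the transversal $\hat{X}=\{x\in D\mid s(x)=x\}$. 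As written, the central step of your proof is an assertion supported by references that do not contain it; either supply the argument above or follow the paper's structural dichotomy.
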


\begin{proof}
By \cite[Proposition~5.10]{CCMT15} $G$ has a maximal normal compact subgroup $W$ and $G/W$ is either a virtually connected rank one simple adjoint Lie group or $G/W$ is totally disconnected.

In case $G/W$ is connected the action of $G$ on $\partial G$ is transitive so there is a unique invariant measure class on $\partial G$ and $\mu$ is in this measure class so the cocycle $\tau$ is independent of $d$ up to cohomology and the actions on $S\times\R\times\Omega$ given by different $d$ are conjugate. For the specific (psudo)metric pulled back from the corresponding symmetric space this action is simply the action on the unit tangent bundle of the symmetric space which is known to be transitive so any point is a cross section.

In case $G/W$ is totally disconnected, by van Dantzig's theorem $G$ admits a compact open subgroup $U$. 
Since $U$ is compact it is bounded in the metric $d$. 

By \cite[Corrolary~2.1.21]{ZIM84} and \cite[Appendix A.7]{ZIM84} the $U$ action on $S \times \R \times \Omega$ admits a Borel cross section $\Delta$.
By lemma \ref{cocompact action} there exists some $D_{\theta',k'}$ intersecting every $G$ orbit and by lemma \ref{bounded times bounded is bounded} there exist $\theta,k$ such that $UD_{\theta',k'} \subseteq D_{\theta,k}$. Since every point has a unique point of $\Delta$ in its $U$ orbit every $G$ orbit intersects $D_{\theta,k} \cap \Delta$ non trivially. By lemma \ref{proper action} there exists a compact set $F \subset G$ such that if $g \notin F$, $gD_{\theta,k} \cap D_{\theta,k} = \phi$. We will importantly use the fact that this intersection is empty and not only null, this is the reason we are restricting ourselves from $\partial^2 G$ to $S$. Enlarging $F$ we can assume without loss of generality that $F = \bigcup_i Ug_i$ is a finite union of right cosets of U. If $x \in S\times \R \times \Omega$ and $g\in G$ such that $x,gx \in D_{\theta,k} \cap \Delta$ then $g \in Ug_i$ for some $i$, but $|Ug_i x \cap \Delta| = 1$. Therefore the intersection of each $G$ orbit with $D_{\theta,k} \cap \Delta$ is finite and non empty.

To sum up, every $G$ orbit intersects $D_{\theta,k} \cap \Delta$ and the restriction of the $G$ orbit equivalence relation to $D_{\theta,k} \cap \Delta$ has finite  equivalence classes. Using the finite Borel selection theorem we deduce that there is a Borel subset $\hat{X} \subseteq D_{\theta,k} \cap \Delta$ intersecting every $G$ orbit exactly once as needed. This set is bounded since it is contained in $D_{\theta,k}$.
\end{proof}

We now get a Borel isomorphism between the the space $X = (S\times \R \times \Omega) // G$ of $G$-ergodic components and $\hat{X}$ by restricting the projection $p: S\times \R \times \Omega \to X$ to $\hat{X}$. This identifies the orbit space of $G$ with the space of ergodic components. Since the flow $\Phi$ on $(S\times \R \times \Omega)$ commutes with $G$ it descends to a flow $\phi$ on $X$ such that $p \circ \Phi^s = \phi^s \circ p$.

Since $G$ is unimodular and for any compact $U \subseteq G$, $U\hat{X}$ is bounded, by theorems \ref{measure construction} and \ref{invariant measure on quotient} there exists a unique measure $\nu$ on $X$ in the measure class of $p_*(m \times \ell \times \omega)$ which is invariant under the flow $\phi$ and such that for any $f \in L^1(S\times \R \times \Omega, m\times \ell \times \omega)$:

\begin{equation} \label{disintegration}
\int f(z) dm\times \ell \times \omega(z) = \int\int f(gz) d\lambda(g)d\nu(p(z))
\end{equation}
(Note that the inner integral on the right hand side depends only on $p(z)$ and not on $z$ because $G$ is unimodular.)

Since for any compact $U \subset G$, $U\hat{X}$ is bounded, it follows form equation \ref{disintegration} that $\nu$ is a finite measure and after re-scaling $m$ we can assume without loss of generality that $\nu$ is a probability measure.
If we take $\Omega$ to be trivial then we call the associated flow $(X,\nu,\phi)$ the \textbf{geodesic flow} of $(G,d)$.

\section{Double Ergodicity} \label{double ergodicity}
We keep all the notation and assumptions from the previous section.
In this section we will prove that the action of $G$ on $\partial^2 G$ is ergodic and even weak mixing, meaning that for any ergodic p.m.p action $G \acts (\Omega,\omega)$ the diagonal action on $(\partial^2 G \times \Omega, m \times \omega)$ is ergodic. We will use a classic argument of E. Hopf and adapt the proof in \cite[Section~4]{BF17} to the locally compact setting. The author would like to thank Uri Bader and Alex Furman for sending him a forthcoming unpublished version of \cite{BF17} on which our arguments are very closely based.

The space $(S\times \R \times \Omega, m\times \ell \times \omega)$ is equipped with the commuting measure preserving actions of $G$ and the $\R$-flow $\Phi$ discussed in the previous section. The space of $\R$-ergodic components is $(S\times \Omega,[m \times \omega])$ and in its canonical measure class it supports the $G$-invariant infinite measure $m \times \omega$. The space $(X,[\nu])$ of $G$-ergodic components has the natural $\R$-flow $\phi$ and contains the finite measure $\nu$ in its canonical measure class. $\phi$ is $\R$-invariant and satisfies equation \ref{disintegration}. After re-scaling $m$ we can further assume that $\nu$ is a probability measure. The $\R$-ergodic components of $(X,\nu)$ , the $G$-ergodic components of $(S \times \Omega, m \times \omega)$ and the $G \times \R$-ergodic components of $(S\times \R \times \Omega, m\times \ell \times \omega)$ are all given by the same space $(Y,[\beta])$. In the measure class $[\beta]$ we choose the measure $\beta$ which is the image of $\nu$. We thus obtain the following commutative diagram:

\begin{equation} \label{Hopf argument maps}
\begin{tikzcd}
                                                                   & {(S\times \mathbb{R} \times \Omega, m\times\ell\times\omega)} \arrow[ld, "p"'] \arrow[rd, "q"] &                                                   \\
{(X,\nu)} \arrow[rd, "u"] \arrow[ru, "\bar{p}", dotted, bend left] &                                                                                                & {(S\times\Omega, m\times\omega)} \arrow[ld, "v"'] \\
                                                                   & {(Y,\beta)}                                                                                    &                                                  
\end{tikzcd}
\end{equation}

The maps $p$ and $v$ are the $G$-ergodic components maps, the maps $q$ and $u$ are the $\R$-ergodic components maps and the map $u\circ p = v \circ q$ is the $G \times \R$-ergodic components map. The maps $p$ and $q$ are $\R$-equivariant and $G$-equivariant respectively. 
Ergodicity of $G$ on $(S \times \Omega, m \times \omega)$, ergodicity of $\R$ on $(X,\mu)$ and ergodicity of $G\times\R$ on  $(S\times \R \times \Omega, m\times \ell \times \omega)$ are all equivalent to $\beta$ being supported on a single point.

In the previous section we constructed a bounded cross section $\hat{X}$ for the $G$ action on $(S \times \R \times \Omega, m\times \ell \times \omega)$. The map $\bar{p}: X \to S\times\R\times\Omega $ describes the corresponding section of $p$. Unlike the case of discrete $G$ the measure $\bar{p}_*\nu$ might be singular to $m\times \ell \times \omega$. Given $x \in X$ we denote

$$\bar{p}(x) = (x_-,x_+,t_x,\omega_x)$$

The maps $p,q,u$ and $v$ can be used to push forward finite signed measure which are absolutely continuous with respect to the given measure classes. By the Radon-Nikodym theorem the space of finite signed measures absolutely continuous with respect to a given measure is identified with the space of integrable functions. Under these isomorphisms we obtain the operators $P,Q,U,V$ in the following commutative diagram:

\begin{equation}\label{Hopf argument operators}
\begin{tikzcd}
                             & {L^1(S\times \mathbb{R} \times \Omega, m\times\ell\times\omega)} \arrow[ld, "P"'] \arrow[rd, "Q"] &                                                                                                    \\
{L^1(X,\nu)} \arrow[rd, "U"] &                                                                                                   & {L^1(S\times\Omega, m\times\omega)} \arrow[ld, "V"'] \arrow[lu, "R_{\theta}"', dotted, bend right] \\
                             & {L^1(Y,\beta)}                                                                                    &                                                                                                   
\end{tikzcd}
\end{equation}

We will now give explicit descriptions of $Q,P$ and $U$. The operator $Q$ is given by integration over the $\R$ coordinate, For any $f \in L^1(S\times \R \times \Omega, m\times \ell \times \omega)$:

$$Q(f)(\xi,\eta,w) = \int f(\xi,\eta,t,w) dt$$
By equation \ref{disintegration}, $P$ is given by integration over $G$ orbits, or equivalently over fibers of $p$. Given $f \in L^1(S\times \R \times \Omega, m\times \ell \times \omega)$:

$$P(f)(p(\xi,\eta,t,w)) = \int f(g(\xi,\eta,t,w)) d\lambda(g)$$
Note that since $G$ is unimodular the right hand side indeed depends only on $p(\xi,\eta,t,w)$ (this fact is critical in the construction of the measure $\nu$).

Using Birkhoff's ergodic theorem we see that $U$ is given by averaging over $\R$ orbits in $X$. For $f \in L^1(X,\nu)$, for almost every $x \in X$, for any $a\leq b \in \R$:

$$U(f)(u(x)) = \frac{1}{T}\int_a^{b+T} f(\phi^{-t}x)dt$$

Our next goal will be to obtain an explicit description of $V$. For this we need to explain the doted arrow $\R_\theta$ in diagram \ref{Hopf argument operators}. If $\theta \in L^1(\R)$ is a positive kernel, i.e $\theta: \R \to [0,\infty)$ and:

$$\int \theta(t) dt = 1$$
We define $R_\theta: L^1(S\times \Omega, m \times \omega) \to L^1(S\times \R \times \Omega, m \times \ell \times \omega)$ by:

$$\R_\theta(f)(\xi,\eta,t,w) = \theta(t)f(\xi,\eta,w)$$
Clearly $Q \circ R_\theta = Id$. Since $V \circ Q = U \circ P$ we can precompose with $R_\theta$ to get that $V = U \circ P \circ R_\theta$. Given $f \in L^1(S\times \Omega, m \times \omega)$ denote:

\begin{equation} \label{description of bar sub theta}
    \bar{f}_\theta(x) = P\circ R_\theta (f) = \int R_\theta f(g\bar{p}(x))d\lambda(g) = \int \theta(t_x +\tau(g,x_-,x_+))f(gx_-,gx_+,gw_x) d\lambda(g)
\end{equation}
We also denote:
$$\bar{f}_{[0,1]} = \bar{f}_{\mathbbm{1}_{[0,1]}}$$

Using $R_\theta$ we obtain the following description of $V$. For any $a \leq b$, for almost every $y \in Y$, if $y = u(x)$:

\begin{equation} \label{first description of V}
   V(f)(y) = \frac{1}{T}\int_a^{b+T} \bar{f}_{[0,1]}(\phi^{-t}x)dt 
\end{equation}
While this formula is explicit it describes the function $V(f)$ in terms of the parameter $x \in X$. We would like to obtain a description of $V$ in terms of a points $(\xi,\eta,w) \in S\times \Omega$. For this purpose we introduce the following averaging operators. For any $a \leq b$ and $f \in L^1(S\times \Omega,m \times \omega)$ define $I_a^b(f),J_a^b(f): S \times \Omega \to [0,\infty]$ by:

$$I_a^b(f)(\xi,\eta,w) = \frac{1}{b-a}\int_{\{g \in G| \tau(g,\xi,\eta) \in [a,b]\}} f(g\xi,g\eta,gw) d\lambda(g)$$

$$J_a^b(f)(\xi,\eta,w) = \frac{1}{b-a}\int_{\{g \in G| \sigma(g,\eta) \in [a,b]\}} f(g\xi,g\eta,gw) d\lambda(g)$$
We prove the following ergodic theorem describing $V$:

\begin{theorem} \label{ergodic theorem for V}
    For any $f \in L^1(S\times \Omega,m \times \omega)$, for almost every $(\xi,\eta,w)$ the values $I_a^b(f)(\xi,\eta,w)$ and $J_a^b(f)(\xi,\eta,w)$ are finite for any interval $[a,b]$ and if $y = v(\xi,\eta,w)$ then:
    $$V(f)(y) = \lim_{T \to \infty} I_a^{b+T}(f)(\xi,\eta,w)$$
    If in addition $f$ is supported on a bounded set then:
    $$V(f)(y) = \lim_{T \to \infty} J_a^{b+T}(f)(\xi,\eta,w)$$
\end{theorem}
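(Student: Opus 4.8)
The plan is to deduce everything from the formula \eqref{first description of V} already at our disposal, which expresses $V(f)$ as a forward time average of $\bar f_{[0,1]}$ along the flow $\phi$ on $(X,\nu)$. Fix $f\in L^1(S\times\Omega,m\times\omega)$. For $(m\times\omega)$-almost every $(\xi,\eta,w)$ I would use the lift $x=p(\xi,\eta,0,w)\in X$; since $u\circ p=v\circ q$ and $q(\xi,\eta,0,w)=(\xi,\eta,w)$, this $x$ satisfies $u(x)=v(\xi,\eta,w)=y$. The first point that needs care is that \eqref{first description of V} is an almost-everywhere statement on $(X,\nu)$ whereas we want one on $(S\times\Omega,m\times\omega)$, and the section $\bar p$ may be singular to $m\times\ell\times\omega$. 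Here I would use that the relevant conull subset $X_0\subseteq X$ is $\phi$-invariant, so $p^{-1}(X_0)$ is a conull $\Phi$-invariant subset of $S\times\R\times\Omega$, and a $\Phi$-invariant null set is necessarily of the form $N_0\times\R$ with $N_0$ null (because $\ell(\R)=\infty$); hence $p(\xi,\eta,0,w)\in X_0$ for a.e.\ $(\xi,\eta,w)$. This careful use of invariant versus ordinary null sets replaces a step that is trivial for discrete groups.

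Next I would carry out the computation. Since the value of $P(h)$ at $p(z)$ is the $G$-orbit integral of $h$ at $z$, independent of the chosen representative $z$ of the orbit (by unimodularity), and $\phi^{-t}x=p(\xi,\eta,-t,w)$ by $\R$-equivariance of $p$, one obtains for a.e.\ $(\xi,\eta,w)$ and a.e.\ $t$
\[
\bar f_{[0,1]}(\phi^{-t}x)=\int_{\{g\in G\,:\,\tau(g,\xi,\eta)\in[t,t+1]\}} f(g\xi,g\eta,gw)\,d\lambda(g),
\]
and then, for $f\ge 0$, by Tonelli,
\[
\frac1T\int_a^{b+T}\bar f_{[0,1]}(\phi^{-t}x)\,dt
=\frac1T\int_G \ell\bigl([a,b+T]\cap[\tau(g,\xi,\eta)-1,\,\tau(g,\xi,\eta)]\bigr)\, f(g\xi,g\eta,gw)\,d\lambda(g).
\]
Using $\mathbbm{1}_{[a+1,\,b+T]}(\tau)\le\ell([a,b+T]\cap[\tau-1,\tau])\le\mathbbm{1}_{[a,\,b+T+1]}(\tau)$, for $f\ge 0$ this quantity is squeezed between $\tfrac{b+T-a-1}{T}I_{a+1}^{b+T}(f)(\xi,\eta,w)$ and $\tfrac{b+T+1-a}{T}I_{a}^{b+T+1}(f)(\xi,\eta,w)$, and the middle term converges to $V(f)(y)$ by \eqref{first description of V}.

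To finish the case $f\ge0$, I would first record that $\int_0^T\bar f_{[0,1]}(\phi^{-t}x)\,dt<\infty$ for a.e.\ $x$ and all $T$ (Tonelli on $X\times[0,T]$, together with the fact that $\phi$ preserves $\nu$ and $\|\bar f_{[0,1]}\|_{L^1(\nu)}=\|f\|_1$, which follows from \eqref{disintegration}); the lower squeeze then forces $I_c^d(f)(\xi,\eta,w)<\infty$ for all rational, hence all, $c<d$. The difference of the two squeezing sequences equals $\tfrac1T\bigl(I_a^{a+1}(f)+I_{b+T}^{b+T+1}(f)\bigr)$: the first summand tends to $0$ by finiteness, and for the second one applies \eqref{first description of V} with $a=b=0$, so that the averages $\tfrac1T\int_0^T\bar f_{[0,1]}(\phi^{-t}x)\,dt$ converge, whence their increments over unit windows are $o(T)$; comparing an $\ell$-weighted window with an indicator window then gives $I_{b+T}^{b+T+1}(f)=o(T)$. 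Thus the squeeze collapses and $I_a^{b+T}(f)(\xi,\eta,w)\to V(f)(y)$ for every $a\le b$ (simultaneity over $a,b$ coming from density of the rationals together with finiteness of $I_c^d(|f|)$). The general $f\in L^1$ then follows by linearity, finiteness of $I_c^d(|f|)$ justifying the splitting into positive, negative, real and imaginary parts.

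For the second assertion I would take $f$ supported where $d_\epsilon(\xi,\eta)\ge\theta_0$ for some $\theta_0>0$. For a.e.\ fixed $(\xi,\eta,w)$ we have $(\xi,\eta)<M$ for some $M$, and, enlarging $M$ if needed, also $(g\xi,g\eta)<M$ on $\{g:f(g\xi,g\eta,gw)\ne0\}$, since there $d_\epsilon(g\xi,g\eta)\ge\theta_0$; lemma~\ref{bound on tau minus sigma} then gives $\abs{\tau(g,\xi,\eta)-\sigma(g,\eta)}<C_M$ on that set. Consequently, for $f\ge0$, $J_a^{b+T}(f)(\xi,\eta,w)$ is squeezed between $\tfrac{b+T-a-2C_M}{b+T-a}I_{a+C_M}^{b+T-C_M}(f)(\xi,\eta,w)$ and $\tfrac{b+T-a+2C_M}{b+T-a}I_{a-C_M}^{b+T+C_M}(f)(\xi,\eta,w)$, both of which tend to $V(f)(y)$ by the first part (applied with the fixed endpoints $a\pm C_M$ and $b\mp C_M$); the general $f$ with bounded support follows by linearity, and the corresponding finiteness statements in the same way. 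The step I expect to be the main obstacle is precisely the measure-theoretic bookkeeping flagged in Section~\ref{the geodesic flow}: transferring \eqref{first description of V} across the possibly singular section $\bar p$ via the $\Phi$-invariant-null-set observation, and ensuring that the unit-window estimate $I_{b+T}^{b+T+1}(f)=o(T)$ holds on a genuinely $G$-invariant conull set, so that all the squeezes close simultaneously for every $a\le b$ — this is where non-discreteness of $G$ genuinely intervenes.
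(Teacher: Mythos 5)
Your proof is correct and follows essentially the same route as the paper's: Birkhoff's theorem on $(X,\nu,\phi)$, transfer to $(S\times\Omega,m\times\omega)$ through the $G$-orbit integral (with unimodularity guaranteeing independence of the orbit representative and $\Phi$-invariance of null sets handling the passage across the possibly singular section), a two-sided sandwich of $I_a^{b+T}$ by flow-time averages over slightly shifted windows, and the comparison of $J$ with $I$ via Lemma~\ref{bound on tau minus sigma}. The only organizational difference is that you evaluate $\bar f_{[0,1]}(\phi^{-t}x)$ at the representative $(\xi,\eta,-t,w)$ rather than at the cross-section representative $\bar p(x)$, which replaces the paper's Lemma~\ref{avrages for points in the cross section} (with its constant $c$ bounding $|t_x|$) and the conjugation identity $I_a^b(f)(\xi,\eta,w)=I_{a+s}^{b+s}(f)(x_-,x_+,w_x)$ by a direct Tonelli computation; this is legitimate and slightly cleaner, at the price of your extra $o(T)$ estimate on the unit windows $I_{b+T}^{b+T+1}(f)$, which you could avoid entirely by applying your sandwich once with the window $[a-1,b+T]$ and once with $[a,b+T-1]$ so that both bounds involve $I_a^{b+T}$ itself.
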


The proof will require two lemmas.

\begin{lemma}\label{convolution lemma}
    For any two positive kernels $\theta_1, \theta_2$ and $f \in L^1(S\times \Omega,m \times \omega)$:
    $$\bar{f}_{\theta_2*\theta_1} = \int_{-\infty}^{\infty} \theta_2(s)\bar{f}_{\theta_1}\circ\phi^{-s} ds$$
\end{lemma}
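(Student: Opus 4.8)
The plan is a direct Fubini computation, carried out after unwinding the definition of $\bar f_\theta$ from \eqref{description of bar sub theta} and making explicit how the flow $\phi$ on $X$ acts through the cross-section $\bar p$.

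The first step is to record the elementary but essential fact that, although $\bar f_\theta$ is defined on $X$ via the chosen section $\bar p$, its value at a point of $X$ does not depend on which representative of the corresponding $G$-orbit is used: if one replaces $\bar p(x) = (x_-, x_+, t_x, w_x)$ by $h\bar p(x) = (hx_-, hx_+, t_x + \tau(h, x_-, x_+), hw_x)$, then the cocycle identity $\tau(h, x_-, x_+) + \tau(g, hx_-, hx_+) = \tau(gh, x_-, x_+)$, the substitution $g \mapsto gh^{-1}$, and the unimodularity of $G$ together show that the integral $\int_G \theta(t + \tau(g,\cdot,\cdot)) f(g\,\cdot)\,d\lambda(g)$ is unchanged. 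Since $p \circ \Phi^{-s} = \phi^{-s} \circ p$, the point $\Phi^{-s}\bar p(x) = (x_-, x_+, t_x - s, w_x)$ lies in the fiber $p^{-1}(\phi^{-s}x)$, so evaluating $\bar f_{\theta_1}$ on this representative gives
\[
\bar f_{\theta_1}(\phi^{-s}x) \;=\; \int_G \theta_1\bigl(t_x - s + \tau(g, x_-, x_+)\bigr)\, f(gx_-, gx_+, gw_x)\, d\lambda(g).
\]

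Granting this, the lemma reduces to interchanging two integrals. Expanding the convolution $(\theta_2 * \theta_1)(u) = \int_\R \theta_2(s)\theta_1(u - s)\,ds$ in the definition of $\bar f_{\theta_2 * \theta_1}$ yields
\[
\bar f_{\theta_2 * \theta_1}(x) \;=\; \int_G \int_\R \theta_2(s)\, \theta_1\bigl(t_x + \tau(g, x_-, x_+) - s\bigr)\, ds\; f(gx_-, gx_+, gw_x)\, d\lambda(g),
\]
whereas substituting the formula above for $\bar f_{\theta_1}\circ\phi^{-s}$ gives
\[
\int_\R \theta_2(s)\, \bar f_{\theta_1}(\phi^{-s}x)\, ds \;=\; \int_\R \theta_2(s) \int_G \theta_1\bigl(t_x - s + \tau(g, x_-, x_+)\bigr)\, f(gx_-, gx_+, gw_x)\, d\lambda(g)\, ds.
\]
The two iterated integrals have the same integrand, so it only remains to justify Fubini. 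For $f \geq 0$ this is immediate from Tonelli's theorem, as all factors are non-negative. For general $f \in L^1(S\times\Omega, m\times\omega)$ one first applies the non-negative case to $\abs{f}$ to conclude that the iterated integrals of $\abs{f}$ agree and are finite for $\nu$-a.e.\ $x$ — finiteness here coming from $\bar{\abs{f}}_{\theta} \in L^1(X,\nu)$, which follows from the disintegration formula \eqref{disintegration} together with $\int\theta = 1$ — and then Fubini's theorem applies to $f$ itself, giving the claimed identity $m \times \ell \times \omega$-a.e., hence $\nu$-a.e.\ on $X$.

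I expect the only genuine subtlety to be the first step: the justification that $\bar f_{\theta_1}$, a priori a function defined through the section $\bar p$, may be evaluated along the flowed representative $\Phi^{-s}\bar p(x)$ rather than along $\bar p(\phi^{-s}x)$, and in particular the appeal to unimodularity that makes the defining integral orbit-independent. Once that identification is in place, the remainder is the bookkeeping above and an application of Fubini's theorem.
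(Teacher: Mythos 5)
Your proof is correct and is essentially the paper's argument carried out explicitly: the paper simply observes that $R_{\theta_2*\theta_1}f = \int_{-\infty}^{\infty}\theta_2(s)\,R_{\theta_1}f\circ\Phi^{-s}\,ds$ pointwise on $S\times\R\times\Omega$ (the definition of convolution) and then applies $P$ to both sides, which packages in one stroke the three things you verify by hand — the intertwining $P(F\circ\Phi^{-s})=P(F)\circ\phi^{-s}$, the orbit-representative independence of the defining integral (unimodularity), and the Fubini interchange of $P$ with the $s$-integral. Your more explicit write-up is fine and loses nothing.
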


\begin{proof}
    Notice that:
    $$R_{\theta_2 * \theta_1}f(\xi,\eta,t,w) = \int_{-\infty}^{\infty} \theta_2(s)\theta_1(t-s)f(\xi,\eta,w)ds = \int_{-\infty}^{\infty} \theta_2(s)R_{\theta_1}f\circ\Phi^{-s}(\xi,\eta,t,w)ds$$
    We finish by applying $P$ to both sides.
\end{proof}

\begin{lemma} \label{avrages for points in the cross section}
    There exists a constant $c\geq0$ such that for every $m\times\omega$ integrable function $f:S\times\Omega \to [0,\infty)$, for every interval $[a,b]$ with $a+c+1 \leq b$ and for every $x \in X$:

    $$\frac{1}{b-a} \int_{a+c}^{b-c-1} \bar{f}_{[0,1]} \circ \phi^{-t}(x)dt \leq I_a^b(f)(x_-,x_+,w_x) \leq \frac{1}{b-a} \int_{a-c-1}^{b+c} \bar{f}_{[0,1]} \circ \phi^{-t}(x)dt$$
\end{lemma}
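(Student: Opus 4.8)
The plan is to make the inner function $\bar f_{[0,1]}\circ\phi^{-t}$ completely explicit, to turn the resulting $t$-integral over an interval into an integral over $G$ by Tonelli's theorem, and then to compare, for each $g\in G$, the weight it receives against the indicator $\mathbbm{1}_{\{\tau(g,\xi,\eta)\in[a,b]\}}$ that defines $I_a^b$. The only genuine input beyond bookkeeping will be the uniform bound $|t_x|<k$ on the $\R$-coordinate of the cross section.

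First I would fix $c$ to be the constant $k$ for which the cross section satisfies $\hat X\subseteq D_{\theta,k}$; by definition of $D_{\theta,k}$ this gives $|t_x|<k=c$ for every $x\in X$. Next, using $\phi^{-t}x=p(\Phi^{-t}\bar p(x))$ (which follows from $p\circ\Phi^{s}=\phi^{s}\circ p$ and $p\circ\bar p=\mathrm{id}$), the identity $\Phi^{-t}(x_-,x_+,t_x,\omega_x)=(x_-,x_+,t_x-t,\omega_x)$, and the formulas defining $P$ and $R_{[0,1]}$, I would derive
$$\bar f_{[0,1]}(\phi^{-t}x)=\int_G\mathbbm{1}_{[0,1]}\bigl(t_x-t+\tau(g,x_-,x_+)\bigr)\,f(gx_-,gx_+,gw_x)\,d\lambda(g).$$
Integrating this over $t\in[c_1,c_2]$ and applying Tonelli's theorem (all integrands are nonnegative), the $t$-integral of the indicator becomes, for each $g$, the length $W_{c_1,c_2}(g):=\ell\bigl([T(g)-1,T(g)]\cap[c_1,c_2]\bigr)\in[0,1]$, where $T(g):=t_x+\tau(g,x_-,x_+)$; hence
$$\int_{c_1}^{c_2}\bar f_{[0,1]}(\phi^{-t}x)\,dt=\int_G f(gx_-,gx_+,gw_x)\,W_{c_1,c_2}(g)\,d\lambda(g).$$

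For the lower bound I would take $c_1=a+c$, $c_2=b-c-1$: then $W_{c_1,c_2}(g)\le1$ always, and $W_{c_1,c_2}(g)>0$ forces $a+c<T(g)<b-c$, hence $a<\tau(g,x_-,x_+)<b$ since $|t_x|<c$; therefore the right-hand side above is at most $\int_{\{g:\,\tau(g,x_-,x_+)\in[a,b]\}}f(gx_-,gx_+,gw_x)\,d\lambda(g)=(b-a)\,I_a^b(f)(x_-,x_+,w_x)$ (if $b-c-1<a+c$ the left-hand side is $\le0$ and there is nothing to prove). For the upper bound I would take $c_1=a-c-1$, $c_2=b+c$: for every $g$ with $\tau(g,x_-,x_+)\in[a,b]$ one has $T(g)\in(a-c,b+c)$, so $[T(g)-1,T(g)]\subseteq[a-c-1,b+c]$ and $W_{c_1,c_2}(g)=1$, whence the displayed integral is at least $(b-a)\,I_a^b(f)(x_-,x_+,w_x)$. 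Dividing both inequalities by $b-a$ gives the lemma. The main point to be careful about is getting the explicit formula for $\bar f_{[0,1]}\circ\phi^{-t}$ right — this is where the section $\bar p$ must be intertwined correctly with $\Phi$ — while the boundedness $|t_x|<k$ of the cross section is precisely what converts the two interval-intersection estimates into the clean condition $\tau(g,x_-,x_+)\in[a,b]$.
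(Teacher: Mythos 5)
Your proof is correct and follows essentially the same route as the paper: both express $\int_{c_1}^{c_2}\bar f_{[0,1]}\circ\phi^{-t}(x)\,dt$ as an integral over $G$ of $f(gx_-,gx_+,gw_x)$ weighted by the convolution $\mathbbm{1}_{[c_1,c_2]}*\mathbbm{1}_{[0,1]}$ evaluated at $t_x+\tau(g,x_-,x_+)$ (your $W_{c_1,c_2}(g)$ is exactly this convolution), and both then use the uniform bound $|t_x|<c$ from the boundedness of the cross section to compare that weight with $\mathbbm{1}_{\{\tau(g,x_-,x_+)\in[a,b]\}}$. The only cosmetic difference is that you choose the integration endpoints $c_1,c_2$ so as to land directly on $I_a^b$, whereas the paper integrates over $[a,b]$, sandwiches $\theta_a^b$ between indicators, and re-indexes at the end.
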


\begin{proof}
    Denote $\theta_a^b = \mathbbm{1}_{[a,b]}*\mathbbm{1}_{[0,1]}$ and notice that if $a + 1\leq b$ then $\theta_a^b$ interpolates linearly between the values $0$ on $(-\infty,a] \cup [b+1,\infty)$ and $1$ on $[a+1,b]$. In particular $ \mathbbm{1}_{[a+1,b]} \leq \theta_a^b \leq \mathbbm{1}_{[a,b+1]}$.
    Recall that the cross section $\hat{X}$ is bounded, therefore there exists a constant $c$ such that for any $x \in X$, if $\bar{p}(x) = (x_-,x_+,t_x,w_x)$ then $|t_x| < c$.
    Let $x \in X$ and $a,b \in \R$ such that $a + c + 1 \leq b$, using lemma \ref{convolution lemma} and equation \ref{description of bar sub theta} we see that:

    \begin{align*}
        &\int_a^b \bar{f}_{[0,1]}\circ\phi^{-t}(x)dt 
        \\=&\int_{-\infty}^\infty \mathbbm{1}_{[a,b]}(t)\bar{f}_{[0,1]}\circ\phi^{-t}(x)dt = \bar{f}_{\theta_a^b}(x) 
        \\=& \int_G \theta_a^b(t_x +\tau(g,x_-,x_+))f(gx_-,gx_+,gw_x) d\lambda(g)
    \end{align*}
Since $|t_x| < c$ and $\mathbbm{1}_{[a+1,b]} \leq \theta_a^b \leq \mathbbm{1}_{[a,b+1]}$ we conclude that:

\begin{small}
$$ \frac{b-a-2c-1}{b-a}I_{a+c+1}^{b-c}f(x_-,x_+,w_x) \leq 
\frac{1}{b-a}\int_a^b \bar{f}_{[0,1]}\circ\phi^{-t}(x)dt \leq
\frac{b-a+2c+1}{b-a}I_{a-c}^{b+c+1}f(x_-,x_+,w_x)$$
\end{small}
Rewriting these inequalities we get the desired result.
\end{proof}

\begin{proof}[proof of theorem \ref{ergodic theorem for V}]

We first prove the claim for the operator $I$. Let $f:S\times\Omega \to [0,\infty)$ be $m\times\omega$ integrable. By Birkhoff's ergodic theorem there exists a full measure subset $X_0 \subseteq X$ such that for $x \in X_0$ and any $a < b$:
$$V(f)(u(x)) = \lim_{T\to \infty}\frac{1}{T}\int_a^{b+T} \bar{f}_{[0,1]}(\phi^{-t}x)dt$$
$X_0$ is an $\R$-invariant subset so $p^{-1}(X_0)$ is a $G\times \R$-invariant subset in $S\times\R\times \Omega$ and $A_f = q(p^{-1}(X_0))$ is a $G$-invariant Borel subset of $S\times\Omega$. We will show that the desired convergence holds on $A_f$. 

Let $(\xi,\eta,w) \in A_f$, consider $x = p(\xi,\eta,0,w) \in X_0$. There exists $h \in G$ such that $(\xi,\eta,0,w) = h(x_-,x_+,t_x,w_ x) = h\bar{p}(x)$. Denoting $s = \tau(h,x_-,x_+)$ we have that $\tau(g,\xi,\eta) =  \tau(gh,x_-,x_+) - s$ so $\tau(g,\xi,\eta) \in [a,b]$ if and only if $\tau(gh,x_-,x_+) \in [a+s,b+s]$. Therefore:

\begin{align*}
&I_a^b(f)(\xi,\eta,w) 
=\frac{1}{b-a} \int_{\{g\in G | \tau(g,\xi,\eta) \in [a,b]\}} f(ghx_-,ghx_+,ghw_x) d\lambda(g) 
\\ =& \frac{1}{b-a}\int_{\{g\in G | \tau(gh,x_-,x_+) \in [a+s,b+s]\}} f(ghx_-,ghx_+,ghw_x) d\lambda(g) 
= I_{a+s}^{b+s}(f)(x_-,x_+,w_x) 
\end{align*}

Note that the last equality uses the assumption that $G$ is unimodular. By lemma \ref{avrages for points in the cross section}, for any $a,b$ such that $a+c+1 < b$:

$$\frac{1}{b-a} \int_{a+s+c}^{b+s-c-1} \bar{f}_{[0,1]} \circ \phi^{-t}(x)dt \leq I_{a+s}^{b+s}(f)(x_-,x_+,w_x) \leq \frac{1}{b-a} \int_{a+s-c-1}^{b+s+c} \bar{f}_{[0,1]} \circ \phi^{-t}(x)dt$$
Taking $b$ to $\infty$ we see that as needed:

\begin{eqnarray*}
V(f)(u(x)) & = & \lim_{T\to \infty}\frac{1}{T}\int_a^{b+T} \bar{f}_{[0,1]}(\phi^{-t}x)dt \\
& = & \lim_{T \to \infty}I_{a+s}^{b+T+s}(f)(x_-,x_+,w_x) = \lim_{T \to \infty} I_{a}^{b+T}(f)(\xi,\eta,w)
\end{eqnarray*}
Since $f$ is non negative this also implies the finiteness of $I_a^b(f)(\xi,\eta,w)$ for any interval $[a,b]$ and $(\xi,\eta,w) \in A_f$. The case of general $f$ follows by linearity.

We now turn to the proof for the operator $J$. Denote $K_n = \{(\xi,\eta) \in S | (\xi,\eta) > n\}$. Let $f: S \to [0,\infty)$ be a $m\times\omega$ integrable function supported on bounded set. For large enough $n$ the support of $f$ is contained in $K_n$. Recall that by lemma \ref{bound on tau minus sigma} there exist constants $C_n$ such that if $(\xi,\eta), (g\xi,g\eta) \in K_n$ then:

$$|\tau(g,\xi,\eta) - \sigma(g,\eta)| \leq C_n$$
 It follows that for large $n$, any $(\xi,\eta,w) \in K_n\times\Omega$ and any $a<b$:
 
 $$\frac{b-a
 -2C_n}{b-a}I_{a+C_n}^{b-C_n}(f)(\xi,\eta,w) \leq J_a^b(f)(\xi,\eta,w)\leq \frac{b-a
 +2C_n}{b-a}I_{a-C_n}^{b+C_n}(f)(\xi,\eta,w)$$
 Letting $b$ tend to $\infty$ we get that for every $n$, $(\xi,\eta,w) \in A_f \cap K_n\times\Omega$ and $a<b$:
 
 $$\lim_{T \to \infty}J_a^{b+T}(f)(\xi,\eta,w) = V(f)(v(\xi,\eta,w))$$
Since $n$ is arbitrary and $K_n$ exhaust the space we see that the convergence actually holds on all of $A_f$. This also implies that $J_a^b(f)(\xi,\eta,w)$ is finite for any interval $[a,b]$. Finally the case of general $f$ follows by linearity.
\end{proof}

From this point onward it will be convenient to use the space $\partial^2 G$ and not $S$. Since $S$ is $G$-invariant and of full measure all the results proved thus far transfer immediately (although in the proof of the existence of $\hat{X}$ it was important that certain relations hold everywhere and not only almost everywhere, and now $\hat{X}$ will intersect only almost every $G$-orbit).

The last ingredient we are missing for the proof of ergodicity is the following contraction lemma from \cite[Lemma~2.6]{BF17}:

\begin{lemma}(\cite[Lemma~2.6]{BF17}) \label{contraction for Hopf argument}
For any compact $K \subset \partial^2 G$ there exists a constant $C_K$ such that for any $\xi,\xi',\eta \in \partial G$ and $g \in G$ satisfying :

$$(\xi,\eta), (\xi',\eta), (g\xi,g\eta) \in K, \hspace{20pt} \sigma(g,\eta) > 0$$
we have:

$$\sigma(g,\xi),\sigma(g,\xi') \in [-\sigma(g,\eta)-C_K,-\sigma(g,\eta)+C_K],  \hspace{20pt}  d_\epsilon(g\xi,g\xi') < e^{-\epsilon \sigma(g,\eta)+C_K}$$

\end{lemma}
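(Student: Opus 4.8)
The lemma is a "contracting dynamics" statement expressed entirely in terms of the Gromov product and the quasi-cocycle $\sigma$. The strategy is to translate everything into Gromov products via Lemma \ref{basic contraction lemma} (the identity $(g,g\zeta) \approx |g| - (g^{-1},\zeta)$) together with the definition $\sigma(g,\zeta) = 2(g^{-1},\zeta)-|g|$, and then to push estimates through hyperbolicity (the almost-ultrametric inequality for the Gromov product). Throughout, the compactness of $K$ enters only through a single number $M = M(K)$ with $(\xi,\eta)\le M$ for all $(\xi,\eta)\in K$, and a lower bound $\theta_K$ on the visual distance; these will feed uniformly into the additive constants.

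First I would establish the estimate on $\sigma(g,\xi)$ and $\sigma(g,\xi')$. Since $(\xi,\eta)\le M$, hyperbolicity gives $M \gtrsim (\xi,\eta) \gtrsim \min\{(g^{-1},\xi),(g^{-1},\eta)\}$, so up to an additive constant depending on $M$ either $(g^{-1},\xi)\le M'$ or $(g^{-1},\eta)\le M'$. The hypothesis $\sigma(g,\eta)>0$ means $2(g^{-1},\eta) > |g| \ge (g^{-1},\eta)$, hence $(g^{-1},\eta) < |g|$ but more importantly it forces $(g^{-1},\eta)$ to be the larger of the two up to constants — indeed if $(g^{-1},\eta)\le M'$ then $\sigma(g,\eta) \le 2M' - |g|$, and combined with Lemma \ref{basic contraction lemma} applied to $(g\xi,g\eta)\in K$, namely $M \gtrsim (g\xi,g\eta) \approx |g| - \max\{(g^{-1},\xi),(g^{-1},\eta)\}$, one gets $|g| \lesssim M + \max\{(g^{-1},\xi),(g^{-1},\eta)\}$; chasing these inequalities pins down that $(g^{-1},\xi)$ is within an $M$-dependent constant of $(g^{-1},\eta)$, which is exactly $\sigma(g,\xi) \in [-\sigma(g,\eta) - C_K, -\sigma(g,\eta)+C_K]$ after unwinding the definitions (note $\sigma(g,\xi) = 2(g^{-1},\xi) - |g|$ and $-\sigma(g,\eta) = |g| - 2(g^{-1},\eta)$, so the claim is just $|(g^{-1},\xi) - (g^{-1},\eta)|\lesssim_M 1$). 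The same argument applies verbatim to $\xi'$.

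For the visual-distance bound, I would compute $(g\xi, g\xi')$ from below. By Lemma \ref{basic contraction lemma}, $(g\xi,g) \approx |g| - (g^{-1},\xi)$ and $(g\xi',g)\approx |g|-(g^{-1},\xi')$, and by the previous paragraph both of these are $\approx_M |g| - (g^{-1},\eta) \approx_M \sigma(g,\eta)$ (using again that $-\sigma(g,\eta) = |g|-2(g^{-1},\eta)$ so $|g|-(g^{-1},\eta) = \sigma(g,\eta)/2 + (g^{-1},\eta)$... here I must be careful: $|g| - (g^{-1},\eta)$ is roughly $\sigma(g,\eta)$ only when $(g^{-1},\eta)$ itself is comparable to $\sigma(g,\eta)$, which holds since $\sigma(g,\eta) = 2(g^{-1},\eta)-|g| \le (g^{-1},\eta)$, so $(g^{-1},\eta) \ge \sigma(g,\eta)$, and $|g| - (g^{-1},\eta) = (g^{-1},\eta) - \sigma(g,\eta) \ge 0$; combined with $(g\xi,g\eta)\le M$ giving $|g| - (g^{-1},\eta) \lesssim_M$ something — I'd rather bound directly). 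The cleanest route: $(g\xi,g\xi')_o \gtrsim \min\{(g\xi,g)_o, (g,g\xi')_o\}$, and each term is $\gtrsim_M \sigma(g,\eta)/2$ — wait, the exponent in the claim is $\sigma(g,\eta)$, not half, so I should track this carefully. In fact from $(g\xi,g\eta)\le M$ and $(g\xi,g)\approx |g|-(g^{-1},\xi)$ and $(g^{-1},\xi)\approx_M (g^{-1},\eta)$ one gets $(g\xi,g) \gtrsim_M \min\{(g\xi,g\eta),(g\xi,g),\dots\}$; the correct bookkeeping yields $(g\xi,g)_{g\eta}$-type estimates, and using the base point appropriately (computing the Gromov product of $g\xi,g\xi'$ based at a point near $g$, then transferring to the basepoint $o=1$ which costs $d(1,g) = |g|$, but this is compensated) one lands on $(g\xi,g\xi') \gtrsim_K \sigma(g,\eta)$, hence $d_\epsilon(g\xi,g\xi') \prec_K e^{-\epsilon\sigma(g,\eta)}$. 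I would cite Lemma \ref{contraction for Hopf argument} as \cite[Lemma~2.6]{BF17} and either reproduce the short hyperbolic-geometry argument or simply remark that the proof there goes through unchanged in the roughly-geodesic setting, since it only uses the formalism recalled in Section \ref{prelims}.

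**Main obstacle.** The delicate point is keeping the additive constants genuinely uniform over $g$ while depending only on $K$ (equivalently on $M$): the Gromov product on $\bar G$ is only coarsely well-defined, the cocycles $\sigma$ and $\tau$ are only coarse cocycles, and one must verify that the basepoint changes needed to relate $(g\xi,g\xi')_1$ to products based near $g$ do not introduce a $|g|$-dependent error. This is exactly the kind of bookkeeping that is routine in the discrete geodesic setting but needs the Section \ref{prelims} machinery (Lemma \ref{basic contraction lemma} and the $2\delta$-stability of the extended Gromov product) to be invoked cleanly; I expect this to be the bulk of the verification, whereas the algebraic identities relating $\sigma$, $\tau$ and the Gromov product are immediate.
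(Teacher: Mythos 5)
Your overall strategy --- expressing everything through the Gromov product, invoking Lemma \ref{basic contraction lemma} and the almost-ultrametric inequality, with constants depending only on a bound $M$ for the Gromov products of pairs in $K$ --- is exactly the paper's. But there is a genuine error in the first half, and the second half is not actually carried out. Unwinding $\sigma(g,\xi)=2(g^{-1},\xi)-|g|$ and $-\sigma(g,\eta)=|g|-2(g^{-1},\eta)$, the claim $\sigma(g,\xi)\approx_K-\sigma(g,\eta)$ is equivalent to $(g^{-1},\xi)+(g^{-1},\eta)\approx_K|g|$, a statement about the \emph{sum}; your reduction to ``$|(g^{-1},\xi)-(g^{-1},\eta)|\lesssim_M 1$'' is an algebra slip, and the intermediate assertion that $(g^{-1},\xi)$ is within an $M$-dependent constant of $(g^{-1},\eta)$ is false. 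What the inequality chase actually gives (and what the paper proves) is that the two products sit at \emph{opposite ends}: $\min\{(g^{-1},\xi),(g^{-1},\eta)\}\lesssim(\xi,\eta)\approx_K 0$ forces $(g^{-1},\xi)\approx_K 0$, and $0\approx_K(g\xi,g\eta)\gtrsim|g|-\max\{(g^{-1},\xi),(g^{-1},\eta)\}$ then forces $(g^{-1},\eta)\approx_K|g|$, so the two products differ by roughly $|g|$, which is unbounded. Had your version been correct one would conclude $\sigma(g,\xi)\approx_K+\sigma(g,\eta)$ (the wrong sign) and $(g\xi,g)\approx|g|-(g^{-1},\xi)\approx_K 0$, which would destroy the contraction estimate rather than prove it.

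For the second half you never complete the computation: the passage ``the correct bookkeeping yields \dots one lands on $(g\xi,g\xi')\gtrsim_K\sigma(g,\eta)$'' is a placeholder, and the partial attempts preceding it inherit the error (for instance $|g|-(g^{-1},\eta)$ is $\approx_K 0$, not $\approx_K\sigma(g,\eta)$). No base-point change is needed and the worry about a $|g|$-dependent transfer cost is a red herring: once one has $(g^{-1},\xi),(g^{-1},\xi')\approx_K 0$ and $\sigma(g,\eta)=2(g^{-1},\eta)-|g|\approx_K|g|$, Lemma \ref{basic contraction lemma} gives $(g\xi,g),(g\xi',g)\approx_K|g|$, hence $(g\xi,g\xi')\gtrsim\min\{(g\xi,g),(g,g\xi')\}\approx_K|g|\approx_K\sigma(g,\eta)$, and the bound on $d_\epsilon(g\xi,g\xi')$ follows directly from estimate \ref{visual_metric}.
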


\begin{proof}
Because $(\xi,\eta), (\xi',\eta), (g\xi,g\eta) \in K$ and $K$ is compact we have that $(\xi,\eta)$, $(\xi',\eta)$, $(g\xi,g\eta) \approx_K 0$.
Since $\sigma(g,\eta) > 0$ we see that $(g^{-1},\eta) > \frac{|g|}{2}$ but $\min\{(\xi,g^{-1}),(g^{-1},\eta)\} \lesssim (\xi,\eta) \approx_K 0$ so we conclude that $(g^{-1},\xi) \approx_K 0$. By lemma \ref{basic contraction lemma} we deduce:

$$0 \approx_K (g\xi,g\eta) \gtrsim \min\{(g\xi,g),(g,g\eta)\} \approx |g| - \max\{(g^{-1},\xi),(g^{-1},\eta)\}$$
Since $(g^{-1},\xi) \approx 0$ it follows that $(g^{-1},\eta) \approx_K |g|$. 
Now note that $\min \{(\xi,g^{-1}),(g^{-1},\eta)\} \lesssim (\xi,\eta) \approx_K 0$ so since $(g^{-1},\eta) \approx_K |g|$ it follows that $(\xi,g^{-1}) \approx_K 0$ and similarly $(\xi',g^{-1}) \approx_K 0$. Therefore:

$$\sigma(g,\xi) \approx_K \sigma(g,\xi') \approx_K -\sigma(g,\eta) \approx_K -|g|$$
Using  \ref{basic contraction lemma} again we see that $(g\xi,g)  \approx |g| - (g^{-1},\xi) \approx_K |g|$ and similarly $(g\xi',g) \approx_K |g|$ so finally:

$$(g\xi,g\xi') \gtrsim \min\{(g\xi,g),(g,g\xi')\} \approx_K |g| \approx_K \sigma(g,\eta)$$
as needed.

\end{proof}

Now we will finally prove ergodicity.

\begin{theorem} \label{weak mixing}
For any ergodic p.m.p action $G\acts (\Omega,\omega)$ the diagonal action on $(\partial^2 G \times \Omega, m\times \omega)$ is ergodic.
\end{theorem}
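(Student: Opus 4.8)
The plan is to run Hopf's argument in the measurable setting, using the two averaging descriptions of $V$ provided by Theorem \ref{ergodic theorem for V}: the $I$-averages taken over $\{\tau(g,\xi,\eta)\in[a,b]\}$, which depend on $(\xi,\eta)$ symmetrically only through the parametrization, and the $J$-averages taken over $\{\sigma(g,\eta)\in[a,b]\}$, which depend on $\eta$ alone. The key output of this section is that $\beta$ is supported on a single point; by the discussion around diagram \ref{Hopf argument maps} this is equivalent to the ergodicity of $G\acts(\partial^2 G\times\Omega,m\times\omega)$, which is exactly the statement. So I must show that $V(f)$ is (almost everywhere) constant for every $f\in L^1(S\times\Omega,m\times\omega)$, or equivalently for $f$ ranging over a dense family; I will take $f$ continuous, compactly supported in the $(\xi,\eta)$ variable (hence supported on a bounded set), so that \emph{both} descriptions of $V(f)$ in Theorem \ref{ergodic theorem for V} are available.

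First I would fix such an $f$ and let $y=v(\xi,\eta,w)=v(\xi',\eta,w)$ for two points $(\xi,\eta,w),(\xi',\eta,w)\in A_f$ sharing the same second coordinate $\eta$ and the same $\Omega$-coordinate $w$; by Fubini such pairs exist for a.e.\ $\eta$ and a.e.\ $w$, with $\xi,\xi'$ ranging over a full-measure set. For these points $J_a^{b+T}(f)(\xi,\eta,w)$ and $J_a^{b+T}(f)(\xi',\eta,w)$ are integrals of $f$ over the \emph{same} region $\{g:\sigma(g,\eta)\in[a,b+T]\}$ in $G$, but with $g\xi$ versus $g\xi'$ plugged into the first slot of $f$. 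Here Lemma \ref{contraction for Hopf argument} does the work: restricting to the (cofinal in $T$, by the ergodic theorem applied to the flow) set of $g$ with $\sigma(g,\eta)$ large and positive, and using that $(\xi,\eta),(\xi',\eta),(g\xi,g\eta)$ stay in a fixed compact set $K$ (the support of $f$ together with a neighbourhood), we get $d_\epsilon(g\xi,g\xi')<e^{-\epsilon\sigma(g,\eta)+C_K}\to 0$ along the relevant $g$'s, while the two integration regions differ only by the bounded shift coming from $|\sigma(g,\xi)+\sigma(g,\eta)|<C_K$, i.e.\ $|\sigma(g,\xi)-\sigma(g,\xi')|<2C_K$. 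Uniform continuity of $f$ then forces
$$\lim_{T\to\infty} J_a^{b+T}(f)(\xi,\eta,w)=\lim_{T\to\infty} J_a^{b+T}(f)(\xi',\eta,w),$$
so $V(f)$ is independent of the first variable (a.e.). Next I would use the $I$-description, together with the symmetry $\tau(g,\xi,\eta)=-\tau(g,\eta,\xi)$ and $m$ being invariant under the flip $(\xi,\eta)\mapsto(\eta,\xi)$, to run the same argument with the roles of the two boundary coordinates exchanged: thus $V(f)$ is also independent of the second variable. Combining, $V(f)$ depends only on $w\in\Omega$; but $q\circ\bar p$ intertwines the flows, $V\circ Q=U\circ P$, and $V(f)$ as a function on $Y$ is $G\acts\Omega$-invariant, so by ergodicity of $G\acts(\Omega,\omega)$ it is constant. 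Hence $\beta$ is a point mass, giving the theorem.

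The main obstacle I anticipate is bookkeeping the null sets and the compact exhaustion: Lemma \ref{contraction for Hopf argument} only applies on a compact $K\subset\partial^2 G$ and only for $g$ with $\sigma(g,\eta)>0$, so I must (i) work on the sets $K_n=\{(\xi,\eta)\in S:(\xi,\eta)>n\}$ and pass to the limit, exactly as in the proof of the $J$-part of Theorem \ref{ergodic theorem for V}, to arrange that the $f$-support and the relevant $G$-translates live in a common compact set; (ii) justify that the portion of the integration region $\{g:\sigma(g,\eta)\in[a,b+T]\}$ where $\sigma(g,\eta)$ is large dominates in the $T\to\infty$ limit, which is where I invoke that $V(f)$ is genuinely an \emph{ergodic} average (Birkhoff) rather than a one-sided tail, so that the finitely many "bad" early $g$'s contribute negligibly; and (iii) check that the almost-everywhere-defined section $\bar p$ and the possibly $m\times\ell\times\omega$-singular measure $\bar p_*\nu$ cause no trouble — this is handled by the fact, already used above, that $A_f$ is $G$-invariant of full measure and that all identities in Theorem \ref{ergodic theorem for V} hold pointwise on $A_f$. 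Modulo these measure-theoretic technicalities the argument is the classical Hopf dichotomy, and the genuinely new input — the uniform contraction along $\sigma(g,\eta)\to\infty$ and the compatibility of $\tau$ with $\sigma$ via Lemma \ref{bound on tau minus sigma} — has already been isolated in the preceding lemmas.
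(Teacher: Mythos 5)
Your proposal is correct and follows essentially the same route as the paper: a Hopf argument using the $J$-averages from Theorem \ref{ergodic theorem for V} together with the contraction Lemma \ref{contraction for Hopf argument} to show that $\bar f=V(f)\circ v$ is independent of $\xi$, then the flip symmetry to get independence of $\eta$, and finally ergodicity of $G\acts(\Omega,\omega)$. The one step you elide is how to pass from the pointwise bound $|f(g\xi,g\eta,gw)-f(g\xi',g\eta,gw)|<\delta$ (valid only where $(g\xi,g\eta)$ lies in the compact set $K$) to a bound on the averages: since $\{g:\sigma(g,\eta)\in[a,a+T]\}$ has large Haar measure and the $\Omega$-factor of $f$ is merely $L^1$, uniform continuity alone does not close the estimate; the paper takes $f=\phi\cdot\psi$ a pure tensor, dominates the difference by $\delta\cdot h(g\xi,g\eta,gw)$ with $h=\mathbbm{1}_K\cdot|\psi|$, and applies Theorem \ref{ergodic theorem for V} to $h$ itself so the error is $\delta\cdot\bar h(\xi,\eta,w)<\infty$, which tends to zero as $\delta\to 0$ along the exhaustion $K_n$.
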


\begin{proof}
To show ergodicity we will show that $L^1(Y,\beta)$ is one dimensional. In order to do this we will show that for any $f \in L^1(\partial^2 \times \Omega, m\times \omega)$, $V(f)$ is constant. Since $V$ is a bounded operator it suffices to show this for $f$ from a dense subspace of $L^1(\partial^2 \times \Omega, m\times \omega)$. We will use the image of the space $C_c(\partial^2 G,m)\otimes L^1(\Omega,\omega)$ under the injection $\phi \otimes \psi \mapsto \phi \cdot \psi$. It is enough to show the image of pure tensors under $V$ is constant. Let $f = \phi \cdot \psi$ with $\phi \in C_c(\partial^2 G,m), \psi \in L^1(\Omega,\omega)$ and $\bar{f} = V(f) \circ v$. We will show that $\bar{f}$ is constant. Note that $\bar{f}$ need not be in $L^1(\partial^2G \times \Omega, m\times \omega)$. Our strategy will be to prove that $\bar{f}(\xi,\eta,w)$ is independent of $\xi$ and by symmetry also independent of $\eta$ so $\bar{f}$ is actually given by a $G$-invariant function of $w$ which is constant by ergodicity.

Fix a compact set $K \subset \partial^2 G$ with $supp(\phi) \subset int(K)$ and $\delta > 0$. Denote $h = \mathbbm{1}_K \cdot|\psi| \in L^1(\partial^2 \times \Omega, m\times \omega)$ and $\bar{h} = V(h)\circ v$. We will show that for $\mu\times\omega$-almost every $(\eta,w)$ there exists a $\mu$-full measure subset such that if $\xi, \xi'$ are in this subset then:

\begin{equation} \label{almost independant of xi}
  (\xi',\eta),(\xi,\eta)\in K \implies  |f(\xi,\eta,w) - f(\xi',\eta,w)| < \delta  \cdot \bar{h}(\xi,\eta,w)
\end{equation}
By theorem \ref{ergodic theorem for V} there exists a full measure subset $A$ of $\partial^2 G \times \Omega$ such that for all $(\xi,\eta,w) \in A$ and any $a \in \R$:
$$\lim_{T\to \infty}J_a^{a+T}(f)(\xi,\eta,w) = \bar{f}(\xi,\eta,w)$$

$$\lim_{T\to \infty}J_a^{a+T}(h)(\xi,\eta,w) = \bar{h}(\xi,\eta,w)$$
By Fubini's theorem it is enough to prove implication \ref{almost independant of xi} for $(\xi,\eta,w),(\xi',\eta,w) \in A$.

Fix $\xi,\xi',\eta,w$ such that $(\xi,\eta,w),(\xi',\eta,w) \in A$. By lemma \ref{contraction for Hopf argument} and using the uniform continuity of $\phi$ there exists $0 < a_1$ such that for all $g$ with $\sigma(g,\eta) > a_1$, if $(\xi,\eta),(\xi',\eta),(g\xi,g\eta) \in K$ then $|\phi(g\xi,g\eta) - \phi(g\xi',g\eta)| < \delta$, so:

$$|f(g\xi,g\eta,gw)-f(g\xi',g\eta,gw)| < \delta  \cdot h(g\xi,g\eta,gw)$$
On the other hand it follows from lemma \ref{contraction for Hopf argument} (after replacing the roles of $\xi$ and $\xi'$) and the fact that $supp(\phi) \subset int(K)$, that there exists $0 < a_2$ such that for all $g$ with $\sigma(g,\eta) > a_2$ if $(\xi,\eta),(\xi',\eta) \in K$ but $(g\xi,g\eta) \notin K $ then $(g\xi,g\eta)(g\xi',g\eta) \notin supp(\phi)$. Therefore:

$$|f(g\xi,g\eta,gw)-f(g\xi',g\eta,gw)| = \delta \cdot h(g\xi,g\eta,gw) = 0$$
So if $a = max\{a_1,a_2\}$ then for any $g$ with $\sigma(g,\eta) > a$, if $(\xi,\eta),(\xi',\eta)\in K$ then:

$$|f(g\xi,g\eta,gw)-f(g\xi',g\eta,gw)| < \delta \cdot h(g\xi,g\eta,gw)$$
Now if $(\xi,\eta),(\xi',\eta)\in K$ then:

\begin{align*}
    & |\bar{f}(\xi,\eta,w)-\bar{f}(\xi',\eta,w)| 
= |\lim_{T\to \infty}J_a^{a+T}(f)(\xi,\eta,w) - \lim_{T\to \infty}J_a^{a+T}(f)(\xi',\eta,w)|
\\\leq & \lim_{T\to\infty}\frac{1}{T} \int_{\{g\in G| \sigma(g,\eta) \in [a,a+T]\}} |f(g\xi,g\eta,gw)-f(g\xi',g\eta,gw)|d\lambda(g)
\\\leq & \lim_{T\to\infty}\frac{1}{T} \int_{\{g\in G| \sigma(g,\eta) \in [a,a+T]\}} \delta \cdot h(g\xi,g\eta,gw)d\lambda(g)
\\= & \delta \cdot \lim_{T\to\infty} J_a^{a+T}(h)(\xi,\eta,w) = \delta\cdot\bar{h}(\xi,\eta,w)
\end{align*}
Showing implication \ref{almost independant of xi} as needed.

Since $K$ and $\delta$ are general we can take a sequence $K_n$ exhausting $\partial^2 G$ and $\delta_n \to 0$. Applying implication \ref{almost independant of xi} to these sequences we conclude that for $\mu\times\omega$ almost every $(\eta,w)$ there exists a $\mu$ full measure subset such that if $\xi, \xi'$ are in this subset then: 

$$\bar{f}(\xi,\eta,w)=\bar{f}(\xi',\eta,w)$$
By symmetry the same statement holds reversing the roles of $\xi$ and $\eta$.

Using Fubini's theorem we conclude that for $\omega$-almost every $w$ there exist $\mu$-full measure subsets $A,B \subseteq \partial G$ such that for $\xi \in A$, $\bar{f}(\xi,\cdot,w)$ is an essentially constant function of $\eta$ and for $\eta \in B$, $\bar{f}(\cdot,\eta,w)$ is an essentially constant function of $\xi$. Therefore for $\omega$-almost every $w$, $\bar{f}(\cdot,\cdot,w)$ is essentially constant and thus $\bar{f}$ depends only on the $\Omega$ coordinate.
Since $\bar{f}$ is $G$-invariant and $G$ acts ergodicaly on $(\Omega,\omega)$ we conclude $\bar{f}$ is essentially constant as needed.

\end{proof}

Having proved ergodicity we know that the space $Y$ is a single point and the operator $V$ is simply given by $V(f) = \int f(\xi,\eta,w) dm\times\omega$. We can therefore re-state theorem \ref{ergodic theorem for V} as follows:

\begin{theorem}
    For any $f \in L^1(\partial^2 G \times \Omega,m \times \omega)$, for almost every $(\xi,\eta,w)$ the values $I_a^b(f)(\xi,\eta,w)$ and $J_a^b(f)(\xi,\eta,w)$ are finite for any interval $[a,b]$ and:
    $$\lim_{T \to \infty} I_a^{b+T}(f)(\xi,\eta,w) = \int f(\xi,\eta,w) dm\times\omega$$
    If in addition $f$ is supported on a bounded set then:

    $$\lim_{T \to \infty} J_a^{b+T}(f)(\xi,\eta,w) = \int f(\xi,\eta,w) dm\times\omega$$
\end{theorem}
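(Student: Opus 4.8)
The plan is to read the final statement as a reformulation of Theorem~\ref{ergodic theorem for V} in the presence of the ergodicity established in Theorem~\ref{weak mixing}. First I would invoke Theorem~\ref{weak mixing} for the fixed ergodic p.m.p.\ action $G\acts(\Omega,\omega)$: the diagonal $G$-action on $(\partial^2 G\times\Omega,\,m\times\omega)$ is ergodic, and hence so is the $G$-action on $(S\times\Omega,\,m\times\omega)$, since $S$ is a $G$-invariant conull Borel subset of $\partial^2 G$. As recorded in the discussion preceding diagram~\ref{Hopf argument maps}, ergodicity of $G$ on $(S\times\Omega,\,m\times\omega)$ is equivalent to the space $(Y,\beta)$ of $G\times\R$-ergodic components being supported on a single point; since $\beta$ was chosen to be the image of the probability measure $\nu$, this means $(Y,\beta)$ is a one-point probability space and $L^1(Y,\beta)$ is one-dimensional, spanned by the constants.

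Next I would identify the value of the constant. The operator $V$ is, by construction, the map on $L^1$ induced by the ergodic-components projection $v:(S\times\Omega,\,m\times\omega)\to(Y,\beta)$, that is, $V(f)\cdot\beta = v_*\bigl(f\cdot(m\times\omega)\bigr)$. For $f\in L^1(m\times\omega)$ the measure $f\cdot(m\times\omega)$ is finite with total mass $\int f\,dm\times\omega$, so its pushforward to the one-point space $Y$ has the same total mass; comparing with $\beta(Y)=1$ gives that $V(f)$ is the constant function $\int f\,dm\times\omega$ on $Y$. (Note that the infinity of $m\times\ell\times\omega$ causes no difficulty: $V$ only ever processes the finite measures $f\cdot(m\times\omega)$ with $f\in L^1$.)

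Finally I would quote Theorem~\ref{ergodic theorem for V}: for every $f\in L^1(\partial^2 G\times\Omega,\,m\times\omega)$ and for $m\times\omega$-almost every $(\xi,\eta,w)$, the averages $I_a^b(f)(\xi,\eta,w)$ and $J_a^b(f)(\xi,\eta,w)$ are finite for every interval $[a,b]$, with $\lim_{T\to\infty}I_a^{b+T}(f)(\xi,\eta,w)=V(f)(v(\xi,\eta,w))$, and the analogous identity for $J_a^{b+T}(f)$ when $f$ has bounded support. Substituting $V(f)\equiv\int f\,dm\times\omega$ from the previous step yields the stated conclusions verbatim. I do not expect any genuine obstacle here: the only points requiring care are the transfer of all objects from $S$ back to $\partial^2 G$ (legitimate because $S$ is conull and $G$-invariant, as already observed) and the bookkeeping of mass under $v_*$, both of which are routine.
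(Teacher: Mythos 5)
Your proposal is correct and follows exactly the paper's own route: the paper obtains this statement by combining Theorem~\ref{weak mixing} (which collapses $(Y,\beta)$ to a point, so that $V(f)$ is the constant $\int f\,dm\times\omega$) with Theorem~\ref{ergodic theorem for V}, precisely as you do. Your extra bookkeeping of mass under $v_*$ and the transfer from $S$ to $\partial^2 G$ just makes explicit what the paper states in one sentence.
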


\appendix

\section{Cocycles and Almost Cocycles} \label{appendix a}

Let $G$ be a locally compact second countable group with a measure class preserving action on a standard Lebesgue space $(X,\mu)$.
A Borel \textbf{cocycle} of the action $G \acts X$ with values in $\R$ is a Borel function $\alpha:G \times X \to \R$ such that for every $g,h \in G$, for almost every $x \in X$:

$$\alpha(gh,x) = \alpha(g,hx) + \alpha(h,x)$$
A cocycle is called strict if the above equation holds for every $x$.
Two cocycles $\alpha, \rho$ are called cohomologous if there exists a function $F:X \to \R$ such that for every $g$, for almost every $x$:

$$\alpha(g,x) - \rho(g,x) = F(gx) - F(x)$$
If $\alpha,\rho$ are strict cocycles they are strictly \textbf{cohomologous} if the above equation holds for every $x$.

A Borel \textbf{almost cocycle} of the action $G \acts X$ with values in $\R$ is a Borel function $\sigma:G \times X \to \R$ such that for every $g,h \in G$, for almost every $x \in X$:

$$\sigma(gh,x) \approx \sigma(g,hx) + \sigma(h,x)$$
An almost cocycle is called strict if the above equation holds for every $x$.

\begin{lemma} \label{almost cocycle lemma}
Let $G$ be a second countable locally compact group with a measure class preserving action on a standard Lebesgue space $(X,\mu)$. Suppose that $\alpha,\sigma: G \times X : \to \R$ are a strict Borel cocycle and a strict Borel almost cocycle respectively such that for every $g \in G$, for almost every $x \in X$, $\alpha(g,x) \approx \sigma(g,x)$. There exists a strict cocycle $\rho$ strictly cohomologous to $\alpha$ and a $G$-invariant full measure Borel set $S \subseteq X$ such that for all $(g,x) \in G \times S$, $\rho(g,x) \approx \sigma(g,x)$.
\end{lemma}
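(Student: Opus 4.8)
The plan is to reduce everything to the ``defect'' $D(g,x):=\alpha(g,x)-\sigma(g,x)$ and to exhibit an explicit approximate primitive for $-D$ that is defined \emph{everywhere} on a $G$-invariant conull set, not merely almost everywhere. First I would record the elementary reductions. Since $\alpha$ is a strict cocycle and $\sigma$ a strict almost cocycle, $D$ is a strict almost cocycle: $|D(gh,x)-D(g,hx)-D(h,x)|\le c_0$ for all $g,h,x$, where $c_0$ is the almost-cocycle constant of $\sigma$. From the hypothesis there is a constant $c$ with $|\alpha(g,\cdot)-\sigma(g,\cdot)|\le c$ a.e.\ for every $g$; by Fubini $|D|\le c$ off a $(\lambda\times\mu)$-null set, so for a.e.\ $x$ the slice $g\mapsto D(g,x)$ is bounded by $c$ off a $\lambda$-null set. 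It then suffices to produce a finite Borel $F:X\to\R$ such that $\rho:=\alpha-\partial F$, with $\partial F(g,x)=F(gx)-F(x)$ (automatically a strict cocycle strictly cohomologous to $\alpha$), satisfies $|\rho(g,x)-\sigma(g,x)|=|D(g,x)+F(gx)-F(x)|$ uniformly bounded for all $g$ and all $x$ in a $G$-invariant conull Borel set.

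The heart of the argument is the choice $\Phi(x):=\operatorname{ess\,sup}_{h\in G}D(h^{-1},x)$, the essential supremum with respect to Haar measure, and $F:=-\Phi$. I would check that $\Phi$ is Borel as a map $X\to(-\infty,+\infty]$ (for each rational $t$, $x\mapsto\lambda(\{h:D(h^{-1},x)>t\})$ is Borel by Tonelli, and $\Phi=\sup\{t\in\Q:\lambda(\{h:D(h^{-1},x)>t\})>0\}$), and that $\Phi$ is finite a.e.\ (on the slices where $|D(\cdot,x)|\le c$ a.e.\ one has $\Phi(x)\le c$). The key computation is this: for every $g_0\in G$, combining the \emph{everywhere-valid} bound $D(h^{-1},g_0x)\le D(h^{-1}g_0,x)-D(g_0,x)+c_0$ with the fact that the essential supremum depends only on the measure class of $\lambda$, which is preserved by the substitution $h\mapsto g_0^{-1}h$ (so $\operatorname{ess\,sup}_h D(h^{-1}g_0,x)=\operatorname{ess\,sup}_k D(k^{-1},x)=\Phi(x)$), yields $\Phi(g_0x)\le\Phi(x)-D(g_0,x)+c_0$, and symmetrically $\Phi(g_0x)\ge\Phi(x)-D(g_0,x)-c_0$. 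Hence
\[
|\Phi(g_0x)-\Phi(x)+D(g_0,x)|\le c_0\qquad\text{for every }g_0\in G\text{ and every }x\text{ with }\Phi(x)<\infty .
\]

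With this identity the rest is bookkeeping. Finiteness of $\Phi$ propagates along $G$-orbits by the displayed estimate, so $X_1:=\{x:\Phi(x)<\infty\}$ is a $G$-invariant conull Borel set; set $S:=X_1$, let $F$ equal $-\Phi$ on $X_1$ and $0$ off $X_1$ (a finite Borel function), and $\rho:=\alpha-\partial F$. For $x\in S$ one has $gx\in S$ too, so $F(gx)=-\Phi(gx)$ and $F(x)=-\Phi(x)$, whence $\rho(g,x)-\sigma(g,x)=D(g,x)+\Phi(gx)-\Phi(x)$, which the displayed estimate bounds by $c_0$; thus $\rho(g,x)\approx\sigma(g,x)$ on $G\times S$, and $\rho$ is strictly cohomologous to $\alpha$ via $F$, as required.

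The step I expect to need the most care — and which is the conceptual point of the lemma — is precisely this passage from ``a.e.'' to ``everywhere on the invariant set'': it works because the essential-supremum construction is insensitive to $\lambda$-null sets and the almost-cocycle relation for $D$ holds literally everywhere (both $\alpha$ and $\sigma$ being \emph{strict}), so no null set is ever deleted in a non-invariant way. The one place where a naive attempt breaks down, and which I would emphasize, is that $\Phi$ is only $(-\infty,+\infty]$-valued; one must handle the finiteness issue by passing to $X_1$ rather than by truncating $\Phi$, since a truncated coboundary would reintroduce unbounded error along the orbits through the (possibly non-null) saturation of $(X_1)^c$.
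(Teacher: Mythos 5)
Your proof is correct, but it takes a genuinely different route from the paper's. The paper first uses Fubini to find a conull set $A$ on which the slice sets $E_x(C_1)=\{g:|\alpha(g,x)-\sigma(g,x)|<C_1\}$ are conull, invokes \cite[Appendix~B.8]{ZIM84} to extract a Borel $G$-invariant saturation $S=G.B$, defines the transfer function as an average $F(x)=\int(\alpha(g,x)-\sigma(g,x))\,d\theta(g)$ against a probability measure $\theta$ in the Haar class, and then upgrades the resulting ``for a.e.\ $g$'' estimates to all of $G\times S$ by a final cocycle manipulation with an auxiliary $h$. You instead take the essential supremum $\Phi(x)=\operatorname*{ess\,sup}_h D(h^{-1},x)$ of the defect $D=\alpha-\sigma$ as an approximate primitive; since the strict almost-cocycle identity for $D$ holds literally everywhere and the essential supremum is insensitive to the left-translation $h\mapsto g_0^{-1}h$, you get the everywhere-valid estimate $|\Phi(g_0x)-\Phi(x)+D(g_0,x)|\le c_0$ directly, which simultaneously shows that $\{\Phi<\infty\}$ is a $G$-invariant conull Borel set and yields the bound on all of $G\times S$. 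This is essentially the same device the paper itself uses in Lemma \ref{bounded RN} (there with a genuine supremum over a countable dense subgroup), adapted via the essential supremum to uncountable $G$. What your approach buys: it bypasses the appeal to Zimmer's measurable-saturation lemma entirely, handles the invariance of the good set and the a.e.-to-everywhere upgrade in one stroke, and produces the explicit constant $c_0$ (the defect constant of $\sigma$) independent of the constant $c$ in the hypothesis, which only enters to guarantee that $\{\Phi<\infty\}$ is conull. The minor points you should make explicit in a final write-up — that $\Phi>-\infty$ everywhere because $D(\cdot,x)$ is real-valued, and that the Tonelli argument for Borel measurability of $\Phi$ should be run with a finite measure equivalent to $\lambda$ — are routine and do not affect correctness.
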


\begin{proof}
For $x \in X$ denote $E_x(C) = \{g\in G ||\alpha(g,x) - \sigma(g,x)| < C\}$. For every $g \in G$, for almost every $x \in X$, $\alpha(g,x) \approx \sigma(g,x)$ so by Fubini's theorem there exists a full measure Borel set $A \subseteq X$ and  $C_1 >0$ such that for every $x \in A$, $E_x(C_1)$ has full Haar measure. By \cite[Appendix~B.8]{ZIM84} there exists a full measure Borel subset $B \subseteq A$ such that $S = G.B$ is Borel. 
If $E_x(C_1)$ has full Haar measure then there exists $C_2 > 0$ such that for almost every $g \in G$, $E_{gx}(C_2)$ has full Haar measure. Indeed if $g \in E_x(C_1)$ then for any $h \in E_x(C_1)g^{-1}$:
$$\alpha(h,gx) = \alpha(hg,x) - \alpha(g,x) \approx \sigma(hg,x) -\sigma(g,x) \approx \sigma(h,gx)$$
Since every $x \in S$ has an orbit which intersects $B$ we conclude that for any $x \in S$, for almost all $g \in G$, $E_{gx}(C_2)$ has full Haar measure.

We claim that there exists a Borel function $F: S \to \R$ and $C_3 >0$ such that for every $x \in S$, for almost every $g \in G$, $|\alpha(g,x) - \sigma(g,x) - F(x)| < C_3$. Note that for almost every $x \in B$ we can take $C_3 = C_1$ and $F(x) = 0$ but the point is that the formula holds over $S$ which is $G$ invariant. To see this notice that if $x \in S$ then for almost every $g \in G$, $E_{gx}(C_2)$ has full measure, so for almost every $g$, for almost every $h$, $|\alpha(h,gx) - \sigma(h,gx)| < C_2$. For such $g,h$:
$$\alpha(hg,x)-\alpha(g,x) = \alpha(h,gx) \approx \sigma(h,gx) \approx \sigma(hg,x)-\sigma(g,x)$$
Taking $k = hg$, re-arranging terms and using Fubini's theorem we get that in particular for almost every $(g,k) \in G^2$:
$$\alpha(g,x) - \sigma(g,x) \approx \alpha(k,x) - \sigma(k,x)$$
Since each side depends only on one of the coordinated $(g,k)$ we conclude that there exists a full measure subset $D_x \subseteq G$ such that for all $g,k \in D_x$:

$$\alpha(g,x) - \sigma(g,x) \approx \alpha(k,x) - \sigma(k,x)$$

Choosing any probability measure $\theta$ on $G$ in the class of Haar measure we can define $F(x) = \int \alpha(g,x) - \sigma(g,x) d\theta(g)$ (we only do this to ensure $F$ is Borel). 
Since for every $x$, for almost every $g \in G$, $E_{gx}(C_2)$ has full measure, we conclude that for every $x\in S$, $|F(gx)| < C_2$ for almost every $g \in G$. For convenience we extend $F$ to $X$ by putting $F(x) = 0$ for $x \notin S$.

We can now define the cocycle $\rho$ by:
$$\rho(g,x) = \alpha(g,x) - F(x) + F(gx)$$
By definition $\alpha$ and $\rho$ are 
strictly cohomologous. Since for every $x \in S$, for almost every $g \in G$, $|F(gx)| < C_2$. We conclude by taking $C_4 = C_2 + C_3$, that for all $x \in S$, for almost every $g \in G$, $|\rho(g,x) - \sigma(g,x)| < C_4$.

We now show that for all $(g,x) \in G\times S$, $\rho(g,x) \approx \sigma(g,x)$ uniformly in $g,x$. Indeed given $(g,x) \in G \times S$, there exists a full measure set of $h \in G$ such that $\rho(h,gx) \approx \sigma(h,gx)$ and $\rho(hg,x) \approx \sigma(hg,x)$.  (Here we are using the critical fact that $gx \in S$ which would not be true for $A$ or $B$). For such an $h$:
$$\rho(g,x) = \rho(hg,x) - \rho(h,gx) \approx \sigma(hg,x) - \sigma(h,gx) \approx \sigma(g,x)$$
So for every $(g,x) \in G \times S$, $\rho(g,x) \approx \sigma(g,x)$, as needed.

\end{proof}

\section{Actions Admitting a Borel Cross Section} \label{appendix b}

Let $G$ be a locally compact second countable group acting on a standard measure space with sigma finite measure $(Z,m)$. Assume that the stabilizer of almost every point in $Z$ is compact and that the action has a cross section, i.e. a measurable subset $\hat{X} \subset Z$ intersecting every orbit exactly once. Denote $X = Z//G$ the space of $G$-ergodic components and denote by $p$ the projection $p:Z \to X$. Since bijective Borel maps are Borel isomorphisms, $p$ restricts to a Borel isomorphism between $\hat{X}$ and $X$ so $\hat{X}$ is endowed with a canonical measure class and a measurable projection $Z \to \hat{X}$ sending every point to the point in $\hat{X}$ sharing the same orbit. In particular every fiber over $X$ is a $G$-orbit. Finally assume that there exists a compact identity neighborhood $V \subset G$ such that the measure $p_*(m|_{V\hat{X}})$ is $\sigma$-finite on $X$.

Fix a left Haar measure $\lambda$ on $G$. Let $B$ be a transitive $G$-space with compact stabilizers. Recall that up to a positive scalar there is a unique $\sigma$-finite $G$-invariant measure on $B$. Choosing a base point $a \in A$ we can construct this measure as the image of $\lambda$ under the orbit map $g \mapsto gb$. Choosing a different base point can only change the measure by a scalar and if $G$ is unimodular it dose not depend on the base point at all. To sum up, fixing a left Haar measure gives a canonical choice of invariant measure on pointed transitive $G$-spaces with compact stabilizers. This measure is independent of the base point when $G$ is unimodular.

Since $\hat{X}$ gives a choice of base point from every $G$ orbit in $Z$, for almost every $x \in X$ we have a canonical $G$-invariant measure $m_x$ supported on the fiber over x. If $G$ is unimodular the measures $m_x$ do not depend on $\hat{X}$. Denote by $K_x$ the stabilizer of the point in $\hat{X}$ in the fiber over $X$. An explicit formula for $m_x$ is $m_x(A\hat{X}) = \lambda(A K_x)$ for any measurable $A \subset G$. 

\begin{theorem} \label{measure construction}
Given $G$, $Z$ and $\hat{X}$ as above, there exists a unique measure $\nu$ on $Y$  such that $m = \int \! m_x d\nu(y)$. The measure $\nu$ is in the measure class of $p_*m$ and if G is unimodular it dose not depend on $\hat{X}$.
\end{theorem}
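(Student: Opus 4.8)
The plan is to construct $\nu$ by an explicit formula involving the distinguished identity neighbourhood $V$, and then to identify $m$ with $\int_X m_x\,d\nu(x)$ by a disintegration argument that is first carried out on the single slice $V\hat{X}$ and then spread over $Z$ using $G$-invariance (which the hypotheses implicitly require, since $\int_X m_x\,d\nu$ is automatically $G$-invariant). First I would record the elementary facts that for almost every $x$ the stabilizer $K_x$ is compact and the quantity $c(x):=\lambda(VK_x)$ satisfies $0<c(x)<\infty$ — as $VK_x$ is compact with nonempty interior — and is Borel in $x$, because $x\mapsto K_x$ is a Borel map into the Effros space of closed subgroups, a standard fact for Borel actions on standard spaces. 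I would then \emph{define} $\nu:=c^{-1}\cdot p_*(m|_{V\hat{X}})$ on $X$; this is $\sigma$-finite because $p_*(m|_{V\hat{X}})$ is $\sigma$-finite by hypothesis and the sets $\{x:c(x)>1/n\}$ exhaust $X$. A routine check (reducing to translates of slices and invoking a monotone class argument) then shows that $\int_X m_x\,d\nu(x)$ is a well-defined $G$-invariant $\sigma$-finite measure on $Z$.

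The core step is the equality $m=\int_X m_x\,d\nu(x)$. From the formula $m_x(A\hat{X})=\lambda(AK_x)$ and the fact that each $G$-orbit meets $\hat{X}$ exactly once, one gets the mass identity $m_x\big(V\hat{X}\cap p^{-1}(x)\big)=\lambda(VK_x)=c(x)$. Disintegrating $m|_{V\hat{X}}=\int_X\rho_x\,d\big(p_*(m|_{V\hat{X}})\big)(x)$ via Rokhlin's theorem, with $\rho_x$ a probability measure concentrated on $V\hat{X}\cap p^{-1}(x)$, I would use the $G$-invariance of $m$ to identify $\rho_x=c(x)^{-1}\,m_x|_{V\hat{X}\cap p^{-1}(x)}$ for $\nu$-almost every $x$: choosing a Borel section $Z\to G$ transports the orbit of the $\hat{X}$-representative onto a measurable transversal $S_x\subseteq G$ for the right $K_x$-cosets, under which $m_x$ is identified with the Haar-induced $G$-invariant measure, and $G$-invariance of $m$ forces the transported conditional to be left-translation-invariant, hence proportional to it, with the constant pinned down by the mass identity. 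This yields $m|_{V\hat{X}}=\int_X m_x|_{V\hat{X}}\,d\nu(x)$, i.e.\ agreement on every Borel subset of $V\hat{X}$. Since $G$ is $\sigma$-compact and $V$ has nonempty interior, $G=\bigcup_n g_nV$, so $Z=\bigcup_n g_nV\hat{X}$; writing $Z$ as the disjoint union of the pieces $g_nV\hat{X}\setminus\bigcup_{j<n}g_jV\hat{X}$ and using that both $m$ and $\int_X m_x\,d\nu$ are $G$-invariant to translate each piece back into $V\hat{X}$, I conclude the equality on all of $Z$.

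Uniqueness follows because if $m=\int_X m_x\,d\nu'$ as well, then restricting to $V\hat{X}$ and evaluating on $p^{-1}(A)\cap V\hat{X}$ gives $\int_A c\,d\nu'=\int_A c\,d\nu$ for every Borel $A\subseteq X$, and since $c>0$ and all the measures involved are $\sigma$-finite this forces $\nu'=\nu$. For the measure class: $\nu=c^{-1}p_*(m|_{V\hat{X}})$ is equivalent to $p_*(m|_{V\hat{X}})$, which is in turn equivalent to $p_*m$ by the same $\sigma$-compactness/translation argument (from $p^{-1}(A)=\bigcup_n g_n(p^{-1}(A)\cap V\hat{X})$ and invariance of $m$), so $\nu\sim p_*m$. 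Finally, when $G$ is unimodular the measures $m_x$ do not depend on the choice of $\hat{X}$ (as recorded in the setup above), hence by the uniqueness just proved neither does $\nu$.

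The step I expect to be the main obstacle is precisely the identification of the Rokhlin conditionals $\rho_x$ with the normalized invariant measures $c(x)^{-1}m_x$. Because $\hat{X}$ is typically $m$-null and the compact groups $K_x$ genuinely vary with $x$, one cannot simply quote a statement about a single transitive $G$-space fibrewise; one must instead produce a measurable field $x\mapsto S_x$ of fundamental domains for the right $K_x$-actions on $G$ and run the invariance argument uniformly in $x$. This is exactly the kind of non-discrete measure-theoretic subtlety the paper's appendices are designed to handle, and the measurable-selection results cited from Zimmer's appendices supply the needed backbone.
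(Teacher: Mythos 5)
Your proposal is correct in its overall architecture and reaches all four conclusions (existence, uniqueness, measure class, independence of $\hat{X}$ under unimodularity), but it takes a genuinely different route at the central step. The paper invokes a disintegration theorem valid for $\sigma$-finite measures to disintegrate the \emph{full} measure $m$ over $\tau = p_*(m|_{V\hat{X}})$; the conditionals $\alpha_x$ are then $\sigma$-finite measures on entire fibers, i.e.\ on whole $G$-orbits, so $G$-invariance of $m$ together with uniqueness of the disintegration immediately forces each $\alpha_x$ to be $G$-invariant, and uniqueness of the invariant measure on a transitive space with compact stabilizers gives $\alpha_x = c(x)m_x$ with no further work. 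You instead define $\nu = \lambda(VK_x)^{-1}\,p_*(m|_{V\hat{X}})$ up front and apply classical Rokhlin disintegration only to the finite measure $m|_{V\hat{X}}$, obtaining probability conditionals $\rho_x$ supported on the slices $V\hat{X}\cap p^{-1}(x)$ rather than on orbits. What the paper's route buys is that the identification of conditionals with (multiples of) $m_x$ is a one-line consequence of invariance plus uniqueness; what your route buys is that you only need the standard finite-measure disintegration theorem, at the price of the transversal-and-gluing argument you describe. That argument is the one thin spot: a measure on a slice cannot itself be $G$-invariant, so to pin down $\rho_x$ you must compare the conditionals of $m|_{V\hat{X}}$ and $m|_{gV\hat{X}}$ on overlaps (using that $p_*(m|_{gV\hat{X}}) = p_*(m|_{V\hat{X}})$ and that $g_*\rho_x$ are the conditionals of $m|_{gV\hat{X}}$) and glue them into an invariant measure on the full orbit, uniformly and measurably in $x$. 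You correctly flag this as the crux and your strategy for it is viable, but as written it is a sketch rather than a proof; the paper's choice of a $\sigma$-finite disintegration theorem exists precisely to avoid this gluing. Your uniqueness, measure-class, and unimodularity arguments coincide with the paper's.
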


\begin{proof}
Denote by $\tau$ the measure $p_*(m|_{V\hat{X}})$. Since $G$ is second countable we can write $G = \bigcup_{i\in \N} g_iV$ so $\tau$ and $p_*m$ are in the same measure class (but $p_*m$ might not be $\sigma$-finite). Since $\tau$ and $m$ are $\sigma$-finite there exists by \cite[Theorem~6.3]{DS12} a unique disintegration $m = \int\!\alpha_xd\tau(x)$. Now for every $g \in G$, $\int\!\alpha_xd\tau(x) = m = g_*m = \int\!g_*\alpha_xd\tau(x)$. By uniqueness of the disintegration, almost every $\alpha_x$ is invariant. Since almost every  $\alpha_x$ is supported on the fiber $p^{-1}(x)$, which is a $G$-orbit, there exists $c(x) > 0$ such that $\alpha_x = c(x)m_x$. $c(x)$ is a measurable function since $c(x) = \frac{\alpha_x(V\hat{X})}{m_x(V\hat{X})} = \frac{\alpha_x(V\hat{X})}{\lambda(VK_x)}$, $\alpha_x$ is a measurable family of measures and $\lambda(VK_x)$ is a measurable function. Therefore $m_x$ is a measurable family of measures.
For any $f\in L^1(m)$ we have $\int\!f(z)dm(z) = \int\!\int\!f(z)c(x)dm_x(z)d\tau(x)$ so defining $d\nu = c(x)d\tau$ we get $m = \int\!m_xd\nu(x)$. Since $c(x) > 0$ for almost every $x$, $\nu$ and $\tau$ are in the same measure class which is the measure class of $p_*m$. $\nu$ is unique since if $\nu'$ is a measure satisfying $m = \int \! m_x d\nu'(x)$ then for any measurable $f \in L^1(\nu')$, $\int\!f(x)d\nu'(x) = \int\! \int_{V\hat{X}}\!\frac{f(p(z))}{m_{p(z)}(V\hat{X})}dm_x(z) d\nu'(x) = \int\!\frac{f(p(z))}{m_{p(z)}(V\hat{X})}dm(z)$ depends only on $m$ and the $m_x$ which only depend on $\hat{X}$.
Finally If G is unimodular the measures $m_x$ do not depend on $\hat{X}$ so since there is a unique measure $\nu$ such that $m = \int \! m_x d\nu(x)$, $\nu$ is independent of $\hat{X}$.
\end{proof}

\begin{theorem}  \label{invariant measure on quotient}
 Suppose in the setting above that the space $(Z,m)$ has a measure preserving action of a group $H$ commuting with the $G$ action. If $G$ is unimodular the natural $H$ action on $Y$ preserves the measure $\nu$.
\end{theorem}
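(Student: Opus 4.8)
The plan is to use the uniqueness clause of Theorem~\ref{measure construction}: since $\nu$ is the \emph{unique} measure on $X=Z//G$ with $m=\int_X m_x\,d\nu(x)$, it suffices to show that for every $h\in H$ the pushforward $h_*\nu$ satisfies the same disintegration identity, i.e. $m=\int_X m_x\,d(h_*\nu)(x)$; then $h_*\nu=\nu$.

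First I would set up the induced action on $X$. Because $H$ commutes with $G$, each $h\in H$ permutes the $G$-orbits and hence descends to a bijection $\bar h\colon X\to X$ with $p\circ h=\bar h\circ p$; since $p$ restricts to a Borel isomorphism $\hat X\to X$ we may write $\bar h=p\circ h\circ (p|_{\hat X})^{-1}$, so $\bar h$ is Borel, $\bar h_1\bar h_2=\overline{h_1h_2}$, and $H\acts X$ is a Borel action that preserves $[p_*m]=[\nu]$ since $h_*m=m$. I abbreviate $\bar h x$ by $hx$.

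The key step is the identity $h_*m_x=m_{hx}$ for $\nu$-almost every $x$, and here unimodularity of $G$ is essential. Recall from the discussion preceding Theorem~\ref{measure construction} that when $G$ is unimodular the canonical invariant measure on a transitive $G$-space with compact stabilizers is independent of the base point, and that for $z\in p^{-1}(x)$ it equals the pushforward $\mu_z$ of $\lambda$ under the orbit map $g\mapsto gz$; thus $m_x=\mu_z$ for any choice of $z\in p^{-1}(x)$. Fixing such a $z$ and using that the $H$- and $G$-actions commute,
$$ (h_*m_x)(E)=(h_*\mu_z)(E)=\lambda\bigl(\{g\in G: hgz\in E\}\bigr)=\lambda\bigl(\{g\in G: g(hz)\in E\}\bigr)=\mu_{hz}(E),$$
and since $hz\in p^{-1}(hx)$ the right-hand side is $m_{hx}$. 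Without unimodularity one would only get $h_*m_x=c(h,x)\,m_{hx}$ with a nontrivial modular-type constant, which is precisely where the hypothesis enters.

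Finally I would conclude by change of variables: using $h_*m=m$, the compatibility of pushforward with the disintegration integral, the identity above, and the pushforward formula applied to the measure-valued integrand $y\mapsto m_y$,
$$ m=h_*m=h_*\!\int_X m_x\,d\nu(x)=\int_X h_*m_x\,d\nu(x)=\int_X m_{hx}\,d\nu(x)=\int_X m_y\,d(h_*\nu)(y),$$
so by uniqueness in Theorem~\ref{measure construction} we get $h_*\nu=\nu$, i.e. $H$ preserves $\nu$. The only real care needed is measure-theoretic bookkeeping — that $x\mapsto m_x$ is a measurable family (part of Theorem~\ref{measure construction}), that $h_*m_x=m_{hx}$ holds for $\nu$-almost every $x$ (clear once one notes $p^{-1}(x)$ is always a nonempty $G$-orbit and $\bar h$ preserves $[\nu]$), and that interchanging $h_*$ with $\int_X\,d\nu$ is legitimate for the fixed Borel automorphism $h$ — all routine given the framework of Appendix~\ref{appendix b}.
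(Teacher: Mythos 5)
Your proof is correct and follows essentially the same route as the paper: both arguments reduce to the fact that unimodularity makes the canonical fiber measures $m_x$ base-point independent (so $h_*m_x=m_{hx}$, equivalently the measure of Theorem~\ref{measure construction} does not depend on the cross section), and then invoke the uniqueness clause of that theorem. Your version just spells out the computation $h_*\mu_z=\mu_{hz}$ that the paper leaves implicit.
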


\begin{proof}
$H$ acts by automorphisms on the measure preserving system $(Z,m,G)$, thus for any $h \in H$, $h_*\nu$ is the canonical measure on $X$ associated to the cross section $h\hat{X}$ by theorem \ref{measure construction}. Since $G$ is unimodular the measure is independent of the cross section so this is the same as the measure associated to the cross section $\hat{X}$, i.e. $\nu$. So $h_*\nu = \nu$ as needed.
\end{proof}

\bibliographystyle{amsplain}
\bibliography{boundary_reps_bibliography}
\end{document}